\documentclass[a4paper,12pt,reqno]{amsart}
\usepackage{amsfonts,amsmath,amssymb,amstext,amsxtra,mathrsfs}
\usepackage{euscript}
\usepackage{inputenc}
\usepackage{color,graphicx,psfrag,subfigure,xcolor}
\usepackage{enumitem}
\usepackage[english]{babel}
\usepackage[T1]{fontenc}
\usepackage{dsfont}
\usepackage{pdftexcmds}
\usepackage{multirow}

\usepackage{scalerel,stackengine}
\stackMath
\newcommand\reallywidehat[1]{%
\savestack{\tmpbox}{\stretchto{%
  \scaleto{%
    \scalerel*[\widthof{\ensuremath{#1}}]{\kern-.6pt\bigwedge\kern-.6pt}%
    {\rule[-\textheight/2]{1ex}{\textheight}}%WIDTH-LIMITED BIG WEDGE
  }{\textheight}% 
}{0.5ex}}%
\stackon[1pt]{#1}{\tmpbox}%
}
\parskip 1ex

\usepackage{cancel, ulem}
\usepackage{pgf,tikz}
\usetikzlibrary{arrows}

\usepackage{hyperref}
\usepackage{tikz}

\textheight700pt
\textwidth450pt
\hoffset-40pt
\voffset-50pt
\headsep+30pt
\parskip0pt
\parindent0pt

\setlist[itemize]{leftmargin=*}

%\usepackage{calrsfs}
%\usepackage{enumerate}
%\usepackage{latexsym}
%\usepackage{pgf}

%\addtolength{\textwidth}{3cm}
%\addtolength{\textheight}{2cm}
%\addtolength{\oddsidemargin}{-1cm}
%\addtolength{\evensidemargin}{-2cm}

\definecolor{darkgreen}{rgb}{0,0.6,0}

\newtheorem{theorem}{Theorem}[section]

\newtheorem{lemma}[theorem]{Lemma}
\newtheorem{corollary}[theorem]{Corollary}
\newtheorem{remark}[theorem]{Remark}

\DeclareRobustCommand\circled[1]{\tikz[baseline=(char.base)]{\node[shape=circle,draw,inner sep=2pt](char){#1};}}

\newcommand\bc{\mathbf{c}\vsf}
\newcommand\C{\mathbb{C}}
\newcommand\const{\operatorname{const.}}

\renewcommand\epsilon{\varepsilon}
\renewcommand\H{\mathbb{H}}
\newcommand\Hn{\mathbb{H}^{\ssf n}}

\newcommand\msb{\hspace{-.5mm}}
\newcommand\msf{\hspace{.5mm}}
\newcommand\N{\mathbb{N}}
\renewcommand\O{\mathbb{O}}
\renewcommand\phi{\varphi}

\renewcommand\Re{\operatorname{Re}}
\newcommand\R{\mathbb{R}}
\newcommand\Rn{\mathbb{R}^n}

\newcommand\ssb{\hspace{-.3mm}}
\newcommand\ssf{\hspace{.3mm}}
\newcommand\supp{\operatorname{supp}}

\newcommand\TQ{\mathbb{T}_{\ssb Q}}

\newcommand\vsb{\hspace{-.1mm}}
\newcommand\vsf{\hspace{.1mm}}
\newcommand\Z{\mathbb{Z}}

%%%%%%%%%%
% \qoppa and \Qoppa (ancient greek letters)
%%%%%%%%%%
%
%\DeclareFontFamily{U}{cbgreek}{}
%\DeclareFontShape{U}{cbgreek}{m}{n}{
%        <-6>    grmn0500
%        <6-7>   grmn0600
%        <7-8>   grmn0700
%        <8-9>   grmn0800
%        <9-10>  grmn0900
%        <10-12> grmn1000
%        <12-17> grmn1200
%        <17->   grmn1728
%      }{}
%\DeclareFontShape{U}{cbgreek}{bx}{n}{
%        <-6>    grxn0500
%        <6-7>   grxn0600
%        <7-8>   grxn0700
%        <8-9>   grxn0800
%        <9-10>  grxn0900
%        <10-12> grxn1000
%        <12-17> grxn1200
%        <17->   grxn1728
%      }{}
%
%\DeclareRobustCommand{\qoppa}{%
%  \text{\usefont{U}{cbgreek}{\normalorbold}{n}\symbol{19}}%
%}
%\DeclareRobustCommand{\Qoppa}{%
%  \text{\usefont{U}{cbgreek}{\normalorbold}{n}\symbol{21}}%
%}
%\makeatletter
%\newcommand{\normalorbold}{%
%  \ifnum\pdf@strcmp{\math@version}{bold}=\z@ bx\else m\fi
%}
%\makeatother
%
%%%%%%%%%%
\title[Schr\"odinger equation for the fractional Laplacian]
{The Schr\"odinger equation with fractional Laplacian on hyperbolic spaces and homogeneous trees}

\begin{document}
\author[J.-P. Anker]{Jean-Philippe Anker}
\address{Universit\'{e} d’Orl\'{e}ans, Universit\'{e} de Tours \& CNRS,
Institut Denis Poisson (UMR 7013), 
B\^{a}timent de Math\'{e}matiques, Rue de Chartres,
B.P. 6759, 45067 Orl\'eans cedex 2, France}
\email{anker@univ-orleans.fr}
\author[G. Palmirotta]{Guendalina Palmirotta}
\address{Universit\"at Paderborn, Warburgerstra{\ss}e 100, 33098 Paderborn, Germany} \email{gpalmi@math.uni-paderborn.de}
\author[Y. Sire]{Yannick Sire}
\address{Johns Hopkins University, Krieger Hall, 3400 North Charles Street, Baltimore, MD, 21231, USA}
\email{ysire1@jhu.edu}

\begin{abstract}
We investigate dispersive and Strichartz estimates for the  Schr\"odinger equation involving the fractional Laplacian in real hyperbolic spaces and their discrete analogues, homogeneous trees. Due to the Knapp phenomenon, the Strichartz estimates on Euclidean spaces for the fractional Laplacian exhibit loss of derivatives. A similar phenomenon appears on real hyperbolic spaces. However, such a loss disappears on homogeneous trees, due to the triviality of the estimates for small times.
\end{abstract}

\keywords{Schr{{\"o}}dinger equation, fractional Laplacian, dispersive estimates, Strichartz estimates, real hyperbolic spaces, homogeneous trees}

\subjclass[2010]{Primary 35R11; Secondary 22E30, 35B45, 35Q41, 35R05, 43A85, 43A90}

\maketitle

%\tableofcontents

%\textcolor{darkgreen}{Jean-Philippe}\\
%\textcolor{teal}{Yannick}\\
%{\color{blue}Guenda}

\section{Introduction}

The aim of the present work is to
derive dispersive and Strichartz estimates for Schr\"odinger equations associated to the \textit{fractional Laplacian} on negatively curved manifolds like real hyperbolic spaces and their discrete counterparts, homogeneous trees. Specifically, consider the following Cauchy problem for the  fractional Schr\"{o}dinger equation:
\begin{equation}\label{eq:NLSgeneral}
\begin{cases}
    i\partial_tu + (-\Delta)^{\alpha/2}u =F(x,t)   &(x,t)\in M \times \R \\
    u|_{t=0} = u_0,
\end{cases}
\end{equation}
where $M$ stands for either $\H^n$ ($n \geq 2$), the real hyperbolic space with its standard metric; $\mathbb{T}_Q$ ($Q \geq 2$), the homogeneous tree with $Q+1$ edges; or
$\R^n$ ($n \geq 2$), the Euclidean space with the flat metric.
The operator $(-\Delta)^{\alpha/2}$ represents the Fourier multiplier of order $\alpha \in (0,2)$ associated to powers of the corresponding Laplacian on $M$. Here $F(x,t)$ is a nonhomogeneous term defined on $M \times \mathbb R$ and $u_0$, defined on $M$, stands for the initial data.
\smallskip

Positive powers of the Laplace-Beltrami operator, known as the fractional Laplacian, appear in several areas of mathematical physics such as relativistic theories (see e.g.,  \cite{CarmonaMastersSimon1990, DaubechiesLieb1983, FrankLiebSeiringer2007, LiebYau1987, LiebYau1988}). 
Such operators are also reminiscent of stochastic processes with pure jumps since they are the infinitesimal generators of stable Levy processes (see the book by Bertoin \cite{Bertoin1996} for a detailed account). E.g. an attempt to reinterpret Feynman's path integral into the framework of Levy processes has been undertaken in \cite{Laskin2002}. \\
Additionally, significant progress has been made in understanding such operators using techniques from partial differential equations. While a comprehensive review is beyond the scope of this work, we refer readers to the books and surveys \cite{BookFRRO, SurveyCaffarelli, SurveyCIME, Vazquez2014} for further insights.\\
\smallskip

We now outline our main results for $M=\{\H^n, \TQ\}$, namely the dispersive estimates for Equation \eqref{eq:NLSgeneral}. As our focus is the harmonic analysis of the multipliers $(-\Delta)^{\alpha/2}$ on rank 1 symmetric spaces of noncompact type, we state below the boundedness properties in $L^p-L^q$ of the multiplier in question. 
\begin{theorem}[{Dispersive estimates on hyperbolic spaces}]\label{thm:DispersiveEstimateHn_CaseAlphaLargeIntro}
Consider $M=\H^n.$
\begin{itemize}
\item
Assume that \,$n\ssb\ge\ssb2$\ssf, $1\!<\msb\alpha\msb<\msb2$\ssf,
\vsf$0\ssb\le\msb\sigma\msb\le\msb\frac n2$\vsf,
\vsf$2\msb<\msb q\msb\le\msb\infty$ and let
\begin{equation*}
m=\max\ssf\bigl\{2\ssf\tfrac{n\vsf-\vsf\sigma}\alpha,
\tfrac{n\vsf-\vsf2\ssf\sigma}{\alpha\vsf-1}\bigr\}
=\begin{cases}
\,2\ssf\frac{n\vsf-\vsf\sigma}\alpha
&\text{if \,$\sigma\msb\ge\msb(1\msb-\msb\frac\alpha2)\ssf n$\ssf,}\\
\msf\tfrac{n\vsf-\vsf2\ssf\sigma}{\alpha\vsf-1}
&\text{if \,$\sigma\msb\le\msb(1\msb-\msb\frac\alpha2)\ssf n$\ssf.}
\end{cases}
\end{equation*}
Then the following dispersive estimates hold for \,$t\ssb\in\ssb\R^*$ on \,$\H^n:$
\begin{equation*}
\bigl\|\ssf(-\Delta)^{-\ssf(\frac12-\frac1q)\ssf\sigma}\msf
e^{\hspace{.4mm}i\hspace{.4mm}t\ssf(-\Delta)^{\alpha/2}}\ssf
\bigr\|_{L^{q^{\vsf\prime}}\!\to\vsf L^q}
\lesssim\ssf|t|^{-(\frac12-\frac1q)\vsf m}
\end{equation*}
for \,$t$ small, say \,$0\msb<\msb|t|\msb<\msb1$\vsf, and
\begin{equation*}
\bigl\|\ssf(-\Delta)^{-\ssf\sigma/2}\msf
e^{\hspace{.4mm}i\hspace{.4mm}t\ssf(-\Delta)^{\alpha/2}}\ssf
\bigr\|_{L^{q^{\vsf\prime}}\!\to\vsf L^q}
\lesssim\ssf|t|^{-\frac32}
\end{equation*}
%{\color{blue}[Shall we not add $-\ssf(\frac12-\frac1q)\ssf\sigma$ as above? In other words:] $$\bigl\|\ssf(-\Delta)^{-\ssf(\frac12-\frac1q)\ssf\sigma}\msf
%e^{\hspace{.4mm}i\hspace{.4mm}t\ssf(-\Delta)^{\alpha/2}}\ssf
%\bigr\|_{L^{q^{\vsf\prime}}\!\to\vsf L^q}
%\lesssim\ssf|t|^{-\frac32}$$}
for \,$t$ large, say \,$|t|\msb\ge\msb1$\vsf. 
\item 
Assume that \,$n\ssb\ge\ssb3$\ssf, \vsf$0\!<\msb\alpha\msb<\msb1$\vsf,
$(1\!-\msb\frac\alpha2)\ssf n\ssb\le\msb\sigma\msb\le\msb n$ and \,$2\ssb<\ssb q\ssb\le\ssb\infty$\ssf.
Then the following dispersive estimates hold for
\,$t\msb\in\msb\R^*$ on \,$\H^n:$
\begin{equation*}
\bigl\|\ssf(-\Delta)^{-\ssf(\frac12-\frac1q)\ssf\sigma}\msf e^{\hspace{.4mm}i\hspace{.4mm}t\ssf(-\Delta)^{\alpha/2}}\ssf\bigr\|_{L^{q^{\vsf\prime}}\!\to\vsf L^q}\ssb\lesssim|t|^{-\vsf(\frac12-\frac1q)\ssf2\vsf\frac{n-\vsf\sigma}\alpha}
\end{equation*}
for \,$t$ small, say \,$0\ssb<\ssb|t|\ssb<\msb1$\vsf, and
\begin{equation*}
\bigl\|\ssf(-\Delta)^{-\ssf\sigma/2}\msf
e^{\hspace{.4mm}i\hspace{.4mm}t\ssf(-\Delta)^{\alpha/2}}
\ssf\bigr\|_{L^{q^{\vsf\prime}}\!\to\vsf L^q}\ssb
\lesssim|t|^{-\frac32}  
\end{equation*}
for \,$t$ large, say \,$|t|\ssb\ge\msb1$\vsf.
\end{itemize}
\end{theorem}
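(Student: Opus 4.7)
The approach follows the standard scheme for dispersive estimates on rank-one symmetric spaces of noncompact type: reduce the $L^{q'}\!\to L^q$ bound to a pointwise bound on the radial convolution kernel $K_t^{\sigma'}$ of $(-\Delta)^{-\sigma'}e^{it(-\Delta)^{\alpha/2}}$ -- with $\sigma'=(\tfrac12-\tfrac1q)\sigma$ in the small-time regime and $\sigma'=\sigma/2$ in the large-time regime -- and then upgrade that pointwise bound to an $L^{q'}\!\to L^q$ estimate via the Kunze--Stein phenomenon and Herz's majoration principle for radial convolutors on $\Hn$. The spherical Fourier inversion formula expresses
\[
K_t^{\sigma'}(r)=c_n\int_0^\infty e^{it(\lambda^2+\rho^2)^{\alpha/2}}(\lambda^2+\rho^2)^{-\sigma'}\phi_\lambda(r)\,|\bc(\lambda)|^{-2}\,d\lambda,\qquad\rho=\tfrac{n-1}{2},
\]
and a smooth cut-off splits $K_t^{\sigma'}=K_t^0+K_t^\infty$ according to $|\lambda|\lesssim 1$ vs.~$|\lambda|\gtrsim 1$; these two pieces dominate, respectively, the large- and small-time regimes.

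For $|t|\geq 1$ one focuses on $K_t^0$: on $|\lambda|\lesssim 1$ the spherical function satisfies the uniform bound $|\phi_\lambda(r)|\lesssim(1+r)e^{-\rho r}$, while the Plancherel density $|\bc(\lambda)|^{-2}$ vanishes quadratically at the origin. The phase $t(\lambda^2+\rho^2)^{\alpha/2}$ is smooth and has non-degenerate second derivative $\alpha\rho^{\alpha-2}t$ at $\lambda=0$, so standard one-dimensional stationary phase, combined with two integrations by parts absorbing the $\lambda^2$-vanishing of the Plancherel density, yields
\[
|K_t^0(r)|\lesssim|t|^{-3/2}(1+r)e^{-\rho r}.
\]
Such a pointwise bound on a radial kernel is square-integrable against $\phi_0^{-2}$; by the Kunze--Stein / Herz criterion it produces $\|K_t^0\ast\cdot\|_{L^{q'}\!\to L^q}\lesssim|t|^{-3/2}$ for every $2<q\leq\infty$, which is the large-time estimate.

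For $0<|t|<1$ the small-time decay is governed by $K_t^\infty$. Replacing $\phi_\lambda(r)$ by its leading oscillatory asymptotic, roughly $\lambda^{-(n-1)/2}\cos(r\lambda)e^{-\rho r}$, and using $|\bc(\lambda)|^{-2}\asymp\lambda^{n-1}$ at infinity, one is reduced to a one-dimensional Euclidean-like oscillatory integral with phase $\sim t\lambda^\alpha$ and amplitude $\sim\lambda^{n-1-2\sigma'}$. Its analysis via van der Corput / stationary phase reveals two competing $L^1\!\to L^\infty$ kernel bounds: an isotropic rate $|t|^{-(n-\sigma)/\alpha}$ coming from the classical second-derivative estimate applied to $t\lambda^\alpha$, and a Knapp-type rate $|t|^{-(n-2\sigma)/(2(\alpha-1))}$ coming from concentration on frequency caps of scale $|t|^{-1/(\alpha-1)}$ on which the phase is essentially affine. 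The two rates cross precisely at $\sigma=(1-\tfrac\alpha2)n$, producing the exponent $m=\max\{2(n-\sigma)/\alpha,(n-2\sigma)/(\alpha-1)\}$ in the statement. Interpolating this pointwise kernel bound against the trivial $L^2$-unitarity of $e^{it(-\Delta)^{\alpha/2}}$, again through the Kunze--Stein mechanism so as to cover all $2<q\leq\infty$, gives the announced small-time bound $|t|^{-(\frac12-\frac1q)m}$.

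The hard part is the high-frequency oscillatory analysis when $\alpha<1$: the second derivative of $t\lambda^\alpha$ then decays at infinity, so the classical van der Corput of order two is insufficient and one must combine a higher-order van der Corput estimate with the intrinsic oscillation of $\phi_\lambda$ and the exponential gain $e^{-\rho r}$. This is precisely what forces the restrictions $(1-\tfrac\alpha2)n\leq\sigma\leq n$ (so that the Knapp rate dominates and the amplitude remains integrable at infinity) and $n\geq 3$ that appear in the second bullet of the statement.
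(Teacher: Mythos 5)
Your overall architecture (spherical Fourier analysis of the kernel, Kunze--Stein for $|t|\ge1$, interpolation with $L^2$ for $|t|<1$) is the same as the paper's, but two of your steps would fail as written. First, in the large-time regime you treat $\phi_\lambda(r)$ as a passive amplitude bounded by $(1+r)e^{-\rho r}$ and run stationary phase only in $t(\lambda^2+\rho^2)^{\alpha/2}$ on $|\lambda|\lesssim1$. The $\lambda$-oscillation of the spherical function cannot be discarded: by the Harish--Chandra expansion, $\phi_\lambda(r)$ carries factors $e^{\pm i\lambda r}$, so the true phase is $\psi_{r/t}(\lambda)=(\lambda^2+\rho^2)^{\alpha/2}-\tfrac rt\lambda$, whose stationary point $\lambda_1\asymp(r/t)^{1/(\alpha-1)}$ leaves every fixed compact frequency set as soon as $r\gtrsim t$, and each $\lambda$-derivative of $\phi_\lambda(r)$ costs a factor $r$. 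Consequently the uniform bound $|k_t(r)|\lesssim|t|^{-3/2}(1+r)e^{-\rho r}$ you claim is false for $r\ge\max\{1,t\}$: there the correct bound is $|t|^{-\frac12\frac{n-2\sigma}{\alpha-1}}r^{\frac12\frac{n-2\sigma}{\alpha-1}-\frac12}e^{-\rho r}$, with no $t$-decay at all when $\sigma=\tfrac n2$, and the operator bound $|t|^{-3/2}$ is recovered only because the Kunze--Stein weight $\phi_0(r)(\sinh r)^{n-1}$ leaves an exponential factor $e^{-(\frac q2-1)\rho r}$ that kills the region $r\gtrsim t$. So the proof genuinely needs the two-regime kernel estimates (the Subcases $t\ge r$ and $r\ge t$ of Theorem 3.6), not a single pointwise bound. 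Likewise, for $0<\alpha<1$ you locate the difficulty at high frequency, but the real obstruction is at the $O(1)$ frequency $\lambda_0$: since $\psi'$ is no longer monotone, $\psi_{r/t}$ acquires a second stationary point $\lambda_2\asymp(r/t)^{-1/(1-\alpha)}$ and degenerates ($\psi'=\psi''=0$) when $\tfrac rt$ is near $\theta_0$, which forces a third-order van der Corput bound ($r^{-1/3}e^{-\rho r}$) and the extra case analysis; the restriction $n\ge3$ comes from requiring $\frac12\frac{2\sigma-n}{1-\alpha}\ge\frac32$ in the resulting large-time exponent, not from integrability of the amplitude.

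Second, your small-time step ``interpolate the pointwise kernel bound against $L^2$-unitarity, again through the Kunze--Stein mechanism'' is not a legitimate interpolation: the two endpoints concern different operators (the propagator smoothed by $(-\Delta)^{-(\frac12-\frac1q)\sigma}$ versus the bare propagator), and Kunze--Stein plays no role for $0<|t|<1$. If you repair it by fixing the operator $(-\Delta)^{-(\frac12-\frac1q)\sigma}e^{it(-\Delta)^{\alpha/2}}$, its $L^1\to L^\infty$ kernel bound is $|t|^{-\frac12 m((\frac12-\frac1q)\sigma)}$, and Riesz--Thorin against $L^2\to L^2$ yields the exponent $(\frac12-\frac1q)\,m\bigl((\frac12-\frac1q)\sigma\bigr)$, which is strictly weaker than the stated $(\frac12-\frac1q)\,m(\sigma)$ because $m$ is decreasing in $\sigma$ (your own exponent bookkeeping already shows the slip: the amplitude carries $\sigma'$ while the rates you quote carry $\sigma$). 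The paper obtains the correct exponent by Stein's interpolation for the analytic family $\sigma\mapsto(-\Delta)^{-\sigma/2}e^{it(-\Delta)^{\alpha/2}}$ on a vertical strip, using the $L^2\to L^2$ bound for $\Re\sigma=0$ and the $L^1\to L^\infty$ kernel bound $|t|^{-m/2}$ at the full value of $\Re\sigma$; it is this mechanism that places the factor $\frac12-\frac1q$ simultaneously on the smoothing power and on the time decay, and it requires kernel estimates valid for complex $\sigma$ in the strip, which your sketch does not provide.
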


In Theorem \ref{thm:DispersiveEstimateHn_CaseAlphaLargeIntro}, in the case $0<\alpha<1$, we did not state the estimates whenever $n=2$ since the numerology gets more tedious. We refer the reader to the dedicated Section~\ref{sect:case_alpha_small} for more details on this case.\\

Note that the $L^p\to L^q$ boundedness of such multipliers, for fixed time $t$\ssf, was extensively investigated by Cowling, Giulini and Meda \cite{GiuliniMeda1990, CowlingGiuliniMeda1993, CowlingGiuliniMeda1995, CowlingGiuliniMeda2001, CowlingGiuliniMeda2002}.\\
For specific $\alpha$, some results are known:
For $\alpha = 2$, Equation \eqref{eq:NLSgeneral} has been the subject of extensive study. Focusing specifically on symmetric spaces, key references include \cite{AnkerPierfelice2009, IonescuStaffilani2009, AnkerPierfeliceVallarino2011}, while for higher-rank symmetric spaces, we refer to \cite{AnkerMedaPierfeliceVallarinoZhang2023}. In addition, space-time linear estimates in the Euclidean space are the well-known Strichartz results of Ginibre-Velo \cite{GinibreVelo1979, GinibreVelo1985} and Keel-Tao \cite{KeelTao1998}. \\
For $\alpha \in (0,2)$, the Strichartz and dispersive estimates are due to Cho, Ozawa and Xia \cite{ChoOzawaXia2011,ChoKohSeo2015}. Guo and Wang in \cite{GuoWang2014} derived finer estimates for $1< \alpha < 2$. This loss of derivatives is reminiscent of the Knapp phenomenon (see e.g., \cite[Cor.~3.10.]{GuoWang2014}). \\% \textcolor{red}{point in the paper} {\color{blue}[it is Cor.~3.10.]}).
For $\alpha = 1$, Equation \eqref{eq:NLSgeneral}, often referred to as the \textit{half-wave equation}, has been investigated in works such as \cite{AnkerPierfeliceVallarino2012, AnkerPierfelice2014, AnkerPierfeliceVallarino2015}.\\
\smallskip

In an influential paper \cite{Gromov1987}, Gromov investigates the so-called hyperbolic groups and provides a discrete analogue of the real hyperbolic space, namely the homogeneous trees introduced before. The next theorem provides dispersive estimates in such a setting.

\begin{theorem}[{Dispersive estimate on homogeneous trees}] \label{thm:DispersiveEstimateTQIntro}
Consider $M=\mathbb T_Q$. Let  \,$0\msb<\msb\alpha\msb\le\msb2$
and \,$2\msb<\msb q,{\tilde{q}}\msb\le\msb\infty$\ssf.
Then the following dispersive estimate holds for
\,$t\msb\in\msb\R^*:$
\begin{equation*}
\bigl\|\msf
e^{\hspace{.4mm}i\hspace{.4mm}t\hspace{.4mm}(-\Delta)^{\alpha/2}}
\ssf\bigr\|_{\ssf\ell^{\ssf q^{\ssf\prime}}\msb{(\mathbb{T}_Q)}\ssf\to\ssf\ell^{\ssf\tilde{q}}{(\mathbb{T}_Q)}}
\lesssim\ssf(1\!+\msb|t|\vsf)^{-\frac32}\msf.
\end{equation*}
\end{theorem}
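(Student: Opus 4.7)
The plan is to realize $e^{it(-\Delta)^{\alpha/2}}$ as convolution with a radial kernel $k_t$ on $\mathbb{T}_Q$ and derive the stated bound by combining a pointwise kernel estimate with a convolution inequality, splitting on the size of $|t|$.

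For the short-time regime $|t|\le 1$, the bound is essentially trivial. Since $\mathbb{T}_Q$ carries counting measure, one has the nested inclusions $\ell^{q'}(\mathbb{T}_Q)\subset\ell^2(\mathbb{T}_Q)\subset\ell^{\tilde q}(\mathbb{T}_Q)$ of norm at most $1$ whenever $q,\tilde q\ge 2$. Since the spectral symbol $e^{it\gamma(\lambda)^{\alpha/2}}$ is unimodular (with $\gamma$ the Laplace symbol on $\mathbb{T}_Q$), Plancherel's theorem makes $e^{it(-\Delta)^{\alpha/2}}$ an $\ell^2$-isometry, and composing gives $\|e^{it(-\Delta)^{\alpha/2}}\|_{\ell^{q'}\to\ell^{\tilde q}}\le 1\lesssim(1+|t|)^{-3/2}$ on this range.

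For the long-time regime $|t|\ge 1$, the core step is the pointwise kernel estimate
\begin{equation*}
|k_t(n)|\lesssim Q^{-n/2}\,|t|^{-3/2}\qquad(n\in\mathbb{N}).
\end{equation*}
Via the inverse spherical Fourier transform on $\mathbb{T}_Q$,
\begin{equation*}
k_t(n)=\int_0^{\tau} e^{\,it\gamma(\lambda)^{\alpha/2}}\,\phi_\lambda(n)\,|\mathbf{c}(\lambda)|^{-2}\,d\lambda,
\end{equation*}
and using the asymptotic $\phi_\lambda(n)=Q^{-n/2}\bigl(\mathbf{c}(\lambda)Q^{\,in\lambda}+\mathbf{c}(-\lambda)Q^{-in\lambda}\bigr)$, one factors out $Q^{-n/2}$ to reduce $k_t(n)$ to oscillatory integrals of the form
\begin{equation*}
Q^{-n/2}\int_0^{\tau} e^{\,it\Phi_n(\lambda)}\,\psi(\lambda)\,d\lambda,\qquad \Phi_n(\lambda)=\gamma(\lambda)^{\alpha/2}\pm\tfrac{n\log Q}{t}\,\lambda,
\end{equation*}
with smooth amplitude $\psi(\lambda)=1/\overline{\mathbf{c}(\pm\lambda)}$ vanishing to first order at $\lambda=0,\tau$. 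A van der Corput / stationary phase analysis uniform in $n$ then produces the $|t|^{-3/2}$ decay: the interior critical points of $\Phi_n$ contribute the standard $|t|^{-1/2}$, while an additional $|t|^{-1}$ is gained by one integration by parts exploiting the boundary vanishing of $\psi$ (i.e. the quadratic vanishing of the Plancherel density $|\mathbf{c}(\lambda)|^{-2}$ at the endpoints of the spectrum).

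With the pointwise kernel bound in hand, the conclusion follows from convolution estimates on $\mathbb{T}_Q$. The counting $|S_n|\asymp Q^n$ immediately gives $\|k_t\|_{\ell^r(\mathbb{T}_Q)}\lesssim|t|^{-3/2}$ for every $r>2$ and $\|k_t\|_{\ell^{2,\infty}(\mathbb{T}_Q)}\lesssim|t|^{-3/2}$. Young's inequality $\|f*k_t\|_{\ell^{\tilde q}}\le\|k_t\|_{\ell^r}\|f\|_{\ell^{q'}}$ with $1/r=1/q+1/\tilde q$ then directly covers the sub-range $1/q+1/\tilde q<1/2$, and the remaining range $2<q,\tilde q\le 4$ is handled by a Kunze-Stein-type convolution inequality for radial kernels on the non-amenable tree $\mathbb{T}_Q$, which upgrades the weak-$\ell^{2,\infty}$ bound on $k_t$ into the full $\ell^{q'}\to\ell^{\tilde q}$ operator bound. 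The main obstacle is the long-time kernel estimate itself: promoting the generic $|t|^{-1/2}$ stationary-phase bound to the sharper uniform $|t|^{-3/2}$ requires a careful interplay between the interior critical points of the phase $\Phi_n$ and the boundary vanishing of the Plancherel density, handled uniformly in the spatial parameter $n$.
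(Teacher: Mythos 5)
Your overall strategy (explicit radial kernel via the inverse spherical transform, oscillatory-integral bound, then a convolution/Kunze--Stein step, with the small-time case handled trivially by $\ell^{q'}\subset\ell^2\subset\ell^{\tilde q}$ and unitarity) is the same as the paper's, and your final summation scheme would indeed close the argument. The genuine gap is the long-time kernel estimate you rely on: the claimed bound $|k_t(n)|\lesssim Q^{-n/2}|t|^{-3/2}$ \emph{uniformly in} $n$ is false, and your stationary-phase sketch cannot produce it. The phase $\psi(\lambda)=[1-\gamma_0\cos\lambda]^{\alpha/2}$ has $\psi'$ attaining a maximum $M=\psi'(\lambda_0)$ at an interior point $\lambda_0\in(0,\pi/2)$ where $\psi''(\lambda_0)=0$. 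Hence for $n/|t|$ near $M$ the full phase $t\psi(\lambda)-n\lambda$ has a \emph{degenerate} (Airy-type) stationary point at which the amplitude does \emph{not} vanish (the vanishing of the Plancherel density is only at the spectral endpoints $\lambda=0,\pi$), so van der Corput gives only $|t|^{-1/3}$ there; in fact $|k_t(n)|\asymp Q^{-n/2}|t|^{-1/3}$ in that caustic region, contradicting your bound. Moreover, the integration by parts that converts the endpoint vanishing of the amplitude into a factor $1/t$ necessarily lets $\partial_\lambda$ hit $e^{-in\lambda}$, producing a factor $n$; this is why the correct estimate (Theorem 5.3 of the paper) reads $|k_t(n)|\lesssim|t|^{-3/2}(1+n)\,Q^{-n/2}$ and is proved only in the range $1+n\le C|t|$, where the stationary points stay uniformly nondegenerate and $|t\psi'-n|\gtrsim t$ off a fixed neighbourhood of them (Lemma 5.1(iii)).

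The fix does not change your final step much: keep the trivial bound $|k_t(n)|\lesssim Q^{-n/2}$ for $n\gtrsim|t|$ and the bound $|t|^{-3/2}(1+n)\,Q^{-n/2}$ for $n\lesssim|t|$. In the weighted sums you form (Young with $\lvert S_n\rvert\asymp Q^n$, or the radial Kunze--Stein-type inequality with weight $\asymp(1+n)Q^{n/2}$), the polynomial factor $(1+n)$ is harmless because the exponent $s>2$ makes $Q^{n(1-s/2)}$ summable, and the region $n\ge C|t|$ contributes $O(Q^{-c|t|})\lesssim|t|^{-3/2}$. This is exactly why the paper remarks that no improvement of the trivial bound is needed when $n/|t|$ is bounded below, and it is the reason the restriction $1+n\le C|t|$ in the kernel estimate is not a loss for the dispersive estimate.
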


The previous statements follow from a fine analysis of the kernel of the propagator $e^{i \,t \,(-\Delta)^{\alpha/2}}$. A standard argument then gives Stichartz estimates from the dispersive ones. 
As a quick inspection shows, the phase in the oscillatory integral of the linear solution changes convexity according to the powers $\alpha$. This requires to consider separately the two regimes $\alpha \in (0,1)$ and $\alpha \in (1,2)$. The kernel analysis is substantially more involved whenever $\alpha$ is small. In particular, we observe degeneracies in this case which need to push the phase analysis an order more.  \smallskip
 
This phenomenon is dominant in the continuous setting of $\H^n$ but disappears in the discrete one of $\mathbb T_Q$ since in this case the {\sl local} analysis of the kernel becomes trivial. We point out as well to the reference \cite{DinhJDE} for an investigation on closed manifolds and to \cite{DinhJFA} for one on asymptotically Euclidean manifolds. 
\smallskip

More generally, it has already been observed in the Euclidean case that space-time estimates exhibit a loss of derivatives (see \cite{GuoWang2014}) which is reminiscent of the Knapp phenomenon. However, it was observed by Guo and Wang that such a loss can be removed by assuming radial symmetry. More precisely, it was also shown in \cite{GuoWang2014} by Guo and Wang that one can obtain optimal Strichartz estimates (i.e., without loss) if one restricts to \ssf$\alpha\ssb\in\msb\bigl(\frac2{2\ssf n\vsf-1},\vsb2\bigr)$\vsf. In particular, the number \vsf$\frac{2\ssf n}{2\ssf n\vsf-1}$ \vsf is larger than $1$ and there is a gap between the Strichartz estimates for the wave operator \ssf$\alpha\ssb=\msb1$ \vsf and the ones occurring for larger powers. The loss in question occurs at small scales in space and then can be removed in the case of homogeneous trees. It is worth mentioning that the range of admissible pairs for the Strichartz estimates on \ssf$\Hn$ is larger than the one on Euclidean spaces. This was of course already observed for \ssf$\alpha\ssb=\ssb2$\ssf. However, one still observes a loss of derivatives for general classes of data. A possible way to remove the loss would be to consider the case of data depending only on the geodesic distance to a given pole of \ssf$\Hn$.
\vspace{-2mm}

The case of real hyperbolic space introduces several points departing substantially from the analysis in the Euclidean space.

\begin{enumerate}
\item
First, we observe that the change of convexity in the phase is essential in dealing with the case of powers close to zero. This introduces a phase analysis much more technical than in the case close to \ssf$2$\ssf. 
\item
Second, as already mentioned, the range of admissible exponents for the Strichartz estimates are broader than in the Euclidean case of Cho, Ozawa and Xia \cite{ChoOzawaXia2011}.
\item
Interestingly enough, we notice that one can {\sl remove} the loss of derivatives (the exponent \ssf$\sigma$ \vsf in the previous theorems) provided \ssf$\alpha\ssb<\msb1$ \vsf and paying the price of a much weaker dispersion.  
\end{enumerate}

The loss of derivatives introduces difficulties for the well-posedness theory for the nonlinear problem. Some partial results can be found in the work by Hong and the last author \cite{HS}, as well as several subsequent contributions by many authors. As previously mentioned, the case of radial data is more favorable and a concentration-compactness/rigidity {\sl \`a la Kenig-Merle}, for the energy-critical {nonlinear Schr\"{o}dinger (NLS)} is performed in \cite{GSWZ}.
\smallskip

In the present article, we refrain from developing nonlinear applications of our estimates. Our original motivation is to understand the structure of the kernel of the propagator on Riemannian symmetric spaces of rank one and noncompact type. We also exclude the case \ssf$\alpha\ssb=\msb1 $ \vsf in the continuous setting since it requires different techniques and is also contained in the literature (the half-wave theory, see \cite{Tataru2001, AnkerPierfeliceVallarino2011, MetcalfeTaylor2011, MetcalfeTaylor2012, AnkerPierfelice2014}).
\smallskip

Our paper is organized as follows: Section~\ref{sect:harmonic_analysis} is devoted to a summary of classical notations and preliminary tools of the harmonic analysis of symmetric spaces. Section~\ref{sect:kernel_analysis} provides the refined kernel estimates, which are at the core of the proofs of the previous theorems~\ref{thm:DispersiveEstimateHn_CaseAlphaLargeIntro} and \ref{thm:DispersiveEstimateTQIntro}. Section~\ref{sect:estimates_hyperbolic_spaces} gives the proofs of the dispersive and Strichartz estimates for the real hyperbolic spaces. Finally, Section~\ref{sect:estimates_trees} deals with the case of homogeneous trees. \\

{\textit{Notation}.
\begin{itemize}
\item
\,Given two non-negative functions \ssf$f$ \ssf and \ssf$g$ \ssf defined on \vsf$M$, we write \ssf$f\ssb\lesssim\ssb g$ \ssf if there exists a positive constant \ssf$C$ \ssf such that \ssf$f(x)\ssb\le\ssb C\ssf g(x)$ \ssf for all \ssf$x\ssb\in\msb M$. The expression \ssf$f\ssb\asymp\ssb g$ \ssf means that \ssf$f(x)\ssb\lesssim\ssb g(x)$ \ssf and \ssf$g(x)\ssb\lesssim\ssb f(x)$ \ssf for all \ssf$x\ssb\in\msb M$.
\item
\,The Lebesgue spaces is denoted by $(L^p:=L^p(\R),\|\cdot\|_{L^p})$, $p\ssb\ge\ssb2$\ssf, and the time-space Strichartz spaces $L^p(\R;L^q(M))$, $p,q\ssb\ge\ssb2$\ssf, of \ssf$f$ \ssf on \ssf$\R\msb\times\msb M$ \ssf is defined by
$$
\|f\|_{L^p(\R;L^q(M))}:=\Biggl[\msf\int_\R\Bigl(\msf\int_M|f(t,x)|^q\,\mathrm{d}x\Bigr)^{\frac{p}{q}}\,\mathrm{d}t\,\Biggr]^{\frac1p}.
$$
%Moreover, for $1 < p < \infty$ and $t> 0$, we denote the $L^p$ (fixed time) dispersive estimate
%$$\| e^{it(-\Delta)^{\alpha/2}}\cdot\|_{L^p} \leq C t^\beta \|\cdot\|_{L^{p'}},$$
%for some constant $C$ independent of $t$ and where $\beta \in \R$, shortly by $\|\cdot\|_{L^{p'} \to L^p}$.
\,In the discrete setting, we will denote the Lebesgue space by its lower-case letter $(\ell^{\ssf p},\|\cdot\|_{\ell^{\vsf p}})$.
\end{itemize}

\section{Harmonic analysis tools on hyperbolic spaces} \label{sect:harmonic_analysis}
%and preliminary results

%\textcolor{red}{[Here I create a section to ease the reading]}
This section is devoted to some preliminary results about harmonic analysis on hyperbolic spaces together with some definitions used throughout the paper. 
\smallskip

\subsection{(Real) hyperbolic spaces} \label{sect:real_Hn}
We define $M= \H^n${, for $n\ge2$,} as the upper branch of a hyperboloid in $\R^{n+1}$ with the metric induced by the Lorenzian metric in $\R^{n+1}$ given by $\mathrm{d}s^2-|\mathrm{d}x|^2$. More precisely, we take
\begin{equation*}\begin{split}
\H^{\vsf n}&=\bigl\{(s,x)\msb\in\msb\R\msb\times\ssb\R^{\vsf n}:s^2\!-\ssb|x|^2\msb=\msb1\vsf\bigr\}\\
&=\bigl\{(s,x)\msb\in\msb\R\msb\times\msb\R^{\vsf n}:(s,x)\msb=\msb(\cosh r\vsf,\vsb{(\sinh r)}\msf\omega)\text{\hspace{2mm} where\hspace{2mm}}r\ssb\geq\ssb 0\ssf,\,\omega\msb\in\msb\mathbb S^{\vsf n-1}\vsf\bigr\}\ssf,
\end{split}
\end{equation*}
with metric
%taking as coordinates de variables $r\in [0,\infty)$ and $\omega\in \ms^{n-1}$, we have that the metric on $\hh^n$ is given by
 $g_{\mathbb H^n}=\mathrm{d}r^2+\sinh^2 r\,\mathrm{d}\omega^2,$
where $\mathrm{d}\omega^2$ is metric on {the hypersphere} $\mathbb S^{n-1}$.   
% Notice that the Lorentzian metric induces an internal product that we denote by $[\cdot, \cdot]$. More precisely,
%$$[(s_1,x_1), (s_2,x_2)]=s_1 s_2-x_1\cdot x_2.$$
Notice that via a stereographic projection, one obtains the half-space model
$$\H^n=\{(x_1,\ldots,x_n) {\in \R^n}: x_n>0\},$$
with the metric
$g_{\mathbb{H}^n}=\frac{\mathrm{d}x_1^2+\ldots+\mathrm{d}x_n^2}{x_n^2}.$
By choosing coordinates $x=(\tilde x,x_n)$, $\tilde x=(x_1,\ldots,x_{n-1})$, we denote the volume element by
$$\mathrm{d}V_{\mathbb H^n}(x){:}=(x_n)^{{-}n}\;\mathrm{d}\tilde x \;\mathrm{d}x_n.$$
The fact that we consider for simplicity the real case plays no role here except for simplicity of the presentation and all the formulas extend to {\sl all} hyperbolic spaces, i.e., any rank {one} Riemannian symmetric spaces of noncompact type. 

\subsection{Fractional Laplacian on hyperbolic spaces}
Under the parametrization of Sect.~\ref{sect:real_Hn} the Laplace Beltrami operator is given by
\begin{equation}
\label{eq:Laplace-Beltrami}\Delta_{\H^n} =x_n^2\Delta -(n-2)x_n\partial_n.\end{equation}
Here $\Delta$ denotes the Euclidean Laplacian in coordinates $x_1,\ldots,x_n {\in \R^n}$ {and $\partial_n=\frac{\partial}{\partial x_n}$ is the partial derivative with respect to $x_n$.}
Before defining its fractional representation, let us first recall in $\R^n$ the Fourier transform{, which} is given by
$$\hat{f}(\xi)=\int_{\R^n} f(x)e^{-2\pi i x\cdot \xi} \;\mathrm{d}x {, \;\; \xi \in \R^n}.$$
Notice that the functions  ${h_\xi(x):=}e^{-2\pi i x\cdot \xi}$ are generalized (in the sense that they do not belong to $L^2$) eigenfunctions of the Laplacian associated to the eigenvalue $-4\pi^2 |\xi|^2$. Moreover, the following inversion formula holds
$$f(x)=\int_{\R^n} \hat{f}(\xi)e^{2\pi i x\cdot \xi} \;\mathrm{d}\xi{, \;\; x \in \R^n}.$$

Similarly, in $\H^n$ we consider the generalized eigenfunctions of the Laplace Beltrami operator:
$$h_{\lambda,\theta}(x)=[x,(1,\theta)]^{i\lambda-\frac{n-1}{2}}, \quad x\in \H^n,$$
where $\lambda\in \R$ and $\theta \in \mathbb S^{n-1}$ and we denoted $[\cdot,\cdot]$ the Lorentzien inner product, i.e.,
$
[(s,x),(\tilde s,\tilde x)]=s\,\tilde s - x \tilde x.
$
Notice that 
$$\Delta_{\H^n} h_{\lambda,\theta}=-\left(\lambda^2+\tfrac{(n-1)^2}{4}\right)h_{\lambda,\theta}.$$
In analogy with the definition in $\R^n$,
the Fourier transform can be defined as$$\hat{f}(\lambda, \theta)=\int_{\H^n} f(x)\,h_{\lambda,\theta}(x)\,\mathrm{d}x,$$
for $\lambda\in\R$, ${\theta}\in \mathbb S^{n-1}$. Moreover, the following inversion formula holds\,:
$$
f(x)=\int_{-\infty}^{\infty}\int_{\mathbb{S}^{n-1}}\bar{h}_{\lambda,\theta}(x)\hat{f}(\lambda,\theta)\,\frac{\mathrm{d}\theta\,\mathrm{d}\lambda}{|{\mathbf{c}}(\lambda)|^2}\,,
$$
where {the Plancherel density}
$$
\frac1{|{\mathbf{c}}(\lambda)|^2}
={\const}\,
\frac{\bigl|\Gamma\bigl(i\vsf\lambda\ssb+\ssb\frac{n\vsf-1}2\bigr)\bigr|^2}
{\bigl|\Gamma(i\vsf\lambda)\bigr|^2}
{\asymp\begin{cases}
\,\lambda^2
&\text{if \ssf$|\lambda|\msb\le\msb1$}\\
\msf|\lambda|^{n-1}
&\text{if \ssf$|\lambda|\msb\ge\msb1$}\\
\end{cases}}
$$
{involves the Harish-Chandra $\mathbf{c}$-function}.
It is easy to check by integration by parts for compactly supported functions, and consequently for  every $f\in L^2(\H^n)$, that
\begin{align*}
\widehat{\Delta_{\H^n}f}(\lambda,\theta)
%&\int_{\mathbb{H}^n}f(x)h_{\lambda,\theta}(x)\,\mathrm{d}x\\
=\int_{\mathbb{H}^n}f(y)\,\Delta_{\H^n}h_{\lambda,\theta}(y)\,\mathrm{d}y
=-\ssf\bigl(\lambda^2\!+\ssb\tfrac{(n-1)^2}4\bigr)\msf\hat{f}(\lambda, \theta).\end{align*}
Having in mind the theory of spherically symmetric multipliers,
we define the \textit{fractional Laplacian} $(-\Delta_{\mathbb{H}^n})^{\alpha/2}$, with \ssf$0\ssb<\ssb\alpha\ssb<\ssb2$\ssf, on the hyperbolic space $\mathbb{H}^n$  by
$$
\reallywidehat{(-\Delta_{\mathbb{H}^n})^{\frac\alpha2}\ssb f\vsf}(\lambda,\theta)=\bigl(\lambda^2\!+\ssb\tfrac{(n-1)^2}4\bigr)^{\ssb\frac\alpha2}\hat{f}(\lambda,\theta)\,.
$$

Hence we can write the Schr\"odinger solution as
$$
\widehat{u}(\lambda\vsf,\vsb\theta\ssf;\vsb t)
=\reallywidehat{e^{\ssf i\ssf t\ssf(-\Delta_{\mathbb{H}^n})^{\ssb\frac\alpha2}}\ssb u_0}(\lambda\vsf,\vsb\theta\ssf)
=e^{\ssf i\ssf t\ssf\left(\smash{\lambda^2+\frac{(n-1)^2}4}\vphantom{\frac oo}\right)^{\ssb\frac\alpha2}}\widehat{u_0}(\lambda\vsf,\vsb\theta\ssf)\,.
$$

%{\color{darkgreen}
%---------------------------
%manuscrit retravaill\'e \`a partir d'ici en 2021
%---------------------------
%}

The (mild) solution to \eqref{eq:NLSgeneral} on \ssf$\Hn$ is given by
\begin{equation}\label{eq:SolutionHn}
u({x,t})
=\ssf\underbrace{
e^{\hspace{.4mm}i\hspace{.4mm}t\ssf(-\Delta_{\ssf x})^{\ssb\frac\alpha2}}\msb u_0\ssf(x)
\vphantom{\int_|}}_{\text{homogeneous}}\ssf
{-\,i}\underbrace{\int_{\,0}^{\,t}\!
e^{\hspace{.4mm}i\hspace{.4mm}(t-s)\ssf(-\Delta_{\ssf x})^{\ssb\frac\alpha2}}\msb F({x,s})\,\mathrm{d}s
\vphantom{\int_|}}_{\text{inhomogeneous}}\ssf .
\end{equation}

Our estimates and their proofs extend straightforwardly
to all {Riemannian symmetric spaces $G/K$ of noncompact type of rank $1$ (where $G$ is a noncompact semisimple connected Lie group with finite centre and $K$ is its maximal compact subgroup), i.e., to the four} hyperbolic spaces: {$\H^{\vsf n}\hspace{-1mm}=\msb\text{\rm H}^{\vsf N\!}(\R)$, $\text{\rm H}^{\vsf N\!}(\C)$, $\text{\rm H}^{\vsf N\!}(\mathds{H})$, $\text{\rm H}^{\ssf 2\vsb}(\O)$, where $\mathds{H}$ (resp. $\O$) denotes the field of quaternions (resp. octonions)} in Table~\ref{tab:SymmSpacesRank1} and more generally to all Damek--Ricci spaces.

\vspace{2mm}

\begin{table}[h!]
\centerline{\begin{tabular}{|c|c|c|c|c|}
\hline
\hspace{10mm}
&\,$\H^{\vsf n}\hspace{-1mm}=\msb\text{\rm H}^{\vsf N\!}(\R)$
&\hspace{5mm}$\text{\rm H}^{\vsf N\!}(\C)$\hspace{5mm}
&\hspace{5mm}$\text{\rm H}^{\vsf N\!}(\mathds{H})$\hspace{5mm}
&\hspace{5mm}$\text{\rm H}^{\ssf 2\vsb}(\O)$\hspace{5mm}$\vphantom{\frac||}$\\
\hline
$G$& $\mathrm{SO}(n,1)^\circ$& $\mathrm{SU}(n,1)$& $\mathrm{Sp}(n,1)$& $\mathrm{F}_{4(-20)}$\\
\hline
$K$& $\mathrm{SO}(n)$& $\mathrm{S}[\mathrm{U}(n)\times\mathrm{U}(1)]$& $\mathrm{Sp}(n)$& $\mathrm{Spin}(9)$\\
\hline
$n$&$N$&$2\ssf N$&$4\ssf N$&$16\vphantom{\frac||}$
\\\hline
$\rho$&$\frac{N\vsb-1}2$&$N$&$2\ssf N\hspace{-1mm}+\!1$&$11\vphantom{\frac||}$
\\\hline
\end{tabular}}
\vspace{0.5cm}
\caption{{Symmetric spaces of rank 1. Here $\mathrm{F}_{4(-20)}$ is an exponential noncompact Lie group, $\mathrm{Spin}(9)$ is the spinor group of dimension $9$, $\mathrm{SO}(n,1)^\circ$ is the} {connected} {component} {of} {the identity} {in} {the orthogonal} {Lorentz group $\mathrm{SO}(n,1)$}, $\mathrm{SO}(n)$ is the special orthogonal group, $\mathrm{U}(n)$ is the unitary group, and $\mathrm{SU}(n)$ is the special unitary group}
    \label{tab:SymmSpacesRank1}
\end{table}
\vspace{1mm}
%$($i.e., to all Riemannian symmetric spaces of noncompact type and of rank \ssf$1\vsf)$\ssf,

\subsection{Asymptotic expansions of the spherical function}

{We now state several well-known results about asymptotic expansions of the spherical function $\phi_\lambda$ which will be used throughout the proof of the kernel estimates in the next Section~\ref{sect:kernel_analysis}. To facilitate the reading, we collect those formulae into several lemmata. }

{We start the following integral formula due to Harish--Chandra, which holds for all $r >0$\,} (see for instance \cite[Prop.~3.1.4]{GangolliVaradarajan1988}, \cite[Ch.~IV, Theorem 4.3]{Helgason1984} {or \cite[p.~40]{Koornwinder1984}}). We have  
\begin{equation}\label{eq:HCformula}\begin{aligned}
\phi_\lambda(r)
=\!\int_K\!\hspace{1mm}e^{\ssf(\vsf i\vsf\lambda\ssf-\vsf\rho)\ssf H(a_{\vsf\vsf r}k)}\;\mathrm{d}k
&=\!\int_K\!\hspace{1mm}e^{\ssf-\ssf(\vsf i\vsf\lambda\ssf+\vsf\rho)\ssf H(a_{\vsf-r}k)}\ssf\;\mathrm{d}k\\
&{=\tfrac{\Gamma(\ssb\frac n2\ssb)}{\sqrt{\pi}\,\Gamma(\ssb\frac{n\vsf-1}2\ssb)}\ssf\int_0^{\ssf\pi}\ssb(\sin\theta)^{n-2}\msf
(\cosh r\ssb\pm\sinh r\cos\theta\vsf)^{-\rho\ssf\mp\ssf i\vsf\lambda}\,\mathrm{d}\theta},
\end{aligned}\end{equation}
where 
%\smallskip
%[In order to answer Guenda's question, I have added an explicit expression for the spherical functions, which follows from
{$
H(a_{\ssf r}\ssf k_\theta)=\log\ssf(\cosh r\ssb+\vsb\sinh r\cos\theta\vsf)\, \in [-r,r],
$
with
$$
a_{\ssf r}=\left(\,\begin{matrix}
\cosh r&0&0&\sinh r\\0&1&0&0\\0&0&I_{n-2}&0\\\sinh r&0&0&\cosh r
\end{matrix}\,\right)
\quad\text{and}\quad
k_\theta=\left(\,\begin{matrix}
1&0&0&0\\0&\cos\theta&0&-\sin\theta\\0&0&I_{n-2}&0\\0&\sin\theta&0&\cos\theta
\end{matrix}\,\right)
$$
in the hyperboloid model of \ssf$\H^{\vsf n}$.}
\smallskip

The following result is a large scale asymptotic (see for instance \cite[Formula (2.17)]{Koornwinder1984}). 

\begin{lemma}[Large scale asymptotic]\label{largescaleExp}
The following large scale con\-ver\-ging expansion
of the spherical functions holds: Let $r_0 >0$ be fixed. Then for every $r>r_0$, we have that
{\begin{equation}\label{eq:HCexpansion}
\phi_\lambda(r)
=\bc(\lambda)\,\Phi_\lambda(r)+\bc(-\lambda)\,\Phi_{-\lambda}(r)
\qquad\forall\;\lambda\!\in\!\C\!\smallsetminus\!i\ssf\Z\ssf,
\end{equation}
holds, where}
\begin{equation}\label{eq:Philambda}\begin{aligned}
\Phi_{\lambda}(r)&\textstyle
\,=(\ssf2\sinh r)^{\ssf i\lambda-\rho}\,{}_2F_1\bigl(
\frac\rho2\!-\!i\ssf\frac\lambda2,-\frac{\rho-1}2\!-\!i\ssf\frac\lambda2\ssf;
1\!-\!i\ssf\lambda\ssf;-\sinh^{-2}\ssb r\bigr)\\
&=\,(\ssf2\sinh r)^{-\rho}\,e^{\ssf i\ssf\lambda\ssf r}\,
\sum\nolimits_{\ssf\ell=0}^{+\infty}\ssf
\Gamma_{\ssb\ell\ssf}(\lambda)\,e^{-2\ssf\ell\ssf r}\,.
\end{aligned}\end{equation}

The coefficients \ssf$\Gamma_{\ssb\ell\ssf}(\lambda)$
\ssf in this expansion are
inhomogeneous symbols of order $0$
on \ssf$\R\ssf$.
More precisely,
there are constants \msf$\gamma\!\ge\!0$
\ssf and \ssf$C_j\!\ge\msb0$ $(j\!\in\!\N)$, such that
\begin{equation}\label{eq:EstimateGamma}
\bigl|\ssf\bigl(\tfrac\partial{\partial\lambda}\bigr)^j\msf
\Gamma_{\ssb\ell\ssf}(\lambda)\ssf\bigr|
\le C_j\,\ell^{\msf\gamma}\msf
(\ssf1\!+\ssb|\lambda|\ssf)^{-\vsf j}
\qquad\forall\;\ell\!\in\!\N^*,\,\forall\;\lambda\!\in\!\R\,.
\end{equation}
\end{lemma}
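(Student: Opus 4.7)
The plan is to start from the radial part of the Laplace--Beltrami operator acting on $K$-bi-invariant functions: using the Harish--Chandra integral \eqref{eq:HCformula}, one checks that $\phi_\lambda$ solves the Jacobi-type ODE
\begin{equation*}
\phi_\lambda''(r) + (n-1)\coth r\,\phi_\lambda'(r) + \bigl(\lambda^2 + \rho^2\bigr)\phi_\lambda(r) = 0,
\end{equation*}
with normalization $\phi_\lambda(0)=1$, $\phi_\lambda'(0)=0$. The substitution $u = -\sinh^{-2}\!r$ converts this into Gauss's hypergeometric equation, whose two-dimensional solution space has Frobenius exponents $e^{(\pm i\lambda - \rho)r}$ at $r\to+\infty$.

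Next I would look for a Frobenius solution of the form
\begin{equation*}
\Phi_\lambda(r) = (2\sinh r)^{-\rho}\,e^{i\lambda r}\sum_{\ell\ge 0}\Gamma_\ell(\lambda)\,e^{-2\ell r},\qquad \Gamma_0(\lambda)=1.
\end{equation*}
Expanding $\coth r = 1 + 2\sum_{k\ge 1}e^{-2kr}$ and inserting the series into the ODE yields, after matching powers of $e^{-2\ell r}$, a two-term recursion of the schematic form
\begin{equation*}
\ell\,(\ell - i\lambda)\,\Gamma_\ell(\lambda) = \sum_{k=1}^{\ell} p_k(\ell,\lambda)\,\Gamma_{\ell-k}(\lambda),
\end{equation*}
where the $p_k$ are polynomials of bounded degree in $\ell$ and linear in $\lambda$. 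This uniquely determines $\Gamma_\ell(\lambda)$ whenever $i\lambda\notin\{1,2,\dots\}$, giving a solution $\Phi_\lambda$ for $\lambda\in\C\setminus i\Z$. Repackaging the series in the variable $u=-\sinh^{-2}\!r$ recovers the $_2F_1$ expression in \eqref{eq:Philambda}. Since $\Phi_\lambda$ and $\Phi_{-\lambda}$ have distinct exponential growth at infinity, they are linearly independent for $\lambda\notin i\Z$, so $\phi_\lambda$ must equal $c_+(\lambda)\Phi_\lambda + c_-(\lambda)\Phi_{-\lambda}$. The coefficients $c_\pm(\lambda)=\bc(\pm\lambda)$ are then read off by comparing $\phi_\lambda(r)\sim \bc(\lambda)\,e^{(i\lambda-\rho)r}$ as $r\to+\infty$, an asymptotic that follows from the Harish--Chandra integral by Laplace/stationary-phase analysis of the integrand on $K$.

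The main obstacle, and the only non-routine part, is the symbol estimate \eqref{eq:EstimateGamma} for the coefficients $\Gamma_\ell(\lambda)$. The bound $|\Gamma_\ell(\lambda)|\lesssim \ell^\gamma$ for real $\lambda$ follows by induction on $\ell$: one splits into the regimes $|\lambda|\le\ell/2$, where $|\ell-i\lambda|\asymp\ell$ absorbs the polynomial growth of the numerators, and $|\lambda|\ge\ell/2$, where an additional factor $(1+|\lambda|)^{-1}$ can be extracted from $(\ell-i\lambda)^{-1}$. To obtain the derivative bounds, one differentiates the recursion $j$ times by Leibniz and runs a nested induction on $j$ and $\ell$: each derivative of $(\ell-i\lambda)^{-1}$ produces an extra factor of $(\ell-i\lambda)^{-1}$, contributing either $\ell^{-1}$ or $(1+|\lambda|)^{-1}$ depending on the regime, and these are exactly the factors needed to upgrade the estimate to $|(\partial_\lambda)^j \Gamma_\ell(\lambda)|\le C_j\,\ell^\gamma (1+|\lambda|)^{-j}$. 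The bookkeeping of the polynomial coefficients generated by repeated differentiation is the delicate point, but it closes thanks to the uniform control on the degrees of the $p_k$.
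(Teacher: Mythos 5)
Your argument is essentially the classical Harish--Chandra/Frobenius proof, and it is sound in outline; note, however, that the paper does not prove this lemma at all -- it imports the expansion \eqref{eq:HCexpansion}--\eqref{eq:Philambda} from Koornwinder's survey (formula (2.17) there) and the coefficient bounds \eqref{eq:EstimateGamma} from \cite[Lem.~1]{AnkerPierfeliceVallarino2015}, which in fact yields the slightly stronger statement recorded in the Remark, namely that $\Gamma_0\equiv1$ while the $\Gamma_\ell$ with $\ell\ge1$ are inhomogeneous symbols of order $-1$. So what you have written is a self-contained reconstruction of the argument behind the citations: radial ODE, Frobenius solutions $\Phi_{\pm\lambda}$ at infinity with the two-term recursion $\ell\,(\ell-i\lambda)\,\Gamma_\ell=\sum_k p_k\,\Gamma_{\ell-k}$, and connection coefficients $\bc(\pm\lambda)$.

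Two steps deserve tightening. First, for real $\lambda$ the two solutions $\Phi_\lambda$ and $\Phi_{-\lambda}$ both decay like $e^{-\rho r}$, so the asymptotics $\phi_\lambda(r)\sim\bc(\lambda)\,e^{(i\lambda-\rho)r}$ does not by itself identify both connection coefficients; the standard remedy is to establish $\phi_\lambda=\bc(\lambda)\Phi_\lambda+\bc(-\lambda)\Phi_{-\lambda}$ first for $\lambda$ in a half-plane (where one exponent strictly dominates, or directly from the Harish--Chandra integral for $\Im\lambda<0$) and then extend to $\C\setminus i\ssf\Z$ by meromorphy of both sides in $\lambda$. Second, in the derivative bounds your nested induction must be organized so that the exponent $\gamma$ in $\ell^{\msf\gamma}$ stays independent of $j$ (only the constants $C_j$ may grow); this works because each $\partial_\lambda$ falling on $(\ell-i\lambda)^{-1}$ produces a factor bounded by $\min\{\ell^{-1},(1+|\lambda|)^{-1}\}$ and the $p_k$ are at most linear in $\lambda$, but it should be stated explicitly rather than absorbed into ``bookkeeping''; alternatively one can get the $\lambda$-derivatives from the $j=0$ bound via Cauchy's formula on discs of radius comparable to $1+|\lambda|$ (respectively $\ell$), using that the $\Gamma_\ell$ are holomorphic away from the poles at $i\ssf\N^*$.
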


\begin{remark}
Notice that we actually have \ssf$\Gamma_{\ssb0}\ssb\equiv\ssb1$\ssf,
while the other \ssf$\Gamma_{\ssb\ell}$ are inhomogeneous symbols of order $-1$ (see for instance \cite[Lem.~1]{AnkerPierfeliceVallarino2015}).
\end{remark}

The following small scale asymptotic is due to Stanton and Tomas \cite[Thm.~2.1]{StantonTomas1978}. 

\smallskip
\begin{lemma}[Small scale asymptotics]\label{smallscaleExp}
The following small scale expansion holds:
Let $r_1>0$ be fixed. Then for every $0\le r<r_1$, we have
\begin{equation}\label{eq:StantonTomas}
\phi_\lambda(r)
\sim\sum\nolimits_{\vsf m=0}^{\ssf+\infty}\ssf
r^{\ssf2\ssf m}\,\tilde{b}_{\ssf m\vsb}(r)\,j_{\vsf m\vsf+\frac n2-1}(\lambda\ssf r)\,,
\end{equation}
where
\begin{eqnarray*}
j_\nu(z)
=\tfrac{\Gamma(\vsf\nu\ssf+\vsf1)\vphantom{\frac00}}
{\sqrt{\pi\ssf}\,\Gamma(\vsf\nu\ssf+\frac12)\vphantom{\frac|0}}
\int_{-1}^{\ssf+1}\hspace{-1mm}\,
(1\!-\msb u^2\vsf)^{\vsf\nu\vsf-\frac12}\msf
e^{\msf i\ssf z\ssf u} \; \mathrm{d}u
&=&\sum\nolimits_{\ssf m=0}^{\ssf+\infty}\,
\tfrac{(-1)^m\,\Gamma(\nu\vsf+1)\vphantom{\frac0|}}
{m\ssf!\;\Gamma(m\vsf+\vsf\nu\vsf+1)\vphantom{\frac|0}}\,
\bigl(\tfrac z2\bigr)^{\ssb2\ssf m}\\
&=& \vsb
\Gamma(\nu\msb+\!1)\ssb\left(\frac z2\right)^{\msb-\nu}\msb J_\nu(z)\ssf, 
\end{eqnarray*}
is a modified Bessel function
and
\msf$\tilde{b}_{\ssf0\vsb}(r)\msb=\msb
\bigl(\tfrac{r\vphantom{|}}{\sinh r}\bigr)^{\msb\frac{n-1}2}$
\ssf is the Jacobian of the exponential map, raised to the power \ssf$-\frac12$\ssf.
More precisely, for every $M\hspace{-1mm}\in\msb\N^*$\ssb,
\begin{equation*}
\phi_\lambda(r)\ssf=\ssf\sum\nolimits_{\ssf0\le m<M}\ssb
r^{\ssf2\ssf m}\,\tilde{b}_{\ssf m\vsb}(r)\,j_{\vsf m\vsf+\frac n2-1}(\lambda\ssf r)\ssf
+\msf r^{\ssf2\vsf M}\widetilde{R}_{\vsf M}(\lambda,r)\,,
\end{equation*}
where the coefficients $\tilde{b}_{\ssf m\vsb}(r)$ are smooth even functions and
\begin{equation*}
|\ssf\widetilde{R}_{\vsf M}(\lambda,r)\ssf|
\le\widetilde{C}_{\vsb M}\,(1\!+\msb|\lambda\ssf r|\vsf)^{-\frac{n-1}2-M}\,.
\end{equation*}
\end{lemma}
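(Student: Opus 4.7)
The plan is to compare the radial ODE satisfied by $\phi_\lambda$ with the Euclidean Bessel ODE. As a bi-$K$-invariant eigenfunction of the Laplace--Beltrami operator, $\phi_\lambda$ satisfies the Jacobi-type equation
\begin{equation*}
\phi_\lambda''(r) + (n-1)\coth(r)\,\phi_\lambda'(r) + (\lambda^2 + \rho^2)\,\phi_\lambda(r) = 0,\qquad \phi_\lambda(0) = 1,\;\; \phi_\lambda'(0) = 0.
\end{equation*}
I would first apply the Liouville substitution $\phi_\lambda(r) = \tilde b_0(r)\, u_\lambda(r)$ with $\tilde b_0(r) = (r/\sinh r)^{(n-1)/2}$, the unique choice (up to a multiplicative constant) that converts the hyperbolic drift $(n-1)\coth(r)$ into the Euclidean one $(n-1)/r$. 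A direct computation then gives $u_\lambda'' + \tfrac{n-1}{r}\, u_\lambda' + (\lambda^2 + V)\, u_\lambda = 0$ with smooth even potential $V(r) = \tfrac{(n-1)(n-3)}{4}\bigl(r^{-2} - (\sinh r)^{-2}\bigr)$ on $[0, r_1]$, the apparent singularity at $r = 0$ cancelling since $r^{-2} - (\sinh r)^{-2} = 1/3 + O(r^2)$.

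The unperturbed equation $v'' + (n-1)\, r^{-1} v' + \lambda^2 v = 0$ has $j_{n/2 - 1}(\lambda r)$ as its unique solution regular at $r = 0$ with $v(0) = 1$. Treating $V(r) u_\lambda$ as a source term and applying variation of parameters, $u_\lambda$ solves a Volterra integral equation whose Neumann series, after reorganization, takes the form $u_\lambda(r) = \sum_{m\ge 0} r^{2m} a_m(r)\, j_{m + n/2 - 1}(\lambda r)$ with smooth even coefficients $a_m$. The advance in the Bessel index at each step is driven by the standard recurrence $j_\nu'(z) = -\tfrac{z}{2(\nu+1)} j_{\nu+1}(z)$, while the extra factor $r^2$ at each level arises from the Euclidean Green's function together with the smoothness of $V$ near the origin. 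Setting $\tilde b_m = \tilde b_0\, a_m$ then yields the expansion \eqref{eq:StantonTomas}.

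The main obstacle --- and the only delicate point --- is the sharp remainder bound $|\widetilde R_M(\lambda, r)| \le \widetilde C_M (1 + |\lambda r|)^{-(n-1)/2 - M}$. After truncating at level $M$, $r^{2M}\widetilde R_M(\lambda, r)$ satisfies a Volterra equation with source proportional to $j_{M + n/2 - 1}(\lambda r)$ times a bounded smooth factor. The desired decay then follows from the uniform pointwise bound
\begin{equation*}
|j_\nu(z)| \lesssim (1 + |z|)^{-\nu - 1/2} \qquad (\nu \ge 0),
\end{equation*}
which is a consequence of the classical asymptotics $J_\nu(z) \sim \sqrt{2/(\pi z)}\,\cos(z - \nu\pi/2 - \pi/4)$ as $|z| \to \infty$, combined with $j_\nu(0) = 1$. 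Applied with $\nu = M + n/2 - 1$, this gives exactly $(1 + |\lambda r|)^{-M - (n-1)/2}$. The final subtlety is showing that the Volterra iteration preserves this rate without loss; this is handled by induction on $M$, using that integration against the bounded potential $V$ on the compact interval $[0, r_1]$ contributes only bounded constants, and that the Bessel recurrence matches the decay exactly at each iteration.
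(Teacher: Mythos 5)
The paper does not actually prove this lemma: it is imported verbatim from Stanton--Tomas \cite[Thm.~2.1]{StantonTomas1978} (with the refinement \eqref{eq:STI} taken from Ionescu), so the only benchmark is that classical argument, and your sketch follows essentially the same route. The parts you make explicit are correct: the substitution $\phi_\lambda=\tilde b_0\,u_\lambda$ with $\tilde b_0(r)=(r/\sinh r)^{(n-1)/2}$ does convert the drift $(n-1)\coth r$ into $(n-1)/r$, the $\rho^2$ cancels exactly and one is left with $u_\lambda''+\tfrac{n-1}{r}u_\lambda'+(\lambda^2+V)u_\lambda=0$, $V(r)=\tfrac{(n-1)(n-3)}{4}\bigl(r^{-2}-\sinh^{-2}r\bigr)$ smooth and even; $j_{n/2-1}(\lambda r)$ is indeed the normalized regular solution of the unperturbed equation, and the numerology $\nu+\tfrac12=M+\tfrac{n-1}{2}$ matches the stated remainder bound.

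The gap is at the step which is the actual content of the lemma. Variation of parameters for $\partial_r^2+\tfrac{n-1}{r}\partial_r+\lambda^2$ has a $\lambda$-dependent kernel (it involves both Bessel solutions of index $n/2-1$), so the Neumann iterates are iterated integrals whose ``coefficients'' depend on $\lambda$; the claim that this series ``after reorganization'' becomes $\sum_m r^{2m}a_m(r)\,j_{m+n/2-1}(\lambda r)$ with $a_m$ smooth, even and \emph{independent of} $\lambda$ is precisely what must be proved, and it does not follow from the recurrence $j_\nu'(z)=-\tfrac{z}{2(\nu+1)}j_{\nu+1}(z)$ alone. The Stanton--Tomas argument runs the other way: one posits the ansatz with undetermined coefficients, inserts it into the perturbed equation, and uses that recurrence to obtain a triangular system of first-order ODEs (equivalently explicit integral recursions) determining $a_{m+1}$ from $a_m$ with $a_0=1$; this is what delivers smoothness, evenness and $\lambda$-independence. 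Likewise, the remainder bound is not a consequence of the pointwise estimate $|j_\nu(z)|\lesssim(1+|z|)^{-\nu-1/2}$ plus ``the iteration loses nothing'': $r^{2M}\widetilde R_M$ solves an inhomogeneous Bessel-type equation, and one must estimate the corresponding Green's kernel uniformly in $\lambda$ on $[0,r_1]$ (or run a Gronwall argument) to check that the $s$-integration produces no extra powers of $|\lambda|$. So your outline identifies the right mechanism and the right reductions, but the two steps that make the lemma true are asserted rather than established; as written it is a plan, and the expedient course -- the one the paper takes -- is to cite Stanton--Tomas.
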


\begin{remark}
    Such asymptotics are closely related to the dual Abel transform. {Note that in} \cite{AnkerPierfelice2009, AnkerPierfeliceVallarino2011, AnkerPierfeliceVallarino2012, AnkerPierfeliceVallarino2015, AnkerPierfelice2014}, the inverse Abel transform was used instead of the asymptotic expansions we used in the present article.
\end{remark}

As noticed by Ionescu \cite[Prop.~A2.(b)]{Ionescu2000},
by combining \eqref{eq:StantonTomas} with the classical asymptotics
(see for instance \cite[10.17.3]{DLMF})
%and (\footnote{\,Specifically, \ssf$\beta_{\vsf m,\nu}\msb=\ssb
%\frac{(-1)^m\ssf2^{\ssf\nu-1/2}\ssf\Gamma(\nu\vsf+1)}{\sqrt{\pi\ssf}\ssf2^m\ssf m\ssf!}
%\prod_{\vsf j\vsf=1}^{\ssf k}\{\nu^{\ssf2}\msb-\ssb(j\msb-\msb\frac12)^2\}$\msf.}))
\begin{equation*}
j_\nu(z)\ssf\sim\msf
e^{\msf i\ssf z}\msf\sum\nolimits_{\ssf m=0}^{\ssf+\infty}
\beta_{\ssf m\vsf,\vsf\nu}\,(i\ssf z)^{-\vsf m-\vsf\nu-\frac12}
+\ssf e^{-\ssf i\ssf z}\msf\sum\nolimits_{\ssf m=0}^{\ssf+\infty}\ssf
\beta_{\ssf m\vsf,\vsf\nu}\,(\ssb-\msf i\ssf z)^{-\vsf m-\vsf\nu-\frac12}\,,
\end{equation*}
one obtains,
for every $M\hspace{-1mm}\in\msb\N^*$\ssb,
for every $\Lambda\msb>\msb0$\ssf,
for every \ssf$\lambda\!\in\msb\R^*$ \ssb
and for every \ssf$0\msb<\msb r\msb\le\!1$ \vsf
such that $|\lambda\ssf r|\ssb\ge\msb\Lambda$\ssf,
\begin{equation}\label{eq:STI}
\phi_\lambda(r)
=\vsf b_{\vsf M}(\lambda,r)\,e^{\msf i\ssf\lambda\msf r}\msb
+\vsb b_{\vsf M}(\vsb-\vsb\lambda,r)\,e^{-\ssf i\ssf\lambda\msf r}\msb
+\vsb R_{\ssf M}(\lambda,r)\,,
\end{equation}
where
\begin{equation}\label{eq:EstimateBM}
|\bigl(\tfrac\partial{\partial\lambda}\bigr)^{\vsb j}\msf b_{\vsf M}(\lambda,r)\vsf|
\le C_{\vsb M}\,|\lambda\ssf r|^{-\frac{n-1}2}\,|\lambda|^{-j}
\qquad\forall\;0\msb\le\ssb j\msb\le\msb M
\end{equation}
and
\begin{equation}\label{eq:EstimateRM}
|\ssf R_{\vsf M}(\lambda,r)\ssf|
\le\vsf C_{\vsb M}\,|\lambda\ssf r|^{-M\vsb-\frac{n-1}2}\msf.
\end{equation}

%\centerline{\color{red}[JP: In Subcase 2.2 on pages 19-20, we need also estimates of the $\lambda$-derivatives $R_M(\lambda,r)$]}

%\section{Strichartz estimates on hyperbolic spaces and Proof of Theorem \ref{thm:KernelEstimate}}
\section{Kernel analysis} \label{sect:kernel_analysis}
%{\color{blue}[As discussed with Yannick, we should create a section "Kernel estimates or analysis" and plug in all the kernel estimates results in, i.e., Lemmas~\ref{lem:phase1Hn}, \ref{lem:phase2Hn}, etc.]}
{This section is devoted to the results in the continuous setting. The analysis relies on fine kernel estimates and we observe the dichotomy between $\alpha \in (0,1)$ and $\alpha \in (1,2)$ which amounts to investigate the two different behaviours for the phase of the oscillatory integral.}
The integral expression in \eqref{eq:SolutionHn} involves the following propagator
\begin{equation*}
e^{\hspace{.4mm}i\hspace{.4mm}t\ssf(-\Delta)^{\alpha/2}}\msb f\ssf(x)
=f\ssb*k_{\ssf t}\ssf(x)
=\!\int_{\Hn}\hspace{-1mm}f(y)\,k_{\ssf t}({d\vsf(x,y)}\ssb)\,\mathrm{d}y,
\end{equation*}
%{\color{red}[GP: I would say that it is $\!\int_{\Hn}\hspace{-1mm}f(y)\,k_{\ssf t}\vsf(x,y)\ssb\,\mathrm{d}y$ in the setting of $\H^n$ (of course for the setting $\TQ$ we need to add $d(x,y)$]}\\
which is the radial convolution operator defined by
the inverse spherical Fourier transform
\begin{equation}\label{eq:KernelHn}
k_{\ssf t}(r)=\const\int_{-\infty}^{\vsf+\infty}\hspace{-1mm}
e^{\hspace{.4mm}i\hspace{.4mm}t\ssf(\lambda^2+\ssf\rho^2)^{\alpha/2}}
\msf\phi_\lambda(r)\,\tfrac{\mathrm{d}\lambda}{|\vsf\bc(\lambda)|^2}
\qquad\forall\;t\ssb\in\ssb\R^*,\,\forall\;r\ssb\ge\ssb0\ssf.
\end{equation}
Here \msf$\rho^{\ssf2}\!=\!\smash{\bigl(\frac{n\vsf-1}2\bigr)^{\ssb2}}$
\ssf is the bottom of the \ssf$L^2$ spectrum of \ssf$-\Delta$ \ssf on \ssf$\Hn$.
Notice that, in comparison with the Hankel transform
(i.e., the Fourier transform of radial functions in \ssf$\Rn$),
the modified Bessel functions are replaced in \eqref{eq:KernelHn}
by the spherical functions \ssf$\phi_\lambda(r)$
and the Plancherel density \ssf$\lambda^{n-1}$
by \ssf$|\bc(\lambda)|^{-2}$,
where
\begin{equation*}%\label{cfunction}
\bc(\lambda)=\ssf\tfrac{\Gamma(2\rho)}{\Gamma(\rho)}\ssf
\tfrac{\Gamma(i\lambda)}{\Gamma(i\lambda+\rho)}\msf.
\end{equation*}

\begin{remark}
Notice that \,$|\phi_\lambda(r)|\msb\le\ssb\phi_0(r)\msb\asymp\msb(1\!+\ssb r)\msf e^{-\rho\ssf r}$ \ssf for every \ssf$\lambda\msb\in\msb\R$ \ssf and \ssf$r\msb\ge\msb0$\ssf.
Moreover, one should (often) factorize
\ssf$\bc(\lambda)^{-1}\!=\ssb\lambda\ssf
\bigl(\vsf i\ssf\tfrac{\Gamma(\rho)}{\Gamma(2\rho)}\ssf
\tfrac{\Gamma(i\lambda\vsf+\rho)}{\Gamma(i\lambda\vsf+1)}\vsf\bigr)$
and use the fact that the parenthesis is an inhomogeneous symbol on $\R$ of order $\frac{n-3}2$\vsf. 
\end{remark}

More generally we shall consider the operator 
\msf$(-\Delta_x)^{-\ssf\sigma/2}\msf
e^{\hspace{.4mm}i\hspace{.4mm}t\ssf(-\Delta_x)^{\vsf\alpha\vsb/2}}$ {for an additional smoothness}
{$\sigma\msb\in\msb\C$ with $\Re\sigma\msb\ge\ssb0$\ssf,} and its kernel
\begin{equation}\label{eq:KernelSigma}
k_{\ssf t}^{\ssf\sigma}\ssb(r)=\ssf\const\int_{-\infty}^{\vsf+\infty}\hspace{-1mm}
(\lambda^{\vsb2}\!+\msb\rho^{\ssf2}\vsf)^{-\frac\sigma2}\,
e^{\hspace{.4mm}i\hspace{.4mm}t\hspace{.4mm}
(\lambda^{\vsb2}+\ssf\rho^{\vsf2}\vsf)^{\vsf\alpha\vsb/2}}
\msf\phi_\lambda(r)\,\tfrac{\mathrm{d}\lambda}{|\vsf\bc(\lambda)|^2}\;.
\end{equation}
This will lead us to analyze oscillatory integrals
\begin{equation*}%\label{OI}
\int_{-\infty}^{\vsf+\infty}\hspace{-1mm}a\vsf(\lambda)\,
e^{\hspace{.4mm}i\hspace{.4mm}t\msf\psi\vsf(\lambda)} \,\mathrm{d}\lambda
\end{equation*}
involving the phase
\begin{equation}\label{eq:phaseHn}
\psi(\lambda)=\psi_{R}(\lambda)=
(\lambda^{\vsb2}\!+\msb\rho^{\ssf2}\vsf)^{\alpha/2}\ssb-\ssb R\msf\lambda\,,
\end{equation}
with \msf$t\msb>\msb0$ \ssf and \ssf$R\ssb=\ssb\frac rt\msb\ge\ssb0$\ssf, {and} {amplitudes} {$a(\lambda)$} {involving $(\lambda^2+\rho^2)^{-\sigma/2}$ 
%{\color{darkgreen}\cancel{\varphi_\lambda(\cdot)}}$ 
and the $\bc$-function}.

%\centerline{\color{darkgreen}
%[JP: the spherical function produces $-R\lambda$ in the phase]
}

Without loss of generality, we may assume that
\msf$t\msb>\msb0$\ssf.
%\medskip
The function $\psi$ will be the phase of the oscillatory integral associated to the propagator. As a consequence, the following technical lemmata are the basis of the kernel analysis (see Section~\ref{sect:Proof}). 

\begin{lemma}[Phase for $1<\alpha<2$]\label{lem:phase1Hn}
Let \msf$1\!<\msb\alpha\msb<\msb2$\ssf.
Then \eqref{eq:phaseHn} has a single stationary point \msf$\lambda_1$\vsf,
which is nonnegative and comparable to
\vspace{-1mm}
\begin{equation*}\begin{cases}
\,R
&\text{if }\,R\msb\le\!1\ssf,\\
\,R^{\vsf\frac 1{\alpha\vsf-1}}
&\text{if }\,R\msb\ge\!1\ssf.\\
\end{cases}\end{equation*}

Moreover, {the following properties hold\,$:$}

{\rm(i)}
\,$\psi^{\msf\prime\prime\ssb}(\lambda)$\ssf,
which is positive and comparable to 
\,$(\vsf|\lambda|\ssb+\msb1)^{-(2-\alpha)}$\ssf,
is an inhomogeneous symbol of order \,$-\ssf(2\ssb-\ssb\alpha)$ on \msf$\R$\ssf.

{\rm(ii)}
\,{Assume that \,$R\ssb>\ssb0$\ssf.
Then,} for any fixed \,$0\msb<\msb\beta\msb<\!1$\vsf,
\,$|\ssf\psi^{\msf\prime\vsb}(\lambda)\vsf|$ \ssf
is comparable to \msf$\tfrac{|\lambda|\vsf+\ssf\lambda_1}
{(\vsf|\lambda|\vsf+\ssf\lambda_1\vsb+\vsf1)^{\vsf2-\alpha}}$ \ssf
when \,$\lambda\msb\in\ssb{\R\msb\smallsetminus}\msb
(\vsf\beta\vsf\lambda_1,\vsb\beta^{-1}\ssb\lambda_1)\ssf$.
\end{lemma}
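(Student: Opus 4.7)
The plan is to reduce everything to direct computation of $\psi''$ and of the multiplier
\[
f(\lambda) := \alpha\lambda(\lambda^2 + \rho^2)^{\alpha/2 - 1} = \psi'(\lambda) + R,
\]
from which monotonicity gives existence and uniqueness of $\lambda_1$, and the two asymptotic regimes of $f$ give both its size and the properties of $\psi''$.

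First I would compute
\[
\psi''(\lambda) \,=\, \alpha (\lambda^2 + \rho^2)^{\alpha/2 - 2} \bigl[(\alpha - 1)\lambda^2 + \rho^2\bigr] \,=\, \alpha (\lambda^2+\rho^2)^{\alpha/2-1}\Bigl[(\alpha-1)+\tfrac{(2-\alpha)\rho^2}{\lambda^2+\rho^2}\Bigr].
\]
Both factors on the right are strictly positive for $1 < \alpha < 2$, so $\psi'' > 0$ on $\R$; hence $f$ is strictly increasing with $f(0) = 0$ and $|f(\lambda)| \to \infty$ as $|\lambda| \to \infty$, producing a unique zero $\lambda_1$ of $\psi'$, necessarily nonnegative since $R \ge 0$. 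The elementary estimate $f(\lambda) \asymp \lambda/(1 + \lambda)^{2 - \alpha}$ for $\lambda \ge 0$ behaves like $\lambda$ for $\lambda \le 1$ and like $\lambda^{\alpha - 1}$ for $\lambda \ge 1$; solving $f(\lambda_1) \asymp R$ in each regime then yields $\lambda_1 \asymp R$ when $R \le 1$ and $\lambda_1 \asymp R^{1/(\alpha - 1)}$ when $R \ge 1$.

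For (i), the factorization above presents $\psi''$ as the product of $\alpha (\lambda^2+\rho^2)^{\alpha/2-1}$, a smooth positive function asymptotic to $|\lambda|^{\alpha-2}$ at infinity which is easily verified by induction to be an inhomogeneous symbol of order $\alpha-2$, times a bounded smooth rational function of $\lambda^2$ with derivatives decaying at all orders, namely a symbol of order $0$. The product is positive and comparable to $(1 + |\lambda|)^{-(2-\alpha)}$, with the uniform lower bound on the bracket by $(\alpha-1) > 0$ supplying the matching lower estimate. For (ii), I would exploit the identity $\psi'(\lambda) = f(\lambda) - f(\lambda_1)$ together with the key dilation estimate $f(\lambda) - f(\beta\lambda) \asymp f(\lambda)$ for $\lambda \ge 0$, valid for any fixed $\beta \in (0,1)$ and verified separately for $\lambda \le 1$ (where $f$ is essentially linear, so the difference is $\asymp (1-\beta)\lambda$) and for $\lambda \ge 1$ (where $f(\lambda) \asymp \lambda^{\alpha-1}$ and $1 - \beta^{\alpha-1} > 0$). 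Splitting the admissible set $\R \setminus (\beta\lambda_1, \beta^{-1}\lambda_1)$ into the three cases $\lambda \ge \beta^{-1}\lambda_1$, $0 \le \lambda \le \beta\lambda_1$, and $\lambda < 0$, the first two give $|\psi'(\lambda)| \asymp f(\max(\lambda,\lambda_1))$ via the dilation estimate, while in the third $\psi'(\lambda) < 0$ yields $|\psi'(\lambda)| = |f(\lambda)| + f(\lambda_1)$ directly. In every case the right-hand side is comparable to $(|\lambda| + \lambda_1)/(1 + |\lambda| + \lambda_1)^{2 - \alpha}$, via the elementary comparison $h(a) + h(b) \asymp h(a+b)$ for $h(x) := x/(1+x)^{2-\alpha}$ and $a, b \ge 0$, which follows from the monotonicity of $h$ and the subadditivity of $x \mapsto x^{\alpha - 1}$ on $[0,\infty)$.

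The main technical obstacle is ensuring uniformity of the implicit constants across the threshold $\lambda \sim 1$: the dilation estimate, the final comparability, and the two-regime description of $\lambda_1$ all need careful matching at the transition between the linear regime near $0$ and the power regime at infinity, with particular attention to the case where $\lambda_1$ itself sits near this threshold. The work is essentially calculus, but the sharp form required to feed into the subsequent kernel estimates in Section~\ref{sect:kernel_analysis} makes it the delicate part of the argument.
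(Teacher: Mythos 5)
Your proposal is correct and follows essentially the same route as the paper: monotonicity of $f=\theta$ for existence/uniqueness and size of $\lambda_1$, the identity $\psi'=\theta(\lambda)-\theta(\lambda_1)$, and a dilation estimate to get the lower bound on $|\psi'|$ off the excluded interval. The only cosmetic difference is that the paper proves the dilation step via the single global ratio inequality $\theta(\beta\lambda)/\theta(\lambda)<\beta^{\,\alpha-1}$ for all $\lambda>0$, which sidesteps the matching at the threshold $\lambda\sim1$ that you flag as delicate; otherwise the arguments coincide.
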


\begin{proof}
Let us compute the first two derivatives
\vspace{-.5mm}
\begin{equation}\label{eq:Psiprime}
\psi^{\msf\prime\vsb}(\lambda)\vsb=\alpha\,\lambda\msf
(\lambda^{\vsb2}\!+\msb\rho^{\ssf2}\vsf)^{\vsf-\vsf(1-\frac\alpha2)}\ssb-\ssb R
\end{equation}
\vspace{-7mm}

and
\begin{equation}\label{eq:Psidoubleprime}
\psi^{\msf\prime\prime\ssb}(\lambda)\vsb=\alpha\,
(\lambda^{\vsb2}\!+\msb\rho^{\ssf2}\vsf)^{\vsb-\vsf(2\vsf-\frac\alpha2)}\msf
[\ssf(\alpha\msb-\!1)\ssf\lambda^{\vsb2}\!+\msb\rho^{\ssf2}\msf]\msf.
\end{equation}
(i) is an immediate consequence of \eqref{eq:Psidoubleprime}.
All claims about the stationary point \ssf$\lambda_1$
follow from the equation \msf$\theta(\lambda_1\vsb)\msb=\msb R$ \ssf
and from the behavior of the function
\begin{equation}\label{eq:theta}
\theta\vsf(\lambda)\vsb=\alpha\msf\lambda\msf
(\lambda^{\vsb2}\!+\msb\rho^{\ssf2}\vsf)^{\vsb-\vsf(1-\frac\alpha2)}
\end{equation}
(see Figure \ref{fig:Theta}),
which is odd, strictly increasing and comparable
to
\vspace{.5mm}

\centerline{$\begin{cases}
\,\lambda
&\text{on \,}[\ssf0\ssf,\ssb1\vsf]\ssf,\\
\,\lambda^{\vsf\alpha\ssf-1}
&\text{on \,}[\vsf1\vsf,\ssb+\infty)\ssf.\\
\end{cases}$}\vspace{-.5mm}
Actually \msf$\theta(\lambda)\msb\le\ssb\alpha\ssf\lambda^{\alpha-1}$ \vsf on \ssf$[\ssf0,+\infty{)}$\,,
\ssf hence \ssf$\lambda_1\msb\ge\ssb(R\ssf/\alpha)^{1/(\alpha-1)}$\vsf.
{Finally, assume that \ssf$R\ssb>\ssb0$ \ssf and l}et us estimate
\vspace{-.5mm}

\centerline{$
\psi^{\msf\prime\vsb}(\lambda)=\vsf\theta\vsf(\lambda)\msb-\vsb\theta_1\ssf,
$}\vspace{.5mm}

where \ssf$\theta_1\!=\ssb\theta(\lambda_1)$\ssf.
On the one hand,
\vspace{.5mm}

\centerline{$
|\psi^{\msf\prime\vsb}(\lambda)\vsf|\le\vsf\theta\vsf(|\lambda|)\msb+\vsb\theta_1\ssf.
$}\vspace{.5mm}

On the other hand, notice that, {for $\beta \in (0,1)$,}
\begin{equation*}%\label{eq:fractiontheta}
\tfrac{\theta\vsf(\beta\vsf\lambda)}{\theta\vsf(\lambda)}
=\beta^{\ssf\alpha\vsf-1}\ssf\bigl(1\!-\msb\tfrac{(\beta^{-2}-1)\rho^{\vsf2}}
{\lambda^{\vsb2}+\ssf\beta^{-2}\vsb\rho^{\vsf2}}\bigr)^{1-\frac\alpha2}\ssb
<\beta^{\ssf\alpha\vsf-1}
\qquad\forall\,\lambda\msb>\msb0\ssf.
\end{equation*}
Hence

\centerline{$
\theta\vsf(\lambda)\msb-\vsb\theta_1\ssb
\ge\theta\vsf(\lambda)\msb-\vsb\theta\vsf(\beta\vsf\lambda)
\ge[\vsf1\!-\msb\beta^{\ssf\alpha\vsf-1}\vsf]\,\theta\vsf(\lambda)
$}\vspace{.5mm}

if \ssf$\lambda\msb\ge\msb\beta^{-1}\vsb\lambda_1$\vsf, while

\centerline{$
\theta_1\!-\vsb\theta\vsf(\lambda)
\ge\theta\vsf(\lambda_1)\msb-\vsb\theta\vsf(\beta\vsf\lambda_1)
\ge[\vsf1\!-\msb\beta^{\ssf\alpha\vsf-1}\vsf]\,\theta_1
$}\vspace{.5mm}

if \msf$0\msb<\msb\lambda\msb\le\msb\beta\vsf\lambda_1$\vsf.
Moreover,

\centerline{$
\theta_1\!-\vsb\theta\vsf(\lambda)\ssb
=\theta_1\!+\vsb\theta\vsf(|\lambda|)
\qquad\forall\,\lambda\!\le\msb0\msf.
$}\vspace{.5mm}

In conclusion,
\ssf$|\ssf\psi^{\msf\prime\vsb}(\lambda)\vsf|$ \ssf is comparable to
\vspace{1mm}

\centerline{$
\theta\vsf(|\lambda|)\msb+\ssb\theta_1\ssb
\asymp\smash{\tfrac{\vphantom{\frac00}|\lambda|\vsf+\ssf\lambda_1}
{(\vsf|\lambda|\vsf+\ssf\lambda_1\vsb+\vsf1)^{\vsf2-\alpha}
\vphantom{\frac00}}}
$}

when \msf$\lambda\msb\notin\msb
(\vsf\beta\vsf\lambda_1,\vsb\beta^{-1}\vsb\lambda_1)$\ssf.
\end{proof}

%{\color{blue} [G: I will draw as discussed a graph about the phase analysis using Tikz.]}

\begin{figure}[h!]
\centering
\begin{tikzpicture}[line cap=round,line join=round,>=triangle 45,x=1.0cm,y=1.0cm]
\draw[->,color=black] (-5.,0.) -- (10.,0.);
\foreach\x in {-5.,-4.,-3.,-2.,-1.,1.,2.,3.,4.,5.,6.,7.,8.,9.,10.}
\draw[shift={(\x,0)},color=black] (0pt,-2pt);
\draw[->,color=black] (0.,-2.3000076462765966) -- (0.,2.300007646276595);
\foreach\y in {-2.,2.}
\draw[shift={(0,\y)},color=black] (-2pt,0pt);
\clip(-5.,-2.3000076462765966) rectangle (10.,2.300007646276595);
\draw[line width=2.pt,color=blue,smooth,samples=100,domain=-5.0:10.000000000000002] plot(\x,{(\x)*((\x)^(2.0)+1.0/4.0)^(-3.0/8.0)});
\draw[line width=2.pt,color=red,smooth,samples=100,domain=-5.0:10.000000000000002] plot(\x,{(\x)*((\x)^(2.0)+1.0/4.0)^(-7.0/8.0)});
%\draw[line width=2.pt,color=black,smooth,samples=100,domain=-5.0:10.000000000000002] plot(\x,{(\x)*((\x)^(2.0)+1.0/4.0)^(-1.0/2.0)});
\begin{scriptsize}
\draw[color=black] (9.8,-0.4) node {\large$\lambda$};
\draw[color=black] (0.4,2.1) node {\large$\theta$};
\draw[color=blue] (8.,1.35) node {\large$\alpha\ssb>\ssb1$};
\draw[color=red] (8.,0.55) node {\large$\alpha\ssb<\ssb1$};
%\draw[color=black] (7.8,1.3) node {\large$\alpha\ssb=\msb1$};
\end{scriptsize}
\end{tikzpicture}
\caption{The function \eqref{eq:theta}}
\label{fig:Theta}
\end{figure}

The function \eqref{eq:theta} behaves differently
when $0\msb<\msb\alpha\msb<\!1$\vsf.
It is still odd and positive on \ssf$(\vsf0\vsf,\ssb+\infty)\ssf$.
But now it increases between $0$ and \ssf$\lambda_{\vsf0}\msb
:=\msb\tfrac{\rho\vphantom{|}}{\sqrt{1-\ssf\alpha\ssf}}\!>\msb0$\ssf,
where it reaches its maximum
\ssf$\theta_{\vsf0}\msb=\ssb\theta\vsf(\lambda_{\vsf0})\!>\msb0\ssf$,
and decreases between \ssf$\lambda_{\vsf0}$ \vsf and \ssf$+\infty$\ssf,
where it tends to \ssf$0$\ssf.
Consequently the equation \msf$\theta(\lambda)\msb=\msb R$ \ssf
may have \ssf$0$\ssf, $1$ \vsf or \ssf$2$ \ssf solutions.

\begin{lemma}[Phase for $0<\alpha<1$]\label{lem:phase2Hn}
Let \,$0\msb<\msb\alpha\msb<\!1$\vsf.

{\rm(i)} If \,$R\ssb>\ssb\theta_0$\ssf,
\eqref{eq:phaseHn} has no stationary point.
More precisely,
\vspace{-1mm}
\begin{equation}\label{eq:Estimate1eq_Psiprime}
|\vsf\psi^{\msf\prime\ssb}(\lambda)|\ssb\ge\ssb R\ssb-\ssb\theta_{\vsf0}
\quad\forall\,\lambda\!\in\msb\R\msf.
\end{equation}
\vspace{-6mm}

{\rm(ii)} If \,$R\ssb=\ssb\theta_0$\ssf,
\eqref{eq:phaseHn} has a single stationary point at \msf$\lambda_{\vsf0}$\ssf,
where \msf$\psi^{\msf\prime}$ \msb and \msf$\psi^{\msf\prime\prime}$
\ssb both vanish.

{\rm(iii)} If \,$0\msb<\msb R\msb<\msb\theta_0$\ssf,
\eqref{eq:phaseHn} has two stationary points\,$:$
\vspace{1mm}

\centerline{$\begin{cases}
\,\lambda_1\!\in\msb(0\vsf,\ssb\lambda_{\vsf0})\ssf,
\text{ which is comparable to }\ssf R\ssf,\\
\,\lambda_{\vsf2}\msb\in\msb(\lambda_{\vsf0}\vsf,\ssb+\infty)\ssf,
\text{ which is comparable \ssf
%{\rm(\footnotemark)} 
to }\ssf\smash{R^{\ssf-\frac1{1-\vsf\alpha}}}\ssf.\\
\end{cases}$}\vspace{1mm}

Moreover, for every \,$0\ssb<\ssb\beta\ssb<\msb1$\vsf,
\vspace{-1mm}
\begin{equation}\label{eq:Estimate2eq_Psiprime}
|\vsf\psi^{\msf\prime\ssb}(\lambda)|
\asymp(\min\ssf\{\lambda\vsf,\ssb\lambda_{\vsf2}\})^{-(1-\vsf\alpha)}
\qquad\forall\,\lambda\msb\in\ssb[\ssf\lambda_{\vsf0}\vsf,\ssb+\infty)\msb
\smallsetminus\msb(\vsf\beta\vsf\lambda_{\vsf2},\beta^{-1}\ssb\lambda_{\vsf2})\ssf.
\end{equation}

{\rm(iv)} If \,$R\ssb=\ssb0$\ssf,
\eqref{eq:phaseHn} has a single stationary point at the origin.
More precisely,
\vspace{-1mm}
\begin{equation*}
|\vsf\psi^{\msf\prime\ssb}(\lambda)|
\asymp|\lambda|\msf(|\lambda|\msb+\!1)^{-\vsf(2\vsf-\vsf\alpha)}
\qquad\forall\,\lambda\!\in\msb\R\msf.
\end{equation*}
{\rm(v)} Contrarily to \,$\psi(\lambda)$ and \,$\psi^{\ssf\prime}\ssb(\lambda)$\ssf,
\,$\psi^{\ssf\prime\prime}\ssb(\lambda)$ and \,$\psi^{\ssf\prime\prime\prime}\ssb(\lambda)$
don't depend on \ssf$R$\ssf. Moreover,

$\bullet$ \;$\psi^{\ssf\prime\prime}\ssb(\lambda)$
is an even inhomogeneous symbol of order \,$-\ssf(2\ssb-\ssb\alpha)$ on \,$\R$\ssf,

$\bullet$ \,away from \,$\lambda\msb=\msb\pm\ssf\lambda_{\vsf0}$\ssf,
where it vanishes,  \,$|\psi^{\ssf\prime\prime}\ssb(\lambda)|$
is comparable to \,$(\vsf|\lambda|\msb+\!1)^{-(2-\alpha)}$\ssf,

$\bullet$ \;$\psi^{\ssf\prime\prime\prime}\ssb(\lambda)$ is an odd function on \,$\R$\ssf,
which vanishes at \,$\lambda\msb=\msb0$ and
\,$\lambda\msb=\msb\pm\ssf\sqrt{\ssf3\ssf}\lambda_{\vsf0}$\ssf.
\end{lemma}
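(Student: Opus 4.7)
My plan is to reduce everything to a qualitative calculus study of the function $\theta$ from \eqref{eq:theta}, using $\psi^{\msf\prime\vsb}(\lambda) = \theta\vsf(\lambda)-R$ and $\psi^{\msf\prime\prime\ssb}(\lambda) = \theta^{\msf\prime\vsb}(\lambda)$. The derivative formulas \eqref{eq:Psiprime}--\eqref{eq:Psidoubleprime} from the proof of Lemma~\ref{lem:phase1Hn} remain valid; the only difference in the regime $0<\alpha<1$ is that the bracket $(\alpha-1)\lambda^2+\rho^2$ in $\psi^{\msf\prime\prime}$ now changes sign, vanishing precisely at $\lambda = \pm\lambda_{\vsf0}$ with $\lambda_{\vsf0} = \rho/\sqrt{1-\alpha}$. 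Combining this with $\theta(0)=0$, $\theta(\pm\infty)=0$, and oddness, I recover the shape drawn in Figure~\ref{fig:Theta}: $\theta$ strictly increases on $[0,\lambda_{\vsf0}]$ from $0$ up to $\theta_{\vsf0}$ and strictly decreases back to $0$ on $[\lambda_{\vsf0},+\infty)$. A case split $|\lambda|\le\lambda_{\vsf0}$ versus $|\lambda|\ge\lambda_{\vsf0}$ in the defining formula also produces $\theta(\lambda)\asymp\lambda$ on the first range and $\theta(\lambda)\asymp\lambda^{\vsf\alpha-1}$ on the second.

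Parts (i), (ii), (iv) and the existence/size claims in (iii) then come essentially for free by reading off the equation $\theta(\lambda)=R$ against this picture: if $R>\theta_{\vsf0}$ no solution exists and $|\psi^{\msf\prime}|\ge R-\theta_{\vsf0}$; if $R=\theta_{\vsf0}$ the unique tangential solution is $\lambda_{\vsf0}$, with $\psi^{\msf\prime\prime}(\lambda_{\vsf0}) = \theta^{\msf\prime}(\lambda_{\vsf0}) = 0$; if $0<R<\theta_{\vsf0}$, strict monotonicity on the two intervals forces exactly two positive solutions $\lambda_1<\lambda_{\vsf0}<\lambda_{\vsf2}$, whose sizes $\lambda_1\asymp R$ and $\lambda_{\vsf2}\asymp R^{-1/(1-\vsf\alpha)}$ follow from inverting the two asymptotics $\theta\asymp\lambda$ and $\theta\asymp\lambda^{\vsf\alpha-1}$ respectively; and in case (iv) the claimed bound on $|\psi^{\msf\prime}|=|\theta|$ is a direct reading of the formula with the case split $|\lambda|\le1$ versus $|\lambda|\ge1$.

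The genuine technical point is the two-sided bound \eqref{eq:Estimate2eq_Psiprime} on $|\psi^{\msf\prime}|$ in case (iii). My plan is to split $[\lambda_{\vsf0},+\infty)\smallsetminus(\beta\lambda_{\vsf2},\beta^{-1}\lambda_{\vsf2})$ into $[\lambda_{\vsf0},\beta\lambda_{\vsf2}]$ and $[\beta^{-1}\lambda_{\vsf2},+\infty)$, on each of which $|\psi^{\msf\prime}(\lambda)|=|\theta(\lambda)-\theta(\lambda_{\vsf2})|\asymp|\lambda^{\vsf\alpha-1}-\lambda_{\vsf2}^{\vsf\alpha-1}|$ because both $\lambda$ and $\lambda_{\vsf2}$ sit in $[\lambda_{\vsf0},+\infty)$. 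Factoring $\min(\lambda,\lambda_{\vsf2})^{\alpha-1}$ out of this difference leaves the residual bracket $1-(\min/\max)^{1-\alpha}$, and the condition $\min/\max\le\beta<1$, built into both pieces, pins that bracket into $[1-\beta^{1-\alpha},1]$. This is exactly the scaling/separation trick used at the end of the proof of Lemma~\ref{lem:phase1Hn}, and the main obstacle I anticipate is more bookkeeping than conceptual: one has to handle the borderline situation where $R$ is close to $\theta_{\vsf0}$ (so that $\lambda_1$ and $\lambda_{\vsf2}$ are both $\asymp 1$ and the lower piece $[\lambda_{\vsf0},\beta\lambda_{\vsf2}]$ may degenerate to $\varnothing$), which is ultimately harmless but must be noted.

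Part (v) is a direct computation from \eqref{eq:Psidoubleprime}. Evenness and $R$-independence are immediate; the pointwise comparison $|\psi^{\msf\prime\prime}(\lambda)|\asymp(|\lambda|+1)^{-(2-\alpha)}$ away from $\pm\lambda_{\vsf0}$ follows from combining $(\lambda^2+\rho^2)^{-(2-\alpha/2)}\asymp(|\lambda|+1)^{-(4-\alpha)}$ with $|(\alpha-1)\lambda^2+\rho^2|\asymp\lambda^2+1$ off a neighborhood of the two zeros, and the inhomogeneous symbol bounds drop out of Leibniz applied to the product form. Differentiating once more, most cleanly after rewriting $\psi^{\msf\prime\prime}/\alpha=(\alpha-1)u^{-(1-\alpha/2)}+(2-\alpha)\rho^2\ssf u^{-(2-\alpha/2)}$ with $u=\lambda^2+\rho^2$, yields
\[
\psi^{\msf\prime\prime\prime}(\lambda) = -\alpha(2-\alpha)\ssf\lambda\,(\lambda^2+\rho^2)^{-(3-\alpha/2)}\bigl[(\alpha-1)\lambda^2+3\rho^2\bigr],
\]
from which oddness and the zero set $\{0,\pm\sqrt{3}\,\lambda_{\vsf0}\}$ are immediate.
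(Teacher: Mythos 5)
Your proposal is correct and takes essentially the same route as the paper, which also derives (i)--(iv) directly from the monotonicity picture of $\theta$ in \eqref{eq:theta}, proves \eqref{eq:Estimate2eq_Psiprime} by the same scaling/separation trick as in Lemma~\ref{lem:phase1Hn}~(ii), and settles (v) with the same explicit formula for $\psi^{\msf\prime\prime\prime}$ (your expression agrees with the paper's after factoring out the sign). The only point to watch is your intermediate claim $|\theta(\lambda)-\theta(\lambda_2)|\asymp|\lambda^{\alpha-1}-\lambda_2^{\alpha-1}|$: the lower bound does not follow formally from $\theta(\lambda)\asymp\lambda^{\alpha-1}$ alone, so the ratio argument must be applied to $\theta$ itself on the decreasing branch (e.g.\ bounding $\theta(\beta^{-1}\lambda)/\theta(\lambda)$ uniformly away from $1$ for $\lambda\ge\lambda_0$), which is exactly what the quoted trick from Lemma~\ref{lem:phase1Hn} provides.
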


\begin{remark}
    {Actually, in $\mathrm{(iii)}$, we have
    \begin{center}
\msf$\theta(\lambda)\msb\le\ssb\alpha\ssf(\lambda^2\!+\msb\rho^{\vsf2})^{-(1-\vsf\alpha)\vsb/\vsf2}$
\vsf on \ssf$[\ssf0,+\infty\ssf{)}$\,, \ssf hence
\ssf$\lambda_{\vsf2}\msb\le\!\sqrt{\lambda_{\vsb2}^2\!+\msb\rho^{\vsf2}\ssf}\!\le\ssb(R\ssf/\alpha)^{-1/(1-\vsf\alpha)}$\vsf.
\end{center}}
\end{remark}

\begin{proof}[Proof of Lemma~\ref{lem:phase2Hn}]
Almost all claims are straightforward consequences of the expressions \eqref{eq:Psiprime},
\eqref{eq:Psidoubleprime} and of the above behavior of \eqref{eq:theta}.
The only exceptions are the last point, which follows from
\vspace{.5mm}

\centerline{$
\psi^{\msf\prime\prime\prime\ssb}(\lambda)\ssb
=\vsb\alpha\msf(2\msb-\msb\alpha)\msf\lambda\,
[\ssf(1\!-\msb\alpha)\ssf\lambda^{\vsb2}\!-\msb3\ssf\rho^{\ssf2}\msf]\ssf
(\lambda^2\!+\msb\rho^{\ssf2}\vsf)^{\frac\alpha2\vsb-\vsf3}\ssf,
$}\vspace{.5mm}

and \eqref{eq:Estimate2eq_Psiprime}, which is proved as (ii) in Lemma \ref{lem:phase1Hn}.
\end{proof}

%{\color{blue} [We already defined it above]} \sout{Denote by \ssf$k_{\ssf t}^{\ssf\sigma}$ the kernel of the operator
%$(-\Delta)^{-\vsf\sigma\vsb/2}\,
%e^{\msf i\msf t\msf(-\Delta)^{\vsf\alpha\vsb/2}}$,
%where \ssf$\sigma\msb\ge\ssb0$ \ssf and \ssf $t\ssb\in\ssb\R^*$.} 

%{\color{blue}[in the following theorem, we should define $\rho$ or remplaced it by directly by $\rho=\tfrac{n-1}2$]}

%{\color{red} JP:
%[We need to extend the kernel estimates with respect to $\sigma$\,:

%$\bullet$
%\,in vertical strips in $\C$ to allow complex interpolation in the proof of the dispersive estimates,

%$\bullet$
%\,$0\ssb\le\ssb\Re\sigma\ssb\le\ssb n$ for the Strichartz inequalities in the range $1\!<\ssb\alpha\ssb<\ssb2$\ssf, and presumably also in the range $0\ssb<\ssb\alpha\ssb<\msb1$\vsf%.
%]}

Hence using the lemmata above, we estimate the kernel for the dichotomy between $\alpha \in (0,1)$ and $\alpha \in (1,2)$.
\begin{theorem}[{Kernel estimates}]\label{thm:KernelEstimate}
{\rm(i)} Assume that \msf$1\!<\msb\alpha\msb<\msb2$
and \,$0\msb\le\ssb\sigma\msb\le\msb\frac n2$\ssf.
Let $\rho=\tfrac{n-1}2$.
Then the following estimates hold,
for \msf$t\msb\in\msb\R^*$ \ssb and \,$r\msb\ge\msb0:$
\vspace{.5mm}

$\bullet$
\,{\rm Large scale\,:}
{$
|\ssf k_{\ssf t}^{\ssf\sigma}\ssb(r)\vsf|
\lesssim\left\{\,\begin{aligned}
&|t|^{-\frac32}\msf(1\!+\msb r)\,e^{-\rho\ssf r} %(1\!+\ssb r)
&&\text{if \,}|t|\ssb\ge\ssb\max\msf\{1,r\}
&&\text{\footnotesize{$($Subcases 1.1.1 and 2.1.1\msf$)$},}\\
&|t|^{-\frac12\frac{n\vsf-\vsf2\vsf\sigma}{\alpha\vsf-1}}\,
r^{\ssf\frac12\frac{n\vsf-\vsf2\vsf\sigma}{\alpha\vsf-1}-\frac12}\,
e^{-\rho\ssf r}
&&\text{if \,$r\msb\ge\ssb\max\msf\{1,|t|\}$}
&&\text{\footnotesize{$($Subcase 1.1.2\msf$)$}.}
\end{aligned}\right.
$}
\vspace{.5mm}

$\bullet$
\,{\rm Small scale\,:}
{$
|\ssf k_{\ssf t}^{\ssf\sigma}\ssb(r)\vsf|
\lesssim\left\{\,\begin{aligned}
&|t|^{-\frac{n\vsf-\vsf\sigma}\alpha}
&&\text{if \,$r^{\ssf\alpha}\msb\le\ssb|t|\ssb<\msb1$}
&&\text{\footnotesize{$($Subcase 2.1.1\msf$)$},}\\
&|t|^{-\frac12\frac{n\vsf-\vsf2\vsf\sigma}{\alpha\vsf-1}}\,
r^{\ssf\frac12\frac{n\vsf-\vsf2\vsf\sigma}{\alpha\vsf-1}-\frac n2}
&&\text{if \,$|t|\ssb\le\ssb r^{\ssf\alpha}\msb<\msb1$}
&&\text{\footnotesize{$($Subcase 2.1.2\msf$)$}.}\\
\end{aligned}\right.$}
\vspace{1mm}

{\rm(ii)} Assume that \msf$0\msb<\ssb\alpha\ssb<\msb1$
,\,$\frac n2\msb\le\ssb\sigma\msb\le\ssb n$ and $N\!>\msb\frac{n\vsf+1}2\msb-\msb\sigma$\ssf.
Then the following estimates hold,
for \msf$t\msb\in\msb\R^*$ \ssb and \,$r\msb\ge\msb0:$
\vspace{.5mm}

$\bullet$
\,{\rm Large scale\,:}

\centerline{$
|\ssf k_{\ssf t}^{\ssf\sigma}\ssb(r)\vsf|
\lesssim\left\{\,\begin{aligned}
&|t|^{-\frac32}\ssb+|t|^{-\frac12\frac{2\vsf\sigma-\vsf n}{1-\vsf\alpha}}\,r^{\frac12\frac{2\vsf\sigma-\vsf n}{1-\vsf\alpha}{{-\frac n2}}}
&&\text{if \,$0\ssb\le\ssb r\msb\le\msb1\msb\le\ssb|t|$}
&&\text{\footnotesize{$($Subcase 2.2.1\msf$)$}},\\
&\smash{\msb\bigl(\tfrac r{|t|}\bigr)^{\ssf\min\ssf\{\frac32,\frac12\frac{2\vsf\sigma-\vsf n}{1-\vsf\alpha}\}}}\msf r^{-\frac12}\,e^{-\rho\ssf r}
&&\text{if \,$r\msb\ge\msb1$ and \,$\tfrac r{|t|}\msb\le\msb\tfrac12\ssf\theta_0$} &&\text{\footnotesize{$($Subcase 1.2.3\msf$)$}},\\
&r^{-\frac13}\,e^{-\rho\ssf r}
&&\text{if \,$r\msb\ge\msb1$ and \,$\tfrac12\ssf\theta_0\msb<\msb\tfrac r{|t|}\msb<\msb2\msf\theta_0$}
&&\text{\footnotesize{$($Subcase 1.2.2\msf$)$},}\\
&r^{-N}\msf e^{-\rho\ssf r}
&&\text{if \,$r\msb\ge\msb1$ and \,$\tfrac r{|t|}\msb\ge\msb2\ssf\theta_0$}
&&\text{\footnotesize{$($Subcase 1.2.1\msf$)$}.}\\
\end{aligned}\right.$}
\vspace{.5mm}

$\bullet$
\,{\rm Small scale\,:}

\centerline{$
|\ssf k_{\ssf t}^{\ssf\sigma}\ssb(r)\vsf|
\lesssim\left\{\,\begin{aligned}
&|t|^{-\frac{n\vsf-\vsf\sigma}\alpha}
&&\text{if \,$|t|\ssb\le\ssb r^{\vsf\alpha}\msb<\msb1$}
&&\text{\footnotesize{$($Subcase 2.2.3\msf$)$},}\\
&|t|^{-\frac{n\vsf-\vsf\sigma}\alpha}\msb+|t|^{-\frac12\frac{2\vsf\sigma-\vsf n}{1-\vsf\alpha}}\,r^{\frac12\frac{2\vsf\sigma-\vsf n}{1-\vsf\alpha}-\frac n2}
&&\text{if \,$r^{\vsf\alpha}\msb\le\ssb|t|\ssb<\msb1$}
&&\text{\footnotesize{$($Subcase 2.2.2\msf$)$}.}
\end{aligned}\right.$}

\end{theorem}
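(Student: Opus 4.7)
The strategy is to substitute the appropriate asymptotic expansion of $\phi_\lambda(r)$ into the spherical Fourier representation \eqref{eq:KernelSigma}, thereby reducing $k_{\ssf t}^{\ssf\sigma}(r)$ to oscillatory integrals
\[
\int_{-\infty}^{+\infty} a(\lambda)\,e^{\ssf i\ssf t\ssf\psi_R(\lambda)}\,\mathrm{d}\lambda,
\qquad \psi_R(\lambda)=(\lambda^2+\rho^2)^{\alpha/2}-R\lambda,\quad R=r/t,
\]
in which the amplitude $a$ collects $(\lambda^2+\rho^2)^{-\sigma/2}$, the Plancherel density $|\bc(\lambda)|^{-2}$ (factorised as $\lambda^2$ times an inhomogeneous symbol of order $n-3$, as in the remark following \eqref{eq:KernelHn}), and the relevant expansion coefficients. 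In each $(t,r)$-regime we then apply stationary phase, Van der Corput, or repeated integration by parts, using the structural information on $\psi_R$ supplied by Lemmas \ref{lem:phase1Hn} and \ref{lem:phase2Hn}.

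\textbf{Large space scale} ($r\ge 1$). Here we feed in the Harish--Chandra expansion \eqref{eq:HCexpansion}--\eqref{eq:Philambda}, which factors out $(\sinh r)^{-\rho}\asymp e^{-\rho r}$ and produces oscillatory factors $e^{\pm i\lambda r}$ whose exponents combine with $t(\lambda^2+\rho^2)^{\alpha/2}$ to yield $t\,\psi_R$. For $1<\alpha<2$, Lemma \ref{lem:phase1Hn} provides a single stationary point $\lambda_1$, and stationary phase yields the gain $|t\,\psi''(\lambda_1)|^{-1/2}$; whether $|t|\ge r$ (so $\lambda_1\asymp R$ is small and one extracts an additional $\lambda$-factor from $|\bc|^{-2}$ near the origin, producing the $|t|^{-3/2}(1+r)e^{-\rho r}$ bound) or $r\ge|t|$ (so $\lambda_1\asymp R^{1/(\alpha-1)}$ is large and $|\bc|^{-2}\asymp\lambda^{n-1}$), one recovers the two tabulated forms. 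For $0<\alpha<1$, Lemma \ref{lem:phase2Hn} triggers three subregimes: if $R\ge 2\theta_0$ there is no stationary point and integrating by parts $N$ times gives $r^{-N}e^{-\rho r}$; if $R\le\tfrac12\theta_0$ the two stationary points $\lambda_1\asymp R$ and $\lambda_2\asymp R^{-1/(1-\alpha)}$ are well separated and each contributes via ordinary stationary phase; and if $R\asymp\theta_0$ the two coalesce, $\psi''$ vanishes but $\psi'''\ne 0$, so a Van der Corput estimate of order $3$ (built on Lemma \ref{lem:phase2Hn}(v)) delivers the degenerate $r^{-1/3}e^{-\rho r}$ bound.

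\textbf{Small space scale} ($r<1$). We split the integration at $|\lambda r|=\Lambda$ for a fixed large $\Lambda$. On $\{|\lambda r|\le\Lambda\}$ we insert the Stanton--Tomas expansion \eqref{eq:StantonTomas}: each term involves a modified Bessel function $j_\nu(\lambda r)$, and substituting its integral representation reduces the matter to oscillatory integrals in $\lambda$ with phase $t(\lambda^2+\rho^2)^{\alpha/2}-r\lambda u$, $|u|\le 1$, handled uniformly in $u$ by the same phase analysis. On $\{|\lambda r|\ge\Lambda\}$ we insert \eqref{eq:STI} so the phase is once more $t\,\psi_R$, apply the stationary-phase/Van der Corput toolkit with the amplitude bounds \eqref{eq:EstimateBM}, and absorb the remainder $R_M$ via \eqref{eq:EstimateRM}. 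Matching the two pieces at the threshold gives the announced estimates: the $|t|^{-(n-\sigma)/\alpha}$ term arises from a crude $L^1$ control on a window of length $\asymp|t|^{-1/\alpha}$ around the origin-type stationary point, while the mixed power $|t|^{-\frac12\frac{n-2\sigma}{\alpha-1}}\,r^{\frac12\frac{n-2\sigma}{\alpha-1}-\frac n2}$ comes from the non-degenerate stationary point $\lambda_1$ on the outer piece.

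\textbf{Main obstacle.} The most delicate case is $0<\alpha<1$ at the critical ray $R\asymp\theta_0$, where the two stationary points merge: the Van der Corput argument must then be run uniformly in $R$ across the transition, with careful control of derivatives of $a$ in terms of $\psi'''$ (Lemma \ref{lem:phase2Hn}(v)) so that the $r^{-1/3}$ decay holds up to the endpoints of this subregime. In parallel, the amplitude is not smooth at $\lambda=0$ through $|\bc(\lambda)|^{-2}$; this is handled by the factorisation into $\lambda^2$ times a smooth symbol and by cleanly separating contributions near and away from the origin. Finally, tracking the weight $(\lambda^2+\rho^2)^{-\sigma/2}$ through the stationary-phase computation at $\lambda_1\asymp R$ or $R^{1/(\alpha-1)}$ is exactly what produces the $(n-2\sigma)/(\alpha-1)$-type exponents in the statement, and keeping this bookkeeping uniform over all the $(t,r,\sigma)$-ranges is the combinatorial crux of the argument.
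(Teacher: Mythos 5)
Your plan is correct and essentially coincides with the paper's proof: for $r\ge1$ the Harish--Chandra expansion \eqref{eq:HCexpansion} reduces the kernel to oscillatory integrals with phase $t\,\psi_{r/t}$, treated via Lemmas \ref{lem:phase1Hn}--\ref{lem:phase2Hn}, van der Corput of order $2$ (order $3$ at the coalescence $\frac{r}{|t|}\asymp\theta_0$) and repeated integrations by parts, while for $r\le1$ one splits at $|\lambda|\,r\asymp1$ and uses \eqref{eq:STI} with \eqref{eq:EstimateBM}--\eqref{eq:EstimateRM} on the outer piece. The only (harmless) deviation is on the inner small-scale piece, where you average over the Bessel parameter $u\in[-1,1]$ coming from \eqref{eq:StantonTomas} instead of over $H(a_{-r}k)\in[-r,r]$ via the Harish--Chandra integral formula \eqref{eq:HCformula} as the paper does; since $|\lambda|\,r\lesssim1$ there the two reductions are equivalent, and your bookkeeping of the three mechanisms (vanishing of the amplitude at $\lambda=0$ for the $|t|^{-3/2}(1+r)$ bound, the window of length $|t|^{-1/\alpha}$ for $|t|^{-\frac{n-\sigma}{\alpha}}$, and stationary phase at the large stationary point with the $|\lambda r|^{-\frac{n-1}{2}}$ amplitude for the mixed $t$--$r$ powers) matches the paper's.
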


%\centerline{\color{darkgreen}[JP: I have tried and failed to improve the kernel estimates when $0<\alpha<1$]}

\begin{remark}
{\rm(i)} Assume that \,$1\msb<\ssb\alpha\ssb<\ssb2$
and \,$0\msb\le\ssb\sigma\msb\le\msb\frac n2$\ssf.
Then the following inequalities are equivalent\,$:$
\vspace{.5mm}

\centerline{$
\sigma\ssb\ge\msb\bigl(1\!-\msb\tfrac\alpha2\bigr)\ssf n\,,\quad
\tfrac{n\vsf-\vsf\sigma}\alpha\msb\le\msb\tfrac n2\,,\quad
\tfrac12\tfrac{n\vsf-\vsf2\vsf\sigma}{\alpha\vsf-1}\msb\le\msb\tfrac{n\vsf-\vsf\sigma}\alpha\,,\quad
\tfrac12\tfrac{n\vsf-\vsf2\vsf\sigma}{\alpha\vsf-1}\msb\le\msb\tfrac n2\,.
$}\vspace{1mm}

Moreover, under these conditions, we have
\vspace{1mm}

\centerline{$
|\ssf k_{\ssf t}^{\ssf\sigma}\ssb(r)\vsf|
\lesssim|t|^{-\frac12\frac{n\vsf-\vsf2\vsf\sigma}{\alpha\vsf-1}}\,
r^{\ssf\frac12\frac{n\vsf-\vsf2\vsf\sigma}{\alpha\vsf-1}-\frac n2}
\le\ssf|t|^{-\frac12\frac{n\vsf-\vsf2\vsf\sigma}{\alpha\vsf-1}}\,
|t|^{\ssf\frac1{2\ssf\alpha}\frac{n\vsf-\vsf2\vsf\sigma}{\alpha\vsf-1}-\frac n{2\ssf\alpha}}
=\ssf|t|^{-\frac{n\vsf-\vsf\sigma}\alpha}
$}\vspace{1mm}

in the range \,$0\ssb<\msb|t|\msb\le\ssb r^{\ssf\alpha}\msb<\msb1$\vsf.

%{\color{darkgreen}[Actually, for the dispersive inequality,
%it is enough to have the estimate
%$$
%|\ssf k_{\ssf t}^{\ssf\sigma}\ssb(r)\vsf|\ssb
%\lesssim\ssb|t|^{-\frac{n\vsf-\vsf\sigma}\alpha}
%$$
%in the range \,$0\ssb<\msb|t|\msb\le\ssb r^{\ssf\alpha}\msb\le\msb1$.]}

{\rm(ii)} Assume that \,$0\ssb<\ssb\alpha\ssb<\msb1$
and \,$\frac n2\msb\le\ssb\sigma\msb\le\ssb n$\ssf.
Then the following inequalities are equivalent\,$:$
\vspace{-4mm}

\centerline{$
\sigma\ssb\ge\msb\bigl(1\!-\msb\tfrac\alpha2\bigr)\ssf n\,,\quad
\tfrac{n\vsf-\vsf\sigma}\alpha\msb\le\msb\tfrac n2\,,\quad
\tfrac12\tfrac{2\vsf\sigma\vsf-\vsf n}{1-\vsf\alpha}\msb\ge\msb\tfrac{n\vsf-\vsf\sigma}\alpha\,,\quad
\tfrac12\tfrac{2\vsf\sigma\vsf-\vsf n}{1-\vsf\alpha}\msb\ge\msb\tfrac n2\,.
$}\vspace{1mm}

Moreover,
in the range \,$0\ssb<\ssb r^{\ssf\alpha}\msb<\msb|t|\msb<\msb1$\vsf,
we have
\vspace{1mm}

\centerline{$
\max\msf\bigl\{\vsf|t|^{-\frac{n\vsf-\vsf\sigma}\alpha},
|t|^{-\frac12\frac{2\vsf\sigma-\vsf n}{1-\vsf\alpha}}\,
r^{\frac12\frac{2\vsf\sigma-\vsf n}{1-\vsf\alpha}-\frac n2}\bigr\}
=\begin{cases}
\msf|t|^{-\frac{n\vsf-\vsf\sigma}\alpha}
&\text{if \,$\sigma\msb\ge\msb(1\!-\msb\frac\alpha2)\ssf n$\ssf,}\\
\msf|t|^{-\frac12\frac{2\vsf\sigma-\vsf n}{1-\vsf\alpha}}\,
r^{\frac12\frac{2\vsf\sigma-\vsf n}{1-\vsf\alpha}-\frac n2}
&\text{if \,$\sigma\msb<\msb(1\!-\msb\frac\alpha2)\ssf n$\ssf.}\\
\end{cases}
$}

\end{remark}

\begin{remark}
Assume that \,$\sigma\msb=\msb(1\!-\msb\frac\alpha2)\msf n$ and $N\!>\msb\frac{1+n(\alpha-1)}2$\ssf.
Then Theorem \ref{thm:KernelEstimate} boils down to
\begin{equation}\label{eq:SimplifiedEstimates1}
|\ssf k_{\ssf t}^{\ssf\sigma}\ssb(r)\vsf|\,
\lesssim\,
\begin{cases}
\,|t|^{-\frac n2}\,(1\!+\ssb r)^{\frac{n\vsf-1}2}\,e^{-\rho\ssf r}
&\text{if \;}|t|\msb\le\msb1\msb+\ssb r\\
\,|t|^{-\frac32}\,(1\!+\ssb r)\,e^{-\rho\ssf r}
&\text{if \;}|t|\msb\ge\msb1\msb+\ssb r\\
\end{cases}
\end{equation}
in the range \msf$1\!<\msb\alpha\msb<\msb2$ and to
\begin{equation}\label{eq:SimplifiedEstimates2}
|\ssf k_{\ssf t}^{\ssf\sigma}\ssb(r)\vsf|\,
\lesssim\,
\begin{cases}
\,|t|^{-\frac n2}
&\text{if \, $0\msb<\msb|t|\msb\le\msb1$ and \,$0\ssb\le\ssb r\msb\le\msb1$}\\
\,|t|^{\ssf-\min\ssf\{\frac32,\ssf\frac n2\}}\msf(1\!+\ssb r)^{\ssf\min\ssf\{1,\ssf\frac{n\vsf-1}2\}}\msf e^{-\rho\ssf r}
&\text{if \,$|t|\msb\ge\msb1$ and \,$0\ssb\le\msb\frac r{|t|}\msb\le\msb\frac12\ssf\theta_0$}\\
\,r^{-\frac13}\,e^{-\rho\ssf r}
&\text{if \,$r\msb\ge\msb1$ and \,$\frac12\ssf\theta_0\msb<\msb\frac r{|t|}\msb<\msb2\msf\theta_0$}\\
\,r^{-N}\msf e^{-\rho\ssf r}
&\text{if \,$r\msb\ge\msb1$ and \,$\frac r{|t|}\msb\ge\msb2\ssf\theta_0$}\\
\end{cases}
\end{equation}
in the range \,$0\msb<\msb\alpha\msb<\!1$\ssf.
In the limit case \,$\alpha\msb=\msb2$\ssf,
the kernel estimates \eqref{eq:SimplifiedEstimates1} were obtained
in \cite{AnkerPierfelice2009} and \cite{AnkerPierfeliceVallarino2011}.
\end{remark}

\subsection{Proof of Theorem \ref{thm:KernelEstimate}} \label{sect:Proof}
%\centerline{\color{blue}[Add a roadmap of the proof of Theorem~\ref{thm:KernelEstimate}] DONE SEE BELOW } 
%\centerline{\color{blue} [In general we should improve the structure of the proof of Theorem~\ref{thm:KernelEstimate}]}
%{\color{darkgreen}\centerline{[Replace below $\tilde\theta_0$ by $\theta_0/2$]} DONE} 

\begin{figure}[h!]
\centering
     \includegraphics[scale=0.5]{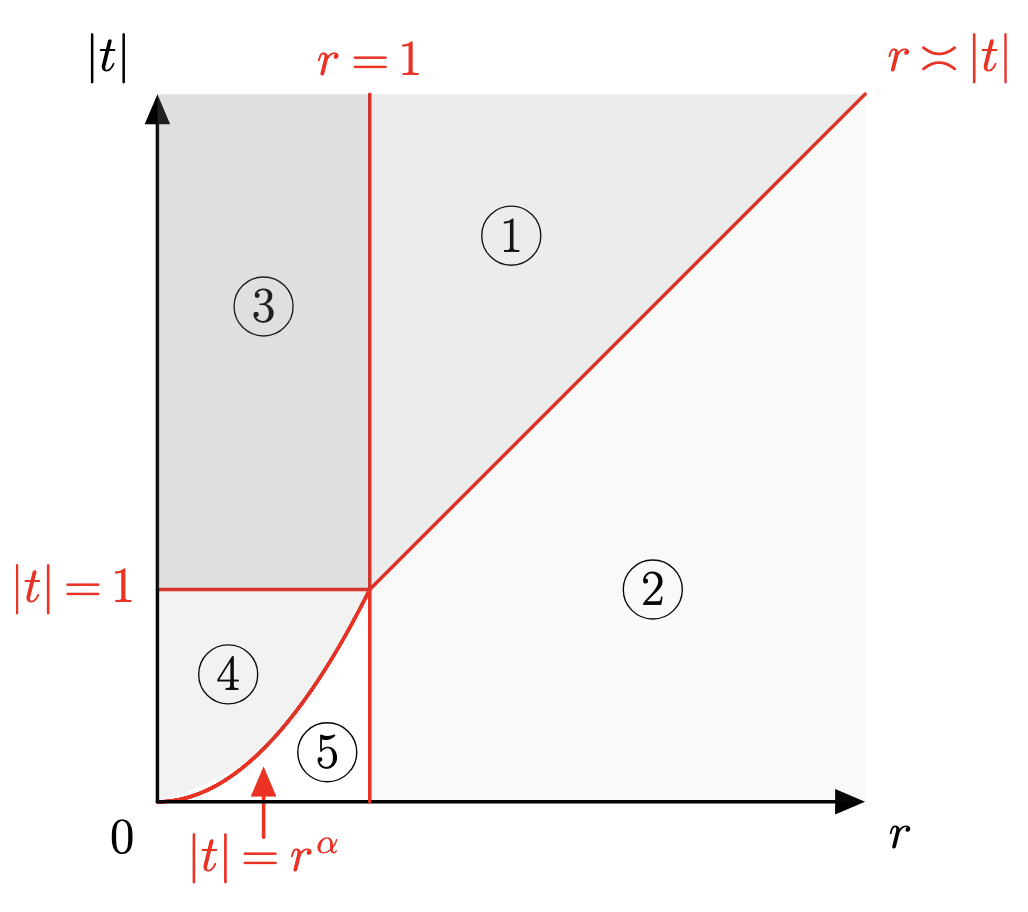}
\caption{Different ranges for the kernel estimates in the case \ssf$1\msb<\ssb\alpha\ssb<\ssb2$.\\
The circled numbers correspond to the different cases in the proof of Theorem~\ref{thm:KernelEstimate}. More precisely, \circled{1} corresponds to Subcase~1.1.1, \circled{2} to Subcase~1.1.2, \circled{3} -- \circled{4} to Subcase~2.1.1 and \circled{5} to Subcase~2.1.2}
\label{fig:RegionsAlphaLarge}
\end{figure}

\begin{figure}[h!]
\centering
     \includegraphics[scale=0.5]{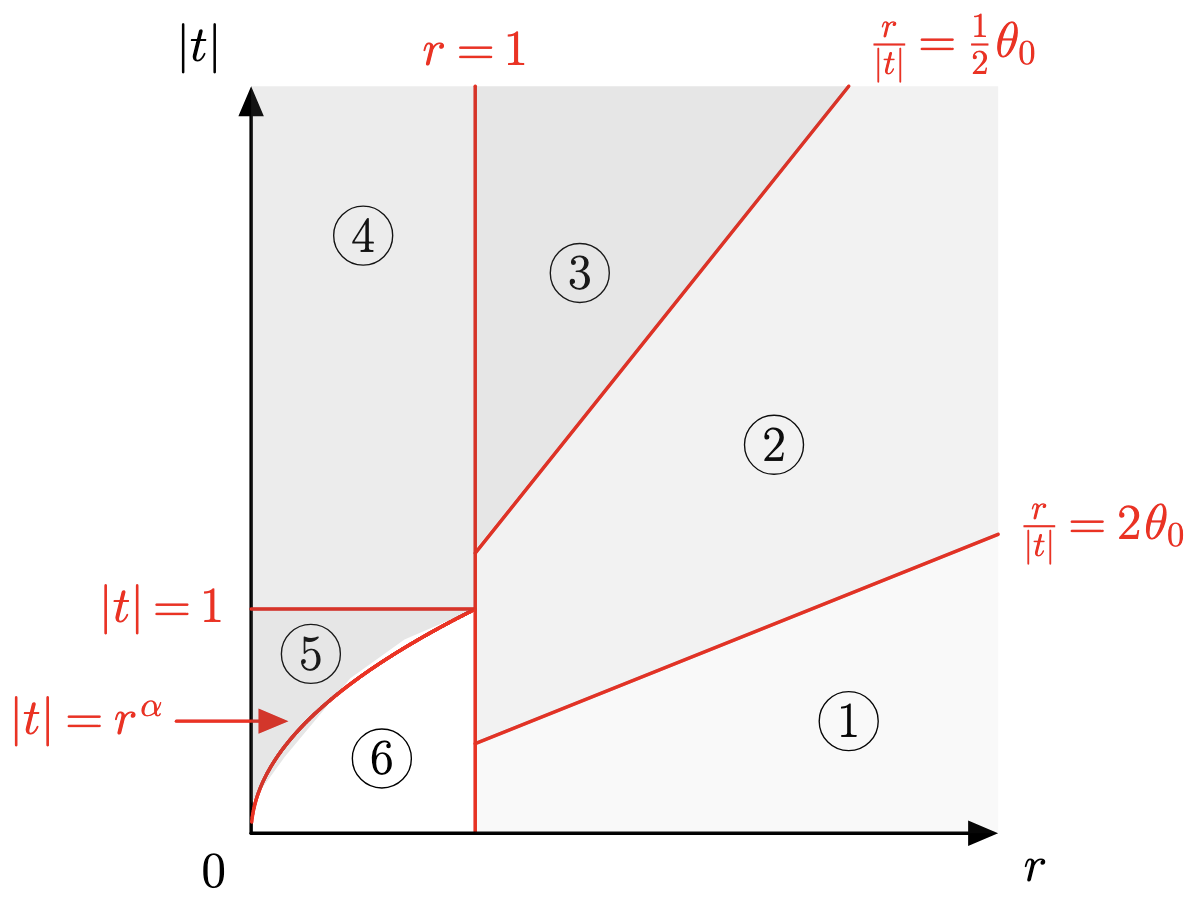}
\caption{Different ranges for the kernel estimates in the case \ssf$0\ssb<\ssb\alpha\ssb<\msb1$.\\
The circled numbers correspond to the different cases in the proof of Theorem~\ref{thm:KernelEstimate}~(ii). More precisely, \circled{1} -- \circled{2} -- \circled{3} correspond to Subcase 1.2, \circled{4} to Subcase 2.2.1, \circled{5} to Subcase 2.2.2 and \circled{6} to Subcase 2.2.3}
\label{fig:RegionsAlphaSmall}
\end{figure}

{ We first explain the global structure of the proof since this is quite technical. Due to the change of behaviour in the phase $\psi$ in Equation \eqref{eq:phaseHn}, it is necessary to consider two regimes $\alpha \in (0,1)$ and $\alpha \in (1,2)$. Of course, the case $\alpha=1$, which is the half-wave has been treated extensively in the literature. 
As explained in the introduction, the change of ``convexity'' of $\psi$ induces different losses which are a major difficulty for nonlinear applications. This also introduces several technical difficulties for the kernel analysis since for $\alpha \in (0,1)$ the phase has {\sl two} stationary points {(see Lemma~\ref{lem:phase2Hn})} and one needs to go to the {\sl third} order. 
Now for each of those ranges in $\alpha$, one needs to consider the different regimes in $r$ and $t$, {which is shown in the corresponding Figures~\ref{fig:RegionsAlphaLarge} and \ref{fig:RegionsAlphaSmall}}.
Because of similarities in the arguments, we prefer to split the proof into several parts according to:
\begin{itemize}
\item[(1)] First case of {\sl large spatial scale} $r \geq r_0>0$ with $r_0$ fixed, { will be treated in Section~\ref{sect:Case1}}. Then we consider the subcases $\alpha \in (0,1)$ and $\alpha \in (1,2).$ %{\color{blue} In Figure~\ref{fig:RegionsAlphaLarge} and \ref{fig:RegionsAlphaSmall} this will correspond to the cases when $r\geq 1$.}
\item[(2)] Second case of {\sl small spatial scale} $0\leq r \leq r_0$ {can be found in Section~\ref{sect:Case2}}. Then we consider again  the subcases $\alpha \in (0,1)$ and $\alpha \in (1,2).$
%{\color{blue} In Figure~\ref{fig:RegionsAlphaLarge} and \ref{fig:RegionsAlphaSmall} this will correspond to the cases when $r \leq 1$.}
\end{itemize}

For each of the cases above, there is additional smallness to consider in the scaled variable~$\frac{r}{t}$. We would like to emphasize that the range $\alpha \in (0,1)$ is the one presenting the most differences with the classical Laplacian estimates. This is because of the structure of the oscillatory integral term that such striking differences occur.
}
\smallskip

%{\color{teal} We now collect some well-known results.}
{Through the proof, we will} use the following version of the van der Corput Lemma
(see \cite[\!Ch.\ssf VIII\ssf{, Cor.~p.~334}]{Stein1993}) 
when \ssf${L}\msb=\ssb2\text{ or }3$\ssf.
\smallskip
%\centerline{\color{darkgreen}
%In order to avoid a conflict of notation with \eqref{eq:Philambda},
%}\centerline{\color{darkgreen}
%$\ell$ has been replaced by $L$ in Lemma \ref{lem:vanderCorput}
%}
\begin{lemma}\label{lem:vanderCorput}
Let \msf${L}\,{\ge\ssb2}$ be an integer.
Then there exists a constant \,$C\msb>\msb0$ such that
\begin{equation*}
\Bigl|\ssf\int_I a(\lambda)\,
e^{\hspace{.4mm}i\hspace{.4mm}\Psi\vsf(\lambda)}
\;\mathrm{d}\lambda\ssf\Bigr|\ssf
\le\ssf 
C\hspace{1mm}\bigl\{\ssf\|\ssf a\ssf\|_\infty\msb
+\ssb\|\ssf a^{\vsf\prime}\|_1\bigr\}\hspace{1mm}
T^{\ssf-\frac1{L}}\,,
\end{equation*}
for any interval \,$I\!\subset\msb\R$\ssf,
for any \msf$C^{\vsf{L}}$ \ssb function \,$\Psi\ssb:\msb I\msb\longrightarrow\msb\R$
\msf such that \,$|\vsf\Psi^{\vsf({L}\vsf)}|\msb\ge\msb T$ on \msf$I$,
and for any \msf$C^{\vsf1}$ \ssb function
\,$a\ssb:\msb I\msb\longrightarrow\msb\C$\ssf.
\end{lemma}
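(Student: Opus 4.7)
The plan is to follow Stein's strategy~\cite[Ch.~VIII]{Stein1993} and to split the proof into two stages. First I would establish the amplitude-free version
\begin{equation*}
\Bigl|\int_I e^{\ssf i\ssf\Psi(\lambda)}\,\mathrm{d}\lambda\Bigr|\le C_L\, T^{-1/L}
\end{equation*}
under the sole hypothesis $|\Psi^{(L)}|\ge T$ on $I$, and then upgrade it to the statement with amplitude by integrating by parts against $a$.

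For the amplitude-free estimate I would proceed by induction on $L$. In the base case $L=2$, since $|\Psi''|\ge T>0$, the second derivative has constant sign and $\Psi'$ is strictly monotone, so the set $\{\lambda:|\Psi'(\lambda)|<\epsilon\}$ is a subinterval of length at most $2\epsilon/T$. On this subinterval I would use the trivial bound $2\epsilon/T$; on its complement in $I$ I would integrate by parts using $e^{i\Psi}=(i\Psi')^{-1}\partial_\lambda e^{i\Psi}$, where the boundary and interior contributions are each controlled by $2/\epsilon$ because $1/\Psi'$ has total variation at most $2/\epsilon$ by monotonicity. Balancing the two pieces with $\epsilon = T^{1/2}$ produces the exponent $-1/2$. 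For the inductive step $L-1\mapsto L$, I would localize around the (at most one) point where $|\Psi^{(L-1)}|$ attains its minimum: on an interval of length $2\delta$ around that point the trivial bound gives $2\delta$, while outside one has $|\Psi^{(L-1)}|\ge T\delta$, so the induction hypothesis applied on each remaining subinterval gives $C_{L-1}(T\delta)^{-1/(L-1)}$. Optimizing with $\delta\asymp T^{-1/L}$ yields the desired exponent.

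The amplitude-free estimate is automatically uniform over subintervals of $I$, since $|\Psi^{(L)}|\ge T$ trivially passes to subintervals. Setting $F(\mu):=\int_{\lambda_0}^{\mu} e^{\ssf i\ssf\Psi(\lambda)}\,\mathrm{d}\lambda$ with $\lambda_0$ the left endpoint of $I$ therefore gives $\|F\|_\infty\le C_L T^{-1/L}$. Abel summation in the form
\begin{equation*}
\int_I a(\lambda)\,e^{\ssf i\ssf\Psi(\lambda)}\,\mathrm{d}\lambda = a(\mu_1)\,F(\mu_1) - \int_I a^{\prime}(\lambda)\,F(\lambda)\,\mathrm{d}\lambda,
\end{equation*}
where $\mu_1$ denotes the right endpoint of $I$, then bounds the right-hand side by $\{\|a\|_\infty+\|a^{\prime}\|_1\}\|F\|_\infty$, which is the conclusion.

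The delicate point, in my view, is the bookkeeping in the inductive step: one must verify that splitting around the minimum of $|\Psi^{(L-1)}|$ and choosing $\delta\asymp T^{-1/L}$ really yields a constant depending only on $L$ (and not on $I$, $T$, or the phase itself). Once this is in place, the amplitude version follows by a one-line integration by parts, and the resulting lemma is applied below with $L=2$ for the generic stationary phase analysis and with $L=3$ in the degenerate regime $\alpha\in(0,1)$, where $\psi^{\ssf\prime\prime}$ vanishes at $\pm\lambda_{\vsf0}$ as detected in Lemma~\ref{lem:phase2Hn}(v).
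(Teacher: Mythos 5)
Your proposal is correct: it is precisely the standard van der Corput argument (amplitude-free estimate by induction on $L$, with the base case $L=2$ handled by splitting around the small-$|\Psi'|$ region and optimizing, followed by the uniform-in-subintervals bound and an Abel-summation/integration-by-parts step to insert the amplitude and produce the factor $\|a\|_\infty+\|a'\|_1$). The paper does not prove this lemma at all but simply cites \cite[Ch.~VIII, Cor.~p.~334]{Stein1993}, and your reconstruction coincides with the proof given there, so there is nothing to compare beyond noting that your write-up supplies the details the paper delegates to the reference.
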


\subsubsection{Case 1 - large spatial scale} \label{sect:Case1}
If \ssf$r$ \vsf is bounded from below, let us say by 1, we use the large scale expansion provided by {Lemma \ref{largescaleExp}}. 
%we use the following large scale con\-ver\-ging expansion
%of the spherical functions \ssf$\phi_\lambda(r)$\vsf,
%which is due to Harish--Chandra (see for instance \cite[Formula (2.17)]{Koornwinder1984})\,:
%\begin{equation}
%\phi_\lambda(r)
%=\bc(\lambda)\,\Phi_\lambda(r)+\bc(-\lambda)\,\Phi_{-\lambda}(r)
%\qquad\forall\;\lambda\!\in\!\C\!\smallsetminus\!i\ssf\Z\ssf,
%\end{equation}
%where
%\begin{equation}
%\begin{aligned}
%\Phi_{\lambda}(r)&\textstyle
%\,=(\ssf2\sinh r)^{\ssf i\lambda-\rho}\,{}_2F_1\bigl(
%\frac\rho2\!-\!i\ssf\frac\lambda2,-\frac{\rho-1}2\!-\!i\ssf\frac\lambda2\ssf;
%1\!-\!i\ssf\lambda\ssf;-\sinh^{-2}\ssb r\bigr)\\
%&=\,(\ssf2\sinh r)^{-\rho}\,e^{\ssf i\ssf\lambda\ssf r}\,
%\sum\nolimits_{\ssf\ell=0}^{+\infty}\ssf
%\Gamma_{\ssb\ell\ssf}(\lambda)\,e^{-2\ssf\ell\ssf r}\,.
%\end{aligned}\end{equation}
%
%The coefficients \ssf$\Gamma_{\ssb\ell\ssf}(\lambda)$
%\ssf in this expansion are
%inhomogeneous symbols of order $0$
%on \ssf$\R\ssf$. Actually, \ssf$\Gamma_{\ssb0}\ssb\equiv\ssb1$\ssf,
%while the other \ssf$\Gamma_{\ssb\ell}$ are inhomogeneous symbols of order $-1$
%(see for instance \cite[Lem.~1]{AnkerPierfeliceVallarino2015}).
%More precisely,
%there are constants \msf$\gamma\!\ge\!0$
%\ssf and \ssf$C_j\!\ge\msb0$ $(j\!\in\!\N)$, such that
%\begin{equation}\label{eq:EstimateGamma}
%\bigl|\ssf\bigl(\tfrac\partial{\partial\lambda}\bigr)^j\msf
%\Gamma_{\ssb\ell\ssf}(\lambda)\ssf\bigr|
%\le C_j\,\ell^{\msf\gamma}\msf
%(\ssf1\!+\ssb|\lambda|\ssf)^{-\vsf j}
%\qquad\forall\;\ell\!\in\!\N^*,\,\forall\;\lambda\!\in\!\R\,.
%\end{equation}}
By substituting \eqref{eq:HCexpansion} and \eqref{eq:Philambda} in \eqref{eq:KernelSigma},
we get
\begin{equation*}
k_{\ssf t}^{\ssf\sigma}\ssb(r)=C\,(\sinh r)^{-\rho}\ssf
\sum\nolimits_{\ssf\ell=0}^{\vsf+\infty}
e^{-2\ssf\ell\ssf r}\,k_{\ssf t,\ssf\ell}^{\ssf\sigma}(r)\,,
\end{equation*}
where
\begin{equation}\label{eq:KernelKtellsigmar}
k_{\ssf t,\ssf\ell}^{\ssf\sigma}(r)
=\!\int_{-\infty}^{\vsf+\infty}\hspace{-1mm}\hspace{1mm}
a_{\vsf\ell}(\lambda)\,e^{\hspace{.4mm}i\hspace{.4mm}t\msf\psi_{\vsb\frac rt\ssb}(\lambda)}\,\mathrm{d}\lambda,
\end{equation}
with
\begin{equation}\label{eq:Amplitude}
a_{\vsf\ell}(\lambda)=
\tfrac{\Gamma(i\vsf\lambda\vsf+\vsf\rho\vsf)}{\Gamma(i\vsf\lambda)}\,
(\lambda^{\vsb2}\!+\msb\rho^{\vsf2}\vsf)^{-\frac\sigma2}\,
\Gamma_{\ssb\ell\ssf}(-\lambda\vsf)\,.
\end{equation}

\textbf{Subcase 1.1}.
Assume that \vsf$1\!<\msb\alpha\msb<\msb2$\ssf.
\smallskip

{\textit{Subcase 1.1.1\/}.}
Consider first the case where \msf$\frac rt$
\ssf or equivalently \ssf$\lambda_1$ remains bounded,
let say $\lambda_1\!\le\!1$, {that is case \circled{1} in Figure~\ref{fig:RegionsAlphaLarge}}.
Given {an even} bump function \,$\chi_0\!\in\msb C_c^{\ssf\infty}(\R)$
\ssf such that \ssf$\chi_0\!\equiv\msb1$ \vsf on \ssf$[\vsf{-2}\vsf,\vsb2\ssf]$\ssf,
let us split up
\begin{equation} \label{eq:integral_split}
\int_{-\infty}^{\vsf+\infty}\hspace{-1.5mm}\;\mathrm{d}\lambda
=\int_{-\infty}^{\vsf+\infty}\hspace{-1.5mm}
\hspace{1mm}\chi_0\vsb(\lambda)\;\mathrm{d}\lambda
+\int_{-\infty}^{\vsf+\infty}\hspace{-1.5mm}
[\ssf1\!-\msb\chi_0\vsb(\lambda)\ssf]\;\mathrm{d}\lambda
\end{equation}
in \eqref{eq:KernelKtellsigmar} and
\begin{equation} \label{eq:kernel_split}
k_{\ssf t,\ssf\ell}^{\ssf\sigma}(r)
=\ssf k_{\ssf t,\ssf\ell}^{\ssf\sigma\ssb,\vsf0}\ssb(r)
+\ssf k_{\ssf t,\ssf\ell}^{\ssf\sigma\ssb,\vsf\infty}\ssb(r)
\end{equation}
accordingly.
On the one hand,
after an integration by parts based on
\begin{equation}\label{eq:IBP0}
e^{\hspace{.4mm}i\hspace{.4mm}t\hspace{.4mm}
(\lambda^{\vsb2}+\ssf\rho^{\vsf2}\vsf)^{\vsf\alpha\vsb/2}}\ssb
=\tfrac{-\ssf i}{\alpha\msf t\ssf\lambda}\,
(\lambda^{\vsb2}\!+\msb\rho^{\vsf2\vsf})^{1-\frac\alpha2}\,
\tfrac\partial{\partial\lambda}\,
e^{\hspace{.4mm}i\hspace{.4mm}t\hspace{.4mm}
(\lambda^{\vsb2}+\ssf\rho^{\vsf2}\vsf)^{\vsf\alpha\vsb/2}}\ssf,
\end{equation}
we get
\begin{equation*}
k_{\ssf t,\ssf\ell}^{\ssf\sigma\ssb,\vsf0}\ssb(r)
=\ssf i\,\tfrac rt\int_{-\infty}^{\vsf+\infty}\hspace{-1mm}\hspace{1mm}
e^{\hspace{.4mm}i\hspace{.4mm}t\msf\psi_{\vsb\frac rt\ssb}(\lambda)}\,
a_{\vsf\ell}^{\vsf0}(\lambda) \;\mathrm{d}\lambda
-\ssf\tfrac 1t\int_{-\infty}^{\vsf+\infty}\hspace{-1mm}\hspace{1mm}
e^{\hspace{.4mm}i\hspace{.4mm}t\msf\psi_{\vsb\frac rt\ssb}(\lambda)}\,
\tfrac\partial{\partial\lambda}\msf a_{\vsf\ell}^{\vsf0}(\lambda)\msf\;\mathrm{d}\lambda,
\end{equation*}
where
\begin{equation*}
a_{\vsf\ell}^{\vsf0}(\lambda)=\tfrac1\alpha\,\chi_0\vsb(\lambda)\,
(\lambda^{\vsb2}\!+\msb\rho^{\vsf2}\vsf)^{1-\frac\alpha2-\frac\sigma2}\,
\tfrac{\Gamma(i\vsf\lambda\vsf+\vsf\rho\vsf)}{\Gamma(i\vsf\lambda\vsf+1)}\,
\Gamma_{\ssb\ell\ssf}(-\lambda\vsf)
\end{equation*}
is a smooth function with compact support.
By using Lemma \ref{lem:vanderCorput} with \ssf${L}\msb=\ssb2$\ssf,
together with Lemma \ref{lem:phase1Hn}~(i) and \eqref{eq:EstimateGamma},
we estimate
\begin{equation*}
|\ssf k_{\ssf t,\ssf\ell}^{\ssf\sigma\ssb,\vsf0}\ssb(r)\vsf|
\lesssim(1\!+\msb\ell\ssf)^\gamma\,t^{-\frac32}\,r\,.
\end{equation*}
On the other hand, after \ssf$N$ integrations by parts based on
\begin{equation}\label{eq:IBP1}
e^{\hspace{.4mm}i\hspace{.4mm}t\msf\psi_{\vsb\frac rt\ssb}(\lambda)}\ssb
=\tfrac{-\ssf i\vphantom{|}}{t\,\psi_{\vsb\frac rt\ssb}^{\msf\prime}(\lambda)}\,
\tfrac{\partial\vphantom{|}}{\partial\lambda\vphantom{|}}\,
e^{\hspace{.4mm}i\hspace{.4mm}t\msf\psi_{\vsb\frac rt\ssb}(\lambda)}\ssf,
\end{equation}
we get
\begin{equation*}
k_{\ssf t,\ssf\ell}^{\ssf\sigma\ssb,\vsf\infty}\ssb(r)
=\bigl(\tfrac{i\vphantom{|}}t\bigr)^{\ssb N}\!
\int_{-\infty}^{\vsf+\infty}\hspace{-1.5mm}
e^{\hspace{.4mm}i\hspace{.4mm}t\msf\psi_{\vsb\frac rt\ssb}(\lambda)}\msf
a_{\vsf\ell}^{\infty}(\lambda)\msf \;\mathrm{d}\lambda,
\end{equation*}
where
\begin{equation*}
a_{\vsf\ell}^{\infty}(\lambda)
=\bigl\{\underbrace{\ssf
\tfrac\partial{\partial\lambda}\ssb\circ\ssb
\tfrac{1\vphantom{|}}{\psi_{\vsb\frac rt\ssb}^{\msf\prime}(\lambda)}
\ssb\circ\ssf\dots\ssf\circ\ssb
\tfrac\partial{\partial\lambda}\ssb\circ\ssb
\tfrac{1\vphantom{|}}{\psi_{\vsb\frac rt\ssb}^{\msf\prime}(\lambda)}
\ssf}_{N\text{ times}}\bigr\}\ssf\bigl\{\ssf
[\ssf1\!-\msb\chi_0\vsb(\lambda)\ssf]\,a_{\vsf\ell}(\lambda)
\vsf\bigr\}
\end{equation*}
is an inhomogeneous symbol of order \msf$\frac{n\vsf-1}2
\msb-\msb\sigma\!-\!N\alpha$\ssf,
according to Lemma \ref{lem:phase1Hn}~(ii), Lemma \ref{lem:phase1Hn}~(i),
\eqref{eq:Amplitude} and \eqref{eq:EstimateGamma}.
Hence
\begin{equation*}
|\ssf k_{\ssf t,\ssf\ell}^{\ssf\sigma\ssb,\vsf\infty}\ssb(r)\vsf|
\lesssim(1\!+\msb\ell\ssf)^\gamma\,t^{-N}\ssf,
\end{equation*}
provided that \msf$N\!
>\msb\frac{n\vsf+1-\vsf2\vsf\sigma}{2\vsf\alpha}$\ssf.
By taking \,$N\msb=\ssb\max\msf\bigl\{2\ssf,
\lfloor\frac{n\vsf+1-\vsf2\vsf\sigma}{2\vsf\alpha}\rfloor
\msb+\msb1\bigr\}$
\ssf and by summing up over \ssf$\ell$\ssf,
we conclude that
\begin{equation}\label{eq:EstimateKernel1}
|\ssf k_{\ssf t}^{\ssf\sigma}\ssb(r)\vsf|
\lesssim t^{-\frac32}\,r\,e^{-\rho\ssf r}
\end{equation}
when \msf$t\msb\ge\msb r\!\ge\!1$\ssf.
\smallskip

{\textit{Subcase 1.1.2\/}.}
Consider next the case where \ssf$\lambda_1\!\ge\!1$, {that is case \circled{2} in Figure~\ref{fig:RegionsAlphaLarge}}.
Given \ssf$0\msb<\msb\beta\msb<\!1$
and a bump function \ssf$\chi_1\hspace{-1mm}\in\!C_c^{\vsf\infty\ssb}(\R)$ such that
\vspace{.5mm}

\centerline{$
\chi_1\msf\equiv\msf\begin{cases}
\,1
&\text{on \,}[\ssf\beta,\ssb\tfrac1\beta\ssf]\ssf,\\
\,0
&\text{outside \,}(\frac\beta2,\ssb\tfrac2\beta\ssf)\ssf,\\
\end{cases}$}

let us now split up
\begin{equation*}
a_{\vsf\ell}(\lambda)
=\underbrace{
\chi_1(\lambda_1^{\ssb-1}\lambda)\msf a_{\vsf\ell}(\lambda)
\vphantom{\big|}}_{=\,a_\ell^1(\lambda)}\ssf
+\ssf\underbrace{
[\ssf1\!-\msb\chi_1(\lambda_1^{\ssb-1}\lambda)\ssf]
\msf a_{\vsf\ell}(\lambda)
\vphantom{\big|}}_{=\,a_\ell^\infty(\lambda)}
\end{equation*}
\vspace{-5mm}

and
\begin{equation*}
k_{\ssf t,\ssf\ell}^{\ssf\sigma}(r)
=k_{\ssf t,\ssf\ell}^{\ssf\sigma\ssb,1}\ssb(r)
+k_{\ssf t,\ssf\ell}^{\ssf\sigma\ssb,\vsf\infty}\ssb(r)
\end{equation*}
accordingly.
On the one hand,
by using Lemma \ref{lem:vanderCorput} with \ssf${L}\msb=\ssb2$\ssf,
together with Lemma \ref{lem:phase1Hn}~(i), \eqref{eq:Amplitude} and \eqref{eq:EstimateGamma},
we estimate
\begin{equation*}
|\ssf k_{\ssf t,\ssf\ell}^{\ssf\sigma\ssb,1}\ssb(r)\vsf|\ssf
\lesssim\ssf(1\!+\msb\ell\ssf)^\gamma\,t^{-\frac12}\msf
\lambda_1^{\msb\frac{n\vsf+1-\vsf\alpha}2-\vsf\sigma}
\asymp\ssf(1\!+\msb\ell\ssf)^\gamma\,
t^{-\frac{n\vsf-\vsf2\ssf\sigma}{2\ssf(\alpha\vsf-1)}}\,
r^{\ssf\frac{n\vsf-\vsf2\ssf\sigma}{2\ssf(\alpha\vsf-1)}-\frac12}\ssf.
\end{equation*}
On the other hand,
after \ssf$N$ integrations by parts based on \eqref{eq:IBP1},
\begin{equation*}
k_{\ssf t,\ssf\ell}^{\ssf\sigma\ssb,\vsf\infty}\ssb(r)
=\bigl(\tfrac{i\vphantom{|}}t\bigr)^{\ssb N}\!
\int_{-\infty}^{\vsf+\infty}\hspace{-1.5mm}
e^{\hspace{.4mm}i\hspace{.4mm}t\msf\psi_{\vsb\frac rt\ssb}(\lambda)}\msf
a_{\vsf\ell}^{\infty}(\lambda)\msf\;\mathrm{d}\lambda,
\end{equation*}
\vspace{-4mm}

with
\begin{equation*}
a_{\vsf\ell}^{\infty}(\lambda)
=\bigl\{\underbrace{\ssf
\tfrac\partial{\partial\lambda}\ssb\circ\ssb
\tfrac{1\vphantom{|}}{\psi_{\vsb\frac rt\ssb}^{\msf\prime}(\lambda)}\ssb
\circ\ssf\dots\ssf\circ\ssb
\tfrac\partial{\partial\lambda}\ssb\circ\ssb
\tfrac{1\vphantom{|}}{\psi_{\vsb\frac rt\ssb}^{\msf\prime}(\lambda)}
\ssf}_{N\text{ times}}\bigr\}\ssf\bigl\{\ssf
[\ssf1\!-\msb\chi_1(\lambda_1^{\ssb-1}\lambda)\ssf]\,
a_{\vsf\ell}(\lambda)\vsf\bigr\}\msf.
\end{equation*}
As
\begin{equation*}
|\ssf a_{\vsf\ell}^{\infty}(\lambda)\vsf|
\lesssim(1\!+\msb\ell\ssf)^\gamma\,
(\vsf|\lambda|\msb+\msb\lambda_1)^{-N(\alpha\vsf-1)}\msf
(1\!+\msb|\lambda|\vsf)^{\frac{n-1}2-\vsf\sigma-N}\,,
\end{equation*}
\vspace{-4mm}

according to Lemma \ref{lem:phase1Hn}~(ii), Lemma \ref{lem:phase1Hn}~(i),
\eqref{eq:Amplitude} and \eqref{eq:EstimateGamma},
we have
\begin{equation*}
\int_{\ssf|\lambda|\ssf\le\ssf\lambda_1}
\hspace{-1.5mm}\,|\ssf a_{\vsf\ell}^{\infty}(\lambda)\vsf|\,\mathrm{d}\lambda
\lesssim(1\!+\msb\ell\ssf)^\gamma\msf
\lambda_1^{\ssb-N\vsb(\alpha\vsf-1)}
\end{equation*}
\vspace{-3mm}

and
\begin{equation*}
\int_{\ssf|\lambda|\ssf\ge\ssf\lambda_1}
\hspace{-1.5mm}|\ssf a_{\vsf\ell}^{\infty}(\lambda)\vsf|\,\mathrm{d}\lambda
\lesssim(1\!+\msb\ell\ssf)^\gamma\msf
\lambda_1^{\msb\frac{n+1}2-\vsf\sigma-N\ssb\alpha},
\end{equation*}
provided that \msf$N\!>\msb
\frac{n+1}2\msb-\msb\sigma$\ssf.
Hence
\begin{equation*}
|\ssf k_{\ssf t,\ssf\ell}^{\ssf\sigma\ssb,\vsf\infty}\ssb(r)\vsf|
\lesssim(1\!+\msb\ell\ssf)^\gamma\,t^{\vsf-N}\ssf
\lambda_1^{\ssb-N(\alpha-1)}
\asymp\,(1\!+\msb\ell\ssf)^\gamma\msf r^{-N}\,.
\end{equation*}
By summing up over \ssf$\ell$\ssf,
we conclude that
\begin{equation}\label{eq:EstimateKernel2}
|\ssf k_{\ssf t}^{\ssf\sigma}\ssb(r)\vsf|\lesssim\ssf
t^{-\frac{n\vsf-\vsf2\ssf\sigma}{2\ssf(\alpha\vsf-1)}}\,
r^{\ssf\frac{n\vsf-\vsf2\ssf\sigma}{2\ssf(\alpha\vsf-1)}-\frac12}\,
e^{-\rho\ssf r}
\end{equation}
when \msf$t\msb>\msb0$ \ssf and
\msf$r\!\ge\hspace{-.4mm}\max\msf\{1,t\ssf\}$\ssf.
\medskip

\textbf{Subcase 1.2\/}.
Assume that \ssf$0\msb<\msb\alpha\msb<\!1$\vsf.
The analysis of \eqref{eq:KernelKtellsigmar} depends again on the size of~$\msf\frac rt\ssf$.
\smallskip

{\textit{Subcase 1.2.1\/}.}
Firstly, if \ssf$\frac rt\msb\ge\msb2\ssf\theta_0$ {(see Figure~\ref{fig:RegionsAlphaSmall}, case \circled{1})},
the phase \ssf$\psi_{\vsf\frac rt}$ has no stationary point
and, after \ssf$N$ integrations by parts based on \eqref{eq:IBP1},
\eqref{eq:KernelKtellsigmar} becomes
\vspace{-2mm}
\begin{equation*}
k_{\ssf t,\ssf\ell}^{\ssf\sigma}(r)
=\bigl(\tfrac{i\vphantom{|}}r\bigr)^{\ssb N}\!
\int_{-\infty}^{\vsf+\infty}\hspace{-1.5mm}\hspace{1mm}
e^{\hspace{.4mm}i\hspace{.4mm}t\msf\psi_{\vsb\frac rt\ssb}(\lambda)}\msf
\bigl\{\overbrace{
\ssf\tfrac\partial{\partial\lambda}\ssb\circ\ssb
\tfrac{r\vphantom{|}}{t\msf\psi_{\vsb\frac rt\ssb}^{\msf\prime}(\lambda)}\ssb
\circ\ssf\dots\ssf\circ\ssb
\tfrac\partial{\partial\lambda}\ssb\circ\ssb
\tfrac{r\vphantom{|}}{t\msf\psi_{\vsb\frac rt\ssb}^{\msf\prime}(\lambda)}
\ssf}^{N\text{ times}}\bigr\}\msf
a_{\vsf\ell}(\lambda)\msf\;\mathrm{d}\lambda,
\end{equation*}
where the amplitude is an inhomogeneous symbol of order
\vsf$\frac{n\vsf-1}2\msb-\msb\sigma\!-\!N$\vsb,
according this time to \eqref{eq:Estimate1eq_Psiprime}.
Thus
\vspace{-.5mm}

\centerline{$
|\ssf k_{\ssf t,\ssf\ell}^{\ssf\sigma}(r)\vsf|
\lesssim(1\!+\msb\ell\ssf)^\gamma\,r^{\vsf-N}\ssf,
$}\vspace{.5mm}

provided that $N\!>\msb\frac{n\vsf+1}2\msb-\msb\sigma$\vsf, and

\centerline{$
|\ssf k_{\ssf t}^{\ssf\sigma}\ssb(r)\vsf|
\lesssim\ssf r^{\vsf-N}\ssf e^{-\rho\ssf r}\ssf,
$}\vspace{1mm}

after summing up over \ssf$\ell$\ssf.
\smallskip

{\textit{Subcase 1.2.2\/}.}
Secondly, assume that
\msf$\theta_{\vsf0}/2\msb\le\msb\frac rt\msb\le\msb2\ssf\theta_0$ {(see Figure~\ref{fig:RegionsAlphaSmall}, case \circled{2})}
{and let \msf$0\msb<\msb c_{\vsf1}\!<\!1\!<\msb c_{\ssf2}\!<\!\smash{\sqrt{\ssf3\ssf}}$ \ssf such that \msf$\theta(c_{\vsf1}\lambda_{\vsf0})\msb=\ssb\theta_0/2\ssb=\ssb\theta(c_{\ssf2}\ssf\lambda_{\vsf0})$\ssf.}
Then all stationary points of the phase \ssf$\psi_{\vsf\frac rt}$ are contained in
\ssf$[\ssf c_{\vsf1}\lambda_{\vsf0}\vsf,\ssb c_{\ssf2}\ssf\lambda_{\vsf0}\ssf]$\ssf,
according to Lemma \ref{lem:phase2Hn}.
Let us split up \eqref{eq:integral_split}
%\vspace{.5mm}
%\centerline{$\displaystyle
%\int_{-\infty}^{\vsf+\infty}\hspace{-1.5mm}\;\mathrm{d}\lambda
%=\int_{-\infty}^{\vsf+\infty}\hspace{-1.5mm}\hspace{1mm}\chi_0(\lambda)\,\mathrm{d}\lambda
%+\int_{-\infty}^{\vsf+\infty}\hspace{-1.5mm}
%[\ssf1\!-\msb\chi_0(\lambda)\ssf] \;\mathrm{d}\lambda
%$}
in \eqref{eq:KernelKtellsigmar} and \eqref{eq:kernel_split}
%\centerline{$
%k_{\ssf t,\ssf\ell}^{\ssf\sigma}(r)
%=\ssf k_{\ssf t,\ssf\ell}^{\ssf\sigma\ssb,\vsf0}\ssb(r)
%+\ssf k_{\ssf t,\ssf\ell}^{\ssf\sigma\ssb,\vsf\infty\ssb}(r)
%$}\vspace{1mm}
accordingly, where \ssf$\chi_0\!\in\!C_c^{\vsf\infty\ssb}(\R)$ is a bump function  such that \ssf$\chi_0\msb=\!1$ on a neighborhood of
\ssf$[\ssf c_{\vsf1}\lambda_{\vsf0}\vsf,\ssb c_{\ssf2}\ssf\lambda_{\vsf0}\ssf]$ \ssf and \ssf$\supp\chi_0\msb\subset\!
(0,\msb\sqrt{\ssf3\ssf}\ssf{\lambda_0}\vsf)$\vsf.
We estimate again
\vspace{.5mm}

\centerline{$
|\ssf k_{\ssf t,\ssf\ell}^{\ssf\sigma\ssb,\vsf0}\ssb(r)\vsf|
\lesssim(1\!+\msb\ell\ssf)^\gamma\,t^{\vsf-\frac13}\ssf,
$}\vspace{.5mm}

by using Lemma \ref{lem:vanderCorput}, this time with \ssf${L}\msb=\ssb3$\ssf, and
\vspace{.5mm}

\centerline{$
|\ssf k_{\ssf t,\ssf\ell}^{\ssf\sigma\ssb,\vsf\infty\ssb}(r)\vsf|
\lesssim(1\!+\msb\ell\ssf)^\gamma\,t^{-N}\ssf,
$}\vspace{.5mm}

by performing \ssf$N$ integrations by parts based on \eqref{eq:IBP1}.
In conclusion,
\vspace{.5mm}

\centerline{$
\bigl|\ssf k_{\ssf t}^{\ssf\sigma}\ssb(r)\vsf\bigr|
\lesssim\ssf r^{\vsf-\frac13}\msf e^{-\rho\ssf r}\ssf,
$}\vspace{.5mm}

as \ssf$t$ \vsf and \ssf$r$ are comparable under the present assumptions.
\smallskip

{\textit{Subcase 1.2.3\/}.}
Thirdly, in the remaining case \ssf$0\msb<\msb\frac rt\msb<\ssb\theta_0/2$ {(see Figure~\ref{fig:RegionsAlphaSmall}, case \circled{3})},
the phase \ssf$\psi_{\vsf\frac rt}$ has two stationary points\,:
\vsf$\lambda_1\!\in\msb(\vsf0,\vsb c_{\vsf1}\lambda_{\vsf0})$ and
\vsf$\lambda_{\vsf2}\msb\in\msb(\vsf c_{\ssf2}\ssf\lambda_{\vsf0}\vsf,\ssb+\infty)\ssf$.
We shall isolate these two points by means of bump functions.
Let \ssf$\chi_0\!\in\msb C_c^{\vsf\infty\ssb}(\R)$ and
\ssf$\chi_2\!\in\msb C_c^{\vsf\infty\ssb}(\R)$ such that
\begin{equation}\label{eq:TwoBumpFunctions}
\begin{cases}
\,\chi_0\msb=\!1\text{ \,on \,}[\ssf-1,\vsb b_{\vsf\vsf1}\lambda_{\vsf0}\ssf]\\
\,\supp\chi_0\msb\subset\msb[-\ssf2\ssf,b_{\vsf\vsf2}\vsf\lambda_{\vsf0}\ssf]
\end{cases}
\quad\text{and}\qquad
\begin{cases}
\,\chi_2\ssb=\!1\text{ \msf on \,}{[\ssf b_{\vsb3}^{-1},\vsb b_{\vsb3}\ssf]}\\
\,\supp\chi_2\msb\subset\msb{[\ssf b_{\vsf4}^{-1},\vsb b_{\vsf4}\ssf]}
\end{cases}\end{equation}
where \ssf{$c_{\vsf1}\!<\ssb b_{\vsf\vsf1}\!<\ssb b_{\vsf\vsf2}\!<\msb1\msb<\ssb b_{\vsf3}\msb<\ssb b_{\vsf4}\msb<\ssb c_2$}\ssf.
Then \ssf$\chi_0$ \ssf and \ssf$\chi_2(\lambda_{\vsf2}^{-1}\msf\cdot\,)$
are smooth bump functions around \ssf$\lambda_1$ and \ssf$\lambda_{\vsf2}$ \ssf {respectively},
whose supports are disjoint and don't contain \ssf$\lambda_{\vsf0}$\ssf.
This follows indeed from the inequalities
\vspace{.5mm}

\centerline{$
b_{\vsf\vsf2}\ssf\lambda_{\vsf0}\msb<\ssb\lambda_{\vsf0}\msb<\ssb{b_{\vsf4}^{-1}\vsf c_{\vsf2}}\ssf\lambda_{\vsf0}\msb<\ssb{b_{\vsf4}^{-1}}\vsf\lambda_{\vsf2}\ssf.
$}\vspace{-.5mm}

%\centerline{\color{darkgreen}[JP: the last inequality follows from $\lambda_2>c_2\lambda_0$]}

Let us split up
\begin{equation*}
\int_{-\infty}^{\vsf+\infty}\hspace{-2mm}\;\mathrm{d}\lambda\,
=\int_{-\infty}^{\vsf+\infty}\hspace{-2mm}\chi_0(\lambda)\;\mathrm{d}\lambda
+\int_{-\infty}^{\vsf+\infty}\hspace{-2mm}\chi_2(\lambda_{\vsf2}^{\ssb-1\ssb}\lambda)\;\mathrm{d}\lambda
+\int_{-\infty}^{\vsf+\infty}\hspace{-2mm}[\vsf1\!-\hspace{-.6mm}\chi_0(\lambda)\msb
-\hspace{-.6mm}\chi_2(\lambda_{\vsf2}^{\ssb-1\ssb}\lambda)\ssf]\;\mathrm{d}\lambda
\end{equation*}
in \eqref{eq:KernelKtellsigmar} and
\begin{equation}\label{eq:ThreeTermsSplitting}
k_{\ssf t,\ssf\ell}^{\ssf\sigma}(r)
=\ssf k_{\ssf t,\ssf\ell}^{\ssf\sigma\ssb,\vsf0}\ssb(r)
+\ssf k_{\ssf t,\ssf\ell}^{\ssf\sigma\ssb,\vsf2}\ssb(r)
+\ssf k_{\ssf t,\ssf\ell}^{\ssf\sigma\ssb,\vsf\infty}\ssb(r)
\end{equation}
accordingly.
We estimate each term as we did in Subcase 1.1,
using Lemma \ref{lem:phase2Hn} instead of Lemma \ref{lem:phase1Hn}.
This way, we obtain
\begin{equation}\label{eq:ThreeEstimates}\begin{cases}
\,|\ssf k_{\ssf t,\ssf\ell}^{\ssf\sigma\ssb,\vsf0}\ssb(r)\vsf|
\lesssim(1\!+\msb\ell\ssf)^\gamma\,t^{\vsf-\frac32}\,r\msf,\\
\,|\ssf k_{\ssf t,\ssf\ell}^{\ssf\sigma\ssb,\vsf2}\ssb(r)\vsf|
\lesssim(1\!+\msb\ell\ssf)^\gamma\,
t^{-\frac{2\ssf\sigma-\vsf n}{2\ssf(1-\vsf\alpha)}}\,
r^{\ssf\frac{2\ssf\sigma-\vsf n}{2\ssf(1-\vsf\alpha)}-\frac12}\ssf,\\
\,|\ssf k_{\ssf t,\ssf\ell}^{\ssf\sigma\ssb,\vsf\infty}\ssb(r)\vsf|
\lesssim(1\!+\msb\ell\ssf)^\gamma\,t^{\vsf-N}\ssf,
\end{cases}\end{equation}
provided that \ssf$\alpha\ssf N\!>\msb\frac{n\vsf+1}2\msb-\msb\sigma$, hence
\begin{equation}\label{eq:Estimateeq_KernelSigma}
|\ssf k_{\ssf t}^{\ssf\sigma}\ssb(r)\vsf|
\lesssim\bigl(\tfrac rt\bigr)^{
\min\ssf\left\{\ssb\frac32,\ssf\frac{2\ssf\sigma-\vsf n}{2\ssf(1-\vsf\alpha)}\ssb\right\}}\,
r^{\vsf-\vsb\frac12}\,e^{-\rho\ssf r}\ssf.
\end{equation}

\begin{remark}\label{rm:RemarkLowerBoundOnR}
All results so far, which have been proved under the assumption \,$r\!\ge\!1$\vsf,
hold actually for \,$r\!\ge\msb r_0$ with \,$r_0\msb>\msb0$ fixed\ssf.
\end{remark}

\subsubsection{Case 2 - small spatial scale} \label{sect:Case2}
If \ssf$r$ \vsf is bounded above, let us say by $1$\vsf,
we use two expressions of the spherical functions  \ssf$\phi_\lambda(r)$\vsf, {namely Harish-Chandra integral formula \eqref{eq:HCformula}, with} \ssf$H(a_{\ssf\pm\ssf r}\ssf k)\msb\in\msb[-r,r\ssf]$ {and Stanton-Tomas-Ionescu formula \eqref{eq:STI} (see also Lemma \ref{smallscaleExp}).}\\

\textbf{Subcase 2.1}.
Assume that \vsf$1\!<\ssb\alpha\ssb<\ssb2$ {\ssf and \ssf$0\ssb\le\ssb r\msb\le\msb1$\vsf, \ssf$t\msb>\msb0$ \ssf (see Figure~\ref{fig:RegionsAlphaLarge}, cases \circled{3} -- \circled{4} -- \circled{5}).}
\smallskip

{\textit{Subcase 2.1.1}. Consider first the range} \ssf$r\msb\le \ssb t^{\vsf\frac1\alpha}$ {(see Figure~\ref{fig:RegionsAlphaLarge}, {cases \circled{3} -- \circled{4})}}.
By using \eqref{eq:HCformula},
\eqref{eq:KernelSigma} becomes
\begin{equation}\label{eq:KernelSigmaAverage}
k_{\ssf t}^{\ssf\sigma}\ssb(r)
=\ssf\const\int_K\!\hspace{1mm}e^{\vsf-\ssf\rho\ssf H(a_{\vsf-r}k)}\,
\tilde{k}_{\ssf t}^{\ssf\sigma}\ssb(H(a_{\vsf-r}\ssf k)\ssb)\msf\; \mathrm{d}k,
\end{equation}
where
\begin{equation}\label{eq:KernelSigmaH}
\tilde{k}_{\ssf t}^{\ssf\sigma}\ssb(H)
=\!\int_{-\infty}^{\vsf+\infty}\hspace{-1mm}\,
|\vsf\bc(\lambda)|^{-2}\,(\lambda^{\vsb2}\!+\msb\rho^{\ssf2}\vsf)^{-\frac\sigma2}\,
e^{\ssf i\ssf t\ssf(\lambda^2+\ssf\rho^2)^{\vsf\alpha\vsb/2}\vsf-\ssf i\ssf\lambda\ssf H}\ssf\; \mathrm{d}\lambda.
\end{equation}
We estimate \eqref{eq:KernelSigmaH} when \ssf$H\!\in\msb[-r,\vsb r\ssf]$
\ssf by resuming the analysis in Subcase 1.1
and by dealing separately with the cases \ssf$|H|\msb\le\msb1\!\le\ssb t$ \vsf
and \ssf$|H|\msb\le\ssb t^{\vsf\frac1\alpha}\!\le\msb1$\vsf.
Let us elaborate.
\smallskip

$\bullet$
\,If \ssf$|H|\msb\le\msb1\!\le\ssb t$ {(see Figure~\ref{fig:RegionsAlphaLarge}, case \circled{3})},
the stationary point \ssf{$\lambda_1$}
of the phase \eqref{eq:phaseHn},
with \ssf$R=\ssb\tfrac Ht\msb\in\msb[-1,\vsb1\vsf]$\ssf,
{remains bounded, according to Lemma \ref{lem:phase1Hn},
say \ssf$|\lambda_1|\ssb\le\ssb c$ \ssf for some constant \ssf$c\ssb>\ssb0$\ssf.}
Let us split up
\vspace{1mm}

\centerline{$\displaystyle
\int_{-\infty}^{\vsf+\infty}\hspace{-1.5mm}\; \mathrm{d}\lambda\,
=\int_{-\infty}^{\vsf+\infty}\hspace{-1.5mm}\hspace{1mm}
\chi\bigl(\vsb\tfrac\lambda{2\ssf{c}}\ssb\bigr)\; \mathrm{d}\lambda
+\int_{-\infty}^{\vsf+\infty}\hspace{-1.5mm}\,
\bigl[\ssf1\!-\msb\chi\bigl(\vsb\tfrac\lambda{2\ssf{c}}\ssb\bigr)\bigr]\; \mathrm{d}\lambda
$}
%{\color{blue}[I don't understand why we need to divide by 1/2 inside the bump fct]} \textcolor{darkgreen}{[In the second integral, $|\lambda|>2\ssf c$ must stay away from the stationary point, which lies in $[-c,c\ssf]$]}
in \eqref{eq:KernelSigmaH} and
\begin{equation}\label{eq:KernelDecomposition1}
\tilde{k}_{\ssf t}^{\ssf\sigma}\ssb(H)
=\ssf\tilde{k}_{\ssf t}^{\ssf\sigma\ssb,\vsf0}(H)
+\ssf\tilde{k}_{\ssf t}^{\ssf\sigma\ssb,\vsf\infty}(H)
\end{equation}
accordingly,
where \ssf$\chi\!\in\msb C_c^{\vsf\infty\ssb}(\R)$ \vsf
is an even bump function  such that
\vspace{1mm}

\centerline{$
0\msb\le\msb\chi\msb\le\msb1\msf,\quad
\chi\msb=\!1\msf\text{ on }\,[-\vsb1,\ssb1\vsf]\,,\quad
\chi\msb=\msb0\,\text{ outside of }\,(\ssb-2,\ssb2\vsf)\msf.
$}\vspace{1mm}
%GP stopped here
On the one hand,
after an integration by parts based on \eqref{eq:IBP0},
the first term in \eqref{eq:KernelDecomposition1} becomes

\centerline{$\displaystyle
\tilde{k}_{\ssf t}^{\ssf\sigma\ssb,\vsf0}(H)\vsb
=C\msf i\,\tfrac Ht\ssb\int_{-\infty}^{\vsf+\infty}\hspace{-1mm}\hspace{1mm}
e^{\hspace{.4mm}i\hspace{.4mm}t\msf\psi_{\ssb\frac Ht}\msb(\lambda)}\msf a_{\vsf0}(\lambda)\;\mathrm{d}\lambda
-\ssf\tfrac Ct\ssb\int_{-\infty}^{\vsf+\infty}\hspace{-1mm}\hspace{1mm}
e^{\hspace{.4mm}i\hspace{.4mm}t\msf\psi_{\ssb\frac Ht}\msb(\lambda)}\msf
\tfrac\partial{\partial\lambda}\msf a_{\vsf0}(\lambda)\msf \;\mathrm{d}\lambda,
$}

where

\centerline{$\displaystyle
a_{\vsf0}(\lambda)\vsb
=\chi\bigl(\vsb\tfrac\lambda{2\ssf{c}}\ssb\bigr)\msf
\tfrac{\Gamma(i\vsf\lambda\vsf+\vsf\rho\vsf)}{\Gamma(i\vsf\lambda\vsf+1)}\msf
\tfrac{\Gamma(-\vsf i\vsf\lambda\vsf+\vsf\rho\vsf)}{\Gamma(-\vsf i\vsf\lambda)}\msf
(\lambda^{\vsb2}\!+\msb\rho^{\vsf2}\vsf)^{1-\frac\alpha2-\frac\sigma2}
$}\vspace{1mm}

is a smooth function with compact support.
By using Lemma \ref{lem:vanderCorput} with \ssf${L}\msb=\ssb2$\ssf,
we deduce that
\vspace{-1mm}
\begin{equation}\label{eq:EstimateKernelTildeSigma0Tlarge}
|\ssf\tilde{k}_{\ssf t}^{\ssf\sigma\ssb,\vsf0}(H)\vsf|
\lesssim t^{\vsf-\frac 32}\msf.
\end{equation}
On the other hand, after \ssf$N$ integrations by parts based on
\begin{equation}\label{eq:IBP2}
e^{\hspace{.4mm}i\hspace{.4mm}t\msf\psi_{\ssb\frac Ht}\msb(\lambda)}\ssb
=\tfrac{-\ssf i\vphantom{|}}{t\,\psi_{\msb\frac Ht}^{\msf\prime}\msb(\lambda)}\,
\tfrac{\partial\vphantom{|}}{\partial\lambda\vphantom{|}}\,
e^{\hspace{.4mm}i\hspace{.4mm}t\msf\psi_{\ssb\frac Ht}\msb(\lambda)}\ssf,
\end{equation}
\vspace{-4mm}

the second term in \eqref{eq:KernelDecomposition1} becomes

\centerline{$\displaystyle
\tilde{k}_{\ssf t}^{\ssf\sigma\ssb,\vsf\infty}(H)\vsb
=C\msf\bigl(\tfrac{i\vphantom{|}}t\bigr)^{\ssb N}\!
\int_{-\infty}^{\vsf+\infty}\hspace{-1.5mm}\,
e^{\hspace{.4mm}i\hspace{.4mm}t\msf\psi_{\ssb\frac Ht}\msb(\lambda)}\msf
a_{\vsf\infty}(\lambda)\msf\; \mathrm{d}\lambda,
$}

where
\vspace{1.5mm}

\centerline{$\displaystyle
a_{\vsf\infty}(\lambda)
=\bigl\{\underbrace{\ssf
\tfrac\partial{\partial\lambda}\ssb\circ\ssb
\tfrac{1\vphantom{|}}{\psi_{\msb\frac Ht}^{\msf\prime}\msb(\lambda)}
\ssb\circ\ssf\dots\ssf\circ\ssb
\tfrac\partial{\partial\lambda}\ssb\circ\ssb
\tfrac{1\vphantom{|}}{\psi_{\msb\frac Ht}^{\msf\prime}\msb(\lambda)}
\ssf}_{N\text{ times}}\bigr\}\ssf\bigl\{
\bigl[\vsf1\!-\msb\chi\bigl(\vsb\tfrac\lambda{2\ssf{c}}\ssb\bigr)\bigr]\,
|\vsf\bc(\lambda)|^{-2}\,
(\lambda^{\vsb2}\!+\msb\rho^{\ssf2}\vsf)^{-\frac\sigma2}
\vsf\bigr\}
$}\vspace{1.5mm}

is an inhomogeneous symbol of order \msf$n\msb-\!1\hspace{-.8mm}-\msb\sigma\!-\!N\alpha$\ssf,
according to Lemma \ref{lem:phase1Hn}.
Hence
\begin{equation}\label{eq:EstimateKernelTildeSigmaInftyTlarge}
|\ssf\tilde{k}_{\ssf t}^{\ssf\sigma\ssb,\vsf\infty}(H)\vsf|
\lesssim t^{\vsf-N}\ssf,
\end{equation}
provided that \msf$N\!>\msb\frac{n\ssf-\ssf\sigma}\alpha$\ssf.
By taking \,$N\msb=\ssb\max\msf\bigl\{2\ssf,
\lfloor\frac{n\ssf-\ssf\sigma}\alpha\rfloor\msb+\msb1\bigr\}$\ssf,
we obtain finally the bound \msf$\text{O}\bigl(\vsf t^{\ssf-\frac 32}\bigr)$ \ssf
for \eqref{eq:KernelSigmaH} and hence {
\begin{equation}\label{eq:EstimateKernel3}
|\ssf k_{\ssf t}^{\ssf\sigma}\ssb(r)\vsf|
\lesssim t^{-\frac32}\,e^{-\rho\ssf r}
\end{equation}
}when \ssf$r\msb\le\msb1\!\le\ssb t$\ssf.
\smallskip

$\bullet$
\,We proceed similarly in the case
\ssf$|H|\msb\le\ssb t^{\vsf\frac1\alpha}\!\le\msb1$ {(see Figure~\ref{fig:RegionsAlphaLarge}, {case \circled{4}})},
with the following few differences.
{T}he stationary point \ssf{$\lambda_1$} of the phase \eqref{eq:phaseHn}, with \ssf$R\ssb=\msb\tfrac Ht\msb\in\msb[-\ssf t^{\vsf-\frac{\alpha-1}\alpha},t^{\vsf-\frac{\alpha-1}\alpha}\vsf]\ssf,$ {satisfies now \ssf$|\lambda_1|\ssb\le\ssb c\ssf t^{-1/\alpha}$, for some constant \ssf$c\ssb>\ssb0$}\ssf. {After splitting} up
\vspace{1mm}

\centerline{$\displaystyle
\int_{-\infty}^{\vsf+\infty}\!\mathrm{d}\lambda\,
=\int_{-\infty}^{\vsf+\infty}\hspace{-1mm}
\chi\bigl(\vsb\tfrac{t^{\vsf1\vsb\vsb/\ssb\alpha}\lambda}{2\ssf{c}}\ssb\bigr)
\;\mathrm{d}\lambda
+\int_{-\infty}^{\vsf+\infty}\bigl[\ssf1\!-\msb
\chi\bigl(\vsb\tfrac{t^{\vsf1\vsb\vsb/\ssb\alpha}\lambda}{2\ssf{c}}\bigr)\bigr]\;\mathrm{d}\lambda$}\vspace{1mm}

in \eqref{eq:KernelSigmaH}, the contribution of the first integral is estimated easily, while the contribution of the second integral is handled as above. Specifically,
as \ssf$|\vsf\bc(\lambda)|^{-2}\ssb\lesssim\ssb(1\!+\msb\lambda)^{n\vsf-1}$\vsf,
we have on the one hand
\begin{equation}\label{eq:EstimateKernelTildeSigma0Tsmall}
|\ssf\tilde{k}_{\ssf t}^{\ssf\sigma\ssb,\vsf0}(H)\vsf|
\lesssim\!\int_{\vsf|\lambda|\vsf\le\ssf4\ssf{c}\ssf t^{-\vsb1\vsb/\ssb\alpha}}
\hspace{-1mm}\,(1\!+\msb\lambda)^{n\vsf\vsf-1-\ssf\sigma}\;\mathrm{d}\lambda
\lesssim\ssf t^{\ssf-\frac{n\vsf\vsf-\ssf\sigma}\alpha}\ssf.
\end{equation}
On the other hand, after \ssf$N$ integrations by parts based on \eqref{eq:IBP2},
with \ssf$N\!>\msb\tfrac{n\vsf\vsf-\ssf\sigma}\alpha$\ssf, we get
\begin{equation}\label{eq:EstimateKernelTildeSigmaInftyTsmall}
|\ssf\tilde{k}_{\ssf t}^{\ssf\sigma\ssb,\vsf\infty}(H)\vsf|
\lesssim\ssf t^{\vsf-N}\!
\int_{\vsf|\lambda|\vsf\vsf\ge\ssf2\ssf{c}\ssf t^{-\vsb1\vsb/\ssb\alpha}}
\hspace{-1mm}\,|\lambda|^{n-1-\sigma-\vsb N\alpha}\; \mathrm{d}\lambda
\lesssim\ssf t^{\ssf-\frac{n\vsf-\ssf\sigma}\alpha}\ssf.
\end{equation}
In conclusion, \eqref{eq:KernelSigmaH} and hence \eqref{eq:KernelSigma} are
\ssf$\text{O}\bigl(\ssf t^{\ssf-\frac{n\vsf\vsf-\ssf\sigma}\alpha}\vsf\bigr)$\vsf,
when \ssf$r\msb\le\ssb t^{\vsf\frac1\alpha}\!\le\msb1$\vsf.
\smallskip

{\textit{Subcase 2.1.2}. Consider next the range \ssf$0\msb<\ssb t^{\vsf\frac1\alpha}\!\le\ssb r\msb\le\!1$ (see Figure~\ref{fig:RegionsAlphaLarge}, case \circled{5}).}
According to Lemma \ref{lem:phase1Hn},
there exists \ssf${c}\ssb>\msb0$ \ssf such that
the stationary point of the phase \eqref{eq:phaseHn},
with \ssf$R\ssb=\ssb\frac rt\msb\ge\msb1$\vsf,
satisfies
\ssf$\lambda_1\msb>\ssb{c}\,(\frac rt)^{1/(\alpha\ssf-1)}$\vsf.
Let us split up
\begin{equation}\label{eq:SplittedIntegral}
\int_{-\infty}^{\vsf+\infty}\hspace{-1.5mm}\mathrm{d}\lambda\,
=\int_{-\infty}^{\vsf+\infty}\hspace{-1.5mm}\hspace{1mm}\chi_0(\vsf r\vsf\lambda)\,\mathrm{d}\lambda\;
+\int_{-\infty}^{\vsf+\infty}\hspace{-1.5mm}\hspace{1mm}\chi_1\vsb(\lambda_1^{-1}\lambda)\,\mathrm{d}\lambda\;
+\int_{-\infty}^{\vsf+\infty}\hspace{-1.5mm}\hspace{1mm}\chi_\infty(\lambda)\; \mathrm{d}\lambda
\end{equation}
in \eqref{eq:KernelSigma} and
\begin{equation}\label{eq:KernelDecomposition2}
k_{\ssf t}^{\ssf\sigma}(r)
=\ssf k_{\ssf t}^{\ssf\sigma\ssb,\vsf0}\ssb(r)
+\ssf k_{\ssf t}^{\ssf\sigma\ssb,1}\ssb(r)
+\ssf k_{\ssf t}^{\ssf\sigma\ssb,\vsf\infty\ssb}(r)
\end{equation}
accordingly, where \ssf$\chi_0\vsf,\vsb\chi_1\!\in\!C_c^{\vsf\infty\ssb}(\R)$
are even functions such that
\vspace{1mm}

\centerline{$\begin{cases}
\,0\msb\le\msb\chi_0\msb\le\!1\vsf,\;
\chi_0\msb=\!1\vsf\text{ on }\ssf[-\frac{c}8,\ssb\frac{c}8]\ssf,\;
\supp\chi_0\ssb\subset\ssb[-\frac{c}4,\ssb\frac{c}4]\ssf,\\
\,0\msb\le\msb\chi_1\!\le\!1\vsf,\;
\chi_1\!=\!1\vsf\text{ on }\ssf
[-2\vsf,\ssb-\frac12\vsf]\ssb\cup\ssb[\vsf\frac12,\ssb2\ssf]\ssf,\;
\supp\chi_1\msb\subset\ssb
[-\vsf4\vsf,\ssb-\frac14\ssf]\ssb\cup\ssb[\vsf\frac14,\ssb4\ssf]\ssf,
\end{cases}$}\vspace{1mm}

and \ssf$\chi_\infty(\lambda)\msb
=\msb1\!-\ssb\chi_0(\vsf r\vsf\lambda)\msb-\ssb\chi_1\vsb(\lambda_1^{-1}\lambda)$\ssf.
Notice that the cutoff functions \ssf$\chi_0(\vsf r\,\cdot\msf)$ \vsf
and \ssf$\chi_1\vsb(\lambda_1^{-1}\msf\cdot\msf)$ \vsf have disjoint supports.
By using \ssf$|\vsf\bc(\lambda)|^{-2}\ssb\lesssim\ssb(1\!+\msb\lambda)^{n\vsf-1}$
\vsf and \ssf$|\ssf\phi_\lambda(r)\vsf|\msb\le\msb1$\vsf,
we estimate easily
\vspace{-2mm}

\centerline{$\displaystyle
|\ssf k_{\ssf t}^{\ssf\sigma\ssb,\vsf0}(r)\vsf|
\lesssim\!\int_{\vsf|\lambda|\ssf\le\ssf c_1\vsb/4\ssf r}
\hspace{-1mm}\,(1\!+\msb\lambda)^{n\vsf-\ssf\sigma\vsf-1}
\;\mathrm{d}\lambda\lesssim\ssf r^{-(n-\sigma)}\msf.
$}\vspace{1mm}

Let us turn to the last two terms in \eqref{eq:KernelDecomposition2}.
By using the small scale asymptotics \eqref{eq:STI}
with $\Lambda\msb=\msb\frac{c}8$,
we obtain the expressions
\begin{equation}\label{eq:KernelSigmaOne}\begin{aligned}
k_{\ssf t}^{\ssf\sigma\ssb,1}\ssb(r)
&=\ssf{2}\,C\msb\overbrace{\int_{-\infty}^{\vsf+\infty}\hspace{-1.5mm}\hspace{1mm}
\chi_1\vsb(\lambda_1^{-1}\lambda)\,|\ssf\bc(\lambda)\vsf|^{-2}\,
(\lambda^{\vsb2}\!+\msb\rho^{\ssf2}\vsf)^{-\frac\sigma2}\,
b_{\vsf M}(-\lambda,r)\,
e^{\hspace{.4mm}i\hspace{.4mm}t\msf\psi_{\vsb\frac rt\ssb}(\lambda)}\;\mathrm{d}\lambda}^{I\vphantom{g}}\\
&\ssf+\ssf C\msb\underbrace{\int_{-\infty}^{\vsf+\infty}\hspace{-1.5mm}\hspace{1mm}
\chi_1\vsb(\lambda_1^{-1}\lambda)\,|\ssf\bc(\lambda)\vsf|^{-2}\,
(\lambda^{\vsb2}\!+\msb\rho^{\ssf2}\vsf)^{-\frac\sigma2}\,
R_{\ssf M}(\lambda,r)\,
e^{\hspace{.4mm}i\hspace{.4mm}t\ssf(\lambda^2+\ssf\rho^2\ssf)^{\alpha/2}}\;\mathrm{d}\lambda
}_{II\vphantom{0^0}}
\end{aligned}\end{equation}
and
\begin{equation}\label{eq:KernelSigmaInfty}\begin{aligned}
k_{\ssf t}^{\ssf\sigma\ssb,\infty}\ssb(r)
&=\ssf{2}\,C\msb\overbrace{\int_{-\infty}^{\vsf+\infty}\hspace{-1.5mm}\hspace{1mm}
\chi_\infty(\lambda)\,|\ssf\bc(\lambda)\vsf|^{-2}\,
(\lambda^{\vsb2}\!+\msb\rho^{\ssf2}\vsf)^{-\frac\sigma2}\,
b_{\vsf M}(-\lambda,r)\,
e^{\hspace{.4mm}i\hspace{.4mm}t\msf\psi_{\vsb\frac rt\ssb}(\lambda)}\;\mathrm{d}\lambda
}^{I\ssb I\ssb I\vphantom{g}}\\
&\ssf+C\msb\underbrace{\int_{-\infty}^{\vsf+\infty}\hspace{-1.5mm}\hspace{1mm}
\chi_\infty(\lambda)\,|\ssf\bc(\lambda)\vsf|^{-2}\,
(\lambda^{\vsb2}\!+\msb\rho^{\ssf2}\vsf)^{-\frac\sigma2}\,
R_{\ssf M}(\lambda,r)\,
e^{\hspace{.4mm}i\hspace{.4mm}t\ssf(\lambda^2+\ssf\rho^2\ssf)^{\alpha/2}
}\;\mathrm{d}\lambda}_{IV\vphantom{0^0}}.
\end{aligned}\end{equation}
\vspace{-1mm}

The main contribution arises from the first integral,
which is estimated by using Lemma \ref{lem:vanderCorput} with \ssf${L}\msb=\ssb2$\ssf,
together with Lemma \ref{lem:phase1Hn}~(i) and \eqref{eq:EstimateBM}.
This way we obtain
\vspace{.5mm}

\centerline{$
|\vsf I\vsf|\vsf\lesssim\ssf t^{\vsf-\frac12}\,r^{\vsf-\frac{n\vsf-1}2}\msf
\lambda_1^{\ssb\frac{n\vsf-\vsf2\vsf\vsf\sigma}2\vsf-\vsf\frac{\alpha\vsf-1}2}
\asymp\ssf t^{\vsf-\frac{n\vsf-\vsf2\vsf\vsf\sigma}{2\ssf(\alpha\vsf-1)}}\,
r^{\vsf\frac{n\vsf-\vsf2\vsf\vsf\sigma}{2\ssf(\alpha\vsf-1)}\vsf-\vsf\frac n2}\ssf.
$}\vspace{1mm}

On the other hand, by using \eqref{eq:EstimateRM}
with \vsf$M\msb>\msb\frac{\ssf n\ssf+\vsf\vsf1}2\msb+\ssb\sigma$\ssf,
we estimate
\begin{align*}
|\vsf I\msb I\vsf|\vsf
&\lesssim\ssf r^{\vsf-\frac{n\vsf-1}2\vsf-M}\!
\int_{\frac14\lambda_1\le\ssf|\lambda|\ssf\le\ssf4\ssf\lambda_1}
\hspace{-1.5mm}\,|\lambda|^{\frac{n-1}2-\ssf\sigma\vsf-M}\; \mathrm{d}\lambda\\
&\asymp\ssf r^{\vsf-\frac{n\vsf-1}2\vsf-M}\msf
\lambda_1^{\ssb\frac{n\vsf-\vsf2\vsf\vsf\sigma}2\vsf-\vsf\frac{2\vsf M\vsb\vsb-1}2}
\asymp\underbrace{\bigl(\tfrac t{r^{\vsf\alpha}}\bigr)^{\ssb\frac{2\vsf M\vsb\vsb-1}{2\ssf(\alpha\vsf-1)}}}_{\le\ssf1}
\ssf t^{\vsf-\frac{n\vsf-\vsf2\vsf\vsf\sigma}{2\ssf(\alpha\vsf-1)}}\,
r^{\vsf\frac{n\vsf-\vsf2\vsf\vsf\sigma}{2\ssf(\alpha\vsf-1)}\vsf-\vsf\frac n2}
\end{align*}
\vspace{-5mm}
and
\vspace{-.5mm}
\begin{equation*}
|\vsf I\vsb V|\vsf\lesssim\ssf r^{\vsf-\frac{n\vsf-1}2\vsf-M}\!
\int_{\vsf|\lambda|\ssf\ge\ssf c_1\vsb/\vsf8\ssf r}
\hspace{-1mm}\,|\lambda|^{\frac{n-1}2-\ssf\sigma\vsf-M} \;\mathrm{d}\lambda\,
\asymp\ssf r^{\vsf-\frac{n\vsf-1}2\vsf-M}\,
r^{\vsf-\frac{n\vsf+1}2+\ssf\sigma\vsf+M}
=\ssf r^{-(n-\sigma)}\ssf,
\end{equation*}
Finally, the third integral in \eqref{eq:KernelSigmaInfty} is estimated
by performing \vsf$N$ integrations by parts based on \eqref{eq:IBP1},
with \ssf$N\!>\msb\tfrac{n\vsf+1}2\msb-\ssb\sigma\vsf$,
by using Lemma \ref{lem:phase1Hn}, together with \eqref{eq:EstimateBM},
and by splitting up the integral

\centerline{$\displaystyle
\int_{-\infty}^{\vsf+\infty}\hspace{-1.5mm}\mathrm{d}\lambda\ssf
=\!\int_{\vsf|\lambda|\vsf\vsf<\ssf\lambda_1}\hspace{-1.5mm}\mathrm{d}\lambda\ssf
+\!\int_{\vsf|\lambda|\ssf\ge\ssf\lambda_1}\hspace{-1.5mm}\mathrm{d}\lambda\,.
$}

This way we obtain
\vspace{-5mm}
\begin{align*}
|\vsf I\msb I\msb I\vsf|
&\lesssim\ssf t^{\vsf-N}\ssf r^{\vsf-\frac{n\vsf-1}2}\ssf\lambda_1^{-(\alpha-\vsb1)N}\!\overbrace{
\int_{\frac{c_1}{8\vsf r}\vsf\le\ssf|\lambda|\vsf\vsf<\ssf\lambda_1}\hspace{-1.5mm}\,
|\lambda|^{\frac{n-1}2-\ssf\sigma\vsf-N}\;\mathrm{d}\lambda}^{\lesssim\,r^{\vsf N+\vsf\sigma-\frac{n+\vsb1}2}\vphantom{\frac00}}
+\,t^{\vsf-N}\ssf r^{\vsf-\frac{n\vsf-1}2}\!\overbrace{
\int_{\vsf|\lambda|\ssf\ge\ssf\lambda_1}\hspace{-1.5mm}\,
|\lambda|^{\frac{n-1}2-\ssf\sigma\vsf-\alpha\vsf N}
\;\mathrm{d}\lambda}^{\asymp\,\lambda_1^{\ssb\frac{n+\vsb1}2-\vsf\sigma-\vsf\alpha\vsf N}\vphantom{\frac00}}\; \\
&\lesssim\ssf r^{-(n-\sigma)}
+\underbrace{\bigl(\tfrac t{r^{\vsf\alpha}}\bigr)^{\frac{2\vsf N\vsb\vsb-1}{2\vsf(\alpha\vsf-1)}}}_{\le\ssf1}
\msf t^{\vsf-\frac{n\vsf-\vsf2\vsf\vsf\sigma}{2\ssf(\alpha\vsf-1)}}\,
r^{\vsf\frac{n\vsf-\vsf2\vsf\vsf\sigma}{2\ssf(\alpha\vsf-1)}\vsf-\vsf\frac n2}\ssf.
\end{align*}
As
\begin{equation*}
r^{-(n-\sigma)}
=(\vsf r^{\ssf\alpha})^{-\frac1{\alpha\vsf-1}\vsf\left(\ssb\frac n2-\ssf\sigma\ssb\right)}\,
r^{{+}\frac1{\alpha\vsf-1}\vsf\left(\ssb\frac n2-\ssf\sigma\ssb\right)-\frac n2}
\le\ssf t^{\vsf-\frac{n\vsf-\vsf2\vsf\vsf\sigma}{2\ssf(\alpha\vsf-1)}}\,
r^{\vsf\frac{n\vsf-\vsf2\vsf\vsf\sigma}{2\ssf(\alpha\vsf-1)}\vsf-\vsf\frac n2}\msf,
\end{equation*}
we conclude that

\centerline{$\displaystyle
|\ssf k_{\ssf t}^{\ssf\sigma}(r)\vsf|
\lesssim\ssf t^{\vsf-\frac{n\vsf-\vsf2\vsf\vsf\sigma}{2\ssf(\alpha\vsf-1)}}\,
r^{\vsf\frac{n\vsf-\vsf2\vsf\vsf\sigma}{2\ssf(\alpha\vsf-1)}\vsf-\vsf\frac n2}\ssf.
$}\medskip

\textbf{Subcase 2.2}.
Assume that \ssf$0\ssb<\ssb\alpha\ssb<\msb1$ {\ssf and \ssf$0\ssb\le\ssb r\msb\le\msb1$\vsf, \ssf$t\msb>\msb0$ \ssf (see Figure~\ref{fig:RegionsAlphaSmall}, cases \circled{4} -- \circled{5} -- \circled{6}).}
\smallskip

{\textit{Subcase 2.2.1\/} (range \circled{4} in Figure~\ref{fig:RegionsAlphaSmall}).}
Assume that \ssf$0\ssb\le\ssb r\msb<\msb1\msb\le t$\ssf,
hence \vsf$\frac rt\msb<\msb1$\vsf.
Recall that \eqref{eq:theta} reaches its maximum \ssf$\theta_0$ \vsf at
\ssf$\lambda_0\ssb=\msb\tfrac{\rho}{\sqrt{1\vsf-\ssf\alpha}}\msb>\ssb0$\ssf.
As in Subcase 1.2,
{let \msf$0\msb<\msb c_{\vsf1}\!<\!1\!<\msb c_{\ssf2}\!<\!\smash{\sqrt{\ssf3\ssf}}$ \ssf such that \msf$\theta(c_{\vsf1}\lambda_{\vsf0})\msb=\ssb\theta_0/2\ssb=\ssb\theta(c_{\ssf2}\ssf\lambda_{\vsf0})$\ssf.}
We may assume that $\frac rt\msb<\msb\smash{\theta_0/2}$
by reducing the range \ssf$0\ssb\le\ssb r\msb<\msb r_0$\ssf,
according to Remark \ref{rm:RemarkLowerBoundOnR}.
Let us estimate $k_{\ssf t}^{\ssf\sigma}(r)$ by considering again several cases.
\smallskip

$\bullet$
\,Assume first that \ssf$r\msb=\ssb0$\ssf.
According to Lemma \ref{lem:phase2Hn}~(iv),
the phase \eqref{eq:phaseHn}, with \ssf$R\msb=\ssb0$\ssf,
has a single stationary point at the origin.
{Given an even bump function \ssf$\chi_0\msb\in\msb C_c^{\ssf\infty}(\R)$ \ssf such that
\begin{equation*}\begin{cases}
\,\chi_0\msb=\!1\text{ \,on \,}
[\ssf-\ssf b_{\vsf\vsf1}\lambda_{\vsf0}\ssf,b_{\vsf\vsf1}\lambda_{\vsf0}\ssf]\\
\,\supp\chi_0\msb\subset\msb
[-\ssf b_{\vsf\vsf2}\lambda_{\vsf0}\ssf,b_{\vsf\vsf2}\vsf\lambda_{\vsf0}\ssf]
\end{cases}\end{equation*}
where \ssf$c_{\vsf1}\!<\ssb b_{\vsf\vsf1}\!<\ssb b_{\vsf\vsf2}\!<\msb1$\vsf,}
let us split up {the integral} {as in \eqref{eq:integral_split}}
%\begin{equation*}
%\int_{-\infty}^{+\infty}\!\mathrm{d}\lambda
%=\int_{-\infty}^{+\infty}\!{\color{darkgreen}\chi_{\vsf0}}(\lambda)\,\mathrm{d}\lambda
%+\int_{-\infty}^{+\infty}\ssb\bigl[1\!-\msb{\color{darkgreen}\chi_{\vsf0}}(\lambda)\bigr]\,\mathrm{d}\lambda
%\end{equation*}
{and the kernel}
\begin{equation}\label{KernelSplittedInTwo}
k_{\ssf t}^{\ssf\sigma}(0)=k_{\ssf t}^{\ssf\sigma,0}(0)+k_{\ssf t}^{\ssf\sigma,\infty}(0)
\end{equation}
accordingly. On the one hand, after an integration by parts based on \eqref{eq:IBP0}, we obtain
\begin{equation*}
k_{\ssf t}^{\ssf\sigma,0}(0)
=\tfrac{i\msf C}\alpha\msf t^{-1}\int_{-\infty}^{\vsf+\infty}\hspace{-1mm}
e^{\hspace{.4mm}i\hspace{.4mm}t\hspace{.4mm}(\lambda^{\vsb2}
+\ssf\rho^{\vsf2}\vsf)^{\vsf\alpha\vsb/2}}
\tfrac\partial{\partial\lambda}\ssf
\bigr\{{\chi_{\vsf0}}(\lambda)\ssf
(\lambda^{\vsb2}\!+\msb\rho^{\ssf2}\vsf)^{1-\frac{\alpha+\sigma}2}
\lambda^{-1}\ssf|\vsf\bc(\lambda)|^{-2}\bigr\}\,\mathrm{d}\lambda
\end{equation*}
and deduce that
\begin{equation*}
\bigl|\vsf k_{\ssf t}^{\ssf\sigma,0}(0)\bigr|\lesssim t^{-\frac32}
\end{equation*}
by applying Lemma \ref{lem:vanderCorput} with \vsf$L\msb=\ssb2$\ssf.
On the other hand,
after performing $N$ integrations by parts based on \eqref{eq:IBP0},
we obtain
\begin{align*}
k_t^{\sigma,\infty}(0)&=\ssf C\msf\bigl(\tfrac i{\alpha\ssf t}\bigr)^N\!\int_{-\infty}^{\vsf+\infty}\hspace{-1mm}
e^{\hspace{.4mm}i\hspace{.4mm}t\hspace{.4mm}(\lambda^{\vsb2}+\ssf\rho^{\vsf2}\vsf)^{\vsf\alpha\vsb/2}}\\
&\ssf\times\underbrace{
\bigl\{\tfrac\partial{\partial\lambda}\ssb\circ\vsb\lambda^{-1}\ssf
(\lambda^{\vsb2}\!+\msb\rho^{\ssf2}\vsf)^{1-\frac\alpha2}\bigr\}^{\ssb N}
\bigr\{[\vsf1\!-\msb{\chi_{\vsf0}}(\lambda)\vsf]\ssf
(\lambda^{\vsb2}\!+\msb\rho^{\ssf2}\vsf)^{-\frac\sigma2}\msf
|\vsf\bc(\lambda)|^{-2}\bigr\}
}_{\text{O}(|\lambda|^{n-\sigma-\alpha N-1})}
\msf\mathrm{d}\lambda\msf,
\end{align*}
which is \ssf$\textrm{O}\bigl(t^{-N}\bigr)$ \ssf if \ssf$N\msb>\msb\tfrac{n\vsf-\vsf\sigma}\alpha$\ssf.
{As a first conclusion, we obtain
$$
\bigl|\vsf k_{\ssf t}^{\ssf\sigma}(0)\bigr|\lesssim t^{-\frac32}
$$
when \ssf$r\msb=\ssb0$ \ssf and \ssf$t\ssb\ge\msb1$\vsf.}
\smallskip

$\bullet$
\,Assume next that \vsf$0\ssb<\ssb r\ssb<\ssb\min\ssf\{1,r_0\}$\ssf.
According to Lemma \ref{lem:phase2Hn}~(iii),
the phase \eqref{eq:phaseHn}, with \ssf$R=\ssb\tfrac rt\msb\in\msb(\vsf0\ssf,\smash{\theta_0/2})$\ssf,
has two stationary points\,:
\ssf$\lambda_1\!\in\msb(\vsf0,\vsb c_{\vsf1}\lambda_{\vsf0})$\vsf,
which is comparable to~$\frac rt$,
and \ssf$\lambda_{\vsf2}\msb\in\msb(\vsf c_{\ssf2}\ssf\lambda_{\vsf0}\vsf,\ssb+\infty)$\vsf,
which is comparable to $(r\ssb/t)^{-1/(1-\alpha)}$.
Given another even bump function \ssf$\chi_2\msb\in\msb C_c^{\ssf\infty}(\R)$ \ssf such that
\begin{equation*}
\begin{cases}
\,\chi_2\ssb=\!1\text{ \msf on \,}{[\vsf-\ssf b_{\ssf3}\vsf,\ssb-\ssf b_{\vsf3}^{-1}\vsf]\cup[\ssf b_{\vsf3}^{-1},\vsb b_{\ssf3}\ssf]\ssf,}\\
\,\supp\chi_2\msb\subset\msb{[\vsf-\ssf b_{\vsf4}\vsf,\ssb-\ssf b_4^{-1}\vsf]\cup[\ssf b_4^{-1},\vsb b_{\vsf4}\ssf]\ssf,}
\end{cases}\end{equation*}
where \ssf{$1\msb<\ssb b_{\vsf3}\msb<\ssb b_{\vsf4}\msb<\ssb c_2$}\ssf,
let us split up
\begin{align*}
\int_{-\infty}^{\vsf+\infty}\hspace{-1mm}\mathrm{d}\lambda\,
&=\int_{-\infty}^{\vsf+\infty}\hspace{-1mm}\chi_0(\lambda)\;\mathrm{d}\lambda{
+\int_{\ssf|\lambda|\ssf<\ssf\lambda_2}\![\vsf1\!-\hspace{-.6mm}\chi_0(\lambda)\msb-\hspace{-.6mm}\chi_2(\lambda_{\vsf2}^{\ssb-1\ssb}\lambda)\ssf]\;\mathrm{d}\lambda}\\
&+\int_{-\infty}^{\vsf+\infty}\hspace{-1mm}\chi_2(\lambda_{\vsf2}^{\ssb-1\ssb}\lambda)\;\mathrm{d}\lambda
{+} \int_{\ssf|\lambda|\ssf>\ssf\lambda_2}\![\vsf1\!-\hspace{-.6mm}\chi_2(\lambda_{\vsf2}^{\ssb-1\ssb}\lambda)\ssf]\;\mathrm{d}\lambda
\end{align*}
in \eqref{eq:KernelSigma} and
\begin{equation}\label{eq:KernelSplittedInFour}
k_{\ssf t}^{\ssf\sigma}(r)
=k_{\ssf t}^{\ssf\sigma,0}(r)
{+k_{\ssf t}^{\ssf\sigma,1}(r)}
+k_{\ssf t}^{\ssf\sigma,2}(r)
+k_{\ssf t}^{\ssf\sigma,\infty}(r)
\end{equation}
accordingly. {
As far as the first term {in \eqref{eq:KernelSplittedInFour}} is concerned, we obtain
\begin{equation*}
\bigl|\vsf k_{\ssf t}^{\ssf\sigma,0}(r)\bigr|\lesssim t^{-\frac32}
\end{equation*}
either by using the phase \eqref{eq:phaseHn} with \vsf$R\ssb=\ssb0$ \ssf as above,
or by using \eqref{eq:HCformula}
and the phase \eqref{eq:phaseHn} with \vsf$R\ssb=\ssb\frac Ht$
as in the proof of \eqref{eq:EstimateKernelTildeSigma0Tlarge}.
We claim that the second term in \eqref{eq:KernelSplittedInFour} is \ssf$\text{O}(t^{-N})$\ssf, for every integer \ssf$N\!>\msb\frac{n-\vsf\sigma}\alpha$\ssf.
{This is achieved by substituting \eqref{eq:HCformula} in
\begin{align*}
k_{\ssf t}^{\ssf\sigma,1}(r)
=\ssf C\msb\int_{\vsf b_1\vsb\lambda_0\vsf<\ssf|\lambda|\ssf<\ssf b_3^{-1}\ssb\lambda_0}\!
&\bigl[1\!-\msb\chi_0(\lambda)\msb-\msb\chi_2(\lambda_2^{-1}\lambda)\bigr]\\
\times&\,e^{\hspace{.4mm}i\hspace{.4mm}t\msf(\lambda^2+\rho^2)^{\alpha/2}}\ssf
\phi_\lambda(r)\msf
(\lambda^{\vsb2}\!+\msb\rho^{\ssf2}\vsf)^{-\frac\sigma2}\,
|\vsf\bc(\lambda)|^{-2}\,\mathrm{d}\lambda
\end{align*}}
and by performing \ssf$N$ integrations by parts based on \eqref{eq:IBP2}
with \ssf$H\ssb=\ssb H(a_{-r}k)\msb\in\msb[\vsf-\ssf r\vsf,r\ssf]$\ssf,
after observing that the stationary points of the phase \eqref{eq:phaseHn},
with \ssf$R\vsb=\ssb\tfrac Ht$,
remain outside \ssf$\{\ssf\lambda\msb\in\ssb\R\,|\,\lambda_1\msb<\msb|\lambda|\msb<\msb\lambda_2\vsf\}$\ssf.
Let us turn to the third term in \eqref{eq:KernelSplittedInFour}, which reads}
\begin{align*}
k_{\ssf t}^{\ssf\sigma,2}(r)
&=\ssf2\,C\ssb\overbrace{
\int_{b_4^{-1}\ssb\lambda_{\vsf2}\vsf\le\vsf|\lambda|\vsf\le\ssf b_{\vsf4}\lambda_{\vsf2}}\hspace{-1mm}
\chi_2(\lambda_2^{-1}\lambda)\,e^{\hspace{.4mm}i\hspace{.4mm}t\msf\psi_{\ssb\frac rt}\msb(\lambda)}\,
b_{\vsf M}(\vsb-\vsb\lambda,r)\,(\lambda^{\vsb2}\!+\msb\rho^{\ssf2}\vsf)^{-\frac\sigma2}\,|\vsf\bc(\lambda)|^{-2}\,
\mathrm{d}\lambda}^I\\
&\ssf+C\ssb\underbrace{
\int_{b_4^{-1}\ssb\lambda_{\vsf2}\vsf\le\vsf|\lambda|\vsf\le\ssf b_{\vsf4}\lambda_{\vsf2}}\hspace{-1mm}
\chi_2(\lambda_2^{-1}\lambda)\,e^{\hspace{.4mm}i\hspace{.4mm}t{(\lambda^2+\rho^2)^{\alpha/2}}}\ssf
R_{\vsf M}(\lambda,r)\,(\lambda^{\vsb2}\!+\msb\rho^{\ssf2}\vsf)^{-\frac\sigma2}\,|\vsf\bc(\lambda)|^{-2}\,
\mathrm{d}\lambda}_{I\!I}
\end{align*}
{after substituting \eqref{eq:STI}.}
We estimate the main term
\begin{equation*}
|I|\lesssim r^{-\frac{n-1}2}\msf t^{-\frac12}\msf\lambda_2^{\ssb\frac{n+1-\alpha}2-\vsf\sigma}
\asymp\ssf t^{-\frac12\frac{2\vsf\sigma-n}{1-\vsf\alpha}}\msf
r^{\ssf\frac12\frac{2\vsf\sigma-\vsf n}{1-\vsf\alpha}-\frac n2}
\end{equation*}
by using Lemma \ref{lem:vanderCorput} with \vsf$L\msb=\ssb2$\ssf,
together with Lemma \ref{lem:phase2Hn}~(iii) and \eqref{eq:EstimateBM},
and the remainder
\begin{equation*}
|I\!I|\lesssim r^{-\frac{n-1}2-M}\msf\lambda_2^{\ssb\frac{n+1}2-\vsf\sigma-M}
\asymp\underbrace{\bigl(\tfrac{r^\alpha}t\bigr)^{\frac{2M-1}{2\vsf(1-\alpha)}}}_{\le1}
t^{-\frac12\frac{2\vsf\sigma-n}{1-\vsf\alpha}}\msf
r^{\ssf\frac12\frac{2\vsf\sigma-\vsf n}{1-\vsf\alpha}-\frac n2}
\quad{\qquad\forall\,M\!>\msb\tfrac{n+1}2\msb-\ssb\sigma}
\end{equation*}
by using \eqref{eq:EstimateRM}.
{Consider finally the last term in \eqref{eq:KernelSplittedInFour}, which reads similarly}
\begin{align*}
k_{\ssf t}^{\ssf\sigma,\infty}(r)
&=\ssf2\,C\ssb\overbrace{\int_{\ssf|\lambda|\vsf>\ssf b_{\vsf3}\vsf\lambda_2}\!
\bigl[1\!-\msb\chi_2(\lambda_2^{-1}\lambda)\bigr]\,e^{\hspace{.4mm}i\hspace{.4mm}t\msf\psi_{\frac rt}(\lambda)}\,
b_M(-\lambda\vsf,\ssb r)\msf(\lambda^{\vsb2}\!+\msb\rho^{\ssf2}\vsf)^{-\frac\sigma2}\,|\vsf\bc(\lambda)|^{-2}\,
\mathrm{d}\lambda}^{I\!I\!I}\\
&\ssf+\ssf C\ssb\underbrace{\int_{\ssf|\lambda|\vsf>\ssf b_{\vsf3}\vsf\lambda_2}\!
\bigl[1\!-\msb\chi_2(\lambda_2^{-1}\lambda)\bigr]\,e^{\hspace{.4mm}i\hspace{.4mm}t{(\lambda^2+\rho^2)^{\alpha/2}}}\ssf
R_M(\lambda\vsf,\ssb r)\msf(\lambda^{\vsb2}\!+\msb\rho^{\ssf2}\vsf)^{-\frac\sigma2}\,|\vsf\bc(\lambda)|^{-2}\,
\mathrm{d}\lambda}_{I\ssb V}\msf.
\end{align*}
We estimate
\begin{equation*}
|I\!I\!I|\lesssim\ssf r^{-\frac{n-1}2}\msf t^{-N}\ssf\lambda_2^{\frac{n+1}2-\vsf\sigma-\vsf\alpha\vsf N}
\asymp\underbrace{\bigl(\tfrac{r^{\vsf\alpha}}t\bigr)^{\frac{2N-1}{2\vsf(1-\alpha)}}}_{\le\ssf1}
t^{-\frac12\frac{2\vsf\sigma-n}{1-\vsf\alpha}}\msf
r^{\ssf\frac12\frac{2\vsf\sigma-\vsf n}{1-\vsf\alpha}-\frac n2}
\qquad\forall\,N\!>\msb\tfrac{n+1}2\msb-\ssb\sigma
\end{equation*}
by performing \ssf$N$ integrations by parts based on \eqref{eq:IBP1}
and by using Lemma \ref{lem:phase2Hn}~(iii) and (v), together with \eqref{eq:EstimateBM},
and
\begin{equation*}
|I\ssb V|\lesssim r^{-\frac{n-1}2-M}\ssf\lambda_2^{\frac{n+1}2-\vsf\sigma-M}
\asymp\underbrace{\bigl(\tfrac{r^{\vsf\alpha}}t\bigr)^{\frac{2M-1}{2\vsf(1-\alpha)}}}_{\le\ssf1}
t^{-\frac12\frac{2\vsf\sigma-n}{1-\vsf\alpha}}\msf
r^{\ssf\frac12\frac{2\vsf\sigma-\vsf n}{1-\vsf\alpha}-\frac n2}
\qquad\forall\,M\!>\msb\tfrac{n+1}2\msb-\ssb\sigma
\end{equation*}
by using \eqref{eq:EstimateRM}.
In conclusion,
\begin{equation*}
\bigl|\vsf k_{\ssf t}^{\ssf\sigma}(r)\bigr|
\lesssim\ssf t^{-\frac32}
+\ssf t^{-\frac12\frac{2\vsf\sigma-n}{1-\vsf\alpha}}\msf
r^{\ssf\frac12\frac{2\vsf\sigma-\vsf n}{1-\vsf\alpha}-\frac n2}
\end{equation*}
when \ssf$r\ssb<\ssb\min\ssf\{1,r_0\}$ and \ssf$t\ssb\ge\msb1$\vsf.

\begin{remark}
We obtain in particular
\,$|\ssf k_{\ssf t}^{\ssf\sigma}(r)\vsf|\msb\lesssim\ssb t^{-\frac32}\msb+\ssb t^{-\frac n2}$
 when \,$\sigma\ssb=\ssb(1\!-\msb\tfrac\alpha2)\ssf n$\ssf.
\end{remark}

%{\color{blue}[Guenda: I checked until here and everything seems to be correct!]}

{\textit{Subcase 2.2.2\/} (range \circled{5} in Figure~\ref{fig:RegionsAlphaSmall}).} Assume that \ssf$0\ssb\le\ssb r\msb<\msb1$ \vsf and \ssf$0\ssb<\ssb t\ssb<\msb1$ \vsf satisfy \ssf$r\ssb<\ssb t^{1/\alpha}$, hence \vsf$\frac rt\msb<\msb1$\vsf.
We argue as in Subcase {2.2.1} with a few differences.
By reducing \ssf$0\ssb\le\ssb r\msb<\ssb\min\ssf\{1,r_0\}\ssf$,
we may assume again that
\vsf$\frac rt\msb<\ssb\smash{\theta_0/2}$\vsf.
\smallskip

$\bullet$
\,When \ssf$r\ssb=\ssb0$\ssf,
we split again \ssf$k_{\ssf t}^{\ssf\sigma}(0)$ \vsf as in \eqref{KernelSplittedInTwo}.
On the one hand, we estimate trivially
\begin{equation*}
\bigl|\vsf k_{\ssf t}^{\ssf\sigma,\ssf0}(0)\bigr|\lesssim1\msf.
\end{equation*}
On the other hand, we estimate
\begin{equation*}
\bigl|\vsf k_{\ssf t}^{\ssf\sigma,\ssf\infty}(0)\bigr|\lesssim\ssf t^{-\frac{n-\sigma}\alpha}
\end{equation*}
by splitting up
\begin{equation*}
\int_{-\infty}^{+\infty}\!\bigl[1\!-\msb\chi(\lambda)\bigr]\,\mathrm{d}\lambda
=\int_{-\infty}^{+\infty}\!\bigl[1\!-\msb\chi(\lambda)\bigr]\msf
\chi\bigl(\tfrac{t^{1/\alpha}\lambda}2\bigr)\,\mathrm{d}\lambda
+\int_{-\infty}^{+\infty}\!\bigl[1\!-\msb\chi\bigl(\tfrac{t^{1/\alpha}\lambda}2\bigr)\bigr]\,\mathrm{d}\lambda\,.
\end{equation*}
More precisely, the contribution of the first integral is trivially bounded by
\begin{equation*}
\int_{\vsf1\vsf\lesssim\vsf|\lambda|\vsf\lesssim\ssf t^{\vsf-1/\alpha}}\!
|\lambda|^{\vsf n-\vsf\sigma-1}\,\mathrm{d}\lambda
\lesssim\ssf t^{\vsf-\frac{n-\sigma}\alpha}
\end{equation*}
while the contribution of the second integral is bounded,
after \ssf$N\msb>\msb\tfrac{n\vsf-\vsf\sigma}\alpha$ \ssf integrations by parts based on \eqref{eq:IBP0},
by
\begin{align*}
&\msf t^{-N}\!
\int_{\ssf|\lambda|\ssf\gtrsim\ssf t^{-1/\alpha}}\msb
\overbrace{\bigl|\bigl\{\tfrac\partial{\partial\lambda}\ssb\circ\vsb\lambda^{-1}\ssf
(\lambda^{\vsb2}\!+\msb\rho^{\ssf2}\vsf)^{1-\frac\alpha2}\bigr\}^N
\bigr\{\bigl[\vsf1\!-\msb\chi\bigl(\tfrac{t^{1/\alpha}\lambda}2\bigr)\bigr]\ssf
(\lambda^{\vsb2}\!+\msb\rho^{\ssf2}\vsf)^{-\frac\sigma2}\msf
|\vsf\bc(\lambda)|^{-2}\bigr\}\bigr|}^{\lesssim\,|\lambda|^{n-\sigma-\alpha N\vsb-1}}\mathrm{d}\lambda\\
&\lesssim\ssf t^{\vsf-\frac{n-\sigma}\alpha}\ssf.
\end{align*}
{As first conclusion, we obtain $\bigl|\vsf k_{\ssf t}^{\ssf\sigma}(0)\bigr|\lesssim\ssf t^{-\frac{n-\sigma}\alpha}$ when $r=0$ and $t \geq 1$.}

$\bullet$
\,When \ssf$0\ssb<\ssb r\ssb<\ssb\min\ssf\{1,r_0\vsf,t^{1/\alpha}\}$\vsf,
we split again \ssf$k_{\ssf t}^{\ssf\sigma}(r)$ \vsf as in \eqref{eq:KernelDecomposition2}.
This time, we estimate
\begin{equation*}
\bigl|\vsf k_{\ssf t}^{\ssf\sigma,0}(r)\bigr|\lesssim1
\end{equation*}
trivially and both \ssf$k_{\ssf t}^{\ssf\sigma,2}(r)$\vsf, $IV$ by
\begin{equation*}
t^{-\frac12\frac{2\vsf\sigma-n}{1-\vsf\alpha}}\msf
r^{\ssf\frac12\frac{2\vsf\sigma-\vsf n}{1-\vsf\alpha}-\frac n2}\ssf,
\end{equation*}
as in Sub{case} 2.2.1. Finally, we split up
\begin{equation*}
I\!I\!I=V\!I\!I+V\!I\!I\!I
\end{equation*}
according to
\begin{align*}
\int_{\ssf|\lambda|\vsf<\vsf\lambda_2}\!
\bigl[1\!-\msb\chi_0(\lambda)\msb-\msb\chi_2(\lambda_2^{-1}\lambda)\bigr] {\,\mathrm{d}\lambda}
&=\int_{\ssf|\lambda|\vsf<\vsf\lambda_2}\!
\bigl[1\!-\msb\chi_0(\lambda)\msb-\msb\chi_2(\lambda_2^{-1}\lambda)\bigr]\msf
\chi\bigl(t^{1/\alpha}\lambda\bigr)\,\mathrm{d}\lambda\\
&+\int_{\ssf|\lambda|\vsf<\vsf\lambda_2}\!
\bigl[1\!-\msb\chi_0(\lambda)\msb-\msb\chi_2(\lambda_2^{-1}\lambda)\bigr]
\bigl[1\!-\msb\chi\bigl(t^{1/\alpha}\lambda\bigr)\bigr]\,\mathrm{d}\lambda\,.
\end{align*}
%{\color{blue}[In the integrals above: Is it not better to write them %explicitly? Like
%$$
%\int_{|\lambda|<\lambda_2}
%\chi_\infty(\lambda) e^{it(\lambda^2+\rho^2)^{\alpha/2}} \phi_\lambda(r) (\lambda^2+\rho^2)^{-\frac{\sigma}{2}} |\mathbf{c}(\lambda)|^{-2}  \,\mathrm{d}\lambda,
%$$
%where $\chi_\infty(\lambda)=\bigl[1\!-\msb\chi_0(\lambda)\msb-\msb\chi_2(\lambda_2^{-1}\lambda)\bigr]$
%]}\\
On the one hand, we estimate
\begin{equation*}
|V\!I\!I|
\lesssim\int_{\vsf1\lesssim\vsf|\lambda|\vsf\lesssim\ssf t^{-1/\alpha}}\!
|\lambda|^{n-\vsf\sigma-1}\,\mathrm{d}\lambda
\lesssim\ssf t^{-\frac{n-\vsf\sigma}\alpha}\ssf,
\end{equation*}
by using \vsf$|\phi_\lambda(r)|\msb\le\msb1$\vsf, and
\begin{equation*}
|V\!I\!I{\! I}|\lesssim t^{-N}\!\int_{\ssf|\lambda|\vsf\gtrsim\ssf t^{-1/\alpha}}\!
|\lambda|^{n-\vsf\sigma-\vsf\alpha\vsf N-1}\mathrm{d}\lambda
\lesssim\ssf t^{-\frac{n-\vsf\sigma}\alpha}\ssf,
\end{equation*}
by substituting \eqref{eq:HCformula},
by performing \ssf$N\msb>\ssb\tfrac{n-\vsf\sigma}\alpha$ \vsf integrations by parts based on \eqref{eq:IBP2}
and by using \eqref{eq:Estimate2eq_Psiprime}.
\smallskip
%{\color{blue} Note that the inhomogeneous symbol has order \ssf$n\ssb-\msb1\msb-\ssb\sigma\msb-\msb N\alpha$\ssf.}
%{\color{blue}[I got another estimate:
%\begin{equation*}
%|V\!I\!I{\! I}|\lesssim t^{-N}\!\int_{\ssf|\lambda|\vsf\gtrsim\ssf t^{-1/\alpha}}\!
%|\lambda|^{n-\vsf\sigma-\vsf\alpha\vsf N-1+2N}\mathrm{d}\lambda
%\lesssim\ssf t^{-\frac{n-\vsf\sigma}\alpha}\ssf t^{-2N/\alpha},
%\end{equation*}
%]}{\color{darkgreen}[I get my estimate by using the fact that
%\begin{equation*}
%\bigl\{\tfrac\partial{\partial\lambda}\ssb\circ\ssb\tfrac1{\psi_{\frac Ht}'(\lambda)}\bigr\}^N\bigl\{[\vsf1\!-\ssb\chi_0(\lambda)\msb-\ssb\chi_2(\lambda_2^{-1}\lambda)\vsf]\ssf[\vsf1\!-\ssb\chi(t^{\frac1\alpha}\lambda)\vsf]\ssf(\lambda^2\!+\ssb\rho^2)^{-\frac\sigma2}\msf|\mathbf{c}(\lambda)|^{-2}\bigr\}
%\end{equation*}
%is an inhomogeneous symbol of order
%\ssf$n\ssb-\msb1\msb-\ssb\sigma\msb-\msb N\alpha$\ssf]}
\smallskip

In conclusion,
\begin{equation*}
\bigl|\vsf k_{\ssf t}^{\ssf\sigma}(r)\bigr|
\lesssim t^{-\frac{n-\vsf\sigma}\alpha}
+\ssf t^{-\frac12\frac{2\vsf\sigma-n}{1-\vsf\alpha}}\msf
r^{\ssf\frac12\frac{2\vsf\sigma-\vsf n}{1-\vsf\alpha}-\frac n2}
\end{equation*}
when \ssf$0\ssb\le\ssb r\ssb<\ssb\min\ssf\{1,r_0\vsf,t^{1/\alpha}\}$\vsf.

\begin{remark}
We obtain in particular
\,$|\ssf k_{\ssf t}^{\ssf\sigma}(r)\vsf|\msb
\lesssim\ssb t^{-\frac n2}$
when \,$\sigma\ssb=\ssb(1\!-\msb\tfrac\alpha2)\ssf n$\ssf.
\end{remark}

%{\color{blue}[I got \,$|\ssf k_{\ssf t}^{\ssf\sigma}(r)\vsf|\msb
%\lesssim\ssb 1+t^{-\frac n2}$]}
%\textcolor{darkgreen}{[no problem: $1$ is controlled by $t^{-\frac n2}$]}
%{\color{blue}[I put the old subcase 2.3 under ``end document"]}

{\textit{Subcase 2.2.3} (range \circled{6} in Figure~\ref{fig:RegionsAlphaSmall}).}
Assume that \ssf$0\ssb<\ssb t\ssb\le\ssb r^{\vsf\alpha}\msb<\msb1$\vsf.
Notice that
\begin{equation*}
\bigl(\tfrac rt\bigr)^{-\frac1{1-\alpha}}\le\ssf t^{-1/\alpha}\,.
\end{equation*}
According to Lemma \ref{lem:phase2Hn}, there exists \ssf$c\ssb>\ssb0$ \ssf such that all critical of the phase \eqref{eq:phaseHn}, with \ssf$R\ssb=\msb\tfrac rt\msb\in\msb[\ssf 0\vsf,\theta_0\ssf]$\ssf, are contained in $[\ssf0\vsf,c\ssf R^{-1/(1-\alpha)}\ssf]\msb\subset\msb[\ssf0\vsf,c\ssf t^{-1/\alpha}]$\ssf.
Given a smooth even bump function \ssf$\chi$ \ssf on \ssf$\R$ \ssf such that \ssf$\chi\ssb=\msb1$ \vsf on \ssf$[-1,1\vsf]$ \ssf and \ssf$\supp\chi\msb\subset\msb[-2\vsf,2\ssf]$\ssf, let us split up
\begin{equation*}
\int_{-\infty}^{+\infty}\!\mathrm{d}\lambda
=\!\int_{-\infty}^{+\infty}\!\chi\bigl(\tfrac{t^{\vsf1/\alpha}}{2\ssf c}\lambda\bigr)\,\mathrm{d}\lambda
+\!\int_{-\infty}^{+\infty}\!\bigl[1\!-\msb\chi\bigl(\tfrac{t^{\vsf1/\alpha}}{2\ssf c}\lambda\bigr)\bigr]\,\mathrm{d}\lambda
\end{equation*}
and
\begin{equation*}
    k_{\ssf t}^{\ssf\sigma}(r)=\ssf k_{\ssf t}^{\ssf\sigma,\vsf0}(r)+\ssf k_{\ssf t}^{\ssf\sigma,\vsf\infty}(r)
\end{equation*}
accordingly. On the one hand, we estimate
\begin{equation*}
    \bigl|\vsf k_{\ssf t}^{\ssf\sigma,\vsf0}(r)\bigr|\lesssim\!\int_{\ssf|\lambda|\vsf\lesssim\ssf t^{-1/\alpha}}\!(|\lambda|\msb+\!1)^{n-\vsf\sigma-1}\,\mathrm{d}\lambda\lesssim\ssf t^{-\frac{n-\sigma}\alpha}
\end{equation*}
by using \vsf$|\phi_\lambda(r)|\msb\le\msb1$\vsf.
On the other hand,
we estimate
\begin{equation*}
    \bigl|\vsf k_{\ssf t}^{\ssf\sigma,\vsf\infty}(r)\bigr|\lesssim t^{-N}\int_{\ssf|\lambda|\vsf\gtrsim\ssf t^{-1/\alpha}}\!
    |\lambda|^{n-\vsf\sigma-\alpha\vsf N-1}\mathrm{d}\lambda
    \lesssim\ssf t^{-\frac{n-\vsf\sigma}\alpha}\ssf,
\end{equation*}
by substituting \eqref{eq:HCformula},
by performing \ssf$N\msb>\ssb\tfrac{n-\vsf\sigma}\alpha$ \vsf integrations by parts based on \eqref{eq:IBP2}
and by using \eqref{eq:Estimate2eq_Psiprime}.
\smallskip

%{\color{blue}[Same as above, I got this additional term $-2N/\alpha$:
%\begin{equation*}
%|k^{\sigma,\infty}_t(r)|\lesssim t^{-N}\!\int_{\ssf|\lambda|\vsf\gtrsim\ssf t^{-1/\alpha}}\!
%|\lambda|^{n-\vsf\sigma-\vsf\alpha\vsf N-1+2N}\mathrm{d}\lambda
%\lesssim\ssf t^{-\frac{n-\vsf\sigma}\alpha}\ssf %t^{-2N/\alpha},
%\end{equation*}
%]}\\

In conclusion,
\begin{equation*}
    \bigl|\vsf k_{\ssf t}^{\ssf\sigma}(r)\bigr|
    \lesssim\ssf t^{-\frac{n-\vsf\sigma}\alpha}
\end{equation*}
when \ssf$0\ssb<\ssb t\ssb\le\ssb r^\alpha\msb<\msb1$\vsf.
\hfill$\square$

{\begin{remark}
We obtain again
\,$|\ssf k_{\ssf t}^{\ssf\sigma}(r)\vsf|\msb
\lesssim\ssb t^{-\frac n2}$
when \,$\sigma\ssb=\ssb(1\!-\msb\tfrac\alpha2)\ssf n$\ssf.
\end{remark}}

\section{Dispersive and Strichartz estimates on hyperbolic spaces} \label{sect:estimates_hyperbolic_spaces}

We now deduce from Theorem \ref{thm:KernelEstimate} dispersive and Strichartz estimates,
following the standard strategy of Ginibre and Velo,
combined with the Kunze--Stein phenomenon,
as in \cite{AnkerPierfelice2009,
IonescuStaffilani2009,
AnkerPierfeliceVallarino2011,
AnkerPierfeliceVallarino2012,
AnkerPierfelice2014,
AnkerPierfeliceVallarino2015}.
{More precisely, we will use the {following} version of the Kunze--Stein phenomenon {(see \cite[Thm.~4.2]{AnkerPierfeliceVallarino2011})}.}

{\begin{lemma}\label{lem:KunzeStein}
Let \,$2\msb\le\msb q\msb<\!\infty$\ssf.
Then there exists a positive constant \,$C$ such that,
for every \,$f\!\in\!L^{q^{\ssf\prime}}\msb(\Hn)$
and for every measurable radial function \,$k$ on \,$\Hn$,
\begin{equation*}
\|\ssf f\ssb*k\msf\|_{L^q\vphantom{\frac|0}}
\le C\,\|\ssf f\ssf\|_{L^{q^{\vsf\prime}}\vphantom{\frac|0}}\ssf
\Bigl[\msf\int_{\,0}^{+\infty}\hspace{-1mm}|\ssf k(r)\vsf|^{\frac q2}\,
\phi_0(r)\,(\sinh r)^{n-1}\,\mathrm{d}\ssf r\,\Bigr]^{\frac2q}\ssf.
\end{equation*}
\end{lemma}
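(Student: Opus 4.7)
The plan is to deduce this from the Kunze--Stein phenomenon on rank-one symmetric spaces; indeed the statement is essentially \cite[Thm.~4.2]{AnkerPierfeliceVallarino2011}, and the strategy below reconstructs the argument of that reference. After reducing to $f,k\ge0$ by positivity, I would first handle the endpoint $q=2$ via spherical Plancherel. Since $k$ is radial, the spherical Fourier transform of $f*k$ factors as $\hat k(\lambda)\,\hat f(\lambda,\theta)$, and combining Plancherel with the pointwise bound $|\phi_\lambda(r)|\le\phi_0(r)$ (noted in the remark after \eqref{eq:KernelHn}) yields
\[
\|f*k\|_{L^2}\le\|f\|_{L^2}\!\int_0^{+\infty}\msb|k(r)|\msf\phi_0(r)\msf(\sinh r)^{n-1}\,\mathrm{d}r,
\]
which is exactly the lemma at $q=2$.

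For $q\in(2,+\infty)$ I would rewrite the convolution in polar coordinates about $x$,
\[
f*k(x)=|\mathbb S^{n-1}|\int_0^{+\infty}\msb k(r)\,(M_rf)(x)\,(\sinh r)^{n-1}\,\mathrm{d}r,
\]
where $M_rf$ denotes the normalized spherical mean of $f$ on the geodesic sphere of radius $r$ centred at $x$, insert the weight $\phi_0(r)^{2/q}\cdot\phi_0(r)^{-2/q}$ into the integrand, and apply Hölder in $r$ with conjugate exponents $q/2$ and $q/(q-2)$. The first factor produces exactly the norm of $k$ stated on the right-hand side. After taking $L^q_x$-norms of the resulting pointwise inequality and exchanging the order of integration via Minkowski, the task reduces to a uniform estimate for the spherical mean of the form $\|M_rf\|_{L^q}\lesssim\phi_0(r)^{2/q}\|f\|_{L^{q'}}$ for $q\ge3$, with a small modification when $2<q<3$.

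The anticipated main obstacle is precisely this last ingredient. At $q=2$ one has $\|M_r\|_{L^2\to L^2}\le\phi_0(r)$ automatically, because the spherical transform of the normalized surface measure $\sigma_r$ is $\phi_\lambda(r)$, bounded pointwise by $\phi_0(r)$; however, the refinement to an $L^{q'}\to L^q$ bound for $q>2$ is genuine and does \emph{not} follow from a naive interpolation with the trivial $L^1\to L^\infty$ endpoint (the numerology does not match, and one has to exploit the subexponential decay of $\phi_0$ together with the Harish--Chandra expansion of $\phi_\lambda$). The standard way to obtain it is to lift $f$ and $k$ to the semisimple group $G$ with $\Hn=G/K$ and apply Cowling's sharp Kunze--Stein inequality for bi-$K$-invariant convolvers, or equivalently to run an analytic interpolation along a complex family of radial distributions extending $\sigma_r$. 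Since this step is classical in the radial setting, the most efficient route for the present paper is to invoke \cite[Thm.~4.2]{AnkerPierfeliceVallarino2011} directly.
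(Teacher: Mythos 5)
Your bottom line coincides with the paper: the paper does not prove this lemma at all, it simply states it with the reference \cite[Thm.~4.2]{AnkerPierfeliceVallarino2011}, and your final recommendation to invoke that theorem directly is exactly what is done. Your $q=2$ endpoint is also correct: by the spherical Plancherel formula and $|\phi_\lambda(r)|\le\phi_0(r)$ one gets $\|f*k\|_{L^2}\le\sup_\lambda|\widehat{k}(\lambda)|\,\|f\|_{L^2}\le\|f\|_{L^2}\int_0^{+\infty}|k(r)|\,\phi_0(r)\,(\sinh r)^{n-1}\,\mathrm{d}r$, which is Herz's criterion.

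However, the reconstruction you sketch for $2<q<\infty$ is not the argument of the cited reference, and as written it does not close. If you perform H\"older in $r$ with exponents $q/2$ and $q/(q-2)$ and then Minkowski, you reduce the lemma to controlling
\begin{equation*}
\Bigl[\int_0^{+\infty}\|M_rf\|_{L^q}^{\frac q{q-2}}\,\phi_0(r)^{-\frac2{q-2}}\,(\sinh r)^{n-1}\,\mathrm{d}r\Bigr]^{\frac{q-2}q},
\end{equation*}
and inserting the uniform bound $\|M_rf\|_{L^q}\lesssim\phi_0(r)^{2/q}\|f\|_{L^{q'}}$ (even granting it) leaves exactly $\bigl[\int_0^{+\infty}(\sinh r)^{n-1}\,\mathrm{d}r\bigr]^{(q-2)/q}$, which diverges: the pointwise H\"older split in the radial variable is lossy and there is no room left. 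The proof in \cite{AnkerPierfeliceVallarino2011} avoids this entirely: it interpolates (complex interpolation for the bilinear map $(f,k)\mapsto f*k$, equivalently a Stein analytic family in $k$) between the $q=2$ Herz endpoint above and the trivial endpoint $\|f*k\|_{L^\infty}\le\|f\|_{L^1}\|k\|_{L^\infty}$, which is what produces the weighted $L^{q/2}\bigl(\phi_0(r)(\sinh r)^{n-1}\,\mathrm{d}r\bigr)$ norm of $k$ without any loss. So either keep only the citation, or replace the H\"older/Minkowski step by this interpolation argument; the appeal to a sharp Kunze--Stein inequality for spherical means is neither needed nor sufficient by itself.
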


\begin{remark}
Notice that
\begin{equation}\label{Weight}
\phi_0(r)\,(\sinh r)^{n-1}\asymp\ssf
\tfrac{r^{\vsf n-1}}{(1\ssf+\msf r)^{n-2}}\;e^{\vsf\frac{n-1}2\ssf r}
\qquad\forall\;r\msb\ge\ssb0\msf.
\end{equation}
\end{remark}}

The proof of the Strichartz estimates uses the standard $TT^*$ {argument, Young's inequality extended to weak type spaces (see for instance \cite[Thm.~1.4.25]{Grafakos2014}), the Christ-Kiselev Lemma \cite{ChristKiselev2001} for the non-endpoint estimates and the Bourgain or Keel-Tao trick \cite{KeelTao1998} for the endpoint estimates, as in \cite[Sect.~6]{AnkerPierfeliceVallarino2011} or \cite[Sect.~5]{AnkerPierfelice2014} for instance. Therefore we generally omit proofs of the Strichartz estimates in this paper.}
%{\color{blue} Recall that a pair $(p,q)$ is admissible if $\big(\frac{1}{q},\frac{1}{p}\big)$ belongs to the admissible triangle $\big\{ \big(\frac{1}{q},\frac{1}{p}\big) \in (0,\frac{1}{2}] \times (0,\frac{1}{2} \;|\; \frac{1}{p} \geq \frac{n}{2}(\frac{1}{2}-\frac{1}{q})\big\} \cup \big\{(0,\frac{1}{2})\big\}.$}

%{\color{blue}[I would move Lem.~\ref{lem:KunzeStein} and its remark above, before the Subsect.~4.1.]}

\subsection{Case \vsf$1\msb<\ssb\alpha\ssb<\ssb2$}
%{\color{darkgreen}[\,Examine the range $\frac n2\msb<\ssb\sigma\msb\le\ssb n$\,]}

\begin{theorem}[{Dispersive estimates}]
\label{thm:DispersiveEstimateHn_CaseAlphaLarge}
Let \,$1\!<\msb\alpha\msb<\msb2$\ssf,
{$0\ssb\le\msb\sigma\msb\le\msb\frac n2$\vsf,}
$2\msb<\msb q\msb\le\msb\infty$ and set
\begin{equation}\label{eq:DefinitionMinimumM}
m=\max\ssf\bigl\{2\ssf\tfrac{n\vsf-\vsf\sigma}\alpha,
\tfrac{n\vsf-\vsf2\ssf\sigma}{\alpha\vsf-1}\bigr\}
=\begin{cases}
\,2\ssf\frac{n\vsf-\vsf\sigma}\alpha
&\text{if \,$\sigma\msb\ge\msb(1\msb-\msb\frac\alpha2)\ssf n$\ssf,}\\
\msf\tfrac{n\vsf-\vsf2\ssf\sigma}{\alpha\vsf-1}
&\text{if \,$\sigma\msb\le\msb(1\msb-\msb\frac\alpha2)\ssf n$\ssf.}
\end{cases}
\end{equation}
Then the following dispersive estimates hold for \,$t\ssb\in\ssb\R^*:$
\begin{equation*}
\bigl\|\ssf(-\Delta)^{-\ssf(\frac12-\frac1q)\ssf\sigma}\msf
e^{\hspace{.4mm}i\hspace{.4mm}t\ssf(-\Delta)^{\alpha/2}}\ssf
\bigr\|_{L^{q^{\vsf\prime}}\!\to\vsf L^q}
\lesssim\ssf|t|^{-\ssf(\frac12-\frac1q)\ssf m}
\end{equation*}
for \,$t$ small, say \,$0\msb<\msb|t|\msb<\msb1$\vsf, and
\begin{equation*}
\bigl\|\ssf(-\Delta)^{-\ssf\sigma/2}\msf
e^{\hspace{.4mm}i\hspace{.4mm}t\ssf(-\Delta)^{\alpha/2}}\ssf
\bigr\|_{L^{q^{\vsf\prime}}\!\to\vsf L^q}
\lesssim\ssf|t|^{-\frac32}
\end{equation*}
%{\color{blue}[Shall we not add $-\ssf(\frac12-\frac1q)\ssf\sigma$ as above? In other words:] $$\bigl\|\ssf(-\Delta)^{-\ssf(\frac12-\frac1q)\ssf\sigma}\msf
%e^{\hspace{.4mm}i\hspace{.4mm}t\ssf(-\Delta)^{\alpha/2}}\ssf
%\bigr\|_{L^{q^{\vsf\prime}}\!\to\vsf L^q}
%\lesssim\ssf|t|^{-\frac32}$$}
for \,$t$ large{, say \,$|t|\msb\ge\msb1$\vsf}.
\end{theorem}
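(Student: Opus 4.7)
The plan combines Stein complex interpolation (for the small-$t$ estimate) with the Kunze--Stein inequality (for the large-$t$ one), following the standard Ginibre--Velo scheme.

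\emph{Small $|t|$.} I would introduce the analytic family
\[
T_z=(-\Delta)^{-z\sigma/2}\,e^{\ssf i\ssf t\ssf(-\Delta)^{\alpha/2}},\qquad 0\le\Re z\le 1.
\]
On $\Re z=0$ the Fourier symbol $(\lambda^2+\rho^2)^{-i\tau\sigma/2}e^{\ssf i\ssf t(\lambda^2+\rho^2)^{\alpha/2}}$ is unimodular, so $T_{i\tau}$ is unitary on $L^2(\Hn)$ uniformly in $\tau\in\R$. On $\Re z=1$, taking the pointwise supremum over the small-scale bounds in Theorem~\ref{thm:KernelEstimate}(i) (Subcases~2.1.1 and~2.1.2) yields
\[
\|k_t^{(1+i\tau)\sigma}\|_{L^\infty(\Hn)}\;\lesssim\;(1+|\tau|)^A\,\max\bigl\{|t|^{-(n-\sigma)/\alpha},\,|t|^{-(n-2\sigma)/(2(\alpha-1))}\bigr\}\;=\;(1+|\tau|)^A\,|t|^{-m/2},
\]
with $m$ as in \eqref{eq:DefinitionMinimumM}, the polynomial growth in $|\tau|$ arising because $(\lambda^2+\rho^2)^{-i\tau\sigma/2}$ is an inhomogeneous symbol of order $0$ whose seminorms are polynomial in $|\tau|$. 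Stein's theorem at $\theta=1-2/q\in(0,1]$ then gives
\[
\|T_\theta\|_{L^{q'}\to L^q}\;\lesssim\;1^{1-\theta}\cdot\bigl(|t|^{-m/2}\bigr)^\theta\;=\;|t|^{-(1/2-1/q)m},
\]
and since $T_\theta=(-\Delta)^{-(1/2-1/q)\sigma}\,e^{\ssf i\ssf t\ssf(-\Delta)^{\alpha/2}}$, this is exactly the small-$t$ estimate.

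\emph{Large $|t|$.} In this regime Stein interpolation would only yield $|t|^{-3(1/2-1/q)}$, weaker than the $q$-independent $|t|^{-3/2}$ claimed. I would therefore apply Lemma~\ref{lem:KunzeStein} directly to $(-\Delta)^{-\sigma/2}e^{\ssf i\ssf t\ssf(-\Delta)^{\alpha/2}}$. By Theorem~\ref{thm:KernelEstimate}(i), the kernel satisfies $|k_t^\sigma(r)|\lesssim|t|^{-3/2}(1+r)e^{-\rho r}$ on the dominant region $r\le|t|$ (Subcases~1.1.1 and~2.1.1). Combined with the weight $\phi_0(r)(\sinh r)^{n-1}\asymp r^{n-1}(1+r)^{2-n}e^{\rho r}$, the weighted integrand reads
\[
|k_t^\sigma(r)|^{q/2}\phi_0(r)(\sinh r)^{n-1}\;\lesssim\;|t|^{-3q/4}\,r^{n-1}(1+r)^{q/2-n+2}\,e^{-\rho r(q/2-1)},
\]
whose $r$-integral is $O(1)$ uniformly in $q>2$. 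The tail $r\ge|t|$ (Subcase~1.1.2) is absorbed in the same way, the polynomial $r^{(n-2\sigma)/(2(\alpha-1))-1/2}$ being beaten by $e^{-\rho r(q/2-1)}$. Raising to the $(2/q)$-th power produces exactly $|t|^{-3/2}$.

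\emph{Main obstacle.} The delicate step is verifying the $L^1\!\to\!L^\infty$ endpoint on the line $\Re z=1$: the pointwise kernel bounds of Theorem~\ref{thm:KernelEstimate} must persist uniformly in $\tau$ with at most polynomial loss $(1+|\tau|)^A$, so that Stein interpolation applies. This requires rerunning the oscillatory-integral analysis of Section~\ref{sect:Proof} with the amplitude $(\lambda^2+\rho^2)^{-(1+i\tau)\sigma/2}$ in place of $(\lambda^2+\rho^2)^{-\sigma/2}$; a routine check shows that each integration by parts loses at most a polynomial factor in $|\tau|$, which is admissible.
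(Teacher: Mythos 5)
Your proposal is correct and is essentially the paper's own argument: for small $|t|$ the paper likewise interpolates an analytic family (Stein) between the $L^2\!\to\!L^2$ identity for purely imaginary powers and the $L^1\!\to\!L^\infty$ bound $|t|^{-m/2}$ coming from the kernel estimates of Theorem~\ref{thm:KernelEstimate}(i) extended to complex $\sigma$, and for large $|t|$ it applies Lemma~\ref{lem:KunzeStein} to $k_t^{\sigma}$ exactly as you do. The only cosmetic point is that in the small-time sup over $r$ you should also invoke the large-scale bound of Subcase~1.1.2 for $r\ge1\ge|t|$ (it is dominated by $|t|^{-m/2}$ anyway), and recall that the Kunze--Stein constant may depend on $q$, which suffices since $q>2$ is fixed.
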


\begin{remark}
These estimates become in particular
\begin{equation*}
\bigl\|\ssf(-\Delta)^{-\ssf\sigma/2}\msf
e^{\hspace{.4mm}i\hspace{.4mm}t\ssf(-\Delta)^{\alpha/2}}\ssf
\bigr\|_{L^{q^{\vsf\prime}}\!\to\vsf L^q}
\lesssim\ssf\begin{cases}
\,|t|^{-\vsf(\frac12-\frac1q)\ssf n}
&\text{if \,$0\msb<\msb|t|\msb<\msb1$}\\
\,|t|^{-\frac32}
&\text{if \,$|t|\msb\ge\msb1$}\\
\end{cases}
\end{equation*}
when \,$\sigma\msb=\msb(\frac12\msb-\msb\frac1q)\vsf(2\msb-\msb\alpha)\ssf n$\ssf.
%{\color{blue}[Shall we not add $-\ssf(\frac12-\frac1q)\ssf\sigma$ for the first $\Delta$ as above and take $\sigma=(1-\frac{\alpha}{2})n$ to be current with the notation?]} 
\end{remark}

\begin{proof}
All estimates rely on the kernel estimates in Theorem \ref{thm:KernelEstimate}~{(i)} {extended straightforwardly to the vertical strip \ssf$0\ssb\le\ssb\Re\sigma\msb\le\msb\frac n2$ \vsf in \ssf$\C$\ssf.}
More precisely, the estimates for \ssf$t$ \ssf small are obtained by interpolation {for an analytic family of operators (see for instance \cite[Ch.~V, Thm.~4.1]{SteinWeiss1971} or \cite[Thm.~1.3.7]{Grafakos2014})} between
\begin{equation}\label{eq:ConservationL2}
\bigl\|\ssf(-\Delta)^{-\ssf\sigma/2}\msf
e^{\hspace{.4mm}i\hspace{.4mm}t\ssf(-\Delta)^{\alpha/2}}\ssf
\bigr\|_{L^2\to\vsf L^2}
=1
\qquad\forall\;\sigma\msb\in\ssb i\ssf\R
\end{equation}
and

\centerline{$
\bigl\|\ssf(-\Delta)^{-\ssf\sigma/2}\msf
e^{\hspace{.4mm}i\hspace{.4mm}t\ssf(-\Delta)^{\alpha/2}}\ssf
\bigr\|_{L^1\to\vsf L^\infty}
\lesssim\ssf{\begin{cases}
\msf|t|^{-\frac{n\vsf-\Re\sigma}\alpha}
&\text{if \ssf$\Re\sigma\msb\ge\msb(1\msb-\msb\frac\alpha2)\ssf n$\ssf,}\\
\msf|t|^{-\frac12\frac{n\vsf-\vsf2\Re\sigma}{\alpha\vsf-1}}
&\text{if \ssf$\Re\sigma\msb\le\msb(1\msb-\msb\frac\alpha2)\ssf n$\ssf.}\\
\end{cases}}$}
\vspace{1mm}
%{\color{blue}[Shall we not add $-\ssf(\frac12-\frac1q)\ssf\sigma$ as above to be current with the notation?]} 
While the estimate for \ssf$t$ \ssf large follows from
\vspace{1mm}

\centerline{$\displaystyle
\Bigl[\msf\int_{\,0}^{+\infty}\hspace{-1mm}
|\ssf k_{\ssf t}^{\ssf\sigma}(r)\vsf|^{\frac q2}\,
\phi_0(r)\,(\sinh r)^{n-1}\,
\mathrm{d}\ssf r\msf\Bigr]^{\frac2q}
\lesssim\msf|t|^{-\frac32}
$}\vspace{1mm}

and from Lem{ma}~\ref{lem:KunzeStein}.
\end{proof}

\begin{figure}[h!]
\centering
\begin{tikzpicture}[line cap=round,line join=round,>=triangle 45,x=8mm,y=8mm]
\clip(-3.,-1.) rectangle (8.,6.);
\draw [->,line width=1.pt,color=black] (0.,0.) -- (7.5,0.);
\draw [line width=1.pt,color=black] (0.,0.) -- (0.,3.2);
\draw [->,line width=1.pt,color=black] (0.,4.1) -- (0.,5.5);
\draw [dashed,line width=1.pt,color=red] (0.,4.) -- (4.,4.);
\draw [line width=1.pt,color=black] (4.,0.) -- (4.,2.);
\draw [line width=1.pt,color=red] (4.,1.6) -- (4.,4.);
\draw [line width=1.pt,color=red] (0.,3.2) -- (2.04,1.98);
\draw [line width=1.pt,color=black] (2.04,1.98) -- (4.,0.8);
\draw [line width=1.pt,color=black] (0.,2.4) -- (2.14,1.96);
\draw [line width=1.pt,color=red] (2.14,1.96) -- (4.,1.6);
\draw [dashed,line width=1.pt,color=red] (0.,3.4) -- (0.,3.8);
%\draw [->,line width=1.pt,color=red] (2.2,0.8) -- (3.2,1.6);
%\draw [->,line width=1.pt,color=red] (-1.2,1.9) -- (1.,2.5);
\fill [line width=0pt,color=red,fill=red,fill opacity=0.1] (.1,3.9) -- (3.9,3.9) -- (3.9,1.72) -- (2.16,2.05) -- (0.1,3.28) -- cycle;
\begin{scriptsize}
\draw [color=red,fill=red] (0.,4.) circle (3.0pt);
%\draw[color=black, fill=black](0.,2.4) circle (3.0pt);
\draw [color=red,fill=white] (0.,3.2) circle (3.0pt);
%\draw [color=black,fill=white] (0.,2.4) circle (3.0pt);
%\draw [fill=white] (4.,0.8) circle (3.0pt);
\draw[color=red,fill=red] (4.,1.6) circle (3.0pt);
\draw[color=red,fill=white] (4.,4.) circle (3.0pt);
\draw[color=red,fill=red] (2.04,1.98) circle (3.0pt);
\draw[color=black](-.5,-.5) node {$0$};
\draw[color=black](7.5,-.5) node {$\frac1p$};
\draw[color=black](4,-.5) node {$\frac12$};
\draw[color=black](-.5,5.5) node {$\frac1q$};
\draw[color=red](-.5,4) node {$\frac12$};
\draw[color=red](-0.85,3.2) node {$\frac12\msb-\msb\frac\sigma{2\vsf n}$};
\draw[color=black](-0.8,2.4) node {$\frac12\msb-\msb\frac\sigma n$};
\draw[color=red](5.3,1.6) node {$\frac12\msb-\msb\frac{\alpha\vsf+\vsf\sigma-1}n$};
\draw[color=red](1.9,0.4) node {$\bigl(\frac\sigma{2\vsf(2-\alpha)},\vsb\frac12\msb-\msb\frac\sigma{(2-\alpha)\vsf n}\bigr)$};
\draw[color=black](5.1,0.7) node {$\frac12\msb-\msb\frac{\alpha\vsf+\vsf\sigma}{2\vsf n}$};
\draw[color=red](-1.7,1.) node {$\frac{\alpha\vphantom{|}}p\msb+\msb\frac{n\vphantom{|}}q\msb=\msb\frac{n\vsf-\vsf\sigma\vphantom{|}}2$};
\draw [<-, line width=1pt, color=red] (2.03,1.7)-- (2.03,0.7);
%\draw [line width=1.pt,color=gray] (0.,4.) -- (4.,2.); %Euclidean line
%\draw [fill=gray] (4.,2.) circle (3.0pt); %Keel-Tao point
%\draw [->,shift={(4.512146580087127,2.1333028489017143)},line width=1.pt,color=gray]  plot[domain=1.0742246203184271:2.643478927372618,variable=\t]({1*1.2292644646016508*cos(\t r)+0*1.2292644646016508*sin(\t r)},{0*1.2292644646016508*cos(\t r)+1*1.2292644646016508*sin(\t r)}); %arrow for Euclidean line
%\draw[color=gray] (6.346452397365361,2.8463824350404856) node {Euclidean line};
%\draw [->,shift={(1.9333333333333336,2.1583333333333337)},line width=1.pt,color=blue]  plot[domain=4.906242642412907:5.8680172028035775,variable=\t]({1*1.384261736650824*cos(\t r)+0*1.384261736650824*sin(\t r)},{0*1.384261736650824*cos(\t r)+1*1.384261736650824*sin(\t r)});
%\draw [<-, shift={(4.034672072978157,1.6773166274669227)},line width=1pt, color=red]  plot[domain=3.061264558551837:4.81073460600348,variable=\t]({1*0.8931979025074132*cos(\t r)+0*0.8931979025074132*sin(\t r)},{0*0.8931979025074132*cos(\t r)+1*0.8931979025074132*sin(\t r)});
\draw [->, shift={(4.713476207838939,0.8982610910339334)},line width=1pt, color=red]  
plot[domain=1.6477573690202114:2.613235003003553,variable=\t]({1*2.009339437222573*cos(\t r)+0*2.009339437222573*sin(\t r)},{0*2.009339437222573*cos(\t r)+1*2.009339437222573*sin(\t r)});
\draw[color=red] (6.3,2.9) node {$\frac{2\ssf(\alpha\vsf-1)\vphantom{|}}p\msb+\msb\frac{n\vphantom{|}}q\msb=\msb\frac{n\vsf-\vsf2\ssf\sigma\vphantom{|}}2$};
\draw [->,shift={(-1.3607121714240082,3.479473881613361)},line width=1pt, color=red]  plot[domain=5.037886224969313:5.863639798909469,variable=\t]({1*2.61898177088956*cos(\t r)+0*2.61898177088956*sin(\t r)},{0*2.61898177088956*cos(\t r)+1*2.61898177088956*sin(\t r)});
\end{scriptsize}
\end{tikzpicture}
\caption{Admissible region \ssf$\mathcal{R}_{\vsf\alpha}$ \vsf for fixed \ssf$1\!<\msb\alpha\msb<\msb2$ \ssf and \ssf$0\msb\le\ssb\sigma\msb\le\ssb \frac n2$}
\label{fig:AdmissibleRegionAlphaLarge}
\end{figure}

\begin{theorem}[Strichartz inequalities]
\label{thm:StrichartzEstimateHnAlphaLarge}
Assume that \,$1\msb<\ssb\alpha\ssb<\ssb2$ and let
\,$I\ssb=\ssb(-T,\ssb+\vsf T\ssf)$ be an open interval with \,$T\msb>\ssb0$\ssf.
Then the following Strichartz inequalities hold for so\-lu\-tions of \eqref{eq:NLSgeneral} on \,$I\msb\times\hspace{-.4mm}\Hn:$
\begin{align*}
\|(-\Delta_{\ssf x})^{-\ssf\sigma/4}\msf u(t,x)
\|_{L^{p}(\vsb I \ssf;\ssf L^q(\mathbb{H}^n\ssb)\vsb)}
\hspace{-.9mm}\le C\,\bigl\{\hspace{.4mm}\|f\|_{L^2(\mathbb{H}^n\ssb)}
+\ssf\|(-\Delta_{\ssf x})^{\vsf\tilde{\sigma}/4}\ssf F(t,x)\|_{
L^{\vsf\tilde{p}^{\ssf\prime}\hspace{-.6mm}}(\vsb I\ssf;
\ssf L^{\tilde{q}^{\ssf\prime}\hspace{-.6mm}}(\mathbb{H}^n\ssb)\vsb)}
\ssf\bigr\}\,.
\end{align*}
Here \msf$\bigl(\frac{1\vphantom{|}}p,\ssb\frac{1\vphantom{|}}q,\sigma\bigr)$ \,and \msf$\bigl(\frac{1\vphantom{|}}{\tilde{p}},\ssb\frac{1\vphantom{|}}{\tilde{q}},\tilde{\sigma}\bigr)$
belong to the {admissible} region $(${see Figure~\ref{fig:AdmissibleRegionAlphaLarge}}\msf$)$
\begin{align*}
\mathcal{R}_{\vsf\alpha}
&=\text{\small{$\bigl\{\ssf\bigl(\tfrac{1\vphantom{|}}p,\ssb\tfrac{1\vphantom{|}}q,\sigma\bigr)\msb\in\msb\bigl(\vsf0\ssf,\ssb\tfrac12\ssf\bigr]\!\times\!\bigl[\ssf0\ssf,\ssb\tfrac12\vsf\bigr)\!\times\!\bigl[\ssf0\ssf,\ssb\tfrac n2\vsf\bigr]\msf\big|\,\tfrac{\alpha\vphantom{|}}p\msb+\msb\tfrac{n\vphantom{|}}q\msb\ge\msb\tfrac{n\vsf-\vsf\sigma\vphantom{|}}2\text{ and \,}\tfrac{2\ssf(\alpha\vsf-1)\vphantom{|}}p\msb+\msb\tfrac{n\vphantom{|}}q\msb\ge\msb\tfrac{n\vsf-\vsf2\ssf\sigma\vphantom{|}}2\ssf\bigr\}\,\cup\,\bigl\{\bigl(\vsf0\ssf,\ssb\tfrac12\vsb,\ssb0\vsf\bigr)\bigr\}\,,$}}
\end{align*}
where \,$m$ is defined in \eqref{eq:DefinitionMinimumM}.
Moreover \,$C\!\ge\msb0$ depends on \,$\alpha$\vsf,
$(\vsf p,\vsb q,\vsb\sigma)$ \,and \ssf$(\vsf\tilde{p},\vsb\tilde{q},\vsf\vsb\tilde{\sigma})$ but not on \,$T\msb>\msb0$ and \,$u$\ssf.
\end{theorem}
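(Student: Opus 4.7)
The plan is to pass from the dispersive estimates of Theorem~\ref{thm:DispersiveEstimateHn_CaseAlphaLarge} to the Strichartz inequalities through the standard $TT^{*}$ scheme, as in \cite{AnkerPierfelice2009, AnkerPierfeliceVallarino2011, AnkerPierfelice2014}. Writing $U(t)=e^{it(-\Delta)^{\alpha/2}}$ and denoting by $k^{\sigma}_{t}$ the radial kernel of $(-\Delta)^{-\sigma/2}U(t)$ introduced in \eqref{eq:KernelSigma}, the homogeneous Strichartz inequality $\|(-\Delta_{x})^{-\sigma/4}U(t)f\|_{L^{p}_{t}L^{q}_{x}}\lesssim\|f\|_{L^{2}}$ is, by duality, equivalent to the bilinear bound
\begin{equation*}
\Bigl|\int_{I}\!\int_{I}\bigl\langle k^{\sigma}_{t-s}*F(s,\cdot),\,G(t,\cdot)\bigr\rangle\,\mathrm{d}s\,\mathrm{d}t\Bigr|\lesssim\|F\|_{L^{p'}_{t}L^{q'}_{x}}\|G\|_{L^{p'}_{t}L^{q'}_{x}}.
\end{equation*}
I would split the time integral according to $|t-s|\le 1$ (small scales) and $|t-s|\ge 1$ (large scales), and estimate each piece separately.

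For the small-time regime, I would interpolate, via the three-line lemma on the analytic family $(-\Delta)^{-z/2}U(t)$, the trivial $L^{2}\!\to\! L^{2}$ conservation \eqref{eq:ConservationL2} against the $L^{1}\!\to\! L^{\infty}$ bound deduced from Theorem~\ref{thm:DispersiveEstimateHn_CaseAlphaLarge}. This yields
\begin{equation*}
\bigl\|k^{(\frac12-\frac1q)\sigma}_{t}*\,\cdot\,\bigr\|_{L^{q'}\to L^{q}}\lesssim|t|^{-(\frac12-\frac1q)\vsf m}\qquad(0<|t|<1),
\end{equation*}
which, combined with Hardy--Littlewood--Sobolev (i.e.\ the weak-type Young inequality \cite[Thm.~1.4.25]{Grafakos2014}), produces the non-endpoint Strichartz bounds on the interior of $\mathcal{R}_{\alpha}$. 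The two linear boundaries of the admissible region correspond precisely to the two regimes of $m$ in \eqref{eq:DefinitionMinimumM}, so the scaling constraint $(\tfrac12-\tfrac1q)\vsf m\cdot p'=1$ translates into the two inequalities $\frac{\alpha}{p}+\frac{n}{q}\ge\frac{n-\sigma}{2}$ and $\frac{2(\alpha-1)}{p}+\frac{n}{q}\ge\frac{n-2\sigma}{2}$. Along these boundary lines, the endpoint Keel--Tao bilinear dyadic argument \cite{KeelTao1998} gives the desired estimate.

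For the large-time regime, I would invoke the Kunze--Stein phenomenon of Lemma~\ref{lem:KunzeStein}. Using the kernel bound $|k^{\sigma}_{t}(r)|\lesssim|t|^{-3/2}(1+r)e^{-\rho r}$ for $|t|\ge\max\{1,r\}$ from Theorem~\ref{thm:KernelEstimate}(i), together with the weight estimate \eqref{Weight}, a direct computation gives
\begin{equation*}
\Bigl[\int_{0}^{+\infty}\!|k^{\sigma}_{t}(r)|^{q/2}\,\phi_{0}(r)(\sinh r)^{n-1}\,\mathrm{d}r\Bigr]^{2/q}\lesssim|t|^{-3/2}
\end{equation*}
for every $2<q<\infty$, since the factor $e^{-\rho q r/2}$ dominates the growth $e^{\rho r}$ of the Plancherel weight. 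As $|t-s|^{-3/2}$ is integrable at infinity, Minkowski and the $L^{p}$ boundedness of the Hardy--Littlewood maximal operator dispose of the large-time contribution for any admissible $p$, placing no further constraint on $(p,q,\sigma)$.

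The transition from the homogeneous estimate to the inhomogeneous one (with independent pair $(\tilde{p},\tilde{q},\tilde{\sigma})$) is then obtained by the Christ--Kiselev lemma \cite{ChristKiselev2001} away from the endpoints, and by the Keel--Tao real-interpolation trick \cite{KeelTao1998} at them, following the strategy of \cite[Sect.~6]{AnkerPierfeliceVallarino2011}. The main subtlety, as already reflected in the broken shape of $\mathcal{R}_{\alpha}$ (Figure~\ref{fig:AdmissibleRegionAlphaLarge}), is that the exponent $m$ switches regime at $\sigma=(1-\tfrac{\alpha}{2})n$, so one has to handle two distinct families of small-time scaling constraints; this is exactly the reason why both linear bounds appear simultaneously in the definition of $\mathcal{R}_{\alpha}$. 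The isolated vertex $(0,\tfrac12,0)$ is, of course, nothing else than the $L^{\infty}_{t}L^{2}_{x}$ conservation of mass and needs no dispersive input.
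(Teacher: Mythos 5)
Your proposal follows essentially the same route the paper indicates (and largely omits in detail): the $TT^{*}$ reduction, the small-time dispersive bounds of Theorem~\ref{thm:DispersiveEstimateHn_CaseAlphaLarge} combined with (weak-type) Young's inequality in time, which yields exactly the two constraints defining $\mathcal{R}_{\alpha}$, the large-time $|t|^{-3/2}$ bound obtained from Theorem~\ref{thm:KernelEstimate} and Lemma~\ref{lem:KunzeStein}, and Christ--Kiselev together with Keel--Tao for the inhomogeneous and endpoint cases. Two details should be tightened but do not affect the scheme: the time-convolution condition is $\bigl(\tfrac12-\tfrac1q\bigr)m\le\tfrac2p$ rather than $\bigl(\tfrac12-\tfrac1q\bigr)m\,p'=1$ (and one must apply the dispersive estimate with the smoothing parameter matched so that the $TT^{*}$ operator carries $(-\Delta)^{-\sigma/2}$), and for $|t-s|\ge1$ the mapping $L^{p'}_t\to L^{p}_t$ follows from Young's inequality (the truncated kernel lies in $L^{p/2}$), not from the maximal function, which only gives $L^{p'}_t\to L^{p'}_t$.
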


\begin{remark}
As already observed for the Schr\"odinger equation $(\alpha\ssb=\ssb2)$ and for the wave equation $(\alpha\ssb=\msb1)$, the admissible region is much larger on hyperbolic spaces than on Euclidean spaces.
\end{remark}

\subsection{Case \ssf$0\ssb<\ssb\alpha\ssb<\msb1$}
\label{sect:case_alpha_small}
From the kernel estimates in Theorem \ref{thm:KernelEstimate} (ii), we deduce similarly the following inequalities.
\medskip

\begin{theorem}[Dispersive estimates]
\label{thm:DispersiveEstimateHn_CaseAlphaSmall}
\textnormal{(i)}
Assume that \,$n\ssb\ge\ssb3$\ssf,
\ssf$0\!<\msb\alpha\msb<\msb1$\vsf,
$(1\!-\msb\frac\alpha2)\ssf n\ssb\le\msb\sigma\msb\le\msb n$
and \,$2\ssb<\ssb q\ssb\le\ssb\infty$\ssf.
Then the following dispersive estimates hold\,$:$
\begin{equation}\label{eq:DispersiveAlphaSmallTSmall}
\bigl\|\ssf(-\Delta)^{-\ssf(\frac12-\frac1q)\ssf\sigma}\msf e^{\hspace{.4mm}i\hspace{.4mm}t\ssf(-\Delta)^{\alpha/2}}\ssf\bigr\|_{L^{q^{\vsf\prime}}\!\to\vsf L^q}\ssb\lesssim|t|^{-\vsf(\frac12-\frac1q)\ssf2\vsf\frac{n-\vsf\sigma}\alpha}
\end{equation}
for \,$t$ small, say \,$0\ssb<\ssb|t|\ssb<\msb1$\vsf, and
\begin{equation}\label{eq:DispersiveAlphaSmallTLarge}
\bigl\|\ssf(-\Delta)^{-\ssf\sigma/2}\msf
e^{\hspace{.4mm}i\hspace{.4mm}t\ssf(-\Delta)^{\alpha/2}}
\ssf\bigr\|_{L^{q^{\vsf\prime}}\!\to\vsf L^q}\ssb
\lesssim|t|^{-\frac32}  
\end{equation}
for \,$t$ large, say \,$|t|\ssb\ge\msb1$\vsf.

\textnormal{(ii)}
In dimension \,$n\ssb=\ssb2$\ssf,
the small time estimate is the same,
while the large time estimate reads
\begin{align*}
\bigl\|\ssf(-\Delta)^{-\ssf\sigma/2}\msf
e^{\hspace{.4mm}i\hspace{.4mm}t\ssf(-\Delta)^{\alpha/2}}
\ssf\bigr\|_{L^{q^{\vsf\prime}}\!\to\vsf L^q}\ssb
&\lesssim|t|^{-\min\ssf\{\frac32,\frac{\sigma-1}{1-\vsf\alpha}\}}\\
&{=\begin{cases}
\msf|t|^{-\frac{\sigma-1}{1-\vsf\alpha}}
&\text{if \,$0\ssb<\ssb\alpha\ssb\le\msb\frac13$ and \,$2\ssb-\ssb\alpha\ssb\le\ssb\sigma\msb\le\ssb2$\ssf,}\\
\msf|t|^{-\frac{\sigma-1}{1-\vsf\alpha}}
&\text{if \,$\frac13\msb\le\ssb\alpha\ssb<\msb1$ and \,$2\ssb-\ssb\alpha\ssb\le\ssb\sigma\msb\le\msb\frac{5\ssf-\ssf3\ssf\alpha}2$\vsf,}\\
\msf|t|^{-\frac32}
&\text{if \,$\frac13\msb\le\ssb\alpha\ssb<\msb1$ and
\,$\frac{5\ssf-\ssf3\ssf\alpha}2\msb\le\ssb\sigma\msb\le\ssb2$\ssf.}
\end{cases}}
\end{align*}
\end{theorem}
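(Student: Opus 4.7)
The plan is to adapt the argument of Theorem~\ref{thm:DispersiveEstimateHn_CaseAlphaLarge} to the new range $0\ssb<\ssb\alpha\ssb<\msb1$: combining the kernel estimates of Theorem~\ref{thm:KernelEstimate}(ii) (extended analytically in $\sigma$ to the vertical strip $0\ssb\le\ssb\Re\sigma\ssb\le\ssb n$) with Stein's complex interpolation for the small-time bound, and with the Kunze--Stein phenomenon (Lemma~\ref{lem:KunzeStein}) for the large-time bound.

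For the small-time estimate \eqref{eq:DispersiveAlphaSmallTSmall} I will apply Stein's interpolation theorem to the analytic family $T_z\ssf:=\ssf(-\Delta)^{-z/2}\,e^{\ssf i\ssf t\ssf(-\Delta)^{\alpha/2}}$. On the line $\Re z=0$ the operator $T_z$ is an $L^2$ isometry by spectral calculus. On the line $\Re z=\sigma$, with $(1-\tfrac\alpha2)n\le\sigma\le n$, the Remarks following Theorem~\ref{thm:KernelEstimate} amalgamate the Subcase 2.2.1--2.2.3 bounds into the single pointwise estimate $\sup_{r\ge0}|k_t^\sigma(r)|\lesssim|t|^{-(n-\sigma)/\alpha}$, valid for $0<|t|<1$ --- this is exactly the range where the standing hypothesis $\sigma\ge(1-\tfrac\alpha2)n$ makes $|t|^{-(n-\sigma)/\alpha}$ dominate over the auxiliary term. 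Hence $\|T_z\|_{L^1\to L^\infty}\lesssim|t|^{-(n-\Re z)/\alpha}$, and interpolation at $\theta=1-2/q=2(\tfrac12-\tfrac1q)$ produces $(-\Delta)^{-(1/2-1/q)\sigma}\,e^{it(-\Delta)^{\alpha/2}}\ssf:\ssf L^{q'}\to L^q$ with the claimed bound $|t|^{-\theta(n-\sigma)/\alpha}$.

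For the large-time estimate \eqref{eq:DispersiveAlphaSmallTLarge} in case (i), Lemma~\ref{lem:KunzeStein} reduces matters to proving
$$
\int_{0}^{+\infty}\!|k_t^\sigma(r)|^{q/2}\,\phi_0(r)\,(\sinh r)^{n-1}\,\mathrm{d}r\,\lesssim\,|t|^{-3q/4}
\qquad(|t|\ge1).
$$
I will split the integration into the four ranges $[0,1]$, $[1,\theta_0|t|/2]$, $[\theta_0|t|/2,2\theta_0|t|]$ and $[2\theta_0|t|,+\infty)$ of Theorem~\ref{thm:KernelEstimate}(ii) (Subcases 2.2.1, 1.2.3, 1.2.2 and 1.2.1 respectively). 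On the last two regions the kernel is already exponentially small in $|t|$, so these contributions are negligible. On $[0,1]$ the weight reduces to $r^{n-1}$ and the Subcase 2.2.1 bound yields the two contributions $|t|^{-3q/4}$ and $|t|^{-(2\sigma-n)q/(4(1-\alpha))}$; since $\sigma\ge(1-\tfrac\alpha2)n$ combined with $n\ge3$ forces $(2\sigma-n)/(2(1-\alpha))\ge n/2\ge\tfrac32$, the first dominates. On $[1,\theta_0|t|/2]$ the very same inequality makes the minimum appearing in the Subcase~1.2.3 exponent equal to $\tfrac32$, so $|k_t^\sigma(r)|^{q/2}\lesssim|t|^{-3q/4}\,r^{\ssf q/2}\,e^{-(n-1)rq/4}$; combining with $\phi_0(r)(\sinh r)^{n-1}\asymp r\,e^{(n-1)r/2}$ leaves
$$
|t|^{-3q/4}\!\int_{1}^{+\infty}\!r^{q/2+1}\,e^{-(n-1)(q-2)\ssf r/4}\,\mathrm{d}r\,\lesssim\,|t|^{-3q/4},
$$
the integral converging because $q>2$. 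Raising to the $2/q$-th power finishes part (i).

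Part (ii) is identical in the small-time regime. In the large-time regime, $n=2$ causes $(2\sigma-n)/(2(1-\alpha))=(\sigma-1)/(1-\alpha)$ to possibly drop below $\tfrac32$; the same four-region scheme and computation then produce the decay $|t|^{-\min\{3/2,(\sigma-1)/(1-\alpha)\}}$, and the piecewise formulation in the theorem follows by comparing $\sigma$ with $(5-3\alpha)/2$ inside $[2-\alpha,2]$. I expect the main technical obstacle to be the bookkeeping in the Kunze--Stein integral over $[1,\theta_0|t|/2]$: one must balance the $(r/|t|)^{3/2}$ decay from Subcase~1.2.3 against the exponential factor $e^{(n-1)r/2}$ inherited from the Plancherel weight \eqref{Weight} and the $e^{-\rho\ssf r\ssf q/2}$ in $|k_t^\sigma|^{q/2}$, in order to extract the sharp rate $|t|^{-3/2}$ without any artificial loss in $\sigma$.
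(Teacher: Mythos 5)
Your proposal is correct and follows essentially the same route as the paper, which proves this theorem exactly as Theorem~\ref{thm:DispersiveEstimateHn_CaseAlphaLarge}: Stein interpolation between the $L^2$ bound \eqref{eq:ConservationL2} and the $L^1\!\to\!L^\infty$ kernel bound of Theorem~\ref{thm:KernelEstimate}~(ii) for small time, and the Kunze--Stein Lemma~\ref{lem:KunzeStein} with the region-by-region kernel bounds for large time. Only a bookkeeping remark: for $0<|t|<1$ the relevant small-scale bounds are Subcases 2.2.2--2.2.3 (not 2.2.1, which is the regime $|t|\ge1$), and the supremum over all $r\ge0$ also uses the large-scale Subcases 1.2.1--1.2.3, whose bounds are $O(1)$ and hence dominated by $|t|^{-(n-\sigma)/\alpha}$ since $\sigma\le n$.
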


{The estimate \eqref{eq:DispersiveAlphaSmallTLarge} is fine for \ssf$q$ \ssf large but, as \ssf$q$ \ssf tends to \ssf$2$\ssf, the smoothness factor \vsf$(-\Delta)^{-\ssf\sigma/2}$ \vsf doesn't vanish, as might be expected. Let us therefore refine \eqref{eq:DispersiveAlphaSmallTLarge} as follows.}

\begin{corollary}
Assume that \,$n\ssb\ge\ssb3$\ssf,
\ssf$0\!<\msb\alpha\msb<\msb1$\vsf,
$(1\!-\msb\frac\alpha2)\ssf n\ssb\le\msb\sigma\msb\le\msb n$
and let \,$2\ssb<\ssb q\ssb\le\ssb Q\ssb\le\ssb\infty\ssf$.
Then the following dispersive estimate holds
for \,$t$ large, say \,$|t|\ssb\ge\msb1:$
\begin{equation}\label{eq:ModifiedDispersiveAlphaSmallTLarge}
\bigl\|\ssf
(-\Delta)^{-\frac{1/2\vsf-1/q}{1/2\vsf-1/Q}\frac\sigma2}\msf
e^{\hspace{.4mm}i\hspace{.4mm}t\ssf(-\Delta)^{\alpha/2}}
\ssf\bigr\|_{L^{q^{\vsf\prime}}\!\to\vsf L^q}\ssb
\lesssim|t|^{-\frac{1/2\vsf-1/q}{1/2\vsf-1/Q}\frac32}\,.
\end{equation}
\end{corollary}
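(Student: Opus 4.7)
The refined estimate \eqref{eq:ModifiedDispersiveAlphaSmallTLarge} is a one-step complex interpolation between the trivial $L^2$-conservation and the existing dispersive estimate \eqref{eq:DispersiveAlphaSmallTLarge} at its best index $q\ssb=\ssb Q$. The plan is to apply Stein's interpolation theorem for analytic families (see e.g.~\cite[Ch.~V, Thm.~4.1]{SteinWeiss1971}) to the family
\begin{equation*}
T_z\ssb=\ssb(-\Delta)^{-\ssf z\vsf\sigma/2}\msf e^{\ssf i\ssf t\ssf(-\Delta)^{\alpha/2}}\ssf,\qquad 0\ssb\le\ssb\Re z\ssb\le\msb1,
\end{equation*}
which is analytic in the interior of the strip. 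On the left edge \ssf$\Re z\ssb=\ssb0$, the spectral theorem gives \ssf$\|T_{iy}\|_{L^2\to L^2}\ssb=\ssb1$. On the right edge \ssf$\Re z\ssb=\ssb1$, I specialize \eqref{eq:DispersiveAlphaSmallTLarge} to \ssf$q\ssb=\ssb Q$ \ssf and absorb the imaginary factor \ssf$(-\Delta)^{-\ssf i\vsf y\vsf\sigma/2}$, which is \ssf$L^{Q^{\ssf\prime}}$- \ssf and \ssf$L^Q$-bounded with at most polynomial growth in \ssf$|y|$ \ssf by standard spectral multiplier theory on \ssf$\Hn$, yielding
\begin{equation*}
\|\vsf T_{1+iy}\vsf\|_{L^{Q^{\ssf\prime}}\ssb\to\vsf L^Q}\msb
\lesssim\vsf(1\!+\!|y|)^N\vsf|t|^{-\frac32}.
\end{equation*}

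Setting \ssf$\theta\ssb=\msb\tfrac{1/2\ssf-\ssf1/q}{1/2\ssf-\ssf1/Q}\msb\in\msb[0,1]$ \ssf and applying Stein interpolation at \ssf$z\ssb=\ssb\theta$ \ssf gives
\begin{equation*}
\|\vsf T_{\vsf\theta}\vsf\|_{L^{p_\theta}\ssb\to\vsf L^{r_\theta}}\vsb\lesssim\vsf|t|^{-\frac{3\ssf\theta}2}\ssf,
\end{equation*}
with \ssf$\tfrac{1}{r_\theta}\msb=\msb\tfrac{1-\theta}2\msb+\msb\tfrac\theta Q$ \ssf and \ssf$\tfrac{1}{p_\theta}\msb=\msb\tfrac{1-\theta}2\msb+\msb\tfrac\theta{Q^{\ssf\prime}}$. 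A direct computation from the definition of \ssf$\theta$ \ssf yields \ssf$r_\theta\ssb=\ssb q$ \ssf and \ssf$p_\theta\ssb=\ssb q^{\ssf\prime}$, which is exactly \eqref{eq:ModifiedDispersiveAlphaSmallTLarge}.

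The only non-routine ingredient is the polynomial-in-\ssf$|y|$ \ssf control of the imaginary powers \ssf$(-\Delta)^{-\ssf i\vsf y\vsf\sigma/2}$ \ssf on \ssf$L^Q$ \ssf and \ssf$L^{Q^{\ssf\prime}}$\ssb; for finite \ssf$Q\msb\in\msb(2,\infty)$ \ssf this follows from classical spectral multiplier results on rank-one symmetric spaces (e.g.~Clerc--Stein or Anker), whereas the endpoint \ssf$Q\ssb=\ssb\infty$ \ssf is reached either by taking \ssf$Q\ssb\to\ssb\infty$ \ssf in the estimate already obtained for finite \ssf$Q$\ssf, or, avoiding multiplier theory altogether, by interpolating the kernel bounds of Theorem~\ref{thm:KernelEstimate}~(ii) in the complex parameter \ssf$\sigma$ \ssf and then invoking the Kunze--Stein type Lemma~\ref{lem:KunzeStein}, exactly as in the proof of the large-time estimate in Theorem~\ref{thm:DispersiveEstimateHn_CaseAlphaLarge}.
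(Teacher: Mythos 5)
Your proposal is correct and follows essentially the same route as the paper: the corollary is proved there by Stein interpolation for an analytic family between the $L^2$ conservation \eqref{eq:ConservationL2} with purely imaginary $\sigma$ and the dispersive estimate \eqref{eq:DispersiveAlphaSmallTLarge} at $q=Q$ taken for complex $\sigma$ with $(1-\frac\alpha2)n\le\Re\sigma\le n$. The only cosmetic difference is that the paper controls the $\Re z=1$ edge directly through the kernel bounds of Theorem \ref{thm:KernelEstimate}~(ii), extended to the vertical strip in $\sigma$, combined with Lemma \ref{lem:KunzeStein} (your second alternative), rather than through separate $L^Q$/$L^{Q'}$ bounds for the imaginary powers, which also sidesteps any delicacy at $Q=\infty$.
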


\begin{proof}
This result is obtained by interpolation between the estimate \eqref{eq:ConservationL2}
for \ssf$q\ssb=\ssb2$\ssf 
 and \ssb$\sigma\msb\in\ssb i\msf\R$\ssf,
and the estimate \eqref{eq:DispersiveAlphaSmallTLarge} for \ssf$q\ssb=\ssb Q$ \ssf
and \ssf$\sigma\msb\in\ssb\C$ \ssf with
$(1\!-\msb\frac\alpha2)\ssf n\ssb\le\ssb\Re\sigma\msb\le\ssb n$\ssf.
\end{proof}

\begin{theorem}[Strichartz inequalities]\label{thm:StrichartzEstimateHnAlphaSmall}
Assume that \,$n\ssb\ge\ssb3$\ssf, $0\msb<\msb\alpha\msb<\msb1$ and let
\,$I\msb=\ssb(-T,\ssb+\vsf T\ssf)$ be an open interval with \,$T\msb>\ssb0$\ssf.
Then the following Strichartz inequalities hold for so\-lu\-tions of \eqref{eq:NLSgeneral} on \,$I\msb\times\hspace{-.4mm}\Hn:$
\begin{align*}
\|(-\Delta_{\ssf x})^{-\frac\sigma2}\ssf u(t,x)
\|_{L^{p}(\vsb I \ssf;\ssf L^q(\mathbb{H}^n\ssb)\vsb)}
\hspace{-.9mm}\le C\,\bigl\{\hspace{.4mm}\|f\|_{L^2(\mathbb{H}^n\ssb)}
+\ssf\|(-\Delta_{\ssf x})^{\frac{\tilde{\sigma}}2}\vsf F(t,x)\|_{
L^{\vsf\tilde{p}^{\ssf\prime}\hspace{-.6mm}}(\vsb I\ssf;
\ssf L^{\tilde{q}^{\ssf\prime}\hspace{-.6mm}}(\mathbb{H}^n\ssb)\vsb)}
\ssf\bigr\}\,.
\end{align*}
Here
\begin{equation*}\begin{cases}
\,\sigma\ssb=\ssb\sigma(\beta,q)\ssb=\msb\bigl(\frac12\msb-\msb\frac1q\bigr)(1\!-\msb\frac\beta2)\ssf n\\
\,\tilde{\sigma}\ssb=\ssb\sigma(\tilde{\beta},\tilde{q})\ssb=\msb\bigl(\frac12\msb-\msb\frac1{\tilde{q}}\bigr)(1\!-\msb\frac{\tilde{\beta}}2)\ssf n\\
\end{cases}
\quad\text{with}\quad\beta,\tilde{\beta}\msb\in\msb[\ssf0\vsf,\vsb\alpha\ssf]
\end{equation*}
and \,\,$\bigl(\frac{1\vphantom{|}}p,\ssb\frac{1\vphantom{|}}q,\beta\vsf\bigr)$,
$\bigl(\frac{1\vphantom{|}}{\tilde{p}},\ssb\frac{1\vphantom{|}}{\tilde{q}},\tilde{\beta}\vsf\bigr)$
belong to the following \textnormal{admissible} region $($see Figures \ref{fig:AdmissibleRegionAlphaSmallBetaSmall} and \ref{fig:AdmissibleRegionAlphaSmallBetaLarge}$\msf)\ssb:$
\begin{equation}\label{eq:AdmissibilityAlphaSmall}\begin{aligned}
\mathcal{R}_{\vsf\alpha}
&=\text{\small{$\bigl\{\bigl(\tfrac1p,\ssb\tfrac1q,\beta\vsf\bigr)\msb\in\msb\bigl[\ssf0\ssf,\ssb\tfrac12\ssf\bigr]\!\times\!\bigl[\ssf0\ssf,\ssb\tfrac12\vsf\bigr]\!\times\!\bigl[\ssf0\ssf,\ssb\alpha\ssf\bigr]\msf\big|\msf\bigl(\tfrac12\msb-\msb\tfrac1q\bigr)\ssf\tfrac\beta\alpha\ssf\tfrac n2\msb\le\msb\tfrac1p\msb\le\msb\bigl(\tfrac12\msb-\msb\tfrac1q\bigr)\ssf\tfrac{2\vsf-\vsf\beta}{2\vsf-\vsf\alpha}\ssf\tfrac32\msf\bigr\}\,\smallsetminus\msf
\bigl\{\vsb\bigl(\tfrac12,\vsb0\vsf,\ssb\tfrac2n\vsf\alpha\bigr)\vsb\bigr\}\msf.$}}
\end{aligned}\end{equation}
Moreover \,$C\!\ge\msb0$ depends on \,$n$\ssf, $\alpha$\vsf,
$(\vsf p,\vsb q,\vsb\beta\vsf)$ \,and \ssf$(\vsf\tilde{p},\vsb\tilde{q},\vsf\vsb\tilde{\beta}\vsf)$ but not on \,$T\msb>\msb0$ and \,$u$\ssf.
\end{theorem}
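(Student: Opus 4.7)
The plan is to follow the standard Ginibre--Velo $TT^*$ strategy, implemented on $\Hn$ exactly as in \cite{AnkerPierfelice2009, AnkerPierfeliceVallarino2011, AnkerPierfelice2014}, feeding in the two dispersive regimes from Theorem~\ref{thm:DispersiveEstimateHn_CaseAlphaSmall} together with its refinement \eqref{eq:ModifiedDispersiveAlphaSmallTLarge}. The trivial $L^2$ conservation \eqref{eq:ConservationL2} anchors the interpolation, and the Kunze--Stein phenomenon (Lemma~\ref{lem:KunzeStein}) is already incorporated in those dispersive estimates.

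Writing $S u_0(t,x) = (-\Delta)^{-\sigma/2}\msf e^{\ssf i\ssf t\ssf(-\Delta)^{\alpha/2}}u_0(x)$ with $\sigma\msb=\msb(\tfrac12\msb-\msb\tfrac1q)(1\msb-\msb\tfrac\beta2)\ssf n$, the homogeneous Strichartz inequality $\|Su_0\|_{L^p(I;L^q)}\msb\lesssim\msb\|u_0\|_{L^2}$ is equivalent, by $TT^*$ duality, to the boundedness $L^{p'}\ssb(I;L^{q'})\!\to\!L^p(I;L^q)$ of the operator with convolution kernel $(-\Delta)^{-\sigma}\msf e^{\ssf i\ssf(t-s)\vsf(-\Delta)^{\alpha/2}}$. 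I split the temporal variable into $|t-s|\msb\le\!1$ and $|t-s|\msb\ge\msb1$. Since $\beta\msb\in\msb[\ssf0,\vsb\alpha\ssf]$ places $\sigma'\msb:=\msb(1\msb-\msb\beta/2)\ssf n$ inside $[\ssf(1\msb-\msb\alpha/2)\ssf n,\vsb n\ssf]$, the small-time estimate \eqref{eq:DispersiveAlphaSmallTSmall} applied with this $\sigma'$ produces pointwise decay $|t-s|^{-\gamma}$ with $\gamma\msb=\msb(\tfrac12\msb-\msb\tfrac1q)\tfrac{\beta\vsf n}\alpha$. Hardy--Littlewood--Sobolev (or, when the inequality is sharp, weak-type Young applied to the truncated kernel) in the time variable then yields the desired bound precisely when $\tfrac1p\msb\ge\msb\gamma/2\msb=\msb(\tfrac12\msb-\msb\tfrac1q)\tfrac{\beta\vsf n}{2\ssf\alpha}$, which is the lower boundary of $\mathcal{R}_{\vsf\alpha}$ in \eqref{eq:AdmissibilityAlphaSmall}.

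For the large-time contribution I invoke the refined Corollary \eqref{eq:ModifiedDispersiveAlphaSmallTLarge} at the minimal smoothness $\sigma'\msb=\msb(1\msb-\msb\alpha/2)\ssf n$, selecting the auxiliary parameter $Q$ so that the effective smoothness prefactor matches $\sigma$, namely $\tfrac{1/2\vsf-\vsf1/q}{1/2\vsf-\vsf1/Q}\msb=\msb(\tfrac12\msb-\msb\tfrac1q)\tfrac{2\ssf(2\vsf-\vsf\beta)}{2\vsf-\vsf\alpha}$. This delivers the decay $|t-s|^{-\delta}$ on $|t-s|\msb\ge\msb1$ with $\delta\msb=\msb(\tfrac12\msb-\msb\tfrac1q)\tfrac{3\ssf(2\vsf-\vsf\beta)}{2\vsf-\vsf\alpha}$, and Young's inequality in time bounds this piece exactly when the truncated kernel lies in $L^{p/2}$, namely when $\tfrac1p\msb\le\msb\delta/2\msb=\msb(\tfrac12\msb-\msb\tfrac1q)\tfrac{3\ssf(2\vsf-\vsf\beta)}{2\vsf(2\vsf-\vsf\alpha)}$, which is the upper boundary of $\mathcal{R}_{\vsf\alpha}$. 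Superposing the two regimes yields the full homogeneous estimate on $\mathcal{R}_{\vsf\alpha}$.

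The inhomogeneous Strichartz inequality with distinct admissible triples $(p,q,\beta)\msb\ne\msb(\tilde p,\tilde q,\tilde\beta)$ then follows from the symmetrised $TT^*$ bound via the Christ--Kiselev lemma \cite{ChristKiselev2001} away from the endpoints. The main obstacle is the treatment of the boundary of $\mathcal{R}_{\vsf\alpha}$, and most delicately the excluded corner $\bigl(\tfrac12,\vsb0,\vsf\tfrac{2\ssf\alpha}n\bigr)$, where $\gamma\msb=\msb1$ coincides with $1/p\msb=\msb1/2$ so that Hardy--Littlewood--Sobolev just fails; there one has to fall back on the Bourgain/Keel--Tao bilinear atomic decomposition \cite{KeelTao1998}, performed on each dyadic temporal scale, following \cite[Sect.~6]{AnkerPierfeliceVallarino2011} and \cite[Sect.~5]{AnkerPierfelice2014}. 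The remaining verifications (restricted weak-type interpolation on the boundary and gluing of the two temporal regimes at scale $|t|\msb\sim\msb1$) are routine once the admissibility region is pinned down.
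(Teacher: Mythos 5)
Your skeleton is essentially the paper's: split the $TT^*$ kernel into $|t-s|\le1$ and $|t-s|\ge1$; on the small-time piece apply \eqref{eq:DispersiveAlphaSmallTSmall} with smoothness parameter $(1-\tfrac\beta2)n\in[(1-\tfrac\alpha2)n,n]$, giving the decay $(\tfrac12-\tfrac1q)\tfrac{\beta n}\alpha$ and hence the lower boundary of $\mathcal{R}_\alpha$ via (weak) Young in time; on the large-time piece use the interpolated estimate \eqref{eq:ModifiedDispersiveAlphaSmallTLarge} with the auxiliary exponent fixed by the smoothness-matching condition, i.e.\ $Q=Q_2$ with $\tfrac1{Q_2}=\tfrac12-\tfrac12\tfrac{2-\alpha}{2-\beta}$, giving the decay $(\tfrac12-\tfrac1q)\tfrac{3(2-\beta)}{2-\alpha}$ and the upper boundary; then decouple the exponents by Christ--Kiselev \cite{ChristKiselev2001}. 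Your exponent bookkeeping agrees with the paper's. Two points, however, need repair.

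First, Corollary \eqref{eq:ModifiedDispersiveAlphaSmallTLarge} is only valid for $2<q\le Q$, and your matching condition forces $Q=Q_2$, which depends on $(\alpha,\beta)$ only; so for $q>Q_2$ you are invoking the corollary outside its hypotheses, and the decay $|t-s|^{-\delta}$ with $\delta>\tfrac32$ that you claim there is not available (indeed $\tfrac32$ is the maximal large-time decay). The fix is the paper's case distinction: for $q\ge Q_2$ one has $2\,\sigma(\beta,q)\ge(1-\tfrac\alpha2)\,n$, so the unrefined estimate \eqref{eq:DispersiveAlphaSmallTLarge} already yields the integrable decay $|t-s|^{-3/2}$ on $|t-s|\ge1$, which suffices for every $2\le p\le\infty$; the refined corollary with $Q=Q_2$ is then used only for $2<q<Q_2$. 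Second, your endpoint discussion is misplaced: the corner $\bigl(\tfrac12,0,\tfrac2n\alpha\bigr)$ is \emph{removed} from $\mathcal{R}_\alpha$ in \eqref{eq:AdmissibilityAlphaSmall} precisely because no argument is available there -- Keel--Tao \cite{KeelTao1998} does not cover $q=\infty$ -- so nothing is to be proved at that point, and proposing to "fall back on Keel--Tao" there would fail. The places where \cite{KeelTao1998} is genuinely needed, and which you dismiss as routine, are the endpoint $\bigl(\tfrac1p,\tfrac1q\bigr)=\bigl(\tfrac12,\tfrac12-\tfrac\alpha\beta\tfrac1n\bigr)$ of the lower boundary when $\beta>\tfrac2n\alpha$ (the kernel $|t-s|^{-1}$ restricted to $|t-s|<1$ is only weak-$L^1$ and weak Young fails at $p=p'=2$) and the point $(p,q)=(2,Q_1)$, $\tfrac1{Q_1}=\tfrac12-\tfrac13\tfrac{2-\alpha}{2-\beta}$, on the upper boundary, where the large-time decay equals $1$ and the same obstruction occurs.
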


\begin{remark}
In this statement, the interval \,$[\ssf0\vsf,\ssb\alpha\ssf]$ must be actually reduced to the smaller interval \,$[\ssf0\vsf,\ssb\widehat{\alpha}\ssf]$\ssf, where
\vspace{1mm}

\centerline{$
\widehat{\alpha}\ssf
=\tfrac{6\ssf\alpha}{(2\ssf-\ssf\alpha)\ssf n\ssf+\ssf3\ssf\alpha}
\,\begin{cases}
=\alpha
&\text{if \,}n\ssb=\ssb3\ssf,\\
\ssf\in(0\vsf,\vsb\alpha)
&\text{if \,}n\ssb>\ssb3\ssf.\\
\end{cases}$}\vspace{1mm}
When \,$n\ssb>\ssb3$ and \,$\widehat{\alpha}\ssb<\ssb\beta\ssb\le\ssb\alpha$\ssf, we have indeed\,$\tfrac\beta\alpha\ssf\tfrac n2\msb>\msb\tfrac{2\vsf-\vsf\beta}{2\vsf-\vsf\alpha}\ssf\tfrac32$ and the admissibility condition boils down to the trivial endpoint \,$p\ssb=\ssb\infty$\ssf, $q\ssb=\ssb2$\ssf.
\end{remark}

\begin{figure}[h!]
\centering
\begin{tikzpicture}[line cap=round,line join=round,>=triangle 45,x=8mm,y=8mm]
\clip(-3.,-1.) rectangle (8.,6.);
\draw [->,line width=1.pt,color=black] (4.1,0.) -- (7.,0.);
\draw [->,line width=1.pt,color=black] (0.,0.) -- (0.,5.5);
\draw [line width=1.pt,color=black] (0.,4.) -- (4.,4.);
\draw [line width=1.pt,color=red] (4.,0.) -- (4.,1.);
\draw [line width=1.pt,color=red] (0.,4.) -- (2.5,0.);
\draw [line width=1.pt,color=black] (0.,0.) -- (2.4,0.);
\draw [line width=1.pt,color=black] (4.,1.) -- (4.,4.);
\draw [line width=1.pt,color=red] (0.,4.) -- (4.,1.);
\draw [line width=1.pt,color=red] (2.5,0.) -- (4.,0.);
\fill [line width=0pt,color=red,fill=red,fill opacity=0.1] (.33,3.65) -- (3.9,0.95) -- (3.9,0.1) -- (2.55,0.1) -- cycle;
\begin{scriptsize}
\draw [color=red,fill=red] (0.,4.) circle (3.0pt);
\draw [color=red,fill=red] (4.,0.) circle (3.0pt);
\draw[color=red,fill=red] (4,1.) circle (3.0pt);
\draw[color=red,fill=red] (2.5,0.) circle (3.0pt);
\draw[color=black](-.5,-.65) node {$0$};
\draw[color=black](7.,-.65) node {$\frac{1\vphantom{|}}{p\vphantom{|}}$};
\draw[color=red](4,-.65) node {$\frac{1\vphantom{|}}{2\vphantom{|}}$};
\draw[color=red](2.5,-.65) node {$\frac{\beta\vphantom{|}}\alpha\vsf\frac{n\vphantom{|}}{4\vphantom{|}}$};
\draw[color=black](-.5,5.5) node {$\frac{1\vphantom{|}}{q\vphantom{|}}$};
\draw[color=red](-.5,4) node {$\frac{1\vphantom{|}}{2\vphantom{|}}$};
\draw[color=red](5.6,1.) node {$\frac{1\vphantom{|}}{Q_1}\msb=\ssb\frac{1\vphantom{|}}{2\vphantom{|}}\msb-\msb\frac{1\vphantom{|}}{3\vphantom{|}}\vsf\frac{2-\alpha}{2-\beta}$};
\draw[color=red](-1.7,1.) node {$\frac{1\vphantom{|}}{p\vphantom{|}}\ssb=\ssb(\frac{1\vphantom{|}}{2\vphantom{|}}\msb-\msb\frac{1\vphantom{|}}{q\vphantom{|}}\bigr)\vsb\frac{\beta\vphantom{|}}{\alpha\vphantom{|}}\vsf\frac{n\vphantom{|}}{2\vphantom{|}}$};
\draw [->,shift={(-.5,4.)},line width=1pt, color=red]  plot[domain=4.83:5.33,variable=\t]({1*3*cos(\t r)+0*3*sin(\t r)},{0*3*cos(\t r)+1*3*sin(\t r)});
\draw[color=red](3.,4.8) node {$\frac{1\vphantom{|}}{p\vphantom{|}}\ssb=\ssb(\frac{1\vphantom{|}}{2\vphantom{|}}\msb-\msb\frac{1\vphantom{|}}{q\vphantom{|}}\bigr)\vsb\frac{2-\beta\vphantom{|}}{2-\alpha\vphantom{|}}\vsf\frac{3\vphantom{|}}{2\vphantom{|}}$};
\draw [->,shift={(-1.,5.)},line width=1pt, color=red]  plot[domain=6.13:5.65,variable=\t]({1*4*cos(\t r)+0*4*sin(\t r)},{0*4*cos(\t r)+1*4*sin(\t r)});
\end{scriptsize}
\end{tikzpicture}
\caption{Admissible region \ssf$\mathcal{R}_{\vsf\alpha}$ \vsf for \ssf$n\ssb\ge\ssb3$ \ssf and \ssf$0\ssb<\ssb\alpha\ssb<\msb1$\vsf, $0\ssb<\ssb\beta\ssb<\ssb\frac2n\ssf\alpha$ \ssf fixed.}
\label{fig:AdmissibleRegionAlphaSmallBetaSmall}
\end{figure}

\begin{figure}[h!]
\centering
\begin{tikzpicture}[line cap=round,line join=round,>=triangle 45,x=8mm,y=8mm]
\clip(-3.,-1.) rectangle (8.,6.);
\draw [->,line width=1.pt,color=black] (0.,0.) -- (7.,0.);
\draw [->,line width=1.pt,color=black] (0.,0.) -- (0.,5.5);
\draw [line width=1.pt,color=black] (0.,4.) -- (4.,4.);
\draw [line width=1.pt,color=red] (0.,4.) -- (4.,0.4);
\draw [line width=1.pt,color=red] (4.,1.) -- (4.,0.4);
\draw [line width=1.pt,color=black] (4.,1.) -- (4.,4.);
\draw [line width=1.pt,color=black] (4.,0.) -- (4.,0.3);
\draw [line width=1.pt,color=red] (0.,4.) -- (4.,1.);
\fill [line width=0pt,color=red,fill=red,fill opacity=0.1] (1.3,2.93) -- (3.92,0.95) -- (3.92,0.6) -- cycle;
\begin{scriptsize}
\draw [color=red,fill=red] (0.,4.) circle (3.0pt);
\draw[color=red,fill=red] (4,1.) circle (3.0pt);
\draw[color=red,fill=red] (4.,0.4) circle (3.0pt);
\draw[color=black](-.5,-.5) node {$0$};
\draw[color=black](7.5,-.5) node {$\frac{1\vphantom{|}}{p\vphantom{|}}$};
\draw[color=black](4,-.5) node {$\frac{1\vphantom{|}}{2\vphantom{|}}$};
\draw[color=black](-.5,5.5) node {$\frac{1\vphantom{|}}{q\vphantom{|}}$};
\draw[color=red](-.5,4) node {$\frac{1\vphantom{|}}{2\vphantom{|}}$};
\draw[color=red](5.6,1.) node {$\frac{1\vphantom{|}}{Q_1}\msb=\ssb\frac{1\vphantom{|}}{2\vphantom{|}}\msb-\msb\frac{1\vphantom{|}}{3\vphantom{|}}\vsf\frac{2-\alpha}{2-\beta}$};
\draw[color=red](-1.7,1.) node {$\frac{1\vphantom{|}}{p\vphantom{|}}\ssb=\ssb(\frac{1\vphantom{|}}{2\vphantom{|}}\msb-\msb\frac{1\vphantom{|}}{q\vphantom{|}}\bigr)\vsb\frac{\beta\vphantom{|}}{\alpha\vphantom{|}}\vsf\frac{n\vphantom{|}}{2\vphantom{|}}$};
\draw[color=red](4.95,0.4) node {$\frac12\msb-\msb\frac{\alpha\vphantom{|}}{\beta\vphantom{|}}\vsf\frac{1\vphantom{|}}{n\vphantom{|}}$};
\draw[color=red](-1.7,1.) node {$\frac{1\vphantom{|}}{p\vphantom{|}}\ssb=\ssb(\frac{1\vphantom{|}}{2\vphantom{|}}\msb-\msb\frac{1\vphantom{|}}{q\vphantom{|}}\bigr)\vsb\frac{\beta\vphantom{|}}{\alpha\vphantom{|}}\vsf\frac{n\vphantom{|}}{2\vphantom{|}}$};
\draw [->,shift={(-.5,4.)},line width=1pt, color=red]  plot[domain=4.83:5.58,variable=\t]({1*3*cos(\t r)+0*3*sin(\t r)},{0*3*cos(\t r)+1*3*sin(\t r)});
\draw[color=red](3.,4.8) node {$\frac{1\vphantom{|}}{p\vphantom{|}}\ssb=\ssb(\frac{1\vphantom{|}}{2\vphantom{|}}\msb-\msb\frac{1\vphantom{|}}{q\vphantom{|}}\bigr)\vsb\frac{2-\beta\vphantom{|}}{2-\alpha\vphantom{|}}\vsf\frac{3\vphantom{|}}{2\vphantom{|}}$};
\draw [->,shift={(-1.,5.)},line width=1pt, color=red]  plot[domain=6.13:5.65,variable=\t]({1*4*cos(\t r)+0*4*sin(\t r)},{0*4*cos(\t r)+1*4*sin(\t r)});
\end{scriptsize}
\end{tikzpicture}
\caption{Admissible region \ssf$\mathcal{R}_{\vsf\alpha}$ \vsf for \ssf$n\ssb\ge\ssb3$ \ssf and \ssf$0\ssb<\ssb\alpha\ssb<\msb1$\vsf, $\frac2n\ssf\alpha\ssb<\ssb\beta\ssb<\ssb\widehat{\alpha}$ \ssf fixed.}
\label{fig:AdmissibleRegionAlphaSmallBetaLarge}
\end{figure}

\begin{proof}
Referring to the proofs of \cite[Thm.~6.3]{AnkerPierfeliceVallarino2011}
and \cite[Thm.~5.2]{AnkerPierfelice2014},
we shall be content to explain and comment the admissibility conditions
\begin{equation}\label{eq:TwoConditionsAdmissibility}
\begin{cases}
\msf\frac1p\ssb\ge\ssb\bigl(\tfrac12\msb-\msb\tfrac1q\bigr)\ssf\tfrac\beta\alpha\ssf\tfrac n2\\
\msf\frac1p\ssb\le\ssb\bigl(\frac12\msb-\msb\frac1q\bigr)\ssf\frac{2\vsf-\vsf\beta}{2\vsf-\vsf\alpha}\ssf\frac32\\
\ssf\bigl(\frac1p,\ssb\frac1q,\vsb\beta\bigr)\ssb\ne\ssb\bigl(\tfrac12,\vsb0\vsf,\ssb\tfrac2n\vsf\alpha\bigr)
\end{cases}
\end{equation}
occurring in \eqref{eq:AdmissibilityAlphaSmall}.
Recall that the above-mentioned proofs consist mainly in estimating
\begin{equation}\label{eq:StrichartzMainEstimate1}\begin{gathered}
\Bigl\|\,\int_{\ssf0\vsf<\vsf|t-s|\vsf<1}\vsf
\bigl\|\ssf(-\ssf\Delta_{\ssf x})^{\vsb-\vsf\sigma(\beta,q)}\msf
e^{\ssf i\ssf(t-s)\vsf(-\vsf\Delta_{\vsf x})^{\alpha/2}}
F(s,x)\ssf\bigr\|_{L_x^q\vphantom{\big|}}\msf ds\,\Bigr\|_{L_t^p}\\
\lesssim\msf\Bigl\|\,\int_{-\infty}^{\ssf+\infty}\!
\|F(s,x)\|_{L_x^{q^{\vsf\prime}}}\msf ds\,\Bigr\|_{L_s^{p^{\ssf\prime}}}
\end{gathered}\end{equation}
and
\begin{equation}\label{eq:StrichartzMainEstimate2}\begin{gathered}
\Bigl\|\,\int_{\ssf|t-s|\vsf\ge1}\vsf
\bigl\|\ssf(-\ssf\Delta_{\ssf x})^{\vsb-\vsf\sigma(\beta,q)}\msf
e^{\ssf i\ssf(t-s)\vsf(-\vsf\Delta_{\vsf x})^{\alpha/2}}
F(s,x)\ssf\bigr\|_{L_x^q\vphantom{\big|}}\msf ds\,\Bigr\|_{L_t^p}\\
\lesssim\msf\Bigl\|\,\int_{-\infty}^{\ssf+\infty}\!
\|F(s,x)\|_{L_x^{q^{\vsf\prime}}}\msf ds\,\Bigr\|_{L_s^{p^{\ssf\prime}}}\,.
\end{gathered}\end{equation}
On the one hand, we deduce \eqref{eq:StrichartzMainEstimate1} from the dispersive estimate \eqref{eq:DispersiveAlphaSmallTSmall}, which yields
\begin{equation*}
\bigl\|\ssf(-\Delta)^{-\vsf\sigma(\beta,q)}\msf e^{\ssf i\ssf(t-s)\vsf(-\Delta)^{\alpha/2}}\ssf\bigr\|_{L^{q^{\vsf\prime}}\!\to\vsf L^q}\ssb
\lesssim|\ssf t\ssb-\ssb s\ssf|^{\vsf-\ssf(\frac12-\frac1q)\ssf\frac\beta\alpha\ssf n}\,,
\end{equation*}
and from Young's inequality (see for instance \cite[Thm.~1.4.25]{Grafakos2014})
provided that
\linebreak
\ssf$(\frac12\msb-\msb\frac1q)\ssf\frac\beta\alpha\ssf n$ \ssf is \ssf$<\msb1$ \vsf and \ssf$\le\msb\frac2p$\ssf.
This way we obtain \eqref{eq:StrichartzMainEstimate1} under the assumptions
$$
0\ssb\le\ssb\beta\ssb\le\ssb\alpha\ssf,\quad
2\ssb\le\ssb p\ssb\le\ssb\infty\ssf,\quad
2\ssb<\ssb q\ssb\le\ssb\infty\ssf,\quad
\tfrac1p\ssb\ge\ssb(\tfrac12\ssb-\ssb\tfrac1q)\ssf\tfrac\beta\alpha\ssf\tfrac n2
$$
and except for the endpoint
\begin{equation}\label{eq:Endpoints}
\bigl(\tfrac1p\vsf,\vsb\tfrac1q\bigr)
=\begin{cases}
\ssf\bigl(\frac12\vsf,\frac12\ssb-\ssb\frac\alpha\beta\frac1n\bigr)
&\text{when \,}\beta\ssb>\msb\frac2n\ssf\alpha\ssf,\\
\ssf\bigl(\frac12\vsf,\vsb0\vsf\bigr)
&\text{when \,}\beta\ssb=\msb\frac2n\ssf\alpha\ssf.\\
\end{cases}\end{equation}
The first case is handled by the refined analysis in \cite{KeelTao1998}
while the second one is excluded.
\smallskip

On the other hand, we prove \eqref{eq:StrichartzMainEstimate2} under the assumptions
$$
0\ssb\le\ssb\beta\ssb\le\ssb\alpha\ssf,\quad
2\ssb\le\ssb p\ssb\le\ssb\infty\ssf,\quad
2\ssb<\ssb q\ssb\le\ssb\infty\ssf,\quad
\tfrac1p\ssb\le\ssb\bigl(\tfrac12\msb-\msb\tfrac1q\bigr)\ssf
\tfrac{2\vsf-\vsf\beta}{2\vsf-\vsf\alpha}\ssf\tfrac32
$$
by considering separately the ranges \ssf$2<q\ssb\le\ssb Q_1$\vsf, $Q_1\msb<\ssb q\ssb<\ssb Q_2$ \vsf and \ssf$Q_2\ssb\le\ssb q\ssb\le\ssb\infty$\ssf, 
where
$$
\tfrac1{Q_1}\msb=\msb\tfrac12\msb-\msb\tfrac13\vsf\tfrac{2\vsf-\vsf\alpha}{2\vsf-\vsf\beta}\msb\in\msb\bigl[\vsf\tfrac16,\ssb\tfrac{1+\vsf\alpha}6\vsf\bigr]
\quad\text{and}\quad
\tfrac1{Q_2}\msb=\msb\tfrac12\msb-\msb\tfrac12\vsf\tfrac{2\vsf-\vsf\alpha}{2\vsf-\vsf\beta}\msb\in\msb\bigl[\ssf0\vsf,\ssb\tfrac\alpha4\vsf\bigr]\vsf.
$$

$\bullet$
\,Assume first that \ssf$q\ssb\ge\ssb Q_2$\vsf.
Then 
{\small$$
2\ssf\sigma(\beta,q)\ssb=\msb\smash{\Bigl(\frac12\msb-\msb\frac1q\Bigr)}(2\ssb-\ssb\beta\vsf)\ssf n\ssb\ge\msb\smash{\Bigl(\frac12\msb-\msb\frac1{Q_2}\Bigr)}(2\ssb-\ssb\beta\vsf)\ssf n\ssb=\msb\smash{\Bigl(1\!-\msb\frac\alpha2\Bigr)}\vsf n
$$}
and we deduce from \eqref{eq:DispersiveAlphaSmallTLarge} that \eqref{eq:StrichartzMainEstimate2} holds for every \ssf$2\ssb\le\ssb p\ssb\le\ssb\infty$\ssf.
\smallskip

$\bullet$
\,Assume next that \ssf$Q_1\msb<\ssb q\ssb<\ssb Q_2$\vsf.
Then
{\small$$
\frac{1/2\msf-\ssf1/q}{1/2\msf-\ssf1/Q_2}\ssf\frac32\msb>\msb\frac{1/2\msf-\ssf1/Q_1}{1/2\msf-\ssf1/Q_2}\ssf\frac32\msb=\msb1
$$}
and we deduce from \eqref{eq:ModifiedDispersiveAlphaSmallTLarge} with \ssf$Q\ssb=\ssb Q_2$ \vsf that \eqref{eq:StrichartzMainEstimate2} holds for every \ssf$2\ssb\le\ssb p\ssb\le\ssb\infty$\ssf.
\smallskip

$\bullet$
\,Assume finally that \ssf$2\ssb<\ssb q\ssb\le\ssb Q_1$\vsf.
Then we obtain \eqref{eq:StrichartzMainEstimate2} under the assumption
\ssf$\smash{\frac1p}\ssb\le\ssb\bigl(\frac12\msb-\msb\frac1q\bigr)\ssf
\frac{2\vsf-\vsf\beta}{2\vsf-\vsf\alpha}\ssf\frac32$
by using \eqref{eq:ModifiedDispersiveAlphaSmallTLarge} with \ssf$Q\ssb=\ssb Q_2$ \vsf together with Young' inequality (see for instance \cite[Thm.~1.4.25]{Grafakos2014}), except for the endpoint \vsf$(p\vsf,\ssb q)\ssb=\ssb(2\vsf,\ssb Q_1)\vsf$, where we use the refined analysis in \cite{KeelTao1998}. Notice that this case is new compared to \cite{AnkerPierfelice2009,AnkerPierfeliceVallarino2012,AnkerPierfelice2014}.

\smallskip

In conclusion,
\eqref{eq:StrichartzMainEstimate1} and \eqref{eq:StrichartzMainEstimate2}
hold under the conditions \eqref{eq:TwoConditionsAdmissibility},
which define a non-empty subset of $\bigl[\ssf0\vsf,\ssb\frac12\vsf\bigr]\!\times\!\bigl[\ssf0\vsf,\ssb\frac12\vsf\bigl)$ \vsf provided that \vsf$\bigl(\frac12\msb-\msb\frac1q\bigr)\vsf\frac\beta\alpha\vsf\frac n2\msb\le\msb\bigl(\frac12\msb-\msb\frac1q\bigr)\vsf\frac{2\vsf-\vsf\beta}{2\vsf-\vsf\alpha}\vsf\frac32$\vsf,
i.e., $\beta\ssb\le\ssb\widehat{\alpha}$\ssf.
\end{proof}

%{\color{blue}
%[GP: Is the admissible region for $n \geq 3$ not the following (see below)? I don't understand, why we need to consider a $\sigma=\sigma(\beta,q)$ and $\tilde{\sigma}=\tilde{\sigma}(\beta,q)$ (as above).]\\
%Here \msf$\bigl(\frac{1\vphantom{|}}p,\ssb\frac{1\vphantom{|}}q,\sigma\bigr)$ \,and \msf$\bigl(\frac{1\vphantom{|}}{\tilde{p}},\ssb\frac{1\vphantom{|}}{\tilde{q}},\tilde{\sigma}\bigr)$
%belong to the {admissible} region $($see Figure~\ref{fig:AdmissibleRegionAlphaSmall}\,$)$
%\\
%\centerline{$
%\bigl\{\ssf\bigl(\frac{1\vphantom{|}}p,\ssb\frac{1\vphantom{|}}q,\sigma\bigr)\msb\in\msb\bigl(\vsf0\ssf,\ssb\frac12\ssf\bigr]\!\times\!\bigl[\ssf0\ssf,\ssb\frac12\vsf\bigr)\!\times\!\bigl[\vsf(1\!-\msb\frac\alpha2)\ssf n\ssf,\vsb n\ssf\bigr]\msf\big|\,\tfrac{\alpha\vphantom{|}}p\msb+\msb\tfrac{n\vphantom{|}}q\msb\ge\msb\tfrac{n\vsf-\vsf\sigma\vphantom{|}}2\,\bigr\}\,\cup\,\bigl\{\bigl(\vsf0\ssf,\ssb\frac12\vsb,\ssb0\vsf\bigr)\bigr\}\,.
%$}\vspace{1.5mm}

\begin{remark}
%{\color{darkgreen}[
%The following detailed statement may be shortened
%]}
In dimension \,$n\ssb=\ssb2$\ssf, Theorem \ref{thm:StrichartzEstimateHnAlphaSmall} holds for the following admissible region\,$:$
\begin{align*}
{\mathcal{R}_{\vsf\alpha}}
&=\bigl\{\bigl(\tfrac1p,\ssb\tfrac1q,\beta\vsf\bigr)\msb\in\msb\bigl[\ssf0\ssf,\ssb\tfrac12\ssf\bigr]\!\times\!\bigl[\ssf0\ssf,\ssb\tfrac12\vsf\bigr]\!\times\!\bigl[\ssf0\ssf,\ssb\alpha\ssf\bigr]\msf\big|\msf\bigl(\tfrac12\msb-\msb\tfrac1q\bigr)\ssf\tfrac\beta\alpha\msb\le\msb\tfrac1p\msb\le\msb\bigl(\tfrac12\msb-\msb\tfrac1q\bigr)\msf{\min\ssf\bigl(\tfrac32\vsf,\vsb\tfrac{1-\vsf\beta}{1-\vsf\alpha}\bigr)}\ssf\tfrac{2\vsf-\vsf\beta}{2\vsf-\vsf\alpha}\msf\bigr\}\\
&\;\;\;\;\;\msf\smallsetminus\msf\bigl\{\vsb\bigl(\tfrac12,\vsb0\vsf,\vsb\alpha\bigr)\vsb\bigr\}\msf\\
&=
\left\{\begin{array}{@{}lr@{}}
    \text{\vphantom{\Big|}\scriptsize{$\bigl\{\bigl(\tfrac1p,\ssb\tfrac1q,\beta\vsf\bigr)\big|\msf\bigl(\tfrac12\msb-\msb\tfrac1q\bigr)\ssf\tfrac\beta\alpha\msb\le\msb\tfrac1p\msb\le\msb\bigl(\tfrac12\msb-\msb\tfrac1q\bigr)\ssf\tfrac32\ssf\tfrac{2\vsf-\vsf\beta}{2\vsf-\vsf\alpha}\msf\bigr\}$}} & \text{\scriptsize{if \,$\alpha\ssb\in\msb\bigl[\frac13\vsf,\ssb1\bigr)$ and \,$\beta\ssb\in\msb\bigl[0\ssf,\ssb\frac{3\ssf\alpha\vsf-1}2\bigr]\vsf,$}}\\
    \multirow{2}{*}{\text{\scriptsize{$\bigl\{\bigl(\tfrac1p,\ssb\tfrac1q,\beta\vsf\bigr)|\msf\bigl(\tfrac12\msb-\msb\tfrac1q\bigr)\ssf\tfrac\beta\alpha\msb\le\msb\tfrac1p\msb\le\msb\bigl(\tfrac12\msb-\msb\tfrac1q\bigr)\ssf\tfrac{1-\vsf\beta}{1-\vsf\alpha}\ssf\tfrac{2\vsf-\vsf\beta}{2\vsf-\vsf\alpha}\msf\bigr\}
\smallsetminus\bigl\{\vsb\bigl(\tfrac12,\vsb0\vsf,\vsb\alpha\bigr)\vsb\bigr\}$}}} 
&
\text{\scriptsize{if \,$\alpha\ssb\in\msb\bigl(0\ssf,\ssb\frac13\bigr]$  and  \,$\beta\ssb\in\msb[\ssf0\ssf,\ssb\alpha\ssf]$}}\\
& \text{\scriptsize{or if \,$\alpha\ssb\in\msb\bigl[\frac13\vsf,\ssb1\bigr)$
 and  \,$\beta\ssb\in\msb\bigl[\frac{3\ssf\alpha\vsf-1}2,\vsb\alpha\vsf\bigr]\vsf$.}}
\end{array} \right.
\end{align*}

%{\color{red}In other words, 
%\begin{equation*}
%\mathcal{R}_{\vsf\alpha}=\bigl\{\bigl(\tfrac1p,\ssb\tfrac1q,\beta\vsf\bigr)\msb\in\msb\bigl[\ssf0\ssf,\ssb\tfrac12\ssf\bigr]\!\times\!\bigl[\ssf0\ssf,\ssb\tfrac12\vsf\bigr]\!\times\!\bigl[\ssf0\ssf,\ssb\alpha\ssf\bigr]\msf\big|\msf\bigl(\tfrac12\msb-\msb\tfrac1q\bigr)\ssf\tfrac\beta\alpha\msb\le\msb\tfrac1p\msb\le\msb\bigl(\tfrac12\msb-\msb\tfrac1q\bigr)\ssf\tfrac32\ssf\tfrac{2\vsf-\vsf\beta}{2\vsf-\vsf\alpha}\msf\bigr\}{\color{blue}\msf\smallsetminus\msf\bigl\{\vsb\bigl(\tfrac12,\vsb0\vsf,\vsb\alpha\bigr)\vsb\bigr\}\msf}
%\end{equation*}
%if \,$\alpha\ssb\in\msb\bigl[\frac13\vsf,\ssb1\bigr)$ and \,$\beta\ssb\in\msb\bigl[0\ssf,\ssb\frac{3\ssf\alpha\vsf-1}2\bigr]$\vsf,
%while
%\begin{align*}
%\mathcal{R}_{\vsf\alpha}=\bigl\{\bigl(\tfrac1p,\ssb\tfrac1q,\beta\vsf\bigr)\msb\in\msb\bigl[\ssf0\ssf,\ssb\tfrac12\ssf\bigr]\!\times\!\bigl[\ssf0\ssf,\ssb\tfrac12\vsf\bigr]\!\times\!\bigl[\ssf0\ssf,\ssb\alpha\ssf\bigr]\msf\big|\msf\bigl(\tfrac12\msb-\msb\tfrac1q\bigr)\ssf\tfrac\beta\alpha\msb\le\msb\tfrac1p\msb\le\msb\bigl(\tfrac12\msb-\msb\tfrac1q\bigr)\ssf\tfrac{1-\vsf\beta}{1-\vsf\alpha}\ssf\tfrac{2\vsf-\vsf\beta}{2\vsf-\vsf\alpha}\msf\bigr\}
%\smallsetminus\bigl\{\vsb\bigl(\tfrac12,\vsb0\vsf,\vsb\alpha\bigr)\vsb\bigr\}
%\end{align*}
%if \,$\alpha\ssb\in\msb\bigl(0\ssf,\ssb\frac13\bigr]$ and \,$\beta\ssb\in\msb[\ssf0\ssf,\ssb\alpha\ssf]$ or if \,$\alpha\ssb\in\msb\bigl[\frac13\vsf,\ssb1\bigr)$
%and \,$\beta\ssb\in\msb\bigl[\frac{3\ssf\alpha\vsf-1}2,\vsb\alpha\vsf\bigr]$\vsf.}

For fixed \,$\beta\msb\in\msb[\ssf0\vsf,\vsb\alpha\vsf)$\ssf,
the admissible set of couples \vsf$\bigl(\frac1p,\ssb\frac1q\bigr)$ looks like Figure \ref{fig:AdmissibleRegionAlphaSmallBetaSmall}\ssf,
with
\begin{align*}
\tfrac1{Q_1}\ssb
&=\ssb\tfrac12\ssb-\ssb\tfrac12\max\ssf\bigl\{\tfrac23\vsf,\tfrac{1-\vsf\alpha}{1-\vsf\beta}\bigr\}\ssf\tfrac{2\vsf-\vsf\alpha}{2\vsf-\vsf\beta}\\
&=\begin{cases}
\ssf\frac12-\frac13\frac{2-\alpha}{2-\beta}
&\text{if \,$\alpha\ssb\in\msb\bigl[\frac13\vsf,\ssb1\bigr)$ and \,$\beta\ssb\in\msb\bigl[0\ssf,\ssb\frac{3\ssf\alpha\vsf-1}2\bigr]$\vsf,}\\
\ssf\frac12\msb-\msb\frac12\ssf\frac{1-\vsf\alpha}{1-\vsf\beta}\ssf\frac{2\vsf-\vsf\alpha}{2\vsf-\vsf\beta}
&\text{if \,$\alpha\ssb\in\msb\bigl(0\ssf,\ssb\frac13\bigr]$ and \,$\beta\ssb\in\msb[\ssf0\ssf,\ssb\alpha\ssf]$ or if \,$\alpha\ssb\in\msb\bigl[\frac13\vsf,\ssb1\bigr)$
and \,$\beta\ssb\in\msb\bigl[\frac{3\ssf\alpha\vsf-1}2,\vsb\alpha\vsf\bigr]$\vsf,}\\
\end{cases}    
\end{align*}
{
and, in the limit case \,$\beta\ssb=\ssb\alpha$\ssf, this set boils down to the diagonal
\begin{equation*}
\bigl\{\vsf\bigl(\vsf\tfrac1p\ssf,\ssb\tfrac1q\vsf\bigr)\msb\in\msb\bigl[\ssf0\ssf,\ssb\tfrac12\ssf\bigr)\msb\times\msb\bigl(\ssf0\ssf,\ssb\tfrac12\ssf\bigr]\msf\big|\,\tfrac1p\msb+\msb\tfrac1q\msb=\msb\tfrac12\msf\bigr\}\msf.
\end{equation*}}
\end{remark}

As a general observation, we would like to emphasize that the admissible range of exponents when the power of the Laplacian is below one, i.e., close to a very small diffusion, is smaller than the one for powers closer to the standard diffusion $\alpha >1$. This is due to a combination of two effects: one is due to the necessary loss of derivatives $\sigma$ which cannot be made arbitrarily small but also the behaviour of the kernel in this low diffusive case which is more similar to an Euclidean one. On the other hand, in the regime of higher diffusion, one observes a behaviour much more influenced by the negative curvature.   
From the point of view of nonlinear applications, this introduces substantial difficulties to prove well-posedness.

%A similar observation has already been observed/studied in the Euclidean case
%This is due $Q_1$?

%\begin{remark} only true for \alpha=1
%    Note that the Strichartz estimate above amounts to
%    $$\| \nabla_{I \times \H^n} u\|_{L^p(I;H^{-\sigma,q})}
%    \lesssim \|f\|_{L^2(\H^n)} + \|F\|_{L^{\tilde{p}'}(I; H^{\tilde{\sigma}, \tilde{q}'})}.$$
%    (check \cite[Thm~5.2]{AnkerPierfelice2014})
%\end{remark}

%--------------------------------------------------------------%
%Section Homogeneous trees
%--------------------------------------------------------------%

\section{Homogeneous trees} \label{sect:estimates_trees}
In this section, we consider the discrete analogs of hyperbolic spaces $\H^n$ which are homogeneous trees and more precisely $0-$hyperbolic space according to Gromov \cite{Gromov1987}.
Specifically, for \ssf$Q\msb\ge\msb2$\ssf,  a homogeneous tree of degree $Q+1$ is an infinite connected graph with no loops, in which every vertex is adjoint to $Q+1$ other vertices (see Figure~\ref{fig:Tree}). We denote by
\ssf$\TQ$ the set of vertices in the homogeneous tree
with \ssf$Q\msb+\!1$ edges,
equipped with the counting measure.

\begin{figure}[h!]
    \centering
     \includegraphics[height=40mm]{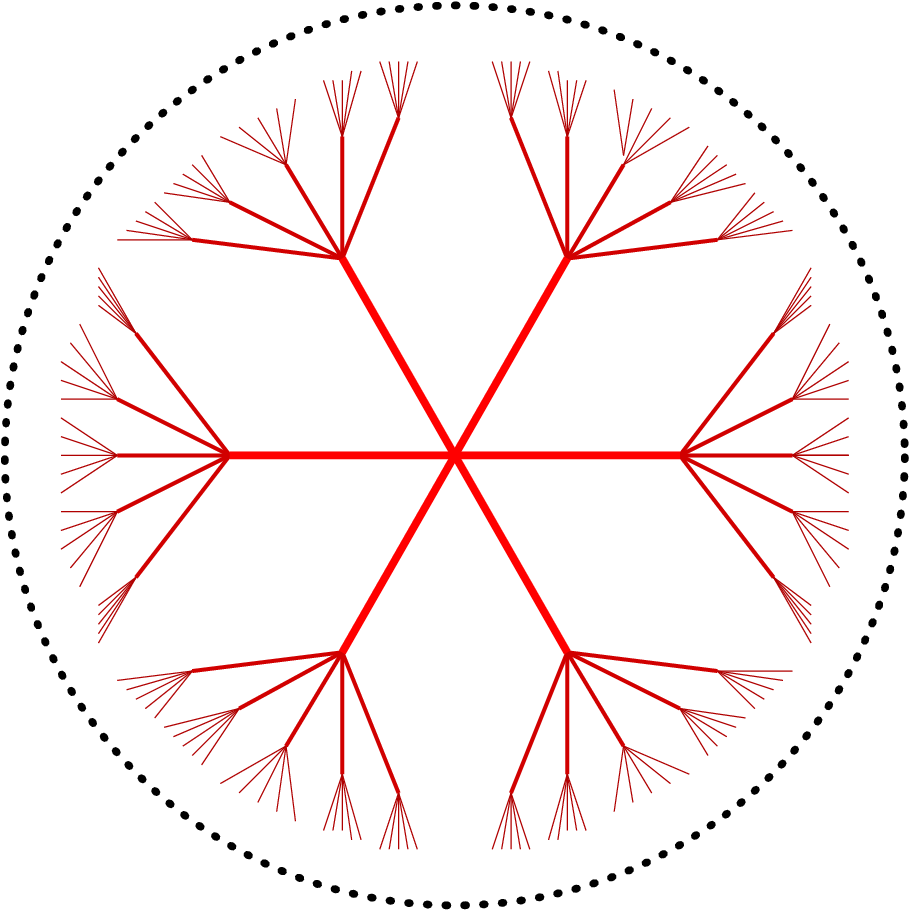}
    \caption{The homogeneous tree \ssf$\mathbb{T}_5$}
    \label{fig:Tree}
\end{figure}

Recall that the combinatorial Laplacian on \ssf$\TQ$ is defined by
\begin{equation*}
\Delta f(x)=\tfrac1{Q\ssf+\vsf1}\sum\nolimits_{\ssf d(y,x)=1}f(y)-f(x)
\end{equation*}
and that its \ssf$\ell^{\ssf2}$ spectrum is equal to
\msf$[-\ssf\gamma_{\ssf0}\!-\!1\vsf,\ssb\gamma_{\ssf0}\!-\!1\vsf]$\ssf,
where \msf$\gamma_{\ssf0}\msb
=\ssb\frac2{Q^{1/2}\vsf+\hspace{.4mm}Q^{-1/2}}
\msb\in\msb(\vsf0\vsf,\ssb1)$\ssf.
Here $d(y,x)$ is the number of edges of the shortest path joining $y$ and $x$. We refer to \cite{Cartier1973, FigaTalamancaNebbia1991, CowlingMedaSetti1998} for some basic tools of harmonic analysis on $\TQ$.\\

Among earlier works {about \eqref{eq:NLSgeneral} on homogeneous trees}, let us mention
\begin{itemize}
\item
\cite{Setti1998}, which is devoted to
the heat equation with continuous time associated with
the Laplacian \ssf$\Delta$ \ssf on \ssf$\TQ$\vsf,
\item
\cite{Stos2011}, which is devoted to
the heat equation with continuous time associated with
the fractional Laplacian
\ssf$(-\Delta)^{\alpha/2}$ on \ssf$\TQ$\vsf,
\item
\cite{MedollaSetti1999}, which is devoted to
the wave equation with continuous time associated with
the shifted Laplacian \ssf$\Delta\msb+\!1\!-\ssb\gamma_{\ssf0}$
\ssf on \ssf$\TQ$\vsf,
%\item{
%\cite{AnkerMartinotPedonSetti2013}, which is devoted in part to a wave equation with discrete time associated with the shifted Laplacian \ssf$\Delta\msb+\!1\!-\ssb\gamma_{\ssf0}$ \ssf on \ssf$\TQ$\vsf,}
\item
\cite{Jamaleddine2013a}, which is devoted to
the Schr\"odinger equation with continuous time associated with
the Laplacian \ssf$\Delta$ \ssf on \ssf$\TQ$\vsf.
%\item {\color{blue} \cite{FernandezJaming2020}, [to do].}
%\item {\color{blue} \cite{KumarRano2021, KumarRano2023}, [to do].}
%\item {\color{blue} \cite{Carlson1997}, [to do].}
\end{itemize}

The equation \eqref{eq:NLSgeneral} on \ssf$\TQ$ is solved and analyzed
as the corresponding equation on \ssf$\Hn$.
The main differences lie in the local (in time) analysis, which is trivial,
and in the spectrum, which is compact. 
More precisely, we have again Duhamel's formula \eqref{eq:SolutionHn}
%\begin{equation*}%\label{SolutionTQ}
%u(t,x)
%=\ssf\underbrace{
%e^{\hspace{.4mm}i\hspace{.4mm}t\ssf(-\Delta_{\ssf x})^{\alpha/2}}\msb {u_0(x)}
%\vphantom{\int_|}}_{\text{homogeneous}}\ssf
%{-i}\underbrace{\int_{\,0}^{\,t}\!\hspace{1mm}
%e^{\hspace{.4mm}i\hspace{.4mm}(t-s)\ssf(-\Delta_{\ssf x})^{\alpha/2}}\ssb F(s,x)
%\vphantom{\int_|}\;\mathrm{d}s}_{\text{inhomogeneous}}\ssf,
%\end{equation*}
where
\begin{equation*}
e^{\hspace{.4mm}i\hspace{.4mm}t\ssf(-\Delta)^{\alpha/2}}\msb f
=f\ssb*k_{\ssf t}
\end{equation*}
%{\color{blue}[Should we say some words about convolution on trees? The same question is also for the Kunze-Stein results for trees.]}
is the convolution operator defined by the radial kernel
\begin{equation}\label{eq:KernelTQ0}
k_{\ssf t}(r)=\ssf\const\int_{\,0}^{\ssf\tau/2}\hspace{-1mm}
e^{\hspace{.4mm}i\hspace{.4mm}t\msf
[\ssf1-\ssf\gamma(\lambda)\vsf]^{\vsf\alpha/2}}
\hspace{.4mm}\phi_\lambda(r)\,\tfrac{\mathrm{d}\lambda}{|\vsf\bc(\lambda)|^2}.
\end{equation}
Here \msf$\tau\ssb
=\ssb\tfrac{2\vsf\pi\vphantom{|}}{\log Q\vphantom{Q^{1/2}}}$\ssf,
\ssf$\mathbf{c}(\lambda)\ssb
=\ssb\tfrac{1\vphantom{|}}{Q^{\vsf1/2}\ssf+\,Q^{-1/2}}\msf
\tfrac{Q^{\vsf1/2\ssf+\ssf i\vsf\lambda}\ssf
-\,Q^{-1/2\ssf-\ssf i\vsf\lambda}\vphantom{|}}
{Q^{\ssf i\vsf\lambda}\ssf-\,Q^{-i\vsf\lambda}\vphantom{Q^{1/2}}}$\ssf,
%{\color{blue} is the meromorphic function for all $\lambda\in \C\backslash (\frac{\tau}{2})\Z$},
\ssf$\gamma(\lambda)\ssb
=\ssb\tfrac{Q^{\ssf i\lambda}\ssf+\,Q^{-i\lambda}\vphantom{|}}
{Q^{\vsf1/2}\ssf+\,Q^{-1/2}}$\msf %{\color{blue} = \frac{2}{Q^{\vsf1/2}\ssf+\,Q^{-1/2}} \cos \lambda (\log Q)= \gamma_0 \cos \frac{2\pi}{\tau}\lambda}$
 %{\color{blue} is the holomorphic function}
and
\begin{equation}\label{eq:SphericalFunctionsTq}
\phi_\lambda(r)=\ssf
\mathbf{c}\vsf(\lambda)\,Q^{\ssf(-1/2\ssf+\ssf i\vsf\lambda)\ssf r}
+\ssf\mathbf{c}\vsf(\ssb-\lambda)\,Q^{\,(-1/2\ssf-\ssf i\vsf\lambda)\ssf r}\,
\end{equation}
is the spherical function of index $\lambda \in \C$.
By substituting \eqref{eq:SphericalFunctionsTq} in \eqref{eq:KernelTQ0},
we obtain
\begin{align}
k_{\ssf t}(r)&=\ssf\const\msf{Q}^{-\ssf r/2}
\int_{\ssf\R\vsf/\vsb\tau\Z}\!\hspace{1mm}
\bc(\lambda)^{-1}\,e^{\hspace{.4mm}i\hspace{.4mm}t\msf
[\ssf1-\ssf\gamma(\lambda)\vsf]^{\vsf\alpha/2}}\msf
Q^{-\ssf i\ssf\lambda\ssf r}\;\mathrm{d}\lambda\nonumber\\
&=\ssf C\hspace{1mm}Q^{-\ssf r/2}
\int_{\ssf\R\vsf/2\vsf\pi\Z}\!\hspace{1mm}
\tfrac{\sin\lambda\vphantom{|}}
{Q^{\vsf1/2}\ssf e^{\ssf i\vsf\lambda}\ssf
-\,Q^{-1/2}\ssf e^{-i\vsf\lambda}}\hspace{1mm}
e^{\ssf i\hspace{.4mm}t\msf\psi(\lambda)}\msf
e^{-\ssf i\msf r\vsf\lambda}\;\mathrm{d}\lambda\,,
\label{eq:KernelTQ1}\end{align}
where \,$\psi(\lambda)\ssb=\vsb
[\ssf1\!-\ssb\gamma_{\ssf0}\cos\vsb\lambda\msf]^{\vsf\alpha/2}$.

\begin{lemma}[{Stationary phase analysis}]\label{lem:phaseTQ}
Let \,$0\msb<\msb\alpha\msb<\msb2$
and assume that \,$t\msb>\msb0$\ssf.

{\rm(i)}
The phase function \,$\psi$ has two stationary points
on the circle \,$\R\vsf/2\ssf\pi\Z:$
\vspace{.5mm}

\centerline{$
\lambda_1\ssb=2\ssf\pi\Z
\quad\text{and}\quad
\lambda_{\vsf2}\ssb=\pi\ssb+\ssb2\ssf\pi\Z\msf.
$}\vspace{.5mm}

Moreover,
\,$\psi^{\ssf\prime\prime}(\lambda_1)\msb>\msb0$ \ssf and
\,$\psi^{\ssf\prime\prime}(\lambda_{\vsf2})\msb<\msb0$\ssf.

{\rm(ii)}
There exists \,$M\!>\msb0$ such that
the phase function \,$\psi_{\ssf t,r}(\lambda)\msb
:=\ssb t\msf\psi(\lambda)\msb-\ssb r\vsf\lambda$
has
\vspace{.5mm}

\centerline{$\begin{cases}
\,\text{no stationary point}
&\text{if \,}\frac rt\msb>\!M,\\
\,\text{one stationary point \,}\lambda_{\vsf0}
&\text{if \,}\frac rt\msb=\!M,\\
\,\text{two stationary points \,}\lambda_1\vsf,\,\lambda_{\vsf2}
&\text{if \,}\frac rt\msb<\!M.\\
\end{cases}$}\vspace{1mm}

{\rm (iii)}
Mor{e}over, we have the following additional information about the last case.
For every \,$\epsilon\msb>\msb0$\ssf,
there exist open subsets \,$U,V$ \ssb in \,$\R\vsf/2\ssf\pi\Z$ \ssf
and a constant \,$c\msb>\msb0$ such that,
whenever \,$0\msb\le\msb\frac rt\msb
\le\msb M\hspace{-1mm}-\msb\epsilon$,

$\bullet$
\msf$\{\lambda_1,\vsb\lambda_{\vsf2}\}\msb
\subset\ssb U\!\subset\ssb\overline{U}\!\subset\ssb V$\ssb,

$\bullet$
\,$|\ssf\psi_{\ssf t,r}^{\,\prime}\vsf|\ssb\ge\ssb c\,t$
outside \,$U$ and
\,$|\ssf\psi_{\ssf t,r}^{\,\prime\prime}\vsf|\ssb\ge\ssb c\,t$
on \,$V$\ssb.

\end{lemma}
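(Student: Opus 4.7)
The argument is an elementary stationary phase analysis of the explicit function $\psi(\lambda) = [1 - \gamma_0 \cos\lambda]^{\alpha/2}$. For part~(i), I would differentiate once to get
\[
\psi'(\lambda) = \tfrac{\alpha\gamma_0}{2}\,\sin\lambda\,[1 - \gamma_0\cos\lambda]^{\alpha/2-1},
\]
and observe that since $\gamma_0 \in (0,1)$ the bracket is strictly positive, so the zeros of $\psi'$ on $\R/2\pi\Z$ are exactly $\lambda = 0$ and $\lambda = \pi$. A direct second differentiation then gives $\psi''(0) = \tfrac{\alpha\gamma_0}{2}(1-\gamma_0)^{\alpha/2-1} > 0$ and $\psi''(\pi) = -\tfrac{\alpha\gamma_0}{2}(1+\gamma_0)^{\alpha/2-1} < 0$.

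For part~(ii), the plan is to view $\psi'$ as a smooth, $2\pi$-periodic, odd function on $\R/2\pi\Z$ whose only zeros are $0$ and $\pi$. On $(0,\pi)$ one has $\psi' > 0$, so $\psi'$ attains a positive maximum $M$ at some interior point $\lambda^* \in (0,\pi)$; by oddness, the minimum $-M$ is attained at $-\lambda^*$, and the image of $\psi'$ on the circle is $[-M,M]$. Since $R := r/t \ge 0$, the equation $\psi'(\lambda) = R$ has no solution on the negative half $[\pi,2\pi]$ and, on $[0,\pi]$, two solutions if $R < M$, one (namely $\lambda^*$) if $R = M$, and none if $R > M$, which is the claimed trichotomy.

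For part~(iii), the key mechanism is uniform non-degeneracy: as $R$ ranges over the compact interval $[0, M-\epsilon]$, the stationary points stay bounded away from the degenerate point $\lambda^*$. Concretely, using strict monotonicity of $\psi'$ on $[0, \lambda^*]$ and on $[\lambda^*, \pi]$, one has $\lambda_1(R) \in [0, a]$ and $\lambda_2(R) \in [b, \pi]$ with $a := \lambda_1(M-\epsilon) < \lambda^* < b := \lambda_2(M-\epsilon)$. Choose open sets $[0,a] \cup [b,\pi] \subset U \subset \overline{U} \subset V \subset \R/2\pi\Z$ so that $\overline{V}$ still avoids $\{\pm\lambda^*\}$. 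Granting that $\psi''$ vanishes on the circle only at $\pm\lambda^*$, continuity and compactness give $|\psi''| \ge c_1 > 0$ on $\overline{V}$, hence $|\psi''_{t,r}| = t|\psi''| \ge c_1 t$ on $V$. For the bound on $\psi'_{t,r}$ outside $U$, one splits into two subregions: in the ``hump'' around $\lambda^*$ one has $\psi'(\lambda) \ge M - \epsilon/2$, so $\psi'(\lambda) - R \ge \epsilon/2$; in the complement, which lies in the negative half of the circle bounded away from $\{0,\pi\} \subset U$, one has $\psi'(\lambda) \le -c_2$ with $c_2 > 0$, so $R - \psi'(\lambda) \ge c_2$. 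Taking the smallest of the constants completes the argument.

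The only non-routine step, and the main obstacle, is the claim that $\psi''$ vanishes on the circle only at $\pm\lambda^*$. I would settle it by writing out
\[
\psi''(\lambda) = \tfrac{\alpha\gamma_0}{2}[1-\gamma_0\cos\lambda]^{\alpha/2-2}\bigl\{-\tfrac\alpha2\gamma_0\cos^2\lambda + \cos\lambda - (1-\tfrac\alpha2)\gamma_0\bigr\},
\]
which reduces $\psi''(\lambda) = 0$ to a quadratic in $u = \cos\lambda$ with discriminant $1 - \alpha(2-\alpha)\gamma_0^2 \in (0,1)$. A short case analysis shows that exactly one of the two real roots lies in $[-1,1]$ (the larger exceeds $1$ precisely because $\gamma_0 < 1$), and it must coincide with $\cos\lambda^*$; evenness of $\psi''$ then produces the companion zero at $-\lambda^*$.
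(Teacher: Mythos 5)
Your proof is correct and follows essentially the same route as the paper: compute $\psi'$ and $\psi''$, show that the bracket $-\tfrac{\alpha}{2}\gamma_0\cos^2\lambda+\cos\lambda-(1-\tfrac{\alpha}{2})\gamma_0$ has a unique zero $\lambda_0\in(0,\tfrac\pi2)$ so that $\psi'$ is unimodal on $[0,\pi]$, deduce the trichotomy in (ii), and obtain the uniform lower bounds in (iii) by keeping the stationary points away from the degenerate points $\pm\lambda_0$. The only (harmless) variation is that you locate the unique zero by explicitly solving the quadratic in $u=\cos\lambda$ (discriminant $1-\alpha(2-\alpha)\gamma_0^2$, one root in $[-1,1]$), whereas the paper argues via the sign of $\theta'(\lambda)=[\alpha\gamma_0\cos\lambda-1]\sin\lambda$ and the values of the bracket at $0,\tfrac\pi2,\pi$, and your part (iii) is a compactness formulation of the paper's explicit neighborhoods built from $\lambda_1(\cdot),\lambda_2(\cdot)$.
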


\begin{proof}
Let us compute the first derivatives
\begin{align} %\label{eq:FirstDerivativePsi}
\psi^{\ssf\prime}(\lambda)
&=\vsf\tfrac\alpha2\,\gamma_{\ssf0}\msf
[\ssf1\!-\ssb\gamma_{\ssf0}\cos\vsb\lambda\msf]^{\vsf\alpha/2\vsf-1}
\ssf\sin\ssb\lambda\,, \nonumber\\
\label{eq:SecondDerivativePsi}
\psi^{\ssf\prime\prime}(\lambda)
&=\vsf\tfrac\alpha2\,\gamma_{\ssf0}\msf
[\ssf1\!-\ssb\gamma_{\ssf0}\cos\vsb\lambda\msf]^{\vsf\alpha/2\vsf-2}\,
[\msf-\msf\tfrac\alpha2\,\gamma_{\ssf0}\ssf\cos^2\!\lambda\ssb
+\cos\ssb\lambda\ssb-\ssb(1\!-\msb\tfrac\alpha2)
\msf\gamma_{\ssf0}\msf]\msf.
%\label{ThirdDerivativePsi}
%\psi^{\ssf\prime\prime\prime}(\lambda)
%&=\vsf\tfrac\alpha2\,\gamma_{\ssf0}\msf
%[\ssf1\!-\msb\gamma_{\ssf0}\cos\vsb\lambda\msf
%]^{\vsf\alpha/2\vsf-3}\msf\sin\ssb\lambda\,\\
%&\nonumber\ssf\times
%[\msf-\,\tfrac{\alpha^2}4\,
%\gamma_{\vsf0}^2\ssf\cos^2\!\lambda\ssb
%+\ssb(\tfrac{3\ssf\alpha}2\msb-\!1)\,\gamma_{\vsf0}\ssf\cos\lambda\ssb
%+\ssb(2\msb-\msb\tfrac\alpha2)\ssf
%(1\!-\msb\tfrac\alpha2)\,\gamma_{\vsf0}^2
%-\msb1\ssf]\msf.
\end{align}
Consider the expression
\begin{equation*}\label{eq:ThetaTQ}
\theta(\lambda)
=-\msf\tfrac\alpha2\,\gamma_{\ssf0}\ssf\cos^2\!\lambda\ssb
+\cos\ssb\lambda\ssb
-\ssb(1\!-\msb\tfrac\alpha2)\msf\gamma_{\ssf0}\,,
\end{equation*}
which occurs in \eqref{eq:SecondDerivativePsi}
and which is a \vsf$2\ssf\pi$\vsf--\ssf periodic even function on \ssf$\R$\ssf,
with
\begin{equation*}
\theta(0)=1\ssb-\gamma_{\ssf0}>0\,,
\quad
\theta(\tfrac\pi2)=-\msf
(1\msb-\ssb\tfrac\alpha2)\,\gamma_{\ssf0}<0\,,
\quad\text{and}\quad
\theta(\pi)=-\msf1\ssb-\gamma_{\ssf0}<0.
\end{equation*}
{On \ssf$[\ssf0\ssf,\ssb\pi\ssf]$\ssf, the function \ssf$\theta$ \vsf may increase before decreasing, as its derivatives
\begin{equation*}\label{eq:ThetaTQ_Derivation}
\theta^{\ssf\prime}(\lambda)
=[\ssf\alpha\msf\gamma_{\ssf0}\cos\ssb\lambda\ssb
-\ssb1\ssf]\msf\sin\ssb\lambda
\end{equation*}
vanishes at \ssf$\lambda\ssb=\ssb0$\ssf, at \ssf$\lambda\ssb=\ssb\pi$ \ssf and at most once on \vsf$(\vsf0\ssf,\ssb\tfrac\pi2)$\vsf.}
In par\-ti\-cu\-lar, \ssf$\theta$ \ssf and hence \ssf$\psi^{\ssf\prime\prime}$
\begin{itemize}
\item
$\,$vanishes at a single point \ssf$\lambda_{\vsf0}$
in \ssf$[\ssf0\ssf,\ssb\pi\ssf]$\ssf,
which belongs to \ssf$(\ssf0\ssf,\ssb\frac\pi2\ssf)$\ssf,
\item
$\,$is strictly positive on \ssf$[\ssf0\ssf,\ssb\lambda_{\vsf0}\vsf)$\ssf,
\item
$\,$is strictly negative on \ssf$(\lambda_{\vsf0}\vsf,\ssb\pi\ssf]$\ssf.
\end{itemize}
Thus \msf$\psi^{\ssf\prime}$\vsb,
which is a \vsf$2\ssf\pi$\vsf--\ssf periodic odd function on \ssf$\R$\ssf,
increases (strictly) on \ssf$[\ssf0\ssf,\ssb\lambda_{\vsf0}\ssf]$\ssf,
from \ssf$\psi^{\ssf\prime}(0)\msb=\msb0$ \ssf to \ssf$M\!
=\ssb\psi^{\ssf\prime}(\lambda_{\vsf0})\msb>\msb0$\ssf,
and decreases back on \ssf$[\ssf\lambda_{\vsf0}\vsf,\ssb\pi\ssf]$\ssf,
from \ssf$M$ \vsf to \ssf$0$\ssf.
Con\-se\-quently,

for every \ssf$\mu\!\in\![\ssf0\ssf,\ssb M\vsf)$\vsf, the equation
\msf$\psi^{\ssf\prime}(\lambda)\msb=\msb\mu$
\msf has exactly two solutions in \ssf$(-\ssf\pi\ssf,\ssb\pi\ssf]$\,:
\vspace{.5mm}

\centerline{
$\lambda_1\vsb(\mu)\msb
\in\msb[\ssf0\ssf,\ssb\lambda_{\vsf0}\vsf)$
\quad and\quad
$\lambda_{\vsf2}(\mu)\msb
\in\msb(\vsf\lambda_{\vsf0}\ssf,\ssb\pi\ssf]$\msf.
}\vspace{.5mm}

Let \msf$\epsilon\msb\in\msb(0\ssf,\ssb\frac M3)$
\ssf and assume that \msf$\tfrac rt\msb\le
\msb M\hspace{-1mm}-\msb3\msf\epsilon$\ssf.
Then the phase function \ssf$\psi_{\ssf t,r}$ has two stationary points
in \ssf$(\vsb-\ssf\pi\ssf,\ssb\pi\ssf]\ssf$,
namely $\lambda_1\!=\msb\lambda_1\vsb(\tfrac rt)$ and
$\lambda_{\vsf2}\msb=\msb\lambda_{\vsf2}(\tfrac rt)$\vsf.
Moreover,
\begin{itemize}

\item
$\ssf U_1\msb=\msb
\bigl(-\lambda_1\vsb(\epsilon)\vsf,
\vsb\lambda_1\vsb(M\hspace{-1mm}-\msb2\ssf\epsilon)\bigr)
\msb\subset\ssb V_1\msb=\msb
\bigl(-\lambda_1\vsb(2\ssf\epsilon)\vsf,
\vsb\lambda_1\vsb(M\hspace{-1mm}-\msb\epsilon)\bigr)$
\ssf are neighborhoods of \ssf$\lambda_1$ in
\ssf$(\ssb-\lambda_{\vsf0}\ssf,\ssb\lambda_{\vsf0}\vsf)$ \ssf such that
\ssf $|\ssf\psi^{\ssf\prime}(\lambda)\msb-\msb\tfrac rt\ssf|
\ssb\ge\ssb\epsilon$ \ssf on
\ssf$[\vsb-\lambda_{\vsf0}\ssf,\ssb\lambda_{\vsf0}\ssf]\msb\smallsetminus\msb{U}_{\ssb1}$ and \ssf$\min_{\vsf\lambda\in\overline{V_1}}\ssf
\psi^{\ssf\prime\prime}(\lambda)\msb>\msb0$\ssf,

\item
$\ssf U_{\vsf2}\ssb=\msb
\bigl(\lambda_{\vsf2}(M\hspace{-1mm}-\msb2\ssf\epsilon)\vsf,
\vsb2\ssf\pi\!-\msb\lambda_{\vsf2}(\epsilon)\bigr)
\msb\subset\ssb V_{\vsf2}\ssb=\msb
\bigl(\lambda_{\vsf2}(M\hspace{-1mm}-\msb\epsilon)\vsf,
\vsb2\ssf\pi\!-\msb\lambda_{\vsf2}(2\ssf\epsilon)\bigr)$
\ssf are neighborhoods of \ssf$\lambda_{\vsf2}$ in
\ssf$(\lambda_{\vsf0}\ssf,\ssb2\ssf\pi\!-\msb\lambda_{\vsf0}\vsf)$
\ssf such that
\ssf$|\ssf\psi^{\ssf\prime}(\lambda)\msb-\msb\tfrac rt\ssf|
\ssb\ge\ssb\epsilon$ \ssf on
\ssf$[\vsf\lambda_{\vsf0}\ssf,\ssb2\ssf\pi\!-\msb\lambda_{\vsf0}\ssf]
\msb\smallsetminus\msb{U}_{\vsb2}$
and \ssf$\min_{\vsf\lambda\in\overline{V_{\vsf2}}}
|\ssf\psi^{\ssf\prime\prime}(\lambda)\vsf|\ssb>\ssb0\ssf$.

\end{itemize}
%Assume next that \ssf$\tfrac rt\msb\ge\msb
%M\hspace{-1mm}+\msb3\msf\epsilon$\ssf. Then
%\ssf$|\ssf\psi^{\ssf\prime}(\lambda)\msb-\msb\tfrac rt\ssf|
%\ssb\ge\ssb\tfrac rt\ssb-\ssb|\ssf\psi^{\ssf\prime}(\lambda)\vsf|\ssb
%\ge\ssb3\msf\epsilon$\ssf.
%Assume finally that \ssf$\tfrac rt\msb\in\msb
%[\ssf M\hspace{-1mm}-\msb3\msf\epsilon\ssf,
%M\hspace{-1mm}+\msb3\msf\epsilon\hspace{.4mm}]$\ssf.
\end{proof}

\begin{remark}
In the limit case \,$\alpha\ssb=\ssb2$\ssf, we have
\vspace{.5mm}

\centerline{
$\psi(\lambda)\ssb
=\ssb1\msb-\vsb\gamma_{\ssf0}\vsf\cos\vsb\lambda$\ssf,
\,$\psi^{\ssf\prime}(\lambda)\ssb
=\vsb\gamma_{\ssf0}\ssf\sin\ssb\lambda$\ssf,
\,$\psi^{\ssf\prime\prime}(\lambda)\ssb
=\vsb\gamma_{\ssf0}\vsf\cos\vsb\lambda\ssf,
$}\vspace{-.5mm}

hence
\vspace{-1mm}

\centerline{
$\lambda_{\vsf0}\ssb=\ssb\frac\pi2$\ssf,
\msf$M\msb=\vsb\gamma_{\vsf0}$\msf,
\,$\lambda_1(\mu)\ssb
=\vsb\arcsin\frac\mu{\gamma_{\vsf0}}$\ssf,
\,$\lambda_{\vsf2}(\mu)\ssb
=\ssb\pi\msb-\ssb\arcsin\frac\mu{\gamma_{\vsf0}}$\ssf.
}

\end{remark}

\begin{theorem}[{Kernel estimate}]\label{KernelEstimateTQ}
Assume that \,$0\msb<\msb
\alpha\msb\le\msb2$\ssf.
Then the following kernel estimate holds\,$:$
\begin{equation}\label{eq:KernelEstimateTQ1}
|\vsf k_{\ssf t}(r)|\lesssim Q^{\ssf-\frac r2}
\qquad\forall\;t\msb\in\msb\R^*,
\,\forall\hspace{1mm}r\!\in\msb\N\msf.
\end{equation}
Moreover, there exists \,$C\!>\msb0$ such that 
\begin{equation}\label{eq:KernelEstimateTQ2}
|\vsf k_{\ssf t}(r)|\lesssim
|t|^{-\frac32}\msf(1\!+\msb r)\,Q^{\ssf-\frac r2}
\end{equation}
if \,$1\!+\hspace{-.4mm}r\msb\le\ssb C\msf|t|$\ssf.
\end{theorem}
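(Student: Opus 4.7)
The first bound $|k_t(r)|\lesssim Q^{-r/2}$ is immediate from \eqref{eq:KernelTQ1}: the amplitude $\frac{\sin\lambda}{Q^{1/2}e^{i\lambda}-Q^{-1/2}e^{-i\lambda}}$ is smooth and bounded on the circle $\R/2\pi\Z$ (the denominator never vanishes since $Q>1$), and the phase factor has modulus one, so the integral is $O(1)$ uniformly in $t$ and $r$.

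For the refined bound, first choose $C<M$ and set $\epsilon=M-C>0$, so that the assumption $1+r\le C|t|$ implies $\tfrac r{|t|}\le M-\epsilon$ and Lemma~\ref{lem:phaseTQ}(iii) applies. The key observation is the identity
\begin{equation*}
\sin\lambda\,e^{\ssf i\ssf t\ssf\psi(\lambda)}
=\tfrac{2\ssf(1-\vsf\gamma_{\ssf0}\vsf\cos\lambda)^{1-\alpha/2}}{i\ssf t\ssf\alpha\ssf\gamma_{\ssf0}}\,
\tfrac{\ssf d\vphantom{|}}{d\lambda}\msf e^{\ssf i\ssf t\ssf\psi(\lambda)},
\end{equation*}
which follows directly from $\psi'(\lambda)=\tfrac{\alpha\vsf\gamma_{\ssf0}}2(1-\gamma_{\ssf0}\cos\lambda)^{\alpha/2-1}\sin\lambda$. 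Inserting this into \eqref{eq:KernelTQ1} and integrating by parts on the circle (no boundary terms, since $r\in\N$ and all other factors are $2\pi$-periodic) gives
\begin{equation*}
k_t(r)\,Q^{r/2}
=\tfrac{2\ssf i\ssf C'}{t\ssf\alpha\ssf\gamma_{\ssf0}}\ssf
\Bigl\{\msb\int_{\R\vsf/2\ssf\pi\ssf\Z}\msb w'(\lambda)\,e^{\ssf i\ssf[\ssf t\ssf\psi(\lambda)\vsf-\vsf r\vsf\lambda\ssf]}\ssf d\lambda
-\ssf i\ssf r\msb\int_{\R\vsf/2\ssf\pi\ssf\Z}\msb w(\lambda)\,e^{\ssf i\ssf[\ssf t\ssf\psi(\lambda)\vsf-\vsf r\vsf\lambda\ssf]}\ssf d\lambda\Bigr\},
\end{equation*}
where $w(\lambda)=(1-\gamma_{\ssf0}\cos\lambda)^{1-\alpha/2}/(Q^{1/2}e^{i\lambda}-Q^{-1/2}e^{-i\lambda})$ is smooth on the circle, and so is $w'$, with bounds independent of $t,r$.

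It remains to bound the two resulting oscillatory integrals by $O(t^{-1/2})$. Let $\chi\in C^\infty(\R/2\pi\Z)$ be a cutoff supported in $V$ and equal to $1$ on $\overline U$ (with $U,V$ as in Lemma~\ref{lem:phaseTQ}(iii)). On the $\chi$-piece, one has $|\psi_{t,r}''|\ge c\,t$ on $\supp\chi\subset V$, and Lemma~\ref{lem:vanderCorput} with $L=2$ yields a bound $O(t^{-1/2})$. On the $(1-\chi)$-piece, $|\psi_{t,r}'|\ge c\,t$ on the support, so repeated integration by parts based on $e^{\ssf i\vsf\psi_{t,r}}\ssb=\ssb(i\vsf\psi_{t,r}')^{-1}\vsb\tfrac\partial{\partial\lambda}e^{\ssf i\vsf\psi_{t,r}}$ gives a bound $O(t^{-N})$ for arbitrary $N$. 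Combining,
\begin{equation*}
|\ssf k_t(r)\vsf|\,Q^{r/2}
\lesssim\ssf t^{-1}\bigl\{\vsf t^{-1/2}\ssb+\vsb r\,t^{-1/2}\bigr\}
\lesssim\ssf(1\!+\vsb r)\,t^{-3/2},
\end{equation*}
which is the desired estimate.

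The main technical step is the integration-by-parts identity that trades the factor $\sin\lambda$ in the amplitude for an extra power of $t^{-1}$. This reflects the fact that the zeros of the amplitude coincide (when $r=0$) with the two non-degenerate stationary points $\{0,\pi\}$ of $\psi$; for $r>0$ the stationary points drift slightly and the amplitude there is $O(r/t)$, which is exactly accounted for by the $(1+r)$-factor. The verification that $|\psi_{t,r}''|$ and $|\psi_{t,r}'|$ have the claimed lower bounds (on $V$ and outside $U$ respectively, uniformly in $\tfrac rt\le M-\epsilon$) is precisely the content of Lemma~\ref{lem:phaseTQ}(iii), so that step is already in hand.
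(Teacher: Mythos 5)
Your proposal is correct and follows essentially the same route as the paper: the trivial bound from the boundedness of the integrand in \eqref{eq:KernelTQ1}, then a single integration by parts based on the identity $(\sin\lambda)\,e^{\,i\,t\,\psi(\lambda)}=\tfrac2{\alpha\,\gamma_{0}}\,\tfrac1t\,[1-\gamma_{0}\cos\lambda]^{1-\alpha/2}\bigl(-i\,\tfrac\partial{\partial\lambda}\bigr)e^{\,i\,t\,\psi(\lambda)}$ to reach the analogue of \eqref{eq:KernelTQ2} with amplitude $r\,a(\lambda)+i\,a'(\lambda)$, followed by the cutoff splitting from Lemma~\ref{lem:phaseTQ}~(iii), van der Corput with $L=2$ on the stationary region and non-stationary integrations by parts on the complement. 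Your choice of $C<M$ with $\epsilon=M-C$ makes explicit the constant the paper leaves implicit, but the argument is otherwise the same.
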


\begin{remark}${}$

\begin{itemize}
\item
In the limit case \,$\alpha\ssb=\ssb2$\ssf,
the slightly weaker estimate
\begin{equation*}
|\vsf k_{\ssf t}(r)|\ssf\lesssim\ssf
\begin{cases}
\,Q^{\ssf-\ssf r/2}
&\text{if }\;0\msb<\!|t|\!<\!1\\
\,|t|^{-\ssf3/2}\msf(1\!+\msb r)^2\,Q^{\ssf-\ssf r/2}
&\text{if }\;|t|\!\ge\!1
\end{cases}
\end{equation*}
was obtained in {\cite[Prop.~3.1]{Jamaleddine2013a}}. Note that a 
small error in \cite[Prop.~\;3.1]{Jamaleddine2013a}
was corrected in \cite[Prop.~\;3.1]{Jamaleddine2013b}.
\item
The estimate \eqref{eq:KernelEstimateTQ1} may be further improved,
when \,$|t|$ is large and \,$\frac r{|t|}$ is bounded from below,
but we will not need it.
\end{itemize}

\end{remark}

\begin{proof}[Proof of Theorem \ref{KernelEstimateTQ}]
Without loss of generality, we may assume that
\msf$t\msb>\msb0$\ssf.
The estimate \eqref{eq:KernelEstimateTQ1} follows immediately
from the expression \eqref{eq:KernelTQ1}, where the integrand is bounded.
Let us improve \eqref{eq:KernelEstimateTQ1}
when \msf$t\ssb\ge\!1$ \ssf and \ssf$\frac rt$ \ssf is small,
so that Lemma \ref{lem:phaseTQ}~(iii) applies.
First, we perform an integration by parts based on
\begin{equation*}
(\ssf\sin\ssb\lambda)\,
e^{\ssf i\hspace{.4mm}t\msf\psi(\lambda)}
=\tfrac2{\alpha\msf\gamma_{\vsf0}}\,\tfrac1t\,
[\ssf1\!-\ssb\gamma_{\ssf0}\cos\vsb\lambda\msf]^{\vsf1-\ssf\alpha/2}\,
\bigl(\ssb-\ssf i\ssf\tfrac\partial{\partial\lambda}\bigr)\msf
e^{\ssf i\hspace{.4mm}t\msf\psi(\lambda)}\,.
\end{equation*}
This way, \eqref{eq:KernelTQ1} becomes
\begin{equation}\label{eq:KernelTQ2}
k_{\ssf t}(r)=\ssf C\,\tfrac{1\vphantom{|}}{t\vphantom{|}}\,Q^{-\ssf r/2}
\int_{\ssf\R\vsf/2\vsf\pi\Z}\!\hspace{1mm}
e^{\msf i\msf\psi_{\vsf t,r}\vsb(\lambda)}\,
\bigl\{\ssf r\ssf a(\lambda)\msb
+\vsb i\msf a^{\ssf\prime}(\lambda)\vsf\bigr\}\;\mathrm{d}\lambda\,,
\end{equation}
where \,$a(\lambda)\ssb=\ssb\tfrac
{[\ssf1-\ssf\gamma_{\vsf0}\cos\vsb\lambda\msf]^{\vsf1-\vsf\alpha/2}
\vphantom{\frac00}}{Q^{\vsf1/2}\ssf e^{\ssf i\vsf\lambda}\ssf
-\,Q^{-1/2}\ssf e^{-i\vsf\lambda}}$
\msf is bounded, as well as its derivatives.
Next, we estimate \eqref{eq:KernelTQ2}
by stationary phase analysis based on Lemma \ref{lem:phaseTQ}~(iii).
Specifically, given a smooth function \ssf$\chi$ on \ssf$\R\vsf/2\ssf\pi\Z$
\ssf such that \ssf$\chi\msb=\msb1$ on \ssf$\overline{U}$
and \msf$\supp\chi\msb\subset\msb V$,
we split up the integral in \eqref{eq:KernelTQ2} as follows\,:
\vspace{-1mm}

\centerline{$\displaystyle
\int_{\ssf\R\vsf/2\vsf\pi\Z}\!\;\mathrm{d}\lambda\,
=\int_{\ssf V}\chi(\lambda)\;\mathrm{d}\lambda\msf
+\int_{(\R\vsf/2\vsf\pi\Z)\ssf\smallsetminus\ssf U}\!
\,[\vsf1\!-\msb\chi(\lambda)\vsf]\,\;\mathrm{d}\lambda.
$}\vspace{.5mm}

On the one hand, the main estimate
\begin{equation*}
\Bigl|\msf\int_{\ssf V}\ssb \chi(\lambda)\,
e^{\msf i\msf\psi_{\vsf t,r}\vsb(\lambda)}\msf
\bigl\{\ssf r\ssf a(\lambda)\msb
+\vsb i\msf a^{\ssf\prime}(\lambda)\vsf\bigr\}\;\mathrm{d}\lambda\hspace{.4mm}\Bigr|
\lesssim\ssf t^{\vsf-\frac12}\msf(1\msb+\ssb r)
\end{equation*}
is obtained by applying Lemma \ref{lem:vanderCorput} with \ssf${L}\msb=\ssb2$\ssf.
On the other hand, the remainder estimate
\begin{equation*}
\Bigl|\msf\int_{(\R\vsf/2\vsf\pi\Z)\ssf\smallsetminus\ssf U}\!\,[\vsf1\!-\msb\chi(\lambda)\vsf]\,
e^{\msf i\msf\psi_{\vsf t,r}\vsb(\lambda)}\msf
\bigl\{\ssf r\ssf a(\lambda)\msb
+\vsb i\msf a^{\ssf\prime}(\lambda)\vsf\bigr\}\;\mathrm{d}\lambda\hspace{.4mm}\Bigr|
\lesssim\ssf t^{\vsf-N}\ssf(1\msb+\ssb r)
\end{equation*}
is obtained after \ssf$N$ integrations by parts based on
\vspace{1mm}

\centerline{$
e^{\msf i\msf\psi_{\vsf t,r}\vsb(\lambda)}
=\tfrac{1}{\psi_{\vsf t,r}^{\msf\prime}\vsb(\lambda)}
\msf\bigl(\ssb-\ssf i\ssf\tfrac\partial{\partial\lambda}\bigr)\msf
e^{\msf i\msf\psi_{\vsf t,r}\vsb(\lambda)}\ssf.
$}\vspace{.5mm}

This concludes the proof of \eqref{eq:KernelEstimateTQ2}.
\end{proof}

{Let us turn to \ssf$\ell^{\ssf q^{\ssf\prime}}\msb(\mathbb{T}_Q)\!\to\msf\ell^{\ssf\tilde{q}}(\mathbb{T}_Q)$ mapping properties of the Schr\"{o}dinger operator $e^{it(-\Delta)^{\alpha/2}}$.}
As in {\cite[Thm.~3.4 and Cor.~3.5]{Jamaleddine2013a}},
let us deduce the following result from Theorem~\ref{KernelEstimateTQ}.

\begin{corollary}[{Dispersive estimate}] \label{cor:DispersiveEstimateTQ}
Let \,$0\msb<\msb\alpha\msb\le\msb2$
and \,$2\msb<\msb q,{\tilde{q}}\msb\le\msb\infty$\ssf.
Then the following dispersive estimate holds for
\,$t\msb\in\msb\R^*:$
\begin{equation*}
\bigl\|\msf
e^{\hspace{.4mm}i\hspace{.4mm}t\hspace{.4mm}(-\Delta)^{\alpha/2}}
\ssf\bigr\|_{\ssf\ell^{\ssf q^{\ssf\prime}}\msb{(\mathbb{T}_Q)}\ssf\to\ssf\ell^{\ssf\tilde{q}}{(\mathbb{T}_Q)}}
\lesssim\ssf(1\!+\msb|t|\vsf)^{-\frac32}\msf.
\end{equation*}
\end{corollary}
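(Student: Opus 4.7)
The plan is to combine the pointwise kernel estimates of Theorem~\ref{KernelEstimateTQ} with the tree analogue of the Kunze--Stein phenomenon (see e.g.~\cite{CowlingMedaSetti1998}), and then pass from the resulting diagonal $\ell^{q'}\!\to\!\ell^q$ estimate to the full off-diagonal range by means of the elementary monotonicity $\ell^{p_1}(\mathbb{T}_Q)\hookrightarrow\ell^{p_2}(\mathbb{T}_Q)$, $p_1\le p_2$, which holds with constant one under the counting measure.

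For the diagonal case $q=\tilde q\in(2,\infty)$, one observes that spheres on $\mathbb{T}_Q$ have cardinality comparable to $Q^r$ and that the ground spherical function satisfies $\phi_0(r)\asymp(1+r)\,Q^{-r/2}$, so that the tree Kunze--Stein inequality takes the form, for any radial $k$,
\begin{equation*}
\|f*k\|_{\ell^q(\mathbb{T}_Q)}\,\lesssim\,\|f\|_{\ell^{q'}(\mathbb{T}_Q)}\,\Bigl[\,\sum_{r\ge 0}|k(r)|^{q/2}\,(1+r)\,Q^{r/2}\,\Bigr]^{2/q}.
\end{equation*}
Taking $k=k_t$ and splitting the weighted sum at $r\asymp|t|$, on the range $1+r\le C|t|$ the bound \eqref{eq:KernelEstimateTQ2} produces $(1+|t|)^{-3q/2}$ times a polynomial-weighted geometric series of common ratio $Q^{1/2-q/4}<1$ (the hypothesis $q>2$ enters here), which is uniformly bounded; on $r>C|t|$ the bound \eqref{eq:KernelEstimateTQ1} leaves an exponentially small tail $\lesssim Q^{-c|t|}$. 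Extracting the $2/q$-th power yields $(1+|t|)^{-3/2}$. The endpoint $q=\infty$ is handled separately by H\"older's inequality, $\|e^{it(-\Delta)^{\alpha/2}}f\|_{\ell^\infty}\le\|k_t\|_{\ell^q(\mathbb{T}_Q)}\|f\|_{\ell^{q'}}$, where an analogous split of $\|k_t\|_{\ell^q(\mathbb{T}_Q)}^q=\sum_r|S_r|\,|k_t(r)|^q$ gives $\|k_t\|_{\ell^q(\mathbb{T}_Q)}\lesssim(1+|t|)^{-3/2}$ for every $q>2$. Finally, to reach the full off-diagonal range I set $p=\min(q,\tilde q)>2$: the embeddings $\ell^{q'}(\mathbb{T}_Q)\hookrightarrow\ell^{p'}(\mathbb{T}_Q)$ (since $q'\le p'$) and $\ell^p(\mathbb{T}_Q)\hookrightarrow\ell^{\tilde q}(\mathbb{T}_Q)$ (since $p\le\tilde q$), both with constant one, combine with the diagonal bound to yield the claimed estimate.

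The main obstacle is the tree Kunze--Stein inequality itself; once it is available, the remainder of the argument reduces to elementary manipulations of geometric and polynomial sums, the decisive cancellation arising from the exact balance between the sphere growth $Q^r$ and the kernel decay $Q^{-r/2}$, with residual factor $Q^{-r(q-2)/4}$ that is summable precisely because $q>2$. No separate short-time analysis is needed, because $(1+|t|)^{-3/2}\asymp 1$ for $|t|\le 1$, for which the trivial uniform bound $\|k_t\|_{\ell^q(\mathbb{T}_Q)}\lesssim 1$ furnished by \eqref{eq:KernelEstimateTQ1} is already consistent with the claim.
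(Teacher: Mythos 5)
Your proposal is correct and follows essentially the same route as the paper, which deduces the corollary from Theorem \ref{KernelEstimateTQ} exactly as in the cited work of Jamal Eddine: the kernel bounds are inserted into the radial Kunze--Stein convolution inequality on $\mathbb{T}_Q$ (with weight $(1+r)\,Q^{\vsf r/2}$) for the diagonal case, and the remaining exponents follow from the monotonicity of $\ell^{\vsf p}$ norms with respect to the counting measure. Note only the harmless slip in your diagonal computation: the prefactor produced by \eqref{eq:KernelEstimateTQ2} is $(1+|t|)^{-3q/4}$, not $(1+|t|)^{-3q/2}$, and it is the former that, after taking the $2/q$-th power, yields the stated bound $(1+|t|)^{-3/2}$.
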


{Similar, we can conclude from Theorem~\ref{KernelEstimateTQ} and \cite[Thm.~3.6]{Jamaleddine2013a} the following result for the inhomogeneous case.}

\begin{corollary}[{Strichartz estimates}]\label{cor:StrichartzEstimateTQ}
{Let \,$0\msb<\msb\alpha\msb\le\msb2$
and \,$I\msb=\msb(-T,\ssb+T\vsf)$ with \,$T\msb>\ssb0$\ssf.}
Then the following Strichartz estimates hold
for solutions of \eqref{eq:NLSgeneral}
on \,${I}\msb\times\ssb\TQ:$
\begin{equation*}
\|\ssf u(t,x)\|_{L^\infty({I,}\msf\ell^{\ssf2}(\TQ))}+\ssf\|\ssf u(t,x)\|_{L^{\ssf\tilde{p}}({I,}\msf\ell^{\ssf\tilde{q}}(\TQ))}\ssb
\le C\,\bigl\{\ssf\|f\|_{\ssf\ell^{\ssf2\vsb}(\TQ)}
+\ssf\|\ssf F(t,x)\|_{L^{\ssf p^{\ssf\prime}}\!({I,}\msf\ell^{\ssf q^{\ssf\prime}}\!(\TQ))}\ssf\bigr\}\,.
\end{equation*}
Here \,$(\frac{1\vphantom{|}}p,\ssb\frac{1\vphantom{|}}q)$ and
\,$(\frac{1\vphantom{|}}{\tilde{p}},\ssb\frac{1\vphantom{|}}{\tilde{q}}\ssf)$
belong to the square {$($see Figure \ref{fig:AdmissiblePairsTQ}\ssf$)$}
\vspace{.5mm}

\centerline{$
{\bigl[}\vsf0\ssf,\ssb\frac12\ssf\bigr]\msb
\times\msb\bigl[\ssf0\ssf,\ssb\frac12\vsf\bigr)
\,\cup\,\bigl\{\bigl(\vsf0\ssf,\ssb\frac12\vsf\bigr)\bigr\}
$}\vspace{1.5mm}

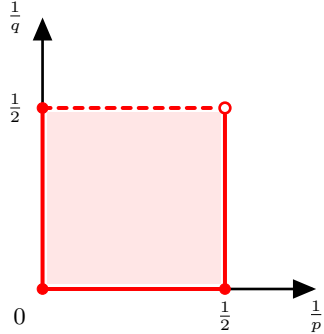
\begin{figure}[h!]
\centering
\begin{tikzpicture}[line cap=round,line join=round,>=triangle 45,x=6mm,y=6mm]
\clip(-1,-1) rectangle (7,7);
\fill[line width=0pt,color=red,fill=red,fill opacity=0.1] (0.1,3.9) -- (3.9,3.9) -- (3.9,0.1) -- (0.1,0.1) -- cycle;
\draw [line width=1.5pt,color=red] (4,0) -- (4,3.85);
\draw [line width=1.5pt,color=red] (0,0) -- (4,0);
\draw [line width=1.5pt,color=red] (0,0) -- (0,4);
\draw [line width=1.5pt,dashed,color=red] (0,4) -- (3.85,4);
\draw [line width=1pt,->] (0,4.1) -- (0,6);
\draw [line width=1pt,->] (4.1,0) -- (6,0);
\begin{scriptsize}
\draw[color=black] (6,-0.6) node {$\frac1p$};
\draw[color=black] (-.6,6) node {$\frac1q$};
\draw[color=red,fill=red] (0,0) circle (2pt);
\draw[color=black] (-0.5,-0.6) node {$0$};
\draw[color=red,fill=red] (0,4) circle (2pt);
\draw[color=black] (-0.6,4) node {$\frac12$};
\draw[color=red,fill=red] (4,0) circle (2pt);
\draw[color=black] (4,-0.6) node {$\frac12$};
\draw[line width=1pt,color=red] (4,4) circle (2pt);
\end{scriptsize}
\end{tikzpicture}
\caption{Admissible pairs for $\mathbb{T}_Q$}
\label{fig:AdmissiblePairsTQ}
\end{figure}

and \,$C\!\ge\msb0$ depends on \,$\alpha$\vsf,
$(\vsf p,\vsb q)$\vsf, $(\vsf\tilde{p},\vsb\tilde{q}\ssf)$
but not on \,$T$ and \,$u$\ssf.
\end{corollary}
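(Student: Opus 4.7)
The plan is to follow the standard Ginibre--Velo / Keel--Tao $TT^{\ssf*}$ strategy, exactly as in Sections 6 of \cite{AnkerPierfeliceVallarino2011} or 5 of \cite{AnkerPierfelice2014}, but with two major simplifications compared to the hyperbolic setting treated in Theorems~\ref{thm:StrichartzEstimateHnAlphaLarge}--\ref{thm:StrichartzEstimateHnAlphaSmall}: (i)~the dispersive bound from Corollary~\ref{cor:DispersiveEstimateTQ} is uniform in $t$ with the \emph{single} decay rate $(1\!+\!|t|)^{-3/2}$, and (ii)~no loss of derivatives $\sigma$ is needed, so the admissible region is maximal. Denote $T\ssb:\ell^{\ssf2}(\TQ)\!\to\!L^p\vsb(\R\vsf;\ell^{\ssf q}(\TQ))$ by $Tf(t)\ssb=\ssb e^{\ssf it(-\Delta)^{\alpha/2}}f$. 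Bounding $T$ is equivalent to bounding its adjoint and, by $TT^{\ssf*}$, to showing
\begin{equation*}
\Bigl\|\,\int_{-\infty}^{+\infty}\!\! e^{\ssf i(t-s)(-\Delta)^{\alpha/2}}\,F(s,\cdot)\;\mathrm{d}s\,\Bigr\|_{L^p(\R;\ssf\ell^{\ssf q}(\TQ))}\lesssim\|F\|_{L^{p^{\ssf\prime}}(\R;\ssf\ell^{\ssf q^{\ssf\prime}}(\TQ))}\msf.
\end{equation*}

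By Corollary~\ref{cor:DispersiveEstimateTQ} and Minkowski's inequality, the left-hand side is dominated by the scalar convolution $\|\vsf|F(\cdot,\cdot)|_{\ell^{\ssf q^{\ssf\prime}}}*g\vsf\|_{L^p(\R)}$ with $g(t)\ssb=\ssb(1\!+\!|t|)^{-3/2}$. Since $3/2\msb>\msb1$, one has $g\!\in\!L^{\ssf r}(\R)$ for every $r\!\in\!(2/3,\infty]$, and in particular $g\!\in\!L^{\ssf p/2}(\R)$ for all $p\!\in\![2,\infty]$. The Young inequality $L^{\ssf p^{\ssf\prime}}\!*\ssb L^{\ssf p/2}\!\hookrightarrow\!L^p$ (valid because $1\!+\!\tfrac1p\ssb=\ssb\tfrac1{p^{\vsf\prime}}\!+\!\tfrac2p$) then gives the homogeneous Strichartz estimate for every $(1/p,1/q)\!\in\![\ssf0,\ssb\tfrac12\ssf]\!\times\!(\ssf0,\ssb\tfrac12\vsf)$, while the endpoint $(0,\ssb\tfrac12)$ comes trivially from the conservation of the $\ell^{\ssf2}$ norm under $e^{\ssf it(-\Delta)^{\alpha/2}}$. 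The mixed case with exponents $(\tilde p,\tilde q)\!\ne\!(p,q)$ is handled identically, applying the dispersive bound with the two (possibly distinct) pairs and invoking Young's inequality with $1\!+\!\tfrac1p\ssb=\ssb\tfrac1{\tilde p^{\ssf\prime}}\!+\!\tfrac1b$ where the kernel lies in $L^b(\R)$.

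To replace the full convolution on $\R$ by the retarded one on $I\ssb=\ssb(-T,T)$ that actually appears in the Duhamel formula \eqref{eq:SolutionHn}, one applies the Christ--Kiselev lemma \cite{ChristKiselev2001} whenever $p\msb>\ssb\tilde p^{\ssf\prime}$, which covers all non-endpoint cases. The remaining diagonal endpoint $p\msb=\ssb\tilde p\msb=\ssb2$ (with $q,\tilde q\!\in\!(2,\infty]$) is handled by the bilinear interpolation argument of Keel--Tao \cite{KeelTao1998}, which goes through here essentially verbatim because $(1\!+\!|t|)^{-3/2}$ is integrable on $\R$ (no dyadic decomposition in time is needed).

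The only point one must check carefully is the \emph{main obstacle}, namely the global integrability of the dispersive kernel: unlike on $\Hn$, where the short-time and long-time behaviors are genuinely different and force a case split with a loss parameter $\sigma$, here the uniform estimate $(1\!+\!|t|)^{-3/2}$ absorbs both regimes at once, and the exponent $3/2\msb>\msb1$ makes $g\!\in\!L^1(\R)$. This is precisely what enlarges the admissible region to the full rectangle $[\ssf0,\tfrac12\vsf]\!\times\![\ssf0,\tfrac12)$ (minus $(\tfrac12,\tfrac12)$), and what permits the $p\msb=\ssb2$ endpoint. Consequently the proof is considerably shorter than on the continuous side; we omit further details, in accordance with the convention stated earlier in the paper.
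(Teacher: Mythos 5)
Your proposal is correct and follows essentially the same route the paper takes: the paper simply omits the argument, invoking the kernel bound of Theorem \ref{KernelEstimateTQ}, the dispersive estimate, and the standard $TT^{*}$/Young/Christ--Kiselev/Keel--Tao machinery as in \cite{Jamaleddine2013a} and \cite{AnkerPierfeliceVallarino2011,AnkerPierfelice2014}, which is exactly what you spell out, correctly identifying that the uniform integrable decay $(1+|t|)^{-3/2}$ removes any loss and yields the full square of admissible pairs. (Two cosmetic points: the case $q=\infty$ is admissible and handled identically, and at $p=\tilde p=2$ Keel--Tao is not even needed, since the time kernel lies in $L^1(\R)$ and Young's inequality already gives the retarded bound.)
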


\begin{remark}
Notice that, in the discrete setting and contrary to the continuous setting, the case \,$\alpha\ssb=\msb1$ $($\vsf half-wave equation\ssf$)$ is similar to the general case \,$0<\alpha\le2$\ssf.
\end{remark}

Let us mention that for the nonlinear Schr\"{o}dinger (NLS) equation on homogenous trees
\begin{equation} \label{eq:NLS_trees}
    \begin{cases}
    i \partial_t u(x,t)+(-\Delta_x)^{\alpha/2}u(x,t)=\widetilde{F}(u(x,t)) \quad (x,t) \in \TQ \times \R\\
    u(x,0)=u_0
\end{cases}
\end{equation}
where
$$
\begin{cases}
    |\widetilde{F}(u)| \lesssim |u|^\eta\\
    |\widetilde{F}(u)-\widetilde{F}(v)| \lesssim \{|u|^{\eta-1} +|v|^{\eta-1}\} |u-v|
\end{cases}
$$
for some exponent $\eta >1$, we get similar local and global well-posedness results as in \cite[Thm.~4.1.]{Jamaleddine2013a} for $\alpha=2$.
\begin{theorem}
Let $1 < \eta < \infty.$ Then the nonlinear Schr\"odinger equation \eqref{eq:NLS_trees} is
\begin{itemize}
\item
\,locally well-posed for arbitrary initial data in \,$\ell^{\ssf2}$,
\item
\,globally well-posed for small initial data in \,$\ell^{\ssf2}$,
\item
\,globally well-posed for arbitrary initial data in \,$\ell^{\ssf2}$ under the additional gauge invariant condition \,$\mathrm{Im}\{\widetilde{F}(u)\overline{u}\}=0$\ssf.
\end{itemize}
\end{theorem}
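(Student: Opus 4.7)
The plan is to follow the standard Picard fixed-point argument applied to the Duhamel formulation
\begin{equation*}
u(t) \,=\, e^{\,i\,t\,(-\Delta)^{\alpha/2}}\,u_0 \,-\, i\!\int_{\,0}^{\,t}\! e^{\,i\,(t-s)(-\Delta)^{\alpha/2}}\,\widetilde{F}(u(s))\,\mathrm{d}s \,=:\, \Phi(u)(t)\ssf,
\end{equation*}
exactly as in \cite[Thm.~4.1]{Jamaleddine2013a} for the particular case $\alpha = 2$. The key feature making the argument uniform in $\alpha$ is Corollary~\ref{cor:StrichartzEstimateTQ}: the admissible region on $\TQ$ is the \emph{whole} square $[\vsf0\ssf,\tfrac12]\!\times\![\ssf0\ssf,\tfrac12\vsf)\cup\{(0,\tfrac12)\}$ of exponent pairs, with no loss of derivatives. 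This flexibility, combined with the trivial embedding $\ell^{\ssf p_1}(\TQ)\hookrightarrow\ell^{\ssf p_2}(\TQ)$ for $p_1\ssb\le\ssb p_2$ (which on a discrete measure space reverses the Euclidean inclusion), replaces here the more delicate Sobolev embeddings needed in the continuous setting.

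First, fix an admissible pair $(p,q)\in[\ssf0,\tfrac12]\msb\times\msb[\ssf0,\tfrac12)$ and set $X_I = L^{\ssf p}(I;\ell^{\ssf q}(\TQ))\cap L^{\ssf\infty}(I;\ell^{\ssf2}(\TQ))$. Corollary~\ref{cor:StrichartzEstimateTQ} applied with $(\tilde p,\tilde q)=(p,q)$ yields
\begin{equation*}
\|\Phi(u)\|_{X_I} \,\le\, C\,\bigl\{\|u_0\|_{\ell^{\ssf2}(\TQ)} + \|\widetilde{F}(u)\|_{L^{\ssf p^{\ssf\prime}}(I;\ell^{\ssf q^{\ssf\prime}}(\TQ))}\bigr\}\,.
\end{equation*}
For the nonlinearity, the pointwise bound $|\widetilde{F}(u)|\lesssim|u|^\eta$ gives
\begin{equation*}
\|\widetilde{F}(u)(s)\|_{\ell^{\ssf q^{\ssf\prime}}} \,\lesssim\, \bigl\|\,|u(s)|^{\ssf\eta}\,\bigr\|_{\ell^{\ssf q^{\ssf\prime}}} = \|u(s)\|_{\ell^{\ssf\eta\vsf q^{\ssf\prime}}}^{\ssf\eta}\,\lesssim\,\|u(s)\|_{\ell^{\ssf2}}^{\ssf\theta_0}\,\|u(s)\|_{\ell^{\ssf q}}^{\ssf\eta-\vsf\theta_0}\,,
\end{equation*}
using the discrete embedding and interpolation between $\ell^{\ssf2}$ and $\ell^{\ssf q}$. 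Hölder in time, with an appropriate choice of $(p,q)$ tuned to $\eta$, then gives $\|\widetilde F(u)\|_{L^{\ssf p^{\ssf\prime}}(I;\ell^{\ssf q^{\ssf\prime}})} \lesssim T^{\,\theta}\|u\|_{X_I}^{\ssf\eta}$ for some $\theta\ge0$; the analogous Lipschitz bound for $\widetilde F(u)-\widetilde F(v)$ follows verbatim from the assumed difference estimate.

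From this point the three conclusions follow by the standard scheme. Local well-posedness for arbitrary $u_0\ssb\in\ssb\ell^{\ssf2}$ comes from contracting $\Phi$ on a suitable closed ball of $X_I$ with $T$ small enough to absorb the factor $T^{\theta}$. Small-data global well-posedness corresponds to the subcritical/scale-invariant choice $\theta=0$ with $T=+\infty$, working provided $\|u_0\|_{\ell^{\ssf2}}$ is sufficiently small. Finally, global well-posedness for arbitrary data under the gauge invariance $\mathrm{Im}\{\widetilde{F}(u)\overline{u}\}=0$ follows because this condition forces conservation of the $\ell^{\ssf2}$ norm (by pairing the equation with $\overline{u}$ and summing), so that the local theory can be iterated on a uniform time step depending only on $\|u_0\|_{\ell^{\ssf2}}$. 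The main obstacle is the careful bookkeeping of the exponents $(p,q,\eta,\theta)$ ensuring that $\eta q^{\ssf\prime}\ge2$ (so that the embedding applies) and that $(p,q)$ remains admissible for every $\eta>1$; thanks to the fact that the admissible region is a full square, no restriction on $\eta$ arises and the proof of \cite[Thm.~4.1]{Jamaleddine2013a} transports verbatim, the sole input requiring replacement being the dispersive estimate, now available for all $0<\alpha\le2$ via Corollary~\ref{cor:DispersiveEstimateTQ}.
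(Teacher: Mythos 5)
Your proposal is correct and coincides with the paper's (implicit) argument: the paper proves no separate result here but appeals to the fixed-point scheme of \cite[Thm.~4.1]{Jamaleddine2013a} for $\alpha=2$, whose only analytic input is the dispersive/Strichartz estimates of Corollaries~\ref{cor:DispersiveEstimateTQ} and \ref{cor:StrichartzEstimateTQ}, now valid for all $0<\alpha\le2$ with the full square of admissible pairs. Your Duhamel contraction in $L^{p}(I;\ell^{q})\cap L^{\infty}(I;\ell^{2})$, the exponent bookkeeping made painless by the trivial $\ell^{2}\hookrightarrow\ell^{r}$ embeddings and the unrestricted admissible region, and the $\ell^{2}$-conservation under the gauge condition are exactly the ingredients that transport that proof verbatim.
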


\section*{Acknowledgments}
G. Palmirotta receives funding from
the German Research Foundation via the grant SFB-TRR 358/1 2023 — 491392403. Y. Sire is partially supported by NSF DMS Grant $2154219$, "Regularity {\sl vs} singularity formation in elliptic and parabolic equations". 

\normalem
\bibliographystyle{amsalpha}
\bibliography{literature}

\end{document}